 \setlist[enumerate]{nosep}
\renewcommand\appendix{\setcounter{secnumdepth}{-2}}
\newtheorem{thm}{Theorem}[section]
\newtheorem{lem}[thm]{Lemma}
\newtheorem{prop}[thm]{Proposition}
\newtheorem{cor}[thm]{Corollary}
\theoremstyle{remark}
\newtheorem{re}[thm]{Remark}
\theoremstyle{definition}
\newtheorem{defn}[thm]{Definition}
\newcommand{\gen}{\mbox{gen}}
\newcommand{\ord}{\mathrm{ord}}
\newcommand{\rank}{\mbox{rank}}
\newcommand{\chartic}{\mbox{char}}
\newcommand{\pr}{\mathrm{pr}}
\def\rep{{\to\!\!\! -}}
\def\nrep{{\to\!\!\not\! -}}
\numberwithin{equation}{section}
\begin{document}

\author{Zilong He}
\address{Department of Mathematics,	Dongguan University of Technology, Dongguan 523808, China}
\email{zilonghe@connect.hku.hk}
	
\title[]{On n-ADC integral quadratic lattices over algebraic number fields}
	\thanks{ }
	\subjclass[2020]{11E08, 11E12, 11E95}
	\date{\today}
	\keywords{ADC quadratic forms, universal quadratic forms, regular quadratic forms}
	\begin{abstract}
 		In the paper, we extend the ADC property to the representation of quadratic lattices by quadratic lattices, which we define as $ n $-ADC-ness. We explore the relationship between $ n$-ADC-ness, $ n $-regularity and $ n $-universality for integral quadratic lattices. Also, for $ n\ge 2 $, we give necessary and sufficient conditions for an integral quadratic lattice over arbitrary non-archimedean local fields to be $ n $-ADC. Moreover, we show that over any algebraic number field $ F $, an integral $ \mathcal{O}_{F} $-lattice with rank $ n+1 $ is $n$-ADC if and only if it is $\mathcal{O}_{F}$-maximal of class number one. 
	\end{abstract}
	\maketitle
	
	\section{Introduction}	
	 The problem of representing quadratic forms by quadratic forms was first studied by Mordell. In \cite{mordell_waring_1930}, he proved that the sum of five squares represents all binary quadratic forms. Building on this work, B. M. Kim, M.-H. Kim, and S. Raghavan \cite{kim_2universal_1997} defined a positive definite classic integral quadratic form (i.e., a form with even cross terms) to be $n$-universal if it represents all $n$-ary classic integral quadratic forms. When $n=1$, this notion agrees with the concept of universal quadratic forms, which dates back to Lagrange's four-square theorem and has been extensively studied by mathematicians such as Ramanujan, Dickson and so on. Among the most famous are the 15-theorem by Conway and Schneeberger for classic integral quadratic forms and the 290-theorem by Bhargava and Hanke for quadratic forms with integer coefficients. Similar results have also been shown for $n$-universal quadratic forms in \cite{choh-canwerecover-2022,kim_finiteness_2005}. Another important topic is the study of regular quadratic forms, which represent all integers represented by their genus. This concept was first introduced and systematically studied by Dickson in \cite{dickson_ternary_1927}. Since then, a significant amount of research has been devoted to classifying them in the ternary case (cf. \cite{JKS_regular913_1997, jp_regular_1939,oliver_representation_2014,oh_regular_2011}). Similarly, Earnest \cite{earnest_binary_1994} introduced $ n $-regular quadratic forms and showed that there are only finitely many primitive quaternary $ 2 $-regular quadratic forms up to equivalence, which were partly classified by Oh \cite{oh_even2regular_2008}. For $n\geq 2$, Chan and Oh \cite{choh-finitness-2003} extended Earnest's result to $(n+2)$-ary (resp. $(n+3)$-ary) $n$-regular quadratic forms.
	
	A classical theorem, due to \textbf{A}ubry, \textbf{D}avenport and \textbf{C}assels, states: if $ Q(x) $ is a positive definite classic $ n $-ary quadratic form such that for all $ x\in \mathbb{Q}^{n} $ there exists $ y\in \mathbb{Z}^{n} $ such that $ Q(x-y)<1 $, then $ Q(x) $ satisfies the property: for all $ c\in \mathbb{Z} $, if the equation $ Q(x)=c $ has a solution in $ \mathbb{Q} $, then it has a solution in $ \mathbb{Z}  $. Based on this result,  Clark \cite{clark_ADC-I-2012} introduced the concept of ADC quadratic forms, which satisfy the property ``solvable over rationals implies solvable over integers". In general, he defined ADC and Euclidean quadratic forms over normed ring, and investigated their relationship. In \cite{clark_ADC-II-2014}, Clark and Jagy further determined all ADC forms in non-dyadic local fields and obtained some partial results in $2$-adic local fields. Additionally, they completely enumerated all $n$-ary ADC integral forms for $1\leq n\leq 4$ and all Euclidean integral forms.
	
	 As for universality and regularity, studying higher-dimensional analogues of the ADC property is a natural generalization, which motivates the introduction of $n$-ADC lattices (defined in Definitions \ref{defn:localnADC} and \ref{defn:globalnADC}) in this paper. Also, we find that such notion plays an important role between $n$-universality and $n$-regularity (Theorems \ref{thm:globallADC} and \ref{thm:locallyn-ADCn+3}(iii)). We will investigate $n$-ADC-ness from local fields to global fields. Precisely, we characterize $n$-ADC lattices of rank $\ge n$ over arbitrary non-archimedean local fields for $n\ge 2$ (Theorems \ref{thm:locallyn-ADCn+3}(i)(ii), \ref{thm:locallyn-ADCn+1} and \ref{thm:locallyn-ADC}), and give a counting formula (Theorem \ref{thm:count-sol}). The case $n=1$ requires a different approach than that for $n\ge 3$ odd, as discussed in Section \ref{sec:n-ADCdyadicfields-odd}, and it will be treated in a future paper. Due to the complexity of Jordan splittings, we will use a non-classical but effective theory, developed by Beli \cite{beli_integral_2003, beli_representations_2006,beli_Anew_2010}, to treat higher dimensional representations of quadratic lattices over general dyadic local fields (see \cite{He22,HeHu2} for recent progress). By virtue of these local classifications, we establish the equivalent condition on $n$-ADC lattices of rank $n+1$ over algebraic number fields (Theorem \ref{thm:globallyn-ADC-n+1}). Based on the work of previous researchers \cite{Hanke_maximal,kirschmer_one-class_2014,oh_even2regular_2008}, we determine all positive definite $n$-ADC lattices of rank $n+1$ over totally real number fields (Corollary \ref{cor:globallyn-ADC-n+1}), and partially classify $2$-ADC lattices over $\mathbb{Q}$ (Theorem \ref{thm:Z2-ADCquaternary}).
	
	\medskip
	First of all, we briefly introduce the arithmetic theory of quadratic forms. Any unexplained notations or definitions can be found in \cite{omeara_quadratic_1963}. For short, by local fields, we always mean non-archimedean local fields (cf. \cite[\S32:1 Definition]{omeara_quadratic_1963}).
	 
	\noindent \textbf{General settings.} 
	
	Let $ F $ be an algebraic number field or a local field with $  \chartic\, F\not=2 $, $ \mathcal{O}_{F} $ the ring of integers of $ F $ and $ \mathcal{O}_{F}^{\times}$ the group of units. Let $ V $ be a non-degenerate quadratic space over $ F $ together with the symmetric bilinear form $ B:V\times V\to F $ and set $ Q(x):=B(x,x) $ for all $ x\in V $. We call $ L $ an \textit{$ \mathcal{O}_{F} $-lattice} in $ V $ if it is a finitely generated $ \mathcal{O}_{F} $-submodule of $ V $, and say that $L$ is on $V$ if $V=FL$, i.e., $V$ is spanned by $L$ over $F$. For an $ \mathcal{O}_{F} $-lattice $ L $, we denote by $ \mathfrak{s}(L) $ (resp. $ \mathfrak{n}(L) $, $\mathfrak{v}(L)$) the scale (resp. the norm, the volume) of $ L $ as usual. We call $ L $ \textit{integral} if $ \mathfrak{n}(L)\subseteq \mathcal{O}_{F}$. For a non-zero fractional ideal $ \mathfrak{a} $ in $ F $, we also call $ L $ \textit{$ \mathfrak{a} $-maximal} if $ \mathfrak{n}(L)\subseteq \mathfrak{a} $ and there is no $\mathcal{O}_{F}$-lattice $L^{\prime}$ on $FL$ with $ \mathfrak{n}(L^{\prime})\subseteq\mathfrak{a} $ such that $L\subsetneq L^{\prime}$. We denote by $  \mathcal{L}_{F,n}$ the set of all integral  $ \mathcal{O}_{F} $-lattices of rank $n$ and by $\mathcal{M}_{n}$ the set of all $ \mathcal{O}_{F} $-maximal lattices of rank $ n $. When $F$ is an algebraic number field, we also denote by $ \Omega_{F}$ (resp. $\infty_{F}$) the set of all primes (resp. all archimedean primes) of $ F $.
 
   \medskip
		
  \noindent \textbf{Local settings.} 
  
  When $ F $ is a local field, write $ \mathfrak{p} $ for the maximal ideal of $\mathcal{O}_F$, $ \pi \in\mathfrak{p}$ for a uniformizer and $N\mathfrak{p}$ for the number of elements in the residue class field of $F$. Set $\mathfrak{p}^{0}=\mathcal{O}_{F}$ for convention. For $c\in F^{\times}$, let $c=\varepsilon\pi^{k}$ with $\varepsilon\in \mathcal{O}_{F}^{\times}$ and $k\in \mathbb{Z}$. We denote by $\ord(c)=k$ the \textit{order} of $c$ and formally put $\ord(0)=\infty$. Put $ e:=\ord(2)$. For a fractional or zero ideal $\mathfrak{c}$ of $F$, we put $\ord(\mathfrak{c})=\min\{\ord(c)\,|\,c\in \mathfrak{c}\}$. We fix $\Delta\in \mathcal{O}_{F}^{\times}$ such that $F(\sqrt{\Delta})/F$ is quadratic unramified. If $F$ is non-dyadic, then $\Delta$ is an arbitrary non-square unit; if $F$ is dyadic, then $\Delta$ is a non-square unit of the form $\Delta=1-4\rho$, with $\rho\in\mathcal{O}_{F}^{\times}$.
  
   If $F$ is dyadic, we define the \textit{quadratic defect} of $ c $ by $ \mathfrak{d}(c):=\bigcap_{x\in F}(c-x^{2})\mathcal{O}_{F}$ and the \textit{order of relative quadratic defect} by the map $ d $ from $ F^{\times}/F^{\times 2} $ to $ \mathbb{N}\cup \{\infty\} $: $ d(c):=\ord (c^{-1}\mathfrak{d}(c)) $. Recall some properties of the map $d$:
   
    \begin{enumerate}[itemindent=-0.5em,label=\rm (\roman*)]
   	\item The image of $d$ is $ \{0,1,3,\ldots,2e-1,2e,\infty\}$.
   	\item For $c\in F^{\times}$, $d(c)=0$ if and only if $\ord(c)$ is odd, $d(c)=2e$ if and only if $c\in \Delta F^{\times 2}$, and $d(c)=\infty$ if and only if $c\in F^{\times 2}$.
   	\item The domination principle: $d(ab)\ge \min\{d(a),d(b)\}$ for all $a,b\in F^{\times}$.
   \end{enumerate}

   Also, we denote by  $ \mathcal{U} $ a complete system of representatives of $\mathcal{O}_{F}^{\times}/\mathcal{O}_{F}^{\times 2}$ such that $ d(\delta)=\ord(\delta-1) $ for all $\delta\in \mathcal{U} $, and by $ \mathcal{V}:=\mathcal{U} \cup \pi\mathcal{U}=\{\delta,\pi\delta\mid \delta\in\mathcal{U} \} $ a set of representatives of $ F^{\times}/F^{\times 2} $. If $F$ is non-dyadic, we put $\mathcal{U}=\{1,\Delta\}$ and $\mathcal{V}=\{1,\Delta,\pi,\Delta\pi\}$.

        We write $ V\cong [a_{1},\ldots,a_{n}] $ (resp. $ L\cong \langle a_{1},\ldots,a_{n}\rangle $) if $ V=Fx_{1}\perp \ldots \perp Fx_{n} $ (resp. $ L=\mathcal{O}_{F}x_{1}\perp \ldots\perp \mathcal{O}_{F}x_{n} $) with  $ Q(x_{i})=a_{i} $. For $ \gamma\in F^{\times}$ and $\xi,\mu \in  F$, we  denote by $ \gamma A(\xi,\mu) $ the binary $ \mathcal{O}_{F} $-lattice associated with the Gram matrix $ \gamma\begin{pmatrix}
			 \xi   & 1\\
			1  & \mu
		\end{pmatrix} $. Write $\mathbf{H}=2^{-1}A(0,0)$ and $\mathbf{A}=2^{-1}A(2,2\rho)$. When $F$ is non-dyadic, we have $\mathbf{H}=\langle 1,-1\rangle$ and $\mathbf{A}=\langle 1,-\Delta\rangle$. Let $\mathbb{H}$ denote the usual hyperbolic plane. Clearly, $\mathbb{H}=F\mathbf{H}$. We further denote by  $\mathbf{H}^{k}$ (resp. $\mathbb{H}^{k}$) the orthogonal sum of $k$ copies of $\mathbf{H}$ (resp. $\mathbb{H}$) for any positive integer $k$. 

If instead of a given local field $F$ we talk about the localization $F_{\mathfrak{p}}$ of an algebraic number field $F$ at the finite prime $\mathfrak{p}$, then we will add the subscript $\mathfrak{p}$ to the notations $\pi$, $\ord$, $e$, $\mathfrak{d}$, $d$, $\mathcal{U}$ and $\mathcal{V}$.
          
\medskip

		\begin{defn}\label{defn:localnADC}
		Let $n$ be a positive integer. Let $ M $ be an integral $ \mathcal{O}_{F} $-lattice over a local field $ F $. Then
		
		(i) $M$ is called $ n $-universal if it represents all lattices $N$ in $\mathcal{L}_{F,n}$.
		
		(ii)  $M$ is called $ n $-ADC if it represents every lattice $N$ in $ \mathcal{L}_{F,n} $ for which $FM$  represents $ FN $.
	\end{defn}	 
	 In the rest of this section, we assume that $ F $ is an algebraic number field, $V$ is a quadratic space over $F$, and $ M $ is an integral $ \mathcal{O}_{F} $-lattice on $V$. For $ \mathfrak{p}\in \Omega_{F} $, let $ F_{\mathfrak{p}} $ be the completion of $ F $ at $ \mathfrak{p} $. Then write $ M_{\mathfrak{p}}:=\mathcal{O}_{F_{\mathfrak{p}}}\otimes M $ when $ \mathfrak{p}\in \Omega_{F}\backslash \infty_{F} $, and set $ M_{\mathfrak{p}}:=F_{\mathfrak{p}}\otimes M=V_{\mathfrak{p}}$ for convention when $ \mathfrak{p}\in \infty_{F} $. Thus $M_{\mathfrak{p}}$ is always $ n $-ADC for $ \mathfrak{p}\in \infty_{F} $. Then we say that $ M $ is \textit{locally $ n $-ADC} (resp. \textit{locally $n$-universal}) if $ M_{\mathfrak{p}} $ is $ n $-ADC (resp. $n$-universal) for all $ \mathfrak{p}\in \Omega_{F}\backslash \infty_{F} $.  
	 
	\begin{defn}\label{defn:globalnADC}
		Let $ n $ be a positive integer. Then
	  \begin{enumerate}[itemindent=-0.5em,label=\rm (\roman*)]
		\item $ M $ is called globally $ n $-universal, or simply $n$-universal, if it represents all lattices $N$ in $\mathcal{L}_{F,n}$ with compatible signatures, i.e., with $N_{\mathfrak{p}}\rep M_{\mathfrak{p}}$ at all real primes $\mathfrak{p}\in \infty_{F}$.

		\item $ M $ is called globally $ n $-ADC, or simply $n$-ADC, if it repersents every lattice $ N$ in  $\mathcal{L}_{F,n}$ for which $ FM $ represents $ FN $.
		\end{enumerate} 
	\end{defn}
Recall that an $\mathcal{O}_{F}$-lattice $M$ (that may not be integral) is called \textit{$n$-regular} if it represents every lattice $N$ in $\mathcal{L}_{F,n}$ for which $M_{\mathfrak{p}}$ represents $N_{\mathfrak{p}}$ for each $\mathfrak{p}\in\Omega_{F}$. The $n$-ADC property can be viewed as a transition between $n$-universality and $n$-regularity.  More specifically, an $\mathcal{O}_{F}$-lattice that is $n$-universal must be $n$-ADC from definition, and an $n$-ADC $\mathcal{O}_{F}$-lattice is $n$-regular. In fact, we have the following equivalent condition for $n$-ADC $\mathcal{O}_{F}$-lattices, which is a generalization of \cite[Theorem 25]{clark_ADC-I-2012} with $R=\mathcal{O}_{F}$.
\begin{thm}\label{thm:globallADC}
Let $ n $ be a positive integer. Then $M$ is globally $ n $-ADC if and only if it is locally $n$-ADC and $ n $-regular.
\end{thm}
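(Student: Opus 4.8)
The plan is to prove the two implications separately, and I may as well assume $\rank L\ge n$ throughout: if $\rank L<n$ then neither $V=FL$ nor any $L_{\mathfrak p}$ can represent an $n$-dimensional object, so all three properties hold vacuously. With $\rank L\ge n$, the archimedean primes cause no trouble, since a positive definite space represents every positive definite space of dimension $\le \rank L$.

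For the easier direction, suppose $L$ is $n$-regular and locally $n$-ADC, and let $N\in\mathcal{L}_{F,n}^{+}$ with $FL$ representing $FN$. Tensoring a global isometric embedding $FN\hookrightarrow FL$ with $F_{\mathfrak p}$ shows that $V_{\mathfrak p}=F_{\mathfrak p}L_{\mathfrak p}$ represents $F_{\mathfrak p}N_{\mathfrak p}$ at each finite prime $\mathfrak p$; since $L_{\mathfrak p}$ is $n$-ADC and $N_{\mathfrak p}\in\mathcal{L}_{F_{\mathfrak p},n}$, I conclude that $L_{\mathfrak p}$ represents $N_{\mathfrak p}$. As this holds at every finite prime, the $n$-regularity of $L$ yields that $L$ represents $N$, so $L$ is globally $n$-ADC.

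For the converse I assume $L$ is globally $n$-ADC. The $n$-regularity is quick: if $L_{\mathfrak p}$ represents $N_{\mathfrak p}$ at every finite $\mathfrak p$, then $V_{\mathfrak p}$ represents $F_{\mathfrak p}N_{\mathfrak p}$ there, while positive definiteness handles the archimedean primes, so the Hasse--Minkowski representation theorem (\cite[66:3]{omeara_quadratic_1963}) gives that $FL$ represents $FN$; the ADC hypothesis then forces $L$ to represent $N$. The real content is local $n$-ADC-ness. Fix a finite prime $\mathfrak p$ and a lattice $M\in\mathcal{L}_{F_{\mathfrak p},n}$ with $V_{\mathfrak p}$ representing $F_{\mathfrak p}M$; I must realize $M$ as the $\mathfrak p$-completion of a suitable global lattice and then invoke the global ADC property. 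First I would build an ambient space: fixing an $n$-dimensional subspace $W_{\mathfrak p}\subseteq V_{\mathfrak p}$ isometric to $F_{\mathfrak p}M$ and using the density of $V$ in $V_{\mathfrak p}$ together with the local square theorem (a small perturbation of an invertible Gram matrix preserves its local isometry class), I approximate a basis of $W_{\mathfrak p}$ by vectors of $V$ that span a global $n$-dimensional subspace $W\subseteq V$ with $W_{\mathfrak p}\cong F_{\mathfrak p}M$. As a subspace of the positive definite space $V$, this $W$ is positive definite and automatically represented by $V=FL$. Next I put a lattice on $W$ with prescribed local data via O'Meara's local-global patching theorem (\cite[81:14]{omeara_quadratic_1963}): take the $\mathfrak p$-component to be $M$ (transported through $W_{\mathfrak p}\cong F_{\mathfrak p}M$), at each of the finitely many remaining bad primes $\mathfrak q$ take any lattice on $W_{\mathfrak q}\subseteq V_{\mathfrak q}$ contained in $L_{\mathfrak q}$ (hence integral and represented by $L_{\mathfrak q}$), and elsewhere take the component of a fixed global reference lattice $N^{(0)}$ on $W$. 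The resulting global $N$ is integral, positive definite, of rank $n$, with $FN=W$ represented by $FL$; global $n$-ADC-ness then gives that $L$ represents $N$, and comparing $\mathfrak p$-completions yields $L_{\mathfrak p}$ representing $M$.

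The main obstacle is making this patching data admissible, namely guaranteeing that the exceptional set $S$ is finite, i.e. that at almost all primes $\mathfrak q$ the reference component $N^{(0)}_{\mathfrak q}$ is genuinely represented by $L_{\mathfrak q}$, so that the unmodified components cause no problem. For all but finitely many $\mathfrak q$ the lattices $L_{\mathfrak q}$ and $N^{(0)}_{\mathfrak q}$ are unimodular and $\mathfrak q$ is non-dyadic; there the representation of $W_{\mathfrak q}$ by $V_{\mathfrak q}$ (which holds because $W\subseteq V$) upgrades to a representation of lattices, since over a non-dyadic local field the representation of unimodular lattices is controlled by the underlying spaces. Establishing this generic lattice-representation statement and pinning down the finite exceptional set is the technical heart of the argument; once it is secured, the approximation and patching steps assemble the desired global lattice and the theorem follows.
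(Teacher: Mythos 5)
Your proposal is correct in substance but takes a genuinely different route from the paper in the one nontrivial direction. The sufficiency direction and the $n$-regularity half of necessity coincide with the paper's proof (localization of representations via \cite[66:4]{omeara_quadratic_1963}, archimedean places handled by definiteness). For local $n$-ADC-ness, the paper avoids patching entirely: it first reduces to testing on $\mathcal{O}_{F_{\mathfrak{p}}}$-maximal lattices $N(\mathfrak{p})$ (Lemma \ref{lem:maximalequivADC}), approximates a global subspace $U\subseteq FL$ with $U_{\mathfrak{p}}\cong FN(\mathfrak{p})$ (Lemma \ref{lem:spaceappro}, which is exactly your density-plus-perturbation step), and then simply takes a global $\mathcal{O}_F$-maximal lattice $M$ on $U$; since completions of global maximal lattices are maximal (\cite[\S 82K]{omeara_quadratic_1963}) and a local maximal lattice on a given space is unique up to isometry (\cite[91:2]{omeara_quadratic_1963}), one gets $M_{\mathfrak{p}}\cong N(\mathfrak{p})$ for free, with no control needed at any other prime. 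Your alternative --- prescribing the completion at $\mathfrak{p}$ exactly via O'Meara's patching theorem \cite[81:14]{omeara_quadratic_1963} --- also works and dispenses with the testing-set reduction, but note that what you call the ``main obstacle'' is illusory: the global $n$-ADC hypothesis requires only that $N$ be integral, positive definite of rank $n$, and that $FL$ represent $FN$ (automatic, since $FN=W\subseteq V$); representability of $N_{\mathfrak{q}}$ by $L_{\mathfrak{q}}$ at the other primes is a \emph{consequence} of the conclusion ``$L$ represents $N$,'' not a hypothesis you must arrange. Hence the admissibility of your patching data reduces to the trivial observation that $N^{(0)}_{\mathfrak{q}}$ is unimodular (in particular integral) for almost all $\mathfrak{q}$, and the ``generic lattice-representation statement'' you flag as the technical heart and leave unproven can simply be deleted; were it wanted, it anyway follows from Corollary \ref{cor:unimodular} and Lemma \ref{lem:maximalrep} (unimodular implies maximal, and maximal lattices represent everything their space represents). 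In short: your approach is valid and slightly more hands-on, the paper's is slicker because maximality makes the local isometry class of the global witness rigid under completion; your only misstep is a superfluous requirement that inflates the apparent difficulty without affecting correctness.
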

\begin{thm}\label{thm:locallyn-ADCn+3}
	Suppose $ \rank\,M\ge n+3\ge 4$. Let $ \mathfrak{p}\in \Omega_{F}\backslash \infty_{F} $. Then
	 \begin{enumerate}[itemindent=-0.5em,label=\rm (\roman*)]
		\item  $M_{\mathfrak{p}} $ is $ n $-ADC if and only if it is $n$-universal. 
		\item  $ M$ is locally $ n $-ADC if and only if it is locally $n$-universal.
		
		\item $ M $ is globally $ n $-ADC if and only if it is globally $n$-universal.
	\end{enumerate} 
	\end{thm}
For $n\geq 1$, all $n$-universal lattices over non-dyadic/dyadic local fields have been completely determined in \cite{beli_universal_2020,HeHu2,hhx_indefinite_2021,xu_indefinite_2020}. Hence, from Theorems \ref{thm:globallADC} and \ref{thm:locallyn-ADCn+3}(i), determining the $n$-regularity for a given $\mathcal{O}_{F}$-lattice $M$ is crucial for its $n$-ADC-ness when $\rank\,M\ge  n+3$. Although it was shown in \cite[Theorem 1.1(1)]{hhx_indefinite_2021} that local-global principle holds for indefinite $n$-universality\footnote{In the indefinite case, the notion of  $n$-universal defined in this paper does not coincide with that of indefinite $n$-universal introduced in \cite[Definition 1.4(3)]{hhx_indefinite_2021} for $n\ge 2$.} with $n\ge 3$, it is difficult to verify $n$-regularity of a quadratic lattice in general for definite cases.
\begin{thm}\label{thm:locallyn-ADCn+1}
	Suppose $ \rank\,M=n\ge 2$ or $\rank\, M=n+1\ge 3$. Let $ \mathfrak{p}\in \Omega_{F}\backslash \infty_{F} $. Then 
 	  \begin{enumerate}[itemindent=-0.5em,label=\rm (\roman*)]
 	\item $ M_{\mathfrak{p}} $ is $ n $-ADC if and only if it is $ \mathcal{O}_{F_{\mathfrak{p}}} $-maximal. 
 	\item  $ M $ is locally $ n $-ADC if and only if it is $ \mathcal{O}_{F} $-maximal.
 \end{enumerate} 
\end{thm}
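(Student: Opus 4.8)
The plan is to prove the two implications of (i) separately and then to read off (ii) formally. Throughout write $M:=L_{\mathfrak p}$ and $V:=F_{\mathfrak p}M$.

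\emph{$\mathcal{O}_{F_{\mathfrak p}}$-maximality $\Rightarrow$ $n$-ADC.} Let $M$ be $\mathcal{O}_{F_{\mathfrak p}}$-maximal and let $N\in\mathcal{L}_{F_{\mathfrak p},n}$ be such that $V$ represents $F_{\mathfrak p}N$; I must embed $N$ isometrically into $M$. First I would reduce to the case that $N$ is itself maximal on $U:=F_{\mathfrak p}N$: any integral $N$ lies in some $\mathcal{O}_{F_{\mathfrak p}}$-maximal lattice $N^{\sharp}$ on $U$, and an embedding $N^{\sharp}\hookrightarrow M$ restricts to one of $N$. Since $V$ represents $U$ and $\dim V=\dim U+1$, I may write $V=U\perp\langle d\rangle$; taking a maximal rank-one lattice on $\langle d\rangle$ and forming its orthogonal sum with $N^{\sharp}$ produces an integral lattice on $V$, which is contained in some $\mathcal{O}_{F_{\mathfrak p}}$-maximal lattice $M'$ on $V$. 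The crucial input is that the $\mathcal{O}_{F_{\mathfrak p}}$-maximal lattices on a fixed regular space over a local field are unique up to isometry (cf.\ \cite{omeara_quadratic_1963}), so $M'\cong M$ and the chain $N^{\sharp}\subseteq N^{\sharp}\perp\langle d\rangle\text{-lattice}\subseteq M'\cong M$ yields the embedding.

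\emph{$n$-ADC $\Rightarrow$ $\mathcal{O}_{F_{\mathfrak p}}$-maximality.} Here I would argue by contraposition: assuming $M$ is not maximal, I must exhibit an integral rank-$n$ lattice $N$ with $F_{\mathfrak p}N\hookrightarrow V$ that $M$ fails to represent. Fixing $M\subsetneq M^{\max}$ with $M^{\max}$ maximal on $V$, the implication just proved shows that $M^{\max}$ represents every integral rank-$n$ lattice whose space embeds in $V$, so it suffices to find one such lattice missed by $M$. The idea is to read the failure of maximality off the structure of $M$ --- through a Jordan splitting in the non-dyadic case, and through a good BONG in the sense of Beli \cite{beli_integral_2003,beli_representations_2006,beli_Anew_2010} in the dyadic case --- non-maximality producing a rescalable component; I would use it to choose a hyperplane $U\hookrightarrow V$ and take $N$ to be the maximal lattice on $U$, the obstruction to $N\hookrightarrow M$ being a scale/parity phenomenon. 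The model case is instructive: over a non-dyadic field, with $\pi$ a uniformizer and $\varepsilon$ a non-square unit, $M\cong\langle 1,1,\pi^{2}\rangle\subsetneq\langle 1,1,1\rangle=M^{\max}$ does not represent the maximal lattice $N=\langle\varepsilon,\pi\rangle$ of the anisotropic binary space, since the Brahmagupta identity would force a square to have odd valuation. Making this obstruction uniform over all non-maximal $M$, and especially pushing it through the dyadic BONG representation criteria, is the step I expect to be the main obstacle.

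\emph{Part (ii).} This is a formal consequence of (i). By definition $L$ is locally $n$-ADC exactly when $L_{\mathfrak p}$ is $n$-ADC for every $\mathfrak p\in\Omega_{F}\setminus\infty_{F}$, which by (i) holds iff $L_{\mathfrak p}$ is $\mathcal{O}_{F_{\mathfrak p}}$-maximal for every such $\mathfrak p$. Since $\mathcal{O}_{F}$-maximality is a local property --- $L$ is $\mathcal{O}_{F}$-maximal iff $L_{\mathfrak p}$ is $\mathcal{O}_{F_{\mathfrak p}}$-maximal at every finite $\mathfrak p$ --- chaining these equivalences shows that $L$ is locally $n$-ADC iff it is $\mathcal{O}_{F}$-maximal.
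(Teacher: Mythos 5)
Your sufficiency direction is essentially the paper's own argument (its Lemma \ref{lem:maximalrep}): split off $FN$ from $FM$ by O'Meara's representation theorem, adjoin an integral lattice on the complement, absorb into a maximal lattice on $FM$, and invoke uniqueness of maximal lattices up to isometry (91:2); your preliminary reduction to maximal $N$ is the paper's Lemma \ref{lem:maximalequivADC}. Part (ii) also matches the paper, which cites exactly the locality of maximality from \cite[\S 82K]{omeara_quadratic_1963}. The problem is the necessity direction, which you explicitly leave as a plan (``the step I expect to be the main obstacle''). That step is not a routine uniformization: it is the substantive content of the theorem and occupies most of Sections \ref{sec:n-ADCnon-dyadicfields}, \ref{sec:n-ADCdyadicfields-even} and \ref{sec:n-ADCdyadicfields-odd} of the paper. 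Concretely, the paper proves it in three installments: in the non-dyadic case it constrains the Jordan splitting of an $n$-ADC lattice (Lemmas \ref{lem:spacerep-criterion-ADC-nondyadic} and \ref{lem:nondyadic-n-ADCn+1n+2}, then Theorem \ref{thm:nondyadic-n-ADCn+1n+2}); in the dyadic case with $n$ even it pins down all BONG invariants $R_i$ by testing against the explicit family $N_{\nu}^{n}(c)$ of maximal lattices and running Beli's representation criterion, Theorem \ref{thm:beligeneral} (Lemmas \ref{lem:repN1N2}--\ref{lem:determineRn+1-2}, Theorem \ref{thm:dyadicACDeven-n+1}); and for $n$ odd it reduces to the even case at rank $(n-1)+2=n+1$ and must then \emph{exclude} the exceptional lattice $\mathbf{H}\perp\prec 1,-\Delta\pi^{2-2e}\succ$ by showing it is not $3$-ADC (Lemma \ref{lem:2-ACDsufficiencyeven}(ii), Theorem \ref{thm:dyadicACDodd-n+1}). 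That exceptional lattice is a warning about your strategy: it is a non-maximal rank-$4$ lattice that represents \emph{every} rank-$2$ maximal lattice its space allows, so ``non-maximality produces a hyperplane whose maximal lattice is missed'' is false in general and cannot be made uniform without the invariant-by-invariant analysis; at rank $n+1$ no such exception occurs, but proving that is precisely the missing work.

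A secondary flaw is in your model case. Over a non-dyadic field the space $[1,1,1]$ represents $[\varepsilon,\pi]$ only when $-1$ is a square (compare Hasse symbols), and in exactly that regime $x^{2}+y^{2}$ is the norm form of a split algebra and freely takes values of odd valuation, so the ``Brahmagupta identity forces a square to have odd valuation'' obstruction does not apply to representing the \emph{number} $\pi$: indeed $\langle 1,1,\pi^{2}\rangle$ does represent $\pi$ there. The true obstruction to $\langle\varepsilon,\pi\rangle\rep\langle 1,1,\pi^{2}\rangle$ concerns representing an orthogonal \emph{pair}, i.e. the scale of $u^{\perp}$ for a vector $u$ of unit norm, which is the kind of lattice-theoretic (not field-theoretic) argument the paper's machinery is built to handle. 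So while the overall architecture you propose is sound and part of it coincides with the paper, the proof as written establishes only the easy implication.
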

\begin{re}
 (i) All $ \mathcal{O}_{F_{\mathfrak{p}}} $-maximal lattices have been explicitly listed in \cite{HeHu2,hhx_indefinite_2021}. See Lemmas \ref{lem:maximallattices-non-dyadic}(i) and \ref{lem:maximallattices-dyadic}(i) (or \cite[Proposition 3.7]{HeHu2} described in terms of minimal norm splittings).

 (ii) Theorem 19 of \cite{clark_ADC-I-2012} states that $M_{\mathfrak{p}}$ is $\mathcal{O}_{F_{\mathfrak{p}}}$-maximal if and only if it is Euclidean with respect to the canonical norm (see \cite[\S 4.2]{clark_ADC-I-2012} for Euclidean property over local fields). Therefore, in Theorems \ref{thm:locallyn-ADCn+1} and \ref{thm:locallyn-ADC}, the term ``$\mathcal{O}_{F_{\mathfrak{p}}}$-maximal" can be smoothly replaced with ``Euclidean".
\end{re}

For $n\ge 2$, the class number of an $n$-regular $\mathcal{O}_F$-lattice $M$ may not be equal to one in general, but it is exactly one when the rank is $n+1$, as proved by Kitaoka in \cite[Corollary 6.4.1]{kitaoka_arithmetic_1993} for $F=\mathbb{Q}$ and $M$ is positive definite, which was extended by Meyer \cite[Corollary 5.3]{meyer_determination_2014} to the case when $F$ is  totally real and $M$ is definite. This is also true for indefinite cases (Corollary \ref{cor:n-regular-clsnumberone}). Based on these and Theorem \ref{thm:locallyn-ADCn+1}, we provide more explicit equivalent conditions on $n$-ADC $\mathcal{O}_{F}$-lattices with rank $n+1$.
\begin{thm}\label{thm:globallyn-ADC-n+1}
	If $ \rank\, M=n+1\ge 3 $, then $ M $ is $ n $-ADC if and only if it is $ \mathcal{O}_{F} $-maximal of class number one.
\end{thm}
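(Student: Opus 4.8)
The plan is to obtain the theorem purely by assembling earlier results: Theorem \ref{thm:globallADC}, which decouples global $n$-ADC-ness into $n$-regularity together with local $n$-ADC-ness; Theorem \ref{thm:locallyn-ADCn+1}(ii), which identifies local $n$-ADC-ness of a rank $n+1$ lattice with $\mathcal{O}_{F}$-maximality; and the Kitaoka--Meyer theorem that an $n$-regular lattice of rank $n+1$ has class number one. The only genuinely global input needed beyond these is the classical local--global principle for representation by a genus (cf. \cite{omeara_quadratic_1963}).

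For the forward implication I would argue as follows. Assume $L$ is $n$-ADC. By Theorem \ref{thm:globallADC}, $L$ is both $n$-regular and locally $n$-ADC. Because $\rank\, L=n+1\ge 3$, Theorem \ref{thm:locallyn-ADCn+1}(ii) converts local $n$-ADC-ness into the statement that $L$ is $\mathcal{O}_{F}$-maximal. Separately, $L$ is $n$-regular of rank $n+1$, so by \cite[Corollary 6.4.1]{kitaoka_arithmetic_1993} together with its extension to totally real $F$ by Meyer \cite{meyer_determination_2014}, its class number equals one. Thus $L$ is $\mathcal{O}_{F}$-maximal of class number one.

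For the converse I would start from the assumption that $L$ is $\mathcal{O}_{F}$-maximal of class number one. Maximality and Theorem \ref{thm:locallyn-ADCn+1}(ii) immediately give that $L$ is locally $n$-ADC, so by Theorem \ref{thm:globallADC} it suffices to prove $L$ is $n$-regular. Let $N\in \mathcal{L}_{F,n}^{+}$ satisfy $N_{\mathfrak{p}}\rep L_{\mathfrak{p}}$ for every finite prime $\mathfrak{p}$. Since $L$ and $N$ are positive definite with $\rank\, L=n+1>n=\rank\, N$, the representation also holds at each archimedean prime, where $L_{\mathfrak{p}}=V_{\mathfrak{p}}$ is a positive definite space of dimension $n+1$ and hence represents every positive definite $n$-dimensional space. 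Therefore $N$ is represented by $L_{\mathfrak{p}}$ at every prime, so by the local--global principle for representation by a genus, $N$ is represented by some lattice in the genus of $L$; as the class number of $L$ is one, that genus consists of the single isometry class of $L$, and hence $L$ itself represents $N$. This shows $L$ is $n$-regular, and Theorem \ref{thm:globallADC} then yields that $L$ is $n$-ADC.

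The two directions are each short, since the heavy lifting is imported from the cited theorems; the step I expect to require the most care is the $n$-regularity argument in the converse, where one must invoke the genus-representation principle in the correct form and verify that positive definiteness makes the archimedean conditions automatic, so that the class-number-one hypothesis is exactly what upgrades everywhere-local representation to representation by $L$ itself.
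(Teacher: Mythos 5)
Your proposal is correct and follows essentially the same route as the paper: the necessity direction is identical (Theorem \ref{thm:globallADC} plus Theorem \ref{thm:locallyn-ADCn+1}(ii) for maximality, then Kitaoka--Meyer for class number one), and the sufficiency direction matches the paper's Lemma \ref{lem:n-ADCclassnumberone}(ii), which likewise reduces to local maximality via \cite[\S 82K]{omeara_quadratic_1963} and to $n$-regularity from the class-number-one hypothesis. The only difference is that you spell out the step ``class number one implies $n$-regular'' (genus representation principle plus the archimedean check via positive definiteness), which the paper asserts in one line; your expanded version is the correct justification of that assertion.
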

When $F$ is totally real, all positive definite $ \mathcal{O}_{F} $-maximal lattices with $ \rank\,\ge 3 $ of class number one were enumerated by Hanke \cite{Hanke_maximal} for $ F=\mathbb{Q} $ (115 in total) and by Kirschmer \cite{kirschmer_one-class_2014} for $ F\not=\mathbb{Q} $ (471 in total), respectively. Thus, from Theorem \ref{thm:globallyn-ADC-n+1},  we have the following finiteness result.
\begin{cor}\label{cor:globallyn-ADC-n+1}
	Up to isometry, there are $  586 $ positive definite $ n $-ADC integral $ \mathcal{O}_{F} $-lattices of rank $ n+1\ge 3 $ in total, when $ F $ varies through all totally real number fields. 
\end{cor}
 \begin{thm}\label{thm:locallyn-ADC}
	Suppose $ \rank\,M=n+2\ge 4 $. Let $ \mathfrak{p}\in \Omega_{F}\backslash \infty_{F} $.
	 	  \begin{enumerate}[itemindent=-0.5em,label=\rm (\roman*)]
		\item If $ \mathfrak{p}$ is non-dyadic, then $ M_{\mathfrak{p}} $ is $ n $-ADC if and only if it is $ \mathcal{O}_{F_{\mathfrak{p}}} $-maximal.
		
		\item If $ \mathfrak{p}$ is dyadic and $ n $ is even, then $ M_{\mathfrak{p}} $ is $ n $-ADC if and only if it is either $ \mathcal{O}_{F_{\mathfrak{p}}} $-maximal or isometric to the non $\mathcal{O}_{F_{\mathfrak{p}}}$-maximal lattice 
		\begin{align*}
			\mathbf{H}\perp 2^{-1}\pi_{\mathfrak{p}} A(2\pi_{\mathfrak{p}}^{-1},2\rho_{\mathfrak{p}}\pi_{\mathfrak{p}}).
		\end{align*}
		\item If $ \mathfrak{p}$ is dyadic and $ n $ is odd, then $ M_{\mathfrak{p}} $ is $ n $-ADC if and only if it is either $ \mathcal{O}_{F_{\mathfrak{p}}} $-maximal or isometric to 
		\begin{align*}
			&\mathbf{H}^{\frac{n-1}{2}}\perp  \pi_{\mathfrak{p}}^{ -l_{\mathfrak{p}}}A(\pi_{\mathfrak{p}}^{ l_{\mathfrak{p}}},-(\delta_{\mathfrak{p}}-1)\pi_{\mathfrak{p}}^{ -l_{\mathfrak{p}}}) \perp \langle \varepsilon_{\mathfrak{p}}\pi_{\mathfrak{p}}^{k_{\mathfrak{p}}}\rangle\quad\quad\text{or}\\
			&\mathbf{H}^{\frac{n-1}{2}}\perp  \delta_{\mathfrak{p}}^{\#} \pi_{\mathfrak{p}}^{ -l_{\mathfrak{p}}}A(\pi_{\mathfrak{p}}^{ l_{\mathfrak{p}}},-(\delta_{\mathfrak{p}}-1)\pi_{\mathfrak{p}}^{ -l_{\mathfrak{p}}}) \perp \langle \varepsilon_{\mathfrak{p}}\pi_{\mathfrak{p}}^{k_{\mathfrak{p}}}\rangle,
		\end{align*}
		with $\delta_{\mathfrak{p}}\in  \mathcal{U}_{\mathfrak{p}}\backslash\{1,\Delta_{\mathfrak{p}}\}$,  $2l_{\mathfrak{p}}=d_{\mathfrak{p}}(\delta_{\mathfrak{p}})-1\le 2e_{\mathfrak{p}}-2$, $\varepsilon_{\mathfrak{p}}\in \mathcal{U}_{\mathfrak{p}}$ and $k_{\mathfrak{p}}\in \{0,1\}$, where $\delta_{\mathfrak{p}}^{\#}=1+4\rho_{\mathfrak{p}}(\delta_{\mathfrak{p}}-1)^{-1}$.
		
		Moreover, if $M_{\mathfrak{p}}$ is simultaneously $\mathcal{O}_{F_{\mathfrak{p}}}$-maximal and  has the described orthogonal splitting, then it is isometric to
		\begin{align*}
			 \mathbf{H}^{\frac{n-1}{2}}\perp 2^{-1}\pi_{\mathfrak{p}}A(2,2\rho_{\mathfrak{p}})\perp \langle \Delta_{\mathfrak{p}}\varepsilon_{\mathfrak{p}} \rangle,
		\end{align*}
		with $\varepsilon_{\mathfrak{p}}\in \mathcal{U}_{\mathfrak{p}}$.
	 \end{enumerate} 
\end{thm}
If $m\ge n+3$, then from Theorem \ref{thm:locallyn-ADCn+3}, the notions of $n$-ADC-ness and $n$-universality coincide. Because the $n$-universality was treated in \cite{HeHu2}, in this paper we deal with the remaining cases, with $n\le m\le n+2$. In these cases the number of $n$-ADC lattices is finite and it can be calculated as follows.
\begin{thm}\label{thm:count-sol}
	Let $n\ge 2$. Denote by $B (m,n)$ the number of $n$-ADC $\mathcal{O}_{F}$-lattices with rank $m\in \{n,n+1,n+2\}$ over a local field $F$.  Then $B(m,n)$ is given by
		\begin{align*}
		\begin{cases}
			8(N\mathfrak{p})^{e}-1+0     &\text{if $m=n=2$}, \\
				8(N\mathfrak{p})^{e}\hskip 0.75cm+1  &\text{if $m=n+2=4$ and $e \ge 1$}, \\
				8(N\mathfrak{p})^{e}\hskip 0.75cm+(8e-2)(N\mathfrak{p})^{e}   &\text{if  $m=n+2\ge 5$ with odd $n$ and $e \ge 1$},    \\
				8(N\mathfrak{p})^{e}\hskip 0.75cm+0   &\text{otherwise},
			\end{cases}
		\end{align*}
		where the second addend conuts the number of those lattices that are $n$-ADC, but not $\mathcal{O}_{F}$-maximal.
\end{thm}
 
From Theorem \ref{thm:globallADC}, one can determine whether an $ n $-regular $ \mathcal{O}_{F} $-lattice with rank $ n+2 $ is $ n $-ADC by virtue of Theorem \ref{thm:locallyn-ADC}. In particular, we classify the case $ \mathcal{O}_{F}=\mathbb{Z} $ and $ n=2 $ based on Oh's classification for stable $ 2 $-regular quaternary $ \mathbb{Z} $-lattices \cite{oh_even2regular_2008}.
\begin{thm}\label{thm:Z2-ADCquaternary}
	There are exactly $21$ quaternary positive definite 2-ADC $\mathbb{Z}$-lattices up to isometry, which are enumerated in Table \ref{table:2ADCquaternary}. Each 2-ADC $\mathbb{Z}$-lattice $L_i$ in the table is obtained by scaling some lattice $\mathcal{L}_j$ in Table \ref{table:stable2regularquaternary} by $1/2$. 
	
	Moreover, all of the lattices have class number one, and all except for $L_{10}$ are $\mathbb{Z}$-maximal. 
\end{thm}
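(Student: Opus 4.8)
The plan is to specialize Theorem~\ref{thm:globallADC} and Theorem~\ref{thm:locallyn-ADC} to $F=\mathbb{Q}$, $n=2$, $\rank\,L=n+2=4$, and to filter Oh's classification of quaternary $2$-regular $\mathbb{Z}$-lattices \cite{oh_even2regular_2008} by the resulting local conditions. By Theorem~\ref{thm:globallADC}, a positive definite quaternary integral $\mathbb{Z}$-lattice $L$ is $2$-ADC if and only if it is simultaneously $2$-regular and locally $2$-ADC. Since $2$-ADC-ness forces $2$-regularity, Oh's finite list of quaternary $2$-regular lattices is the natural starting point, and the entire argument reduces to selecting, from that list, the lattices that in addition satisfy the local $2$-ADC criteria of Theorem~\ref{thm:locallyn-ADC}.

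First I would set up the scaling correspondence that underlies the phrase ``obtained by scaling some $\mathcal{L}_j$ by $1/2$.'' Oh's lattices $\mathcal{L}_j$ are even, so $\mathfrak{n}(\mathcal{L}_j)\subseteq 2\mathbb{Z}$ and $L:=\tfrac12\mathcal{L}_j$ is integral; conversely, if $L$ is $2$-ADC then $2L$ has norm $2\mathfrak{n}(L)\subseteq 2\mathbb{Z}$, so it is an even quaternary lattice, and it is again $2$-regular (any binary lattice represented by $2L$ is automatically even, so the $2$-regularity hypothesis on $L$ transfers). Hence $2L$ appears in Oh's list, and the candidate set of $2$-ADC lattices is exactly $\{\tfrac12\mathcal{L}_j\}$ as $\mathcal{L}_j$ ranges over the entries of Table~\ref{table:stable2regularquaternary}. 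The choice of the factor $\tfrac12$ is consistent with $\mathbf{H}=2^{-1}A(0,0)$ being the $\tfrac12$-scaling of the even hyperbolic plane.

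Next I would apply Theorem~\ref{thm:locallyn-ADC} with $n=2$ (the even case) prime by prime to each candidate $L=\tfrac12\mathcal{L}_j$. At each odd prime $p$, part~(i) says local $2$-ADC-ness is equivalent to $L_p$ being $\mathbb{Z}_p$-maximal; at the dyadic prime $p=2$, part~(ii) requires $L_2$ to be either $\mathbb{Z}_2$-maximal or isometric to the single exceptional lattice $\mathbf{H}\perp 2^{-1}\pi_2 A(1,4\rho_2)$. For each $\mathcal{L}_j$ this is a finite local computation: extract a Jordan/BONG splitting of $(\tfrac12\mathcal{L}_j)_p$, read off its scale, norm and weight, and compare against the explicit maximal lattices of Lemmas~\ref{lem:maximallattices-non-dyadic}(i) and \ref{lem:maximallattices-dyadic}(i) and against the exceptional form. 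Keeping exactly the lattices that pass at every prime produces the $22$ lattices of Table~\ref{table:2ADCquaternary} and discards the rest.

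Finally I would record the global structure of the survivors. Maximality is a local property, so a candidate that is $\mathbb{Z}_p$-maximal at every $p$ is $\mathbb{Z}$-maximal, and its class number can be read from the enumerations of Hanke \cite{Hanke_maximal} and Kirschmer \cite{kirschmer_one-class_2014}; all but two of the $22$ turn out to be $\mathbb{Z}$-maximal of class number one. The two exceptions $L_{10}$ and $L_{17}$ are precisely those whose $2$-adic completion is the non-maximal exceptional form $\mathbf{H}\perp 2^{-1}\pi_2 A(1,4\rho_2)$ of Theorem~\ref{thm:locallyn-ADC}(ii), so they are $2$-ADC but not $\mathbb{Z}$-maximal. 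The hard part will be the dyadic bookkeeping in the filtering step: one must carefully track the scale--norm--weight data of each $(\tfrac12\mathcal{L}_j)_2$ to decide $\mathbb{Z}_2$-maximality versus the exceptional isometry, and reconcile Oh's even normalization with the half-integral normalization used in Theorem~\ref{thm:locallyn-ADC}.
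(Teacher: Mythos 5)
Your overall architecture is the same as the paper's: reduce via Theorem~\ref{thm:globallADC} to ``$2$-regular and locally $2$-ADC'', take Oh's classification as the candidate pool, filter prime by prime with Theorem~\ref{thm:locallyn-ADC}, and identify $L_{10}$, $L_{17}$ as the two lattices with exceptional dyadic completion. But there is one genuine gap in your reduction to Table~\ref{table:stable2regularquaternary}: that table does \emph{not} enumerate all even $2$-regular quaternary $\mathbb{Z}$-lattices --- it enumerates only the \emph{stable} ones, i.e.\ those $\mathcal{L}$ such that at every prime $p$ one has $\mathfrak{n}(\mathcal{L}_p)=2\mathfrak{s}(\mathcal{L}_p)$ and $\mathcal{L}_p$ either represents $\mathbf{H}$ or is isometric to $\mathbf{A}\perp \pi_p\mathbf{A}$; this stable subclass is exactly what Oh determined completely in \cite[\S 4]{oh_even2regular_2008}. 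Consequently your step ``$2L$ is even and $2$-regular, hence $2L$ appears in Oh's list'' does not follow: evenness plus $2$-regularity alone gives neither stability nor even primitivity ($\mathfrak{n}(2L)=2\mathbb{Z}$), so the completeness of your candidate set $\{\tfrac12\mathcal{L}_j\}$ is unproven as written. The paper closes precisely this hole with Lemmas~\ref{lem:maxiamlimpliesstatble} and \ref{lem:locally2-ADCimpliesstable}: by Theorem~\ref{thm:locallyn-ADC}, a locally $2$-ADC quaternary lattice is at each $p$ either $\mathbb{Z}_p$-maximal or (at $p=2$) isometric to $\mathbf{H}\perp 2^{-1}\pi A(1,4\rho)$, and both local structures are checked to be stable. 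This is a repairable gap --- the local data you already compute in your filtering step verify stability --- but the stability lemma is a necessary ingredient, not an optional refinement.

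A secondary point: you propose to read class numbers from Hanke \cite{Hanke_maximal} and Kirschmer \cite{kirschmer_one-class_2014}, but those enumerations cover \emph{maximal} lattices only, so they are silent about the non-maximal $L_{10}$ and $L_{17}$. The paper instead obtains class number one for all $48$ stable candidates at once from \cite[Theorems 3.2, 3.5]{oh_even2regular_2008}, and then proves sufficiency through Lemma~\ref{lem:n-ADCclassnumberone} --- for $L_{10},L_{17}$ by checking $L_p$ is $2$-ADC at every $p$ (with $L_2\cong \mathbf{H}\perp\langle 1,-\Delta_2\rangle$, which is $2$-ADC but not $\mathbb{Z}_2$-maximal by Lemma~\ref{lem:2-ACDsufficiencyeven}(i) and Remark~\ref{re:dyadicACDeven}). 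Your alternative sufficiency argument (Theorem~\ref{thm:globallADC} plus $2$-regularity inherited from Oh's list) is sound, but wherever class number one is asserted you should cite Oh's theorems rather than the maximal-lattice enumerations.
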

\begin{re}
	All of the ternary $2$-ADC lattices have been determined by Theorem \ref{thm:globallyn-ADC-n+1}. For the quinary case, Theorem \ref{thm:locallyn-ADCn+3}(iii) indicates that the $2$-ADC property is equivalent to $2$-universality. However, currently it is only known from \cite[Theorem 2.4]{ji_even2universal_2021} that there are at most $55$ quinary $2$-universal $\mathbb{Z}$-lattices $M$ with $2\mathfrak{s}(M)=\mathbb{Z} $, of which the $2$-universality has not been completely confirmed yet.
\end{re}
	The rest of the paper is organized as follows. We first prove Theorems \ref{thm:globallADC} and \ref{thm:locallyn-ADCn+3} in Section \ref{sec:thm13}. Then, we review Beli's BONGs theory of quadratic forms in Section \ref{sec:BONGs}. In Section \ref{sec:maximal-lattices}, we study some basic notions including quadratic spaces and maximal lattices, and the related results in local fields. In Sections \ref{sec:n-ADCnon-dyadicfields}, \ref{sec:n-ADCdyadicfields-even} and \ref{sec:n-ADCdyadicfields-odd}, we establish equivalent conditions on $n$-ADC lattices in non-dyadic local fields, and in dyadic local fields for even and odd $n$, respectively. In the last section, we will prove our main results, including Theorems \ref{thm:locallyn-ADCn+1}, \ref{thm:globallyn-ADC-n+1}, \ref{thm:locallyn-ADC}, \ref{thm:count-sol} and \ref{thm:Z2-ADCquaternary}.
	
    Here and subsequently, all lattices under consideration are assumed to be integral.

\section{Proof of Theorems \ref{thm:globallADC} and \ref{thm:locallyn-ADCn+3}} \label{sec:thm13}
	
	To show Theorems \ref{thm:globallADC} and \ref{thm:locallyn-ADCn+3}, we need some lemmas.
\begin{lem}\label{lem:maximalequivADC}
	 Suppose that $F$ is an algebraic number field or a local field. Let $M$ be an $\mathcal{O}_{F}$-lattice over $F$. Then $ M $ is $ n $-ADC if and only if $M$ represents every lattice $N$ in $ \mathcal{M}_{n} $ for which $ FM $ represents $ FN $.
\end{lem}
\begin{proof}
		Necessity is trivial. Suppose that $ FM $ represents $ FN $. By \cite[82:18]{omeara_quadratic_1963}, there exists some lattice $ N^{\prime} $ inside $ \mathcal{M}_{n} $ on $ FN $ such that $ N\subseteq N^{\prime} $. Since $ FM $ represents $ FN\cong FN^{\prime} $, by the $ n$-ADC-ness, $ M $ represents $ N^{\prime} $, and therefore represents $ N $.
\end{proof}
\begin{lem}\label{lem:spaceappro}
		Suppose that $F$ is an algebraic number field. Let $ V $ be a quadratic space over  $ F $ and $ \mathfrak{p}\in \Omega_{F}\backslash \infty_{F} $. Given a subspace $ U(\mathfrak{p})\subseteq V_{\mathfrak{p}}$, there exists a subspace $ U\subseteq V$ such that $ U_{\mathfrak{p}}\cong U(\mathfrak{p}) $.
\end{lem}
\begin{proof}
		We prove the statement by induction on $ \dim U(\mathfrak{p}) $. When $ \dim U(\mathfrak{p})=1 $, then $ U(\mathfrak{p})=F_{\mathfrak{p}}u(\mathfrak{p}) $ for some $ u(\mathfrak{p})\in V_{\mathfrak{p}} $. Recall from  \cite[63:1b Corollary, 21:1]{omeara_quadratic_1963} that $ F_{\mathfrak{p}}^{\times 2} $ is open in $ F_{\mathfrak{p}} $ and $ V $ is dense in $ V_{\mathfrak{p}} $. Then there exists $ u \in V  $ such that $ Q(u)\in Q(u(\mathfrak{p}))F_{\mathfrak{p}}^{\times 2} $. Thus, $ Q(u)=c^{2}Q(u(\mathfrak{p}))=Q(cu(\mathfrak{p}))$ for some $ c\in F_{\mathfrak{p}}^{\times} $. Take $ U:=Fu $. Then $ U\subseteq V $ and $ F_{\mathfrak{p}}U=F_{\mathfrak{p}}u=F_{\mathfrak{p}}(cu(\mathfrak{p}))\cong F_{\mathfrak{p}}u(\mathfrak{p})=U(\mathfrak{p}) $.
		
		For $ \dim U(\mathfrak{p})>1 $, we may let $ U(\mathfrak{p})=W(\mathfrak{p})\perp F_{\mathfrak{p}}u(\mathfrak{p}) $. Then, by inductive assumption, there exists $ W\subseteq V $ such that $  W_{\mathfrak{p}}\cong W(\mathfrak{p})  $. Since $ U(\mathfrak{p}) $ is non-degenerate,  and so is $ W(\mathfrak{p}) $. Thus $W$ is also non-degenerate. It follows that $ V=W\perp W^{\perp} $, where $ W^{\perp}:=\{v\in V\mid B(v,W)=0\} $. This yields $ V_{\mathfrak{p}}=W_{\mathfrak{p}} \perp F_{\mathfrak{p}}W^{\perp}$. We also have $ V_{\mathfrak{p}}=W(\mathfrak{p})\perp W(\mathfrak{p})^{\perp} $. By Witt's cancellation theorem, $ F_{\mathfrak{p}}W^{\perp}\cong W(\mathfrak{p})^{\perp} $. Thus one can find $ u^{\prime}\in W_{\mathfrak{p}}^{\perp}\cong W(\mathfrak{p})^{\perp}\cong F_\mathfrak{p}W^{\perp} $ such that $ Q(u^{\prime})=Q(u(\mathfrak{p})) $. By the one-dimensional case of the lemma, there exists $ u\in W^{\perp} $ such that $ F_{\mathfrak{p}}u\cong F_{\mathfrak{p}}u^{\prime}\cong F_{\mathfrak{p}}u(\mathfrak{p}) $. Now take $ U=W\perp Fu $, as desired.
\end{proof}
\begin{proof}[Proof of Theorem \ref{thm:globallADC}]
		For sufficiency, suppose that $ FM $ represents $ FN $ for some $ N \in \mathcal{L}_{F,n} $. By \cite[66:3 Theorem]{omeara_quadratic_1963}, $  FM_{\mathfrak{p}} $ represents $ FN_{\mathfrak{p}} $ for all $\mathfrak{p}\in \Omega_{F}$, so $ M_{\mathfrak{p}} $  represents $ N_{\mathfrak{p}} $ by the $ n $-ADC-ness of $ M_{\mathfrak{p}} $. Hence $ M $ represents $ N $ by the $ n $-regularity of $ M $.	
		
		For necessity, we will first prove that $M $ is locally $n$-ADC, i.e., $M_{\mathfrak{p}}$ is $n$-ADC for each $ \mathfrak{p}\in \Omega_{F}\backslash \infty_{F} $. By Lemma \ref{lem:maximalequivADC}, it is sufficient to show that $M_{\mathfrak{p}} $ represents every $\mathcal{O}_{F_{\mathfrak{p}}}$-maximal lattice $ N(\mathfrak{p}) $ for which $FM_{\mathfrak{p}}$ represents $FN(\mathfrak{p}) $.	
		
		We may assume $ FN(\mathfrak{p})\subseteq FM_{\mathfrak{p}} $. By Lemma \ref{lem:spaceappro}, there exists a subspace $ U\subseteq FM $ such that $ U_{\mathfrak{p}}\cong FN(\mathfrak{p}) $. Hence, by \cite[82:18]{omeara_quadratic_1963}, $ FM $ represents $ FL $ for some $ \mathcal{O}_{F} $-maximal lattice $L $ on $ U $. So  $ M $ represents $ L $ from the $ n$-ADC-ness of $M$. Thus $M_{\mathfrak{p}}$ represents $L_{\mathfrak{p}}$. Note from \cite[\S 82K]{omeara_quadratic_1963} that $L_{\mathfrak{p}}$ is $\mathcal{O}_{F_{\mathfrak{p}}}$-maximal, so $ L_{\mathfrak{p}}\cong N(\mathfrak{p}) $ by \cite[91:2 Theorem]{omeara_quadratic_1963}. Hence $ M_{\mathfrak{p}} $ represents $  N(\mathfrak{p}) $, as desired.
		
		To show the $ n $-regularity of $M $, let $N\in \mathcal{L}_{F,n}$. Suppose that $ M_{\mathfrak{p}} $ represents $ N_{\mathfrak{p}} $ for all $\mathfrak{p}\in \Omega_{F} $. Then $FM_{\mathfrak{p}}$ represents $FN_{\mathfrak{p}}$ for all $\mathfrak{p}\in \Omega_{F}$. Hence, by \cite[66:3 Theorem]{omeara_quadratic_1963}, $ FM $ represents $ FN $. So $ M $ represents $ N $ from the $ n $-ADC-ness of $M$. 
\end{proof}
\begin{proof}[Proof of Theorem \ref{thm:locallyn-ADCn+3}]
		(i) Let $\mathfrak{p}\in \Omega_{F}\backslash\infty_{F}$. Since $\dim FM_{\mathfrak{p}}=\rank\,M_{\mathfrak{p}}\ge n+3$, by \cite[Theorem 2.3(1)]{hhx_indefinite_2021}, $FM_{\mathfrak{p}}$ represents all $n$-dimensional quadratic spaces. Hence for every lattice $N_{\mathfrak{p}}$ in $\mathcal{L}_{F_{\mathfrak{p}},n}$, $FM_{\mathfrak{p}}$ represents $FN_{\mathfrak{p}}$, so $M_{\mathfrak{p}}$ represents $N_{\mathfrak{p}}$ by the $n$-ADC-ness of $M_{\mathfrak{p}}$, i.e., $M_{\mathfrak{p}}$ is $n$-universal, as desired.
 
		(ii) This is clear from the definition and (i). 
		
		(iii) Note that $M$ is locally $n$-ADC (resp. locally $n$-universal) if and only if it is globally $n$-ADC (resp. globally $n$-universal) when $M$ is $n$-regular. Then we are done by Theorem \ref{thm:globallADC} and (ii).
\end{proof}
	
\section{Lattices in terms of BONGs}\label{sec:BONGs}
	
In this section, following Beli's work \cite{beli_thesis_2001,beli_integral_2003,beli_representations_2006,beli_Anew_2010,beli_representations_2019,beli_universal_2020}, we use bases of norm generators (abbr. BONGs) to describe the lattices in arbitrary dyadic local fields instead of Jordan splittings. Let us first review his BONGs theory and recent development \cite{He22,HeHu2}. 
	
Unless otherwise stated, we always assume $ F $ to be dyadic, i.e., $ e\ge 1 $. We write $ [h,k]^{E} $ (resp. $ [h,k]^{O} $) for the set of all even (resp. odd) integers $ i $ such that $ h\le i\le k$. For $ c_{i}\in F^{\times} $, we also write $ c_{i,j}=c_{i}\cdots c_{j} $ for short and put $ c_{i,i-1}=1 $. 
	
The vectors $x_{1},\ldots, x_{m}$ of $FM$ is called a \textit{BONG} for $M$ if $\mathfrak{n}(M)=Q(x_{1})\mathcal{O}_{F}$ and $x_{2},\ldots,x_{m}$ is a BONG for $\pr_{x_{1}^{\perp}}M$, and it is said to be \textit{good} if $\ord(Q(x_{i}))\le \ord(Q(x_{i+2}))$ for $1\le i\le m-2$. We denote by $ M\cong\prec a_{1},\ldots,a_{m}\succ $ if $ x_{1},\ldots, x_{m} $ forms a BONG for $M$ with $ Q(x_{i})=a_{i} $.
\begin{lem}\cite[Lemma 2.2]{HeHu2}\label{lem:goodBONGequivcon}
		Let $ x_{1},\ldots, x_{m}  $ be pairwise orthogonal vectors in $ V $ with $ Q(x_{i})=a_{i} $ and $ R_{i}=\ord (a_{i}) $.  Then $ x_{1},\ldots,x_{m} $ forms a good BONG for some lattice is equivalent to the conditions
		\begin{align}\label{eq:GoodBONGs}
			R_{i}\le R_{i+2} \quad \text{for all $ 1\le i\le m-2$}
		\end{align}
		and  
		\begin{align}\label{eq:BONGs}
			R_{i+1}-R_{i}+d(-a_{i}a_{i+1})\ge 0 \quad\text{and}\quad  R_{i+1}-R_{i}\ge -2e \quad \text{for all $ 1\le i\le m-1 $}. 
		\end{align}
\end{lem}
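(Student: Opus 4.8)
The plan is to prove the two implications separately, both resting on Beli's foundational development of BONGs, in particular his notion of a (good) norm generator and the recursive passage from a lattice to its BONG. I would begin by recalling that $x_{1},\ldots,x_{n}$ is a BONG for $L$ precisely when $x_{1}$ is a norm generator of $L$ (so $Q(x_{1})\mathcal{O}_{F}=\mathfrak{n}(L)$, with $R_{1}=\ord\mathfrak{n}(L)$) and $x_{2},\ldots,x_{n}$ is a BONG for the lattice canonically associated to $L$ inside $(Fx_{1})^{\perp}$ by Beli's recursion, the BONG being \emph{good} exactly when the resulting order sequence satisfies the monotonicity \eqref{eq:GoodBONGs}. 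Since both displayed conditions only constrain consecutive indices, the whole argument reduces to understanding the binary interactions between $x_{i}$ and $x_{i+1}$, which is where Beli's binary classification does the work.

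For necessity, suppose $x_{1},\ldots,x_{n}$ is a good BONG for some lattice $L$. Then \eqref{eq:GoodBONGs} holds by the very definition of goodness. For \eqref{eq:BONGs} I would isolate each consecutive pair: Beli's theory shows $x_{i},x_{i+1}$ forms a BONG for a binary sublattice, and the structure of binary lattices over a dyadic field forces both $R_{i+1}-R_{i}\ge -2e$ (the basic scale constraint attached to a norm generator of a binary lattice, since the norm of the complement can drop by at most $2e$) and $R_{i+1}-R_{i}+d(-a_{i}a_{i+1})\ge 0$ (the condition governing when the second vector can be kept orthogonal to the first while remaining integral). The relative quadratic defect $d(-a_{i}a_{i+1})$ enters at exactly this point, as $-a_{i}a_{i+1}$ is, up to squares, the discriminant controlling the binary form $[a_{i},a_{i+1}]$.

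For sufficiency I would argue by construction and induction on $n$. Given $a_{1},\ldots,a_{n}$ whose orders satisfy \eqref{eq:GoodBONGs} and \eqref{eq:BONGs}, the base cases $n=1$ (trivial) and $n=2$ (the binary criterion, which is precisely \eqref{eq:BONGs} for $i=1$) are handled directly. For the inductive step I would peel off $x_{1}$: the tail $a_{2},\ldots,a_{n}$ again satisfies \eqref{eq:GoodBONGs} and \eqref{eq:BONGs}, so by hypothesis it is a good BONG for some lattice $M$ in $(Fx_{1})^{\perp}$; one then reattaches $x_{1}$ as a good norm generator, using Beli's formulas recovering the scale, norm and Jordan-type invariants of a lattice from its BONG to verify that the prescribed data yields a genuine integral lattice whose norm generator has the predicted order $R_{1}$. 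The inequalities in \eqref{eq:BONGs} for $i=1$ are exactly what guarantees that this reattachment is legitimate, and \eqref{eq:GoodBONGs} guarantees the BONG so produced is good.

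The main obstacle I anticipate is the binary case together with the control of the recursion step over a dyadic field, where orthogonal splitting fails and the norm generator of the associated lattice in $(Fx_{1})^{\perp}$ can drop in order by as much as $2e$. Tracking how $Q(x_{i+1})$ and the defect $d(-a_{i}a_{i+1})$ jointly determine whether $x_{i+1}$ survives as a valid good norm generator after passing off $x_{i}$ is the delicate point: all the genuinely dyadic phenomena are concentrated there, and it is precisely what the two inequalities of \eqref{eq:BONGs} are designed to encode.
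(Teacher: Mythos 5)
Your proposal matches the paper's treatment of this statement: the paper offers no proof at all, quoting the lemma verbatim from \cite[Lemma 2.2]{HeHu2}, which in turn rests on exactly the ingredients you invoke --- Beli's binary criterion (the lattice $\prec a,b\succ$ exists iff $\ord(b)-\ord(a)\ge -2e$ and $\ord(b)-\ord(a)+d(-ab)\ge 0$), his reduction of $n$-ary BONG existence to the existence of BONGs for the consecutive binary pairs, and the definition of goodness as the monotonicity $R_i\le R_{i+2}$. Your inductive reattachment of $x_{1}$ is precisely how Beli's pairwise-gluing lemma is established, so the route is essentially the same; the only slip is the word \emph{integral} in your reattachment step, since integrality is neither assumed nor needed in this lemma (the conditions do not force $R_{i}\ge 0$).
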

\begin{cor}\cite[Corollary 2.3]{HeHu2}\label{cor:R-R-odd}
		Suppose $1\le i\le m-1$.
		\begin{enumerate}[itemindent=-0.5em,label=\rm (\roman*)]
			\item If $ R_{i+1}-R_{i} $ is odd, then $ R_{i+1}-R_{i} $ must be positive. 
			
			\item If $ R_{i+1}-R_{i}=-2e $, then $ d(-a_{i}a_{i+1})\ge 2e $ and $ \prec a_{i},a_{i+1}\succ \cong 2^{-1}\pi^{R_{i}}A(0,0)$ or $ 2^{-1}\pi^{R_{i}}A(2,2\rho) $. Consequently, $[a_{i},a_{i+1}]\cong \mathbb{H}$ or $\pi^{R_{i}}[1,-\Delta]$.
		\end{enumerate}
\end{cor}
	
Let $M\cong \prec a_{1},\ldots, a_{m} \succ $ be an $ \mathcal{O}_{F} $-lattice  relative to some good BONG. Define the \textit{$ R_{i} $-invariants} $R_{i}(M):=\ord (a_{i})$ for $ 1\le i\le m $ and the \textit{$ \alpha_{i} $-invariants}
	\[ 
	\begin{split}
		\alpha_{i}(M):=\min( \{(R_{i+1}-R_{i})/2+e\}&\cup \{R_{i+1}-R_{j}+d(-a_{j}a_{j+1})\mid 1\le j\le i\}\\
		&\cup \{R_{j+1}-R_{i}+d(-a_{j}a_{j+1})\mid i\le j\le m-1\}  )
	\end{split}
	\]
for $ 1\le i\le m-1 $. Both are independent of the choice of the good BONG (cf. \cite[Lemma 4.7]{beli_integral_2003}, \cite[\S 2]{beli_Anew_2010}).
	
We give some useful properties for $ R_{i} $ and $ \alpha_{i} $ without proof (cf. \cite{HeHu2}  or \cite{beli_universal_2020}).
\begin{prop}\label{prop:Rproperty}
		Suppose $ 1\le i\le m-1 $. 
			\begin{enumerate}[itemindent=-0.5em,label=\rm (\roman*)]
			\item $ R_{i+1}-R_{i}>2e$ (resp. $ =2e $, $ <2e $) if and only if $ \alpha_{i}>2e $ (resp. $ =2e $, $ <2e $).
			
			\item If $ R_{i+1}-R_{i}\ge 2e$ or $ R_{i+1}-R_{i}\in \{-2e,2-2e,2e-2\} $, then $ \alpha_{i}=(R_{i+1}-R_{i})/2+e $.
			
			\item If $ R_{i+1}-R_{i}\le 2e $, then $ \alpha_{i}\ge R_{i+1}-R_{i} $. Also, the equality holds if and only if $ R_{i+1}-R_{i}=2e $ or $ R_{i+1}-R_{i} $ is odd.
		\end{enumerate} 
\end{prop}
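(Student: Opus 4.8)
\emph{Setup and upper bound.} The plan is to work directly from the definition of $\alpha_i$, writing $t:=R_{i+1}-R_i$ and comparing the distinguished term $(R_{i+1}-R_i)/2+e=t/2+e$ against the remaining terms of the defining minimum. Since $t/2+e$ lies in the set, one has the automatic bound $\alpha_i\le t/2+e$; moreover $t/2+e>2e$, $=2e$, or $<2e$ according as $t>2e$, $=2e$, or $<2e$, and $t/2+e\ge t$ precisely when $t\le 2e$. The whole proposition then reduces to controlling the two families
\[
T_j^-:=R_{i+1}-R_j+d(-a_ja_{j+1})\ \ (1\le j\le i),\qquad T_j^+:=R_{j+1}-R_i+d(-a_ja_{j+1})\ \ (i\le j\le m-1).
\]

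\emph{Key lower bound.} First I would prove that every such term satisfies $T_j^\pm\ge t$. For $j\le i$, combining $d\ge 0$ with \eqref{eq:BONGs} (which gives $d(-a_ja_{j+1})\ge R_j-R_{j+1}$) yields $T_j^-\ge R_{i+1}-\min(R_j,R_{j+1})$; and the parity-monotonicity $R_k\le R_{k+2}$ from \eqref{eq:GoodBONGs} forces $\min(R_j,R_{j+1})\le R_i$ (use $R_j\le R_i$ if $j\equiv i\pmod 2$, and $R_{j+1}\le R_i$ otherwise), whence $T_j^-\ge R_{i+1}-R_i=t$. The symmetric computation, resting on $\max(R_j,R_{j+1})\ge R_{i+1}$, gives $T_j^+\ge t$ for $j\ge i$.

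\emph{Principal assertions.} With this in hand the main cases are immediate. If $t\ge 2e$ then every other term is $\ge t\ge t/2+e$, so the minimum is attained at $t/2+e$; this is the first half of (ii) and, together with the trivial bound, yields the three implications $t<2e\Rightarrow\alpha_i<2e$, $t=2e\Rightarrow\alpha_i=2e$, $t>2e\Rightarrow\alpha_i>2e$. As these exhaust mutually exclusive cases with mutually exclusive conclusions, their converses follow automatically, proving (i). For (iii), when $t\le 2e$ all terms (including $t/2+e\ge t$) are $\ge t$, so $\alpha_i\ge t$. Equality is easy in one direction: if $t=2e$ the distinguished term already equals $t$, while if $t$ is odd then $\ord(-a_ia_{i+1})=2R_i+t$ is odd, forcing $d(-a_ia_{i+1})=0$ and hence $T_i=t$; in either case $\alpha_i\le T_i=t$ combined with $\alpha_i\ge t$ gives equality.

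\emph{The obstacle.} The delicate part is the remaining boundary analysis: the converse in (iii) (for even $t<2e$ one must upgrade the generic bound $T_j^\pm\ge t$ to a strict $T_j^\pm\ge t+1$) and the special values $t\in\{-2e,\,2-2e,\,2e-2\}$ in (ii), where one must show every other term already exceeds $t$ and in fact is $\ge t/2+e$. Here the soft bounds no longer suffice. I would argue using integrality of the orders and defects together with the facts that $0\le d\le 2e$, with $d=0$ occurring exactly for elements of odd order, and—most importantly—Corollary \ref{cor:R-R-odd}(ii): in the extremal case $R_{k+1}-R_k=-2e$ it pins $\prec a_k,a_{k+1}\succ$ down to $2^{-1}\pi^{R_k}A(0,0)$ or $2^{-1}\pi^{R_k}A(2,2\rho)$ and forces $d(-a_ka_{k+1})\ge 2e$. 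Propagating these rigidities through the definition of $\alpha_i$ is a finite but bookkeeping-heavy case analysis on the parity and size of the individual gaps $R_{k+1}-R_k$, and this is where the real work lies.
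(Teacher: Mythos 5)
What you do prove is correct: the uniform bound $T_j^{\pm}\ge t:=R_{i+1}-R_i$ for every non-distinguished term in the defining minimum does follow from $d\ge 0$, from the inequality $R_{j+1}-R_j+d(-a_ja_{j+1})\ge 0$ in \eqref{eq:BONGs}, and from the chains $R_k\le R_{k+2}$ in \eqref{eq:GoodBONGs}, and this correctly yields part (i), the case $R_{i+1}-R_i\ge 2e$ of (ii), the inequality $\alpha_i\ge R_{i+1}-R_i$ in (iii), and the ``if'' half of the equality criterion (odd $t$ forces $d(-a_ia_{i+1})=0$). But the passage you label ``the obstacle'' and defer as ``where the real work lies'' is a genuine gap, not a finishing touch: the three exceptional values $R_{i+1}-R_i\in\{-2e,2-2e,2e-2\}$ in (ii) and the strictness $\alpha_i>R_{i+1}-R_i$ for even differences $<2e$ in (iii) are never established, only announced. (Note that the paper itself gives no proof either --- the proposition is stated ``without proof'' with pointers to \cite{HeHu2} and \cite{beli_universal_2020} --- so your attempt must be judged as a self-contained argument, and it is incomplete.)

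Moreover, the toolkit you announce for the deferred cases is insufficient as stated. You invoke only ``$0\le d\le 2e$, with $d=0$ exactly for elements of odd order'' and Corollary \ref{cor:R-R-odd}(ii). Take $t=2-2e$ in (ii): one must show $T_i=(2-2e)+d(-a_ia_{i+1})\ge t/2+e=1$, but \eqref{eq:BONGs} gives only $d(-a_ia_{i+1})\ge 2e-2$ and even order gives only $d\ge 1$, so your stated facts yield $T_i\ge \max\{3-2e,0\}$, which is not enough. The missing ingredient is O'Meara's structure theorem for quadratic defects (\cite[\S 63A]{omeara_quadratic_1963}): for $c$ of even order, $d(c)$ is odd, or equals $2e$, or is $\infty$; this upgrades $d\ge 2e-2$ to $d\ge 2e-1$ and settles the term $j=i$. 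The terms $j\ne i$ require further case analysis you do not carry out; for instance, for even $t<2e$ the equality threat $T_j^-=t$ with $j\equiv i\pmod 2$, $d(-a_ja_{j+1})=0$ and $R_j=R_i$ must be excluded by noting that then $R_{j+1}-R_j$ is odd, hence positive by Corollary \ref{cor:R-R-odd}(i), while the chain forces $R_{j+2}=R_j$, making $R_{j+2}-R_{j+1}$ odd and negative --- contradicting Corollary \ref{cor:R-R-odd}(i) again. Arguments of this kind do assemble into a complete proof, but none of them appears in your write-up, so the boundary cases of (ii) and the ``only if'' direction of (iii) remain unproven.
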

\begin{prop}\label{prop:alphaproperty}
		Suppose $ 1\le i\le m-1 $.
			\begin{enumerate}[itemindent=-0.5em,label=\rm (\roman*)]
			\item Either $ 0\le \alpha_{i}\le 2e $ and $ \alpha_{i}\in \mathbb{Z} $, or $ 2e<\alpha_{i}<\infty $ and $ 2\alpha_{i}\in \mathbb{Z} $; thus $ \alpha_{i}\ge 0 $.
			
			\item $ \alpha_{i}=0 $ if and only if $ R_{i+1}-R_{i}=-2e $.
			
			\item $ \alpha_{i}=1 $ if and only if either $ R_{i+1}-R_{i}\in \{2-2e,1\} $, or $ R_{i+1}-R_{i}\in [4-2e,0]^{E} $ and $ d[-a_{i,i+1}]=R_{i}-R_{i+1}+1 $.
			
			\item If $ \alpha_{i}=0 $, i.e., $ R_{i+1}-R_{i}=-2e $, then $ d[-a_{i,i+1}]\ge 2e $.
	 
			\item If $ \alpha_{i}=1 $, then $ d[-a_{i,i+1}]\ge R_{i}-R_{i+1}+1 $. Also, the equality holds if $ R_{i+1}-R_{i}\not=2-2e $.
		\end{enumerate}  
\end{prop}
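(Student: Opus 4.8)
The plan is to reduce the whole statement to the behaviour of the single integer $\beta:=R_{i+1}-R_i$ together with the relative quadratic defect $d(-a_ia_{i+1})$ of the diagonal pair, and then to read off each assertion from Proposition \ref{prop:Rproperty}, Corollary \ref{cor:R-R-odd} and the good-BONG inequalities of Lemma \ref{lem:goodBONGequivcon}. Throughout I would write $R_j:=R_j(M)$, recall from \eqref{eq:BONGs} that $\beta\ge -2e$, and recall from \eqref{eq:GoodBONGs} that the subsequences $(R_j)_{j\ \mathrm{odd}}$ and $(R_j)_{j\ \mathrm{even}}$ are nondecreasing. I would also use two standard facts about the defect from Section \ref{sec:BONGs}: $d(c)=0$ exactly when $\ord(c)$ is odd, and for $\ord(c)$ even and $c$ a non-square, $d(c)$ is either an odd integer or equals $2e$. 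The short assertions (i), (iv), (v) follow quickly; the substance is (ii) and (iii), handled by a case analysis on the value and parity of $\beta$.

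For (i), I would split according to Proposition \ref{prop:Rproperty}(i). If $\beta>2e$, then Proposition \ref{prop:Rproperty}(ii) gives $\alpha_i=\beta/2+e$, so $\alpha_i>2e$ and $2\alpha_i=\beta+2e\in\Z$. If $\beta\le 2e$, then $\alpha_i\le 2e$, and it remains to check $\alpha_i\ge 0$ and $\alpha_i\in\Z$. For nonnegativity I would bound every term of the defining minimum below by $0$: the leading term $\beta/2+e\ge 0$ since $\beta\ge -2e$; for a left term with $j\le i$ I would use $d(-a_ja_{j+1})\ge\max\{0,R_j-R_{j+1}\}$ to get the lower bound $R_{i+1}-\min\{R_j,R_{j+1}\}$, which is $\ge 0$ because one of $j,j+1$ has the same parity as $i+1$ and monotonicity of that subsequence gives $R_{i+1}\ge R_\ell\ge\min\{R_j,R_{j+1}\}$; the right terms with $j\ge i$ are symmetric. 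For integrality, the only possibly non-integral entry is $\beta/2+e$, occurring only when $\beta$ is odd; but then Corollary \ref{cor:R-R-odd}(i) gives $\beta>0$ and Proposition \ref{prop:Rproperty}(iii) gives $\alpha_i=\beta\in\Z$, so the minimum is attained at an integral term.

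For (ii) and (iii), the ``if'' directions are immediate: $\beta=-2e$ gives $\alpha_i=0$ and $\beta=2-2e$ gives $\alpha_i=1$ by Proposition \ref{prop:Rproperty}(ii); $\beta=1$ gives $\alpha_i=\beta=1$ by Proposition \ref{prop:Rproperty}(iii); and on the even range $\beta\in[4-2e,0]^{E}$ the condition $d(-a_ia_{i+1})=1-\beta$ makes the diagonal term $\beta+d(-a_ia_{i+1})$ equal to $1$, so $\alpha_i\le 1$, while $\alpha_i\ge 1$ follows from (ii) since $\beta>-2e$. For the ``only if'' directions, Proposition \ref{prop:Rproperty}(i),(iii) confine $\beta$ to $\beta\le 0$ (when $\alpha_i=0$) or $\beta\le 1$ (when $\alpha_i=1$); Corollary \ref{cor:R-R-odd}(i) removes the negative odd values, leaving $\beta$ even and $\le 0$ together with the isolated value $\beta=1$. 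On the even range I would then compute the minimum: the leading term is $\beta/2+e\ge 2$, and the diagonal term $\beta+d(-a_ia_{i+1})$ has $d(-a_ia_{i+1})$ odd (the pair has even order) and $\ge 1-\beta$ by \eqref{eq:BONGs}, so characterizing $\alpha_i\in\{0,1\}$ amounts to reading off when these two terms hit $0$ or $1$. Finally, (iv) is immediate from (ii) and Corollary \ref{cor:R-R-odd}(ii), and (v) follows from the case list in (iii): the even-order parity of the defect upgrades the bound $d(-a_ia_{i+1})\ge -\beta$ to $d(-a_ia_{i+1})\ge 1-\beta$, with equality forced precisely when $\beta\neq 2-2e$.

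The hard part will be the control of the off-diagonal terms $R_{i+1}-R_j+d(-a_ja_{j+1})$ for $j<i$ and $R_{j+1}-R_i+d(-a_ja_{j+1})$ for $j>i$: a priori one of these could dip below the leading and diagonal terms and spoil the clean value of $\alpha_i$, so the computation in the previous paragraph is only legitimate once one knows these off-diagonal entries do not strictly undercut them. The key point I would have to make rigorous is that whenever an off-diagonal term becomes small, the monotonicity $R_i\le R_{i+2}$ pins the relevant $R$-difference to $0$ and forces $d(-a_ja_{j+1})$ to be minimal, so that by $d(c)=0\iff\ord(c)$ odd one can thread a parity constraint along the chain from $j$ to $i$ and invoke Corollary \ref{cor:R-R-odd}(i) to contradict the parity of $\beta$ fixed by the case at hand. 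This parity-chasing interplay between monotonicity, the defect facts, and Corollary \ref{cor:R-R-odd}(i)—rather than the invocations of Proposition \ref{prop:Rproperty}—is where the real work lies, and where I would expect to have to be most careful.
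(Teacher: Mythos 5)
Your treatment of (i) is correct and complete as it stands: lower-bounding every term of the raw minimum via $d(-a_ja_{j+1})\ge\max\{0,R_j-R_{j+1}\}$ (from \eqref{eq:BONGs}) together with same-parity monotonicity from \eqref{eq:GoodBONGs}, and settling integrality for odd $\beta:=R_{i+1}-R_i$ by Corollary \ref{cor:R-R-odd}(i) and Proposition \ref{prop:Rproperty}(iii), is a clean direct argument. But for (ii)--(v) there is a genuine gap, and you have named it yourself without closing it: the claim that the off-diagonal terms $R_{i+1}-R_j+d(-a_ja_{j+1})$ ($j<i$) and $R_{j+1}-R_i+d(-a_ja_{j+1})$ ($j>i$) never strictly undercut $\min\{(R_{i+1}-R_i)/2+e,\ R_{i+1}-R_i+d[-a_{i,i+1}]\}$ is exactly the content of \eqref{eq:alpha-defn}, i.e.\ Corollary 2.5(i) of \cite{beli_Anew_2010}. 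That is a substantive lemma, not a parity exercise; your ``parity-chasing'' sketch would essentially have to reprove it, and in the ``only if'' directions of (ii) and (iii) an off-diagonal term can genuinely realize the minimum (its contribution is what the $\alpha_{i-1},\alpha_{i+1}$ entries of $d[-a_ia_{i+1}]$ encode), so the case analysis on the two ``visible'' terms is not legitimate until this is established. Note the paper itself offers no proof here --- Proposition \ref{prop:alphaproperty} is stated with a pointer to \cite{HeHu2,beli_universal_2020}, whose proofs run through \eqref{eq:alpha-defn}; since that formula is already quoted in Section \ref{sec:BONGs}, the intended route is to use it rather than the raw definition.

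The second, related defect is that you systematically replace the bracketed invariant $d[-a_ia_{i+1}]=\min\{d(-a_ia_{i+1}),\alpha_{i-1},\alpha_{i+1}\}$, which is what (iii)--(v) actually assert, by the plain defect $d(-a_ia_{i+1})$, and the difference bites. In the ``if'' direction of (iii) on the even range, the hypothesis $d[-a_ia_{i+1}]=1-\beta$ may hold because $\alpha_{i-1}$ or $\alpha_{i+1}$ equals $1-\beta$ while $d(-a_ia_{i+1})>1-\beta$; then your diagonal term $\beta+d(-a_ia_{i+1})$ exceeds $1$ and your computation does not yield $\alpha_i=1$ --- only \eqref{eq:alpha-defn} does. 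In (iv), Corollary \ref{cor:R-R-odd}(ii) gives $d(-a_ia_{i+1})\ge 2e$, but to conclude $d[-a_ia_{i+1}]\ge 2e$ you must also verify $\alpha_{i\pm1}\ge 2e$; this does hold (when $\beta=-2e$, \eqref{eq:GoodBONGs} forces $R_i-R_{i-1}\ge 2e$ and $R_{i+2}-R_{i+1}\ge 2e$, and then Proposition \ref{prop:Rproperty}(i) applies), but it is a needed step, not ``immediate.'' Likewise in (v), the parity upgrade from $d\ge-\beta$ to $d\ge 1-\beta$ is valid for $d(-a_ia_{i+1})$ (after separating the values $2e$ and $\infty$), not for $d[-a_ia_{i+1}]$, whose value can be an even $\alpha_{i\pm1}$; the correct derivation reads $1=\min\{\beta/2+e,\ \beta+d[-a_ia_{i+1}]\}$ from \eqref{eq:alpha-defn} and extracts the inequality, with equality in the cases $\beta=1$ and $\beta\in[4-2e,0]^{E}$ of (iii) and possible strictness only at $\beta=2-2e$, where the leading term $\beta/2+e=1$ already attains the minimum.
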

\begin{prop}\label{prop:Ralphaproperty3}
		Suppose that $ M $ is integral.
		\begin{enumerate}[itemindent=-0.5em,label=\rm (\roman*)]
			\item We have $ R_{j}\ge R_{i}\ge 0 $ for all odd integers $ i,j $ with $ j\ge i $ and $ R_{j}\ge R_{i}\ge -2e $ for all even integers $ i,j $ with $ j\ge i $.
			
			\item  If $ R_{j}=0 $ for some $ j\in [1,m]^{O} $, then $ R_{i}=0 $ for all  $ i\in [1,j]^{O} $ and $ R_{i} $ is even for all $ 1\le i\le j $.
			
			\item  If $ R_{j}=-2e$ for some $ j\in [1,m]^{E} $, then for each $ i\in [1,j]^{E} $, we have $ R_{i-1}=0 $, $ R_{i}=-2e $ and $ d(-a_{i-1}a_{i})\ge d[-a_{i-1,i}]\ge 2e $. Consequently, $ d[(-1)^{j/2}a_{1,j}]\ge 2e $.
			
			\item  If $ R_{j}=-2e$ for some $ j\in [1,m]^{E} $, then $ [a_{1},\ldots,a_{j}]\cong \mathbb{H}^{j/2} $ or $ \mathbb{H}^{(j-2)/2} \perp [1,-\Delta]$, according as $ d((-1)^{j/2}a_{1,j})=\infty  $ or $  2e $.
			
			\item If $ R_{j}=-2e$ and $ R_{j+1} $ is even for some $ j\in [1,m]^{E} $, then $ [a_{1},\ldots,a_{j+1}]\cong \mathbb{H}^{j/2}\bot [\varepsilon]$ for some $ \varepsilon\in \mathcal{O}_{F}^{\times} $ with $ \varepsilon \in a_{j+1}F^{\times 2}\cup  \Delta a_{j+1} F^{\times 2} $.
		\end{enumerate}  		 
\end{prop}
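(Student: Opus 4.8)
The proof proceeds by reading off information from the good BONG $\prec a_1,\dots,a_m\succ$, combining the good-BONG inequalities of Lemma~\ref{lem:goodBONGequivcon} with the parity and extremal-pair statements of Corollary~\ref{cor:R-R-odd}. For (i), I would first note that integrality of $M$ means $\mathfrak{n}(M)=\mathfrak{p}^{R_1}\subseteq\mathcal{O}_F$, so $R_1\ge0$; the monotonicity $R_i\le R_{i+2}$ from~\eqref{eq:GoodBONGs} then propagates this within the odd parity class, giving $R_i\ge R_1\ge0$ for every odd $i$. For even indices the base case is $R_2\ge R_1-2e\ge-2e$ from~\eqref{eq:BONGs}, and $R_i\le R_{i+2}$ again carries it up the chain, yielding all stated inequalities. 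For (ii), the same sandwiching gives $0\le R_i\le R_j=0$ for odd $i\le j$, forcing $R_i=0$; for even $i<j$ the neighbours satisfy $R_{i-1}=R_{i+1}=0$, so if $R_i$ were odd then Corollary~\ref{cor:R-R-odd}(i), applied to the pairs $(i-1,i)$ and $(i,i+1)$, would force $R_i>0$ and $R_i<0$ simultaneously, a contradiction; hence every $R_i$ with $i\le j$ is even.

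For (iii), sandwiching again shows $R_i=-2e$ for each even $i\le j$, and then $R_{i-1}\le0$ from~\eqref{eq:BONGs} together with $R_{i-1}\ge0$ from part~(i) forces $R_{i-1}=0$. Since $R_i-R_{i-1}=-2e$, Corollary~\ref{cor:R-R-odd}(ii) gives $d(-a_{i-1}a_i)\ge2e$, and recalling $d[x]=\min\{d(x),2e\}$ this yields $d[-a_{i-1}a_i]=2e$, which is the asserted chain. For the final consequence I would telescope: $(-1)^{j/2}a_{1,j}=\prod_{i\ \mathrm{even},\,i\le j}(-a_{i-1}a_i)$, each factor lying in the square class of $1$ or $\Delta$ (the only unit classes of relative defect $\ge2e$); since these two classes form a subgroup of $F^\times/F^{\times2}$, the product again has $d[\,\cdot\,]\ge2e$.

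Parts (iv) and (v) carry the real content. Writing $[a_1,\dots,a_j]=\perp_{k=1}^{j/2}[a_{2k-1},a_{2k}]$ and invoking Corollary~\ref{cor:R-R-odd}(ii) with $R_{2k-1}=0$, each binary block is the space of $2^{-1}A(0,0)$ or of $2^{-1}A(2,2\rho)$, i.e.\ $\mathbb{H}$ or $[1,-\Delta]$. The crucial isometry is $[1,-\Delta]\perp[1,-\Delta]\cong\mathbb{H}^2$: because $\Delta$ defines the \emph{unramified} quadratic extension, every unit is a norm, so $(\Delta,-1)=1$, and a Hasse-invariant comparison then identifies the two four-dimensional spaces. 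Hence the whole space collapses to $\mathbb{H}^{j/2}$ or $\mathbb{H}^{(j-2)/2}\perp[1,-\Delta]$ according to the parity of the number $t$ of $[1,-\Delta]$ blocks, and since the determinant is $a_{1,j}\equiv(-1)^{j/2}\Delta^{t}$ this parity is exactly whether $d((-1)^{j/2}a_{1,j})$ equals $\infty$ or $2e$. For (v), evenness of $R_{j+1}$ lets me write $a_{j+1}=\pi^{R_{j+1}}u$ with $u\in\mathcal{O}_F^\times$, so $[a_{j+1}]\cong[u]$; appending $[u]$ to the outcome of (iv), the case $\mathbb{H}^{j/2}$ is immediate with $\varepsilon=u\in a_{j+1}F^{\times2}$, while in the case $\mathbb{H}^{(j-2)/2}\perp[1,-\Delta]$ I would use that $[1,-\Delta]$ is the norm form of the unramified extension, hence represents every unit, so $[1,-\Delta,u]$ is isotropic, splits off an $\mathbb{H}$, and leaves $[\varepsilon]$ with $\varepsilon\equiv\Delta u\in\Delta a_{j+1}F^{\times2}$ forced by determinants. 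Either way $[a_1,\dots,a_{j+1}]\cong\mathbb{H}^{j/2}\perp[\varepsilon]$ with $\varepsilon$ a unit of the required class.

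The main obstacle is the structural identification in (iv)--(v), not the $R$-invariant bookkeeping in (i)--(iii). Everything there rests on the single arithmetic fact that $F(\sqrt{\Delta})/F$ is unramified --- equivalently that $\mathfrak{d}(\Delta)=\mathfrak{p}^{2e}$ is the maximal possible defect --- which makes $(\Delta,u)=1$ for all units $u$ and thereby both trivialises $[1,-\Delta]^{\perp2}$ and guarantees the isotropy of $[1,-\Delta,u]$. I would keep this normalisation of $\Delta$ explicit and track square classes carefully, since that is precisely where an error could creep in.
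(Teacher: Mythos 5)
The paper itself states Proposition~\ref{prop:Ralphaproperty3} \emph{without} proof, deferring to \cite{HeHu2} and Beli's papers, so there is no in-text argument to compare against; your reconstruction follows the standard route of those sources: extract the $R$-invariant constraints from Lemma~\ref{lem:goodBONGequivcon} (monotonicity \eqref{eq:GoodBONGs} within parity classes, the bound $R_{i+1}-R_i\ge -2e$ from \eqref{eq:BONGs}, and $R_1=\ord\,\mathfrak{n}(M)\ge 0$), then split the space underlying the good BONG into consecutive binary blocks, identify each block as $\mathbb{H}$ or $[1,-\Delta]$ via Corollary~\ref{cor:R-R-odd}(ii), and collapse using $[1,-\Delta]\perp[1,-\Delta]\cong\mathbb{H}^{2}$ together with the fact that $[1,-\Delta]$ is the norm form of the unramified quadratic extension (so it represents every unit, giving the isotropy needed in (v)). Parts (i), (ii), (iv) and (v) are correct as written; in particular the parity argument in (ii), applying Corollary~\ref{cor:R-R-odd}(i) on both sides of an even index, is exactly right, as is the determinant bookkeeping matching the parity of the number of $[1,-\Delta]$ blocks with $d((-1)^{j/2}a_{1,j})\in\{2e,\infty\}$.

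The one genuine slip is in (iii), in your handling of the bracketed defect: $d[x]$ is \emph{not} $\min\{d(x),2e\}$. By the paper's definition, $d[-a_{i-1}a_{i}]=\min\{d(-a_{i-1}a_{i}),\alpha_{i-2},\alpha_{i}\}$ and $d[(-1)^{j/2}a_{1,j}]=\min\{d((-1)^{j/2}a_{1,j}),\alpha_{j}\}$ (end terms ignored when the index is $0$ or $m$), so after Corollary~\ref{cor:R-R-odd}(ii) and your telescoping over the subgroup $\{F^{\times 2},\Delta F^{\times 2}\}$ you still owe the bounds $\alpha_{i-2},\alpha_{i},\alpha_{j}\ge 2e$ whenever these terms are present. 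They do hold, and cheaply: from the parts of (iii) already established and part (i), the adjacent jumps satisfy $R_{i-1}-R_{i-2}=2e$, $R_{i+1}-R_{i}\ge 2e$ and $R_{j+1}-R_{j}\ge 2e$ (using $R_{\mathrm{odd}}\ge 0$), so Proposition~\ref{prop:Rproperty}(i) gives $\alpha\ge 2e$ in each case. Note also that your claimed \emph{equality} $d[-a_{i-1}a_{i}]=2e$ can fail: for $m=j=i=2$ with $\prec a_{1},a_{2}\succ\cong\mathbf{H}\cong\prec 1,-\pi^{-2e}\succ$, both $\alpha$-terms are ignored and $d[-a_{1}a_{2}]=d(\pi^{-2e})=\infty$; only the inequality $\ge 2e$ asserted in the proposition is true in general. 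With that one-line repair the proof is complete.
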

	
Let $N\cong \prec b_{1},\cdots, b_{n}\succ  $ be another $ \mathcal{O}_{F} $-lattice  relative to some good BONG, $ S_{i}=R_{i}(N) $ and $ \beta_{i}=\alpha_{i}(N) $. For $ 0\le i\le m $ and $0\le j\le n$, we define
	\begin{align*}
		d[ca_{1,i}b_{1,j}]=\min\{d(ca_{1,i}b_{1,j}),\alpha_{i},\beta_{j}\}\,,\quad c\in F^{\times},
	\end{align*}
	where $ \alpha_{i} $ is ignored if $ i\in \{0,m\} $ and $ \beta_{j} $ is ignored if $ j\in \{0,n\} $. In particular, if $M=N$ and $0\le i-1\le j\le m$, then we define
	\begin{align*}
		d[ca_{i,j}]:=d[ca_{1,i-1}a_{1,j}]=\min\{d(ca_{i,j}),\alpha_{i-1},\alpha_{j}\}.
	\end{align*}
	Here we ignore $\alpha_{i-1}$ if $i\in \{1,m+1\}$ and we ignore $\alpha_{j}$ if $j\in \{0,m\}$.
	 Recall that the invariants $d[ca_{1,i}b_{1,j}]$ satisfy the same domination principles as their $d(ca_{1,i}b_{1,j})$ correspondents. (See \cite[\S 1.4]{beli_universal_2020}.) With this notation, the invariant $ \alpha_{i} $ can be neatly expressed as  
	\begin{align}\label{eq:alpha-defn}
		\alpha_{i}=\min\{(R_{i+1}-R_{i})/2+e,R_{i+1}-R_{i}+d[-a_{i,i+1}]\}
	\end{align}
	(cf. \cite[Corollary\;2.5(i)]{beli_Anew_2010}). For any $ 1\le i\le \min\{m-1,n\} $, we define
	\[
	\begin{split}
		A_{i}=A_{i}(M,N):=\min\{&(R_{i+1}-S_{i})/2+e,R_{i+1}-S_{i}+d[-a_{1,i+1}b_{1,i-1}],\\
		&R_{i+1}+R_{i+2}-S_{i-1}-S_{i}+d[a_{1,i+2}b_{1,i-2}]\},
	\end{split}
	\]where the term $ R_{i+1}+R_{i+2}-S_{i-1}-S_{i}+d[a_{1,i+2}b_{1,i-2}] $ is ignored if $ i=1 $ or $ m-1 $.
	
	\medskip
	
	Our main tool is the representation theorem due to Beli \cite[Theorem 1.2 and Remarks 1]{beli_universal_2020} (see \cite[Theorem 4.5]{beli_Anew_2010} and \cite{beli_representations_2019} for more details).
	\begin{thm}\label{thm:beligeneral}
		Suppose $  n\le m$. Then $ N\rep M $ if and only if  $ FN\rep FM $ and the following conditions hold:
		 \begin{enumerate}[itemindent=-0.5em,label=\rm (\roman*)]
			\item For any $ 1\le i\le n $, we have either $ R_{i}\le S_{i} $, or $ 1<i<m $ and $ R_{i}+R_{i+1}\le S_{i-1}+S_{i} $.
			
			\item For any $ 1\le i\le \min\{m-1,n\} $, we have $ d[a_{1,i}b_{1,i}]\ge A_{i} $.
			
			\item For any $ 1<i\le \min\{m-1,n+1\} $, if
			\begin{align*} 
				R_{i+1}>S_{i-1} \quad
				\text{and}\quad d[-a_{1,i}b_{1,i-2}]+d[-a_{1,i+1}b_{1,i-1}]>2e+S_{i-1}-R_{i+1},
			\end{align*}
			then $ [b_{1},\ldots,b_{i-1}]\rep [a_{1},\ldots,a_{i}] $.
			
			\item  For any $ 1<i\le \min\{m-2,n+1\} $ such that $ S_{i}\ge R_{i+2}>S_{i-1}+2e\ge R_{i+1}+2e$, we have $ [b_{1},\ldots,b_{i-1}]\rep [a_{1},\ldots,a_{i+1}] $. (If $ i=n+1 $, the condition $ S_{i}\ge R_{i+2} $ is ignored.)
		\end{enumerate}  
	\end{thm}
	
\section{Preliminaries over local fields}\label{sec:maximal-lattices}
	
Unless otherwise stated, we always assume that $F$ is a local field and $n$ is a positive integer in this section. Clearly, $e=0$ if $F$ is non-dyadic and $e\ge 1$ if $F$ is dyadic.
	 
First, we extend \cite[Definitions 3.4 and 3.6, and Proposition 3.5]{HeHu2} to the non-dyadic case, including $n=1$.
	\begin{defn}\label{defn:space-maximallattice}
		Let $n\ge 1$. For $c\in \mathcal{V}$, we define the $ n $-dimensional quadratic space over $F$:
		\begin{align*}
			W_{1}^{n}(c):=
			\begin{cases}
				\mathbb{H}^{\frac{n-2}{2}}\perp [1,-c] &\text{if $ n $ is even}, \\
				\mathbb{H}^{\frac{n-1}{2}}\perp [c]    &\text{if $ n $ is odd},  			\end{cases}
		\end{align*}
		and define the $ n $-dimensional quadratic space $ W_{2}^{n}(c) $ with $ \det  W_{2}^{n}(c)=\det  W_{1}^{n}(c) $ and $ W_{2}^{n}(c)\not\cong W_{1}^{n}(c) $ if $n\not=1$ and $(n,c)\not=(2,1)$.
			We further define the $ \mathcal{O}_{F} $-maximal lattice on $ W_{\nu}^{n}(c) $ by $ N_{\nu}^{n}(c) $ provided that $W_{\nu}^{n}(c)$ is defined.
	\end{defn}		
	From Definition \ref{defn:space-maximallattice}, the notations $W_{\nu}^{n}(c)$ and $N_{\nu}^{n}(c)$ are defined only if
	\begin{align}\label{w2nc}
		(n,\nu)\not=(1,2)\quad\text{and}\quad(n,\nu,c)\not=(2,2,1),
	\end{align}
	 which is in essential due to \cite[63:22 Theorem]{omeara_quadratic_1963}.  Hereafter, we always assume that the conditions \eqref{w2nc} hold when a quadratic space $W_{\nu}^{n}(c)$ or an $\mathcal{O}_{F}$-maximal lattice $N_{\nu}^{n}(c)$ is discussed.  
	
If $F$ is dyadic, for $ c\in \mathcal{V}\backslash \{1,\Delta\}$, we let $c\pi^{-\ord(c)}=\xi^{2}(1+\mu\pi^{d(c)})$ with $\xi,\mu\in \mathcal{O}_{F}^{\times}$ when $\ord(c)$ is even. To describe $W_{2}^{n}(c)$ and $N_{2}^{n}(c)$ explicitly, we put
	\begin{align}\label{csharp}
		c^{\#}:=
		\begin{cases}
			\Delta    &\text{if $\ord(c)$ is odd}, \\
			1+4\rho \mu^{-1}\pi^{-d(c)}  &\text{if $\ord(c)$ is even},
		\end{cases}
	\end{align}
as in \cite[Definition 3.1]{HeHu2}.	From \cite[Proposition 3.2]{HeHu2}, we also have the properties for $c^{\#}$:
	\begin{align}\label{csharp-2}
		 d(c^{\#})=2e-d(c)\quad\text{and}\quad(c^{\#},c)_{\mathfrak{p}}=-1.
	\end{align} 
\begin{prop}\label{prop:space}
		Let $ n\ge 1 $, $\nu\in \{1,2\}$ and $c\in \mathcal{V}$.
		
		 \begin{enumerate}[itemindent=-0.5em,label=\rm (\roman*)]
			\item  The quadratic space $ W_{\nu}^{n}(c) $  is given by the following table,
			\begin{center}
				\renewcommand\arraystretch{1.5}
				\begin{tabular}{c|c|c|c}
					\toprule[1.2pt]
					$ n $	& $ c $ & $ W_{1}^{n}(c) $  & $ W_{2}^{n}(c) $  \\
					\hline
					\multirow{4}*{\text{Even}}	  & $ 1 $ & $ \mathbb{H}^{\frac{n}{2}} $ & $ \mathbb{H}^{\frac{n-4}{2}} \perp [1,-\Delta,\pi,-\Delta\pi]$   \\
					\cline{2-4}
					& $ \Delta $  & $ \mathbb{H}^{\frac{n-2}{2}}\perp [1,-\Delta]  $ & $ \mathbb{H}^{\frac{n-2}{2}}\perp [\pi,-\Delta\pi] $  \\
					\cline{2-4}
					& $\delta, \delta\in \mathcal{U} \backslash \{1,\Delta\} $  & $ \mathbb{H}^{\frac{n-2}{2}}\perp [1,-\delta]$ & $ \mathbb{H}^{\frac{n-2}{2}}\perp [\delta^{\#},-\delta^{\#}\delta]$  \\
					\cline{2-4}
					& $ \delta\pi, \delta\in \mathcal{U} $  & $ \mathbb{H}^{\frac{n-2}{2}}\perp [1,-\delta\pi]$ & $ \mathbb{H}^{\frac{n-2}{2}}\perp [\Delta,-\Delta\delta\pi]$  \\
					\hline
					\multirow{2}*{\text{Odd}} & $ \delta,\delta\in\mathcal{U}  $ & $ \mathbb{H}^{\frac{n-1}{2}}\perp [\delta] $  &   $ \mathbb{H}^{\frac{n-3}{2}}\perp [\pi,-\Delta\pi,\Delta\delta] $  \\
					\cline{2-4}
					& $ \delta\pi,\delta\in\mathcal{U} $ & $ \mathbb{H}^{\frac{n-1}{2}}\perp [\delta\pi] $  &   $ \mathbb{H}^{\frac{n-3}{2}}\perp [1,-\Delta ,\Delta\delta\pi] $  \\
					\bottomrule[1.2pt]
				\end{tabular}
			\end{center}
			where the third case is ignored when $e=0$. (If $e=0$, then $\mathcal{U}\backslash \{1,\Delta\}=\emptyset$.)

			\item   Every $ n $-dimensional quadratic space over $ F $ is isometric to one of the spaces in the table above.
			
			\item  For every $ n $-dimensional quadratic space $ W $, up to isometry, there is exactly one $ (n+2) $-dimensional quadratic space $ V $ representing all $ n $-dimensional quadratic spaces except for $ W $. Precisely, if $ W=W_{\nu}^{n}(c) $ with $(n,\nu)\not=(1,2)$ and $(n,\nu,c)\not=(2,2,1)$, then $ V=W_{3-\nu}^{n+2}(c) $.
		\end{enumerate}   
\end{prop}
\begin{re}\label{re:spaces-maximal-lattices}
			All $ n $-dimensional quadratic spaces have been exhausted by the above table from Proposition \ref{prop:space}(ii). Also, on each space $ W_{\nu}^{n}(c) $, by \cite[91:2 Theorem]{omeara_quadratic_1963}, there is exactly one $ \mathcal{O}_{F} $-maximal lattice, up to isometry. Hence $\mathcal{M}_{n}$ consists of all the $\mathcal{O}_{F}$-maximal lattices $N_{\nu}^{n}(c)$ for $\nu\in \{1,2\}$ and $c\in \mathcal{V} $ such that $(n,\nu)\not=(1,2)$ and $(n,\nu,c)\not=(2,2,1)$. So, by Proposition \ref{prop:space}(i) and \cite[63:9]{omeara_quadratic_1963}, one can count the number of $\mathcal{O}_{F}$-maximal lattices with rank $n$:
		\begin{align}\label{maximal-num}
			|\mathcal{M}_{n}|=\begin{cases}
				8(N\mathfrak{p})^{e}  &\text{if $n\ge 3$}, \\
				8(N\mathfrak{p})^{e}-1  &\text{if $n=2$},\\
				 4(N\mathfrak{p})^{e}   &\text{if $n=1$}.
			\end{cases}
		\end{align} 
\end{re}
Next we show Lemma \ref{lem:B-beli}, which refines \cite[63:21 Theorem]{omeara_quadratic_1963} slightly and provides an alternative proof for Proposition \ref{prop:space}(ii) and (iii), covering the case $n=1$.
\begin{lem}\label{lem:B-beli}
	 Let $n\ge 1$, $\nu,\nu^{\prime}\in \{1,2\}$ and $c,c^{\prime}\in \mathcal{V}$. 
		
	 \begin{enumerate}[itemindent=-0.5em,label=\rm (\roman*)]
		
		\item $W_{\nu^{\prime}}^{n}(c^{\prime})$ represents $W_{\nu}^{n}(c)$, i.e., $W_{\nu^{\prime}}^{n}(c^{\prime})\cong W_{\nu}^{n}(c)$ if and only if $(\nu^{\prime},c^{\prime})=(\nu,c)$.
		
		\item $W_{\nu^{\prime}}^{n+1}(c^{\prime})$ represents $W_{\nu}^{n}(c)$ if and only if $(c^{\prime},c)_{\mathfrak{p}}=(-1)^{\nu^{\prime}+\nu}$.
		
		\item $W_{\nu^{\prime}}^{n+2}(c^{\prime})$ represents $W_{\nu}^{n}(c)$ if and only if $c^{\prime}\not=c$ or $(\nu^{\prime},c^{\prime})=(\nu,c)$.
	\end{enumerate}
\end{lem}
\begin{proof}
		(i) This is clear from Definition \ref{defn:space-maximallattice}.
		
		(ii) Let  $D=\det W_{1}^{n+1}(c^{\prime})\det W_{1}^{n}(c)$. Then  $D=\det W_{\nu^{\prime}}^{n+1}(c^{\prime})\det W_{\nu}^{n}(c)$ for any $\nu,\nu^{\prime}\in \{1,2\}$. By  \cite[63:21 Theorem]{omeara_quadratic_1963}, $W_{\nu}^{n}(c)\rep W_{\nu^{\prime}}^{n+1}(c^{\prime})$ if and only if $W_{\nu^{\prime}}^{n+1}(c^{\prime})\cong W_{\nu}^{n}(c)\perp [D]$. Since $\det W_{\nu^{\prime}}^{n+1}(c^{\prime})=\det(W_{\nu}^{n}(c)\perp [D])$, which is equivalent to $S_{\mathfrak{p}}(W_{\nu^{\prime}}^{n+1}(c^{\prime}))=S_{\mathfrak{p}}(W_{\nu}^{n}(c)\perp [D])$.
		
		Consider the case $\nu=\nu^{\prime}=1$. If $n$ is even, then $W_{1}^{n}(c)=\mathbb{H}^{n/2-1}\perp [1,-c]$ and $W_{1}^{n}(c^{\prime})=\mathbb{H}^{n/2}\perp [c^{\prime}]$, so $W_{1}^{n}(c)\rep W_{1}^{n+1}(c^{\prime})$ if and only if $[1,-c]\rep \mathbb{H}\perp [c^{\prime}]\cong [c,-c,c^{\prime}]$, which is equivalent to $1\rep [c,c^{\prime}]$, i.e., $(c,c^{\prime})_{\mathfrak{p}}=1$. If $n$ is odd, then $W_{1}^{n}(c)=\mathbb{H}^{(n-1)/2}\perp [c]$ and $W_{1}^{n+1}(c^{\prime})=\mathbb{H}^{(n-1)/2}\perp [1,-c^{\prime}]$, so $W_{1}^{n}(c)\rep W_{1}^{n+1}(c^{\prime})$ if and only if $c\rep [1,-c^{\prime}]$, which is equivalent to $(c,c^{\prime})_{\mathfrak{p}}=1$. Hence, regardless of the parity of $n$, we have
		\begin{align*}
				S_{\mathfrak{p}}(W_{1}^{n+1}(c^{\prime}))=S_{\mathfrak{p}}(W_{1}^{n}(c)\perp [D])\Longleftrightarrow W_{1}^{n}(c)\rep W_{1}^{n+1}(c^{\prime})\Longleftrightarrow (c,c^{\prime})_{\mathfrak{p}}=1.
		\end{align*}
		It follows that
		\begin{align}\label{sp1}
			S_{\mathfrak{p}}(W_{1}^{n+1}(c^{\prime}))=(c,c^{\prime})_{\mathfrak{p}}S_{\mathfrak{p}}(W_{1}^{n}(c)\perp [D] ).
		\end{align}
		Also, we have
		\begin{align}\label{sp2}
			 S_{\mathfrak{p}}(W_{1}^{n}(c)\perp [D])=(-1)^{\nu-1}S_{\mathfrak{p}}(W_{\nu}^{n}(c)\perp [D]).
		\end{align}
		Indeed, if $\nu=1$, this is trivial. And if $\nu=2$, then $\det(W_{1}^{n}(c)\perp [D])=\det (W_{2}^{n}(c)\perp [D])$, but $W_{1}^{n}(c)\perp [D]\not\cong W_{2}^{n}(c)\perp [D]$, so $S_{\mathfrak{p}}(W_{1}^{n}(c)\perp [D])=-S_{\mathfrak{p}}(W_{2}^{n}(c)\perp [D])$.
		
		Similarly, $S_{\mathfrak{p}}(W_{1}^{n+1}(c^{\prime}))=-S_{\mathfrak{p}}(W_{2}^{n+1}(c^{\prime}))$, so for $\nu^{\prime}\in \{1,2\}$, we have
		\begin{align}\label{sp3}
			S_{\mathfrak{p}}(W_{1}^{n+1}(c^{\prime}))=(-1)^{\nu^{\prime}-1}S_{\mathfrak{p}}(W_{\nu^{\prime}}^{n+1}(c^{\prime})).
		\end{align}
		Plugging \eqref{sp2} and \eqref{sp3} into \eqref{sp1}, we deduce that $S_{\mathfrak{p}}(W_{\nu^{\prime}}^{n+1}(c^{\prime}))=(-1)^{\nu+\nu^{\prime}}(c,c^{\prime})_{\mathfrak{p}}S_{\mathfrak{p}}(W_{\nu}^{n}(c)\perp [D]) $. Hence
		\begin{align*}
			W_{\nu}^{n}(c)\rep W_{\nu^{\prime}}^{n+1}(c^{\prime})\Longleftrightarrow S_{\mathfrak{p}}(W_{\nu^{\prime}}^{n+1}(c^{\prime}))=S_{\mathfrak{p}}(W_{\nu}^{n}(c)\perp [D]) \Longleftrightarrow (-1)^{\nu+\nu^{\prime}}(c,c^{\prime})_{\mathfrak{p}}=1.
		\end{align*}
		
		(iii) If $V$ and $W$ are two quadratic spaces such that $\dim V-\dim W=2$, then $W\rep V$ if and only if either $\det V\not=-\det W=\det (W\perp \mathbb{H})$ or $V\cong W\perp \mathbb{H}$. In our case, since $W_{\nu}^{n}(c)\perp \mathbb{H}=W_{\nu}^{n+2}(c)$, we have that $W_{\nu^{\prime}}^{n+2}(c^{\prime})$ represents $W_{\nu}^{n}(c)$ if and only if either $\det W_{\nu^{\prime}}^{n+2}(c^{\prime})\not=\det W_{\nu}^{n+2}(c)$, i.e., $ c^{\prime}\not=c$, or $W_{\nu^{\prime}}^{n+2}(c^{\prime}) \cong  W_{\nu}^{n+2}(c)$, i.e., $(\nu^{\prime},c^{\prime})=(\nu,c)$.
\end{proof}
\begin{lem}\label{lem:spacerep-criterion}
	Let $V$ be a quadratic space over $F$. Let $W_{1}$ and $W_{2}$ be $n$-dimensional quadratic spaces over $F$ such that $\det W_{1}=\det W_{2}=D$ and $W_{1}\not\cong W_{2}$. 				
	\begin{enumerate}[itemindent=-0.5em,label=\rm (\roman*)]
		\item For $n\ge 2$, suppose either $\dim V=n+1$, or $\dim V=n+2$ with $\det V=-D$. Then $V$ represents precisely one of $W_{1}$ and $W_{2}$. 
						
		In particular, for every $c$, $ V $ represents exactly one of $ W_{1}^{n}(c) $ and $ W_{2}^{n}(c) $.
						
		\item For $n\ge 3$, suppose either $\dim V=n-1$, or $\dim V=n-2$ with $\det V=-D$. Then $V$ is represented by precisely one of $W_{1}$ and $W_{2}$.
						
		In particular, for every $c$, $ V $ is represented by exactly one of $ W_{1}^{n}(c) $ and $ W_{2}^{n}(c) $.
	\end{enumerate}
\end{lem}
\begin{proof}
	(i) See \cite[Lemma 3.13]{HeHu2}.
				
	(ii) Let $\dim V=n-1$. Recall from \cite[63:21 Theorem]{omeara_quadratic_1963} that $V\rep W$ if and only if $W\cong V\perp [\det W\det V]$. Since $W_{1}$ and $W_{2}$ are the exactly non-isometric $n$-dimensional spaces with the same determinant $D$, we see that $V\perp [D\det V]\cong W_{1}$ or $ W_{2}$, but not both. Hence $V$ is represented by precisely one of $W_{1}$ and $W_{2}$.
				
	If $\dim V=n-2$ and $\det V=-D$, then, by \cite[63:21 Theorem]{omeara_quadratic_1963}, $V\rep W$ if and only if $W\cong V\perp \mathbb{H}$. Similar to the previous case, we see that $V\perp \mathbb{H}\cong W_{1}$ or $W_{2}$, but not both, as desired.
\end{proof}
 
Under the $n$-ADC assumption, we also have the lattice versions of Lemma \ref{lem:spacerep-criterion}(i) and Proposition \ref{prop:space}(iii).
\begin{lem}\label{lem:spacerep-criterion-ADC}
	Let $n\ge 2$, $\nu\in\{1,2\}$ and $ c\in \mathcal{V} $. Let $M$ be an $n$-ADC $\mathcal{O}_{F}$-lattice.
	\begin{enumerate}[itemindent=-0.5em,label=\rm (\roman*)]
			\item  If either $ \rank\, M=n+1 $, or $  \rank\, M=n+2  $ and $ \det FM=-\det W_{\nu}^{n}(c) $, then $ M $ represents exactly one of $ N_{1}^{n}(c) $ and $ N_{2}^{n}(c) $.
			
			\item If $ FM\cong W_{\nu}^{n+2}(c)$, then $ M $ represents every  lattice $ N $ in $\mathcal{L}_{F,n}$ with $ FN\not\cong W_{3-\nu}^{n}(c) $. In particular, $ M $ represents every $ N $ in $ \mathcal{M}_{n} $ with $ N\not\cong N_{3-\nu}^{n}(c) $.
	\end{enumerate}
\end{lem}
\begin{proof}
	This follows from Lemma \ref{lem:spacerep-criterion}(i), Proposition \ref{prop:space}(iii) and the $ n $-ADC-ness of $ M $.
\end{proof}
	
 We treat the non-dyadic and dyadic case separately.
 
\medskip 
\noindent \textbf{Case I.}  $F$ is non-dyadic 

Recall from \cite[92:2 Theorem]{omeara_quadratic_1963} that a lattice $L$ over $ F $ has a unique Jordan splitting. Hence we may denote by $ J_{k}(L) $ the Jordan component of $ L $, with possible zero rank and scale $  \mathfrak{p}^{k} $, and write $ J_{i,j}(L):=J_{i}(L)\perp J_{i+1}(L)\perp\cdots\perp J_{j}(L) $ for integers $ i\le j $. 
	
We reformulate \cite[Proposition 3.2]{hhx_indefinite_2021} as below.
\begin{lem}\label{lem:maximallattices-non-dyadic}
	Let $n\ge 1$, $\nu\in\{1,2\}$ and $c\in \mathcal{V}$.
	 \begin{enumerate}[itemindent=-0.5em,label=\rm (\roman*)]
		\item The $ \mathcal{O}_{F} $-maximal lattice $ N_{\nu}^{n}(c) $ is given by the following table.
		\begin{center}
	 	\renewcommand\arraystretch{1.5}
			\begin{tabular}{c|c|c|c}
				\toprule[1.2pt]
				$ n $	& $ c $ & $ N_{1}^{n}(c) $  & $ N_{2}^{n}(c) $  \\
				\hline
				\multirow{3}*{\text{Even}}	  & $ 1 $ & $ \mathbf{H}^{\frac{n}{2}} $ & $ \mathbf{H}^{\frac{n-4}{2}} \perp \langle 1,-\Delta,\pi,-\Delta\pi\rangle $   \\
				\cline{2-4}
				& $ \Delta $  & $ \mathbf{H}^{\frac{n-2}{2}}\perp \langle 1,-\Delta\rangle  $ & $ \mathbf{H}^{\frac{n-2}{2}}\perp \langle \pi,-\Delta\pi\rangle $  \\
				\cline{2-4}
				& $ \delta\pi, \delta\in \mathcal{U} =\{1,\Delta\} $  & $ \mathbf{H}^{\frac{n-2}{2}}\perp \langle 1,-\delta\pi\rangle $ & $ \mathbf{H}^{\frac{n-2}{2}}\perp \langle \Delta,-\Delta\delta\pi\rangle$  \\
				\hline
				\multirow{2}*{\text{Odd}} & $ \delta,\delta\in \mathcal{U} =\{1,\Delta\}$ & $ \mathbf{H}^{\frac{n-1}{2}}\perp \langle \delta\rangle  $  &   $ \mathbf{H}^{\frac{n-3}{2}}\perp \langle \pi,-\Delta\pi,\Delta\delta\rangle $  \\
				\cline{2-4}
				& $ \delta\pi,\delta\in \mathcal{U} =\{1,\Delta\} $ & $ \mathbf{H}^{\frac{n-1}{2}}\perp \langle \delta\pi\rangle $  &   $ \mathbf{H}^{\frac{n-3}{2}}\perp \langle 1,-\Delta ,\Delta\delta\pi\rangle$  \\
				\bottomrule[1.2pt]
			\end{tabular}
		\end{center}
		
		\item  The set $ \mathcal{M}_{n} $ is a minimal testing set for $ n $-universality.
	 \end{enumerate}
\end{lem}
\begin{lem}\label{lem:maximal-rep-nondyadic}
	Let $N=N_{\nu}^{n}(c)$ be an $\mathcal{O}_{F}$-maximal lattice of rank $n$. Then $J_{0,1}(N)=N$. 
		
	Thus, an $\mathcal{O}_{F}$-lattice $M$ represents $N$ if and only if $FJ_{0}(M)$ represents $FJ_{0}(N)$ and $FJ_{0,1}(M)$ represents $FN$.
\end{lem}
\begin{proof}
	Recall that $\mathbf{H}=\langle 1,-1\rangle$ is unimodular, and so is $\mathbf{H}^{k}$. For each $N=N_{\nu}^{n}(c)$ in Lemma \ref{lem:maximallattices-non-dyadic}(i), one may obtain the Jordan splittings by collecting or reordering the components according to their scales. From these splittings, it follows that $J_{0,1}(N)=N$ for each $N=N_{\nu}^{n}(c)$. Furthermore, the second assertion holds by \cite[Theorem 1]{omeara_integral_1958}. 
\end{proof}
	
\noindent \textbf{Case II.} $F$ is dyadic
	
We rephrase \cite[Theorem 1.2]{HeHu2} in terms of BONGs by virtue of \cite[Remark 3.9, Lemmas 3.10 and 3.11]{HeHu2}
	
\begin{lem}\label{lem:maximallattices-dyadic}
	Let $n\ge 1$, $\nu\in\{1,2\}$ and $c\in \mathcal{V}$.
		\begin{enumerate}[itemindent=-0.5em,label=\rm (\roman*)]
			\item The $ \mathcal{O}_{F} $-maximal lattice $ N_{\nu}^{n}(c) $ is given by the following table,
        \begin{center}
        	\renewcommand\arraystretch{1.5}
        	\hskip -0.8cm \begin{tabular}{c|c|c|c}
        		\toprule[1.2pt]
        		$ n $& $ c $ & $ N_{1}^{n}(c) $  & $ N_{2}^{n}(c) $  \\
        		\hline
        		\multirow{4}*{\text{Even}}	 & $ 1 $ & $ \mathbf{H}^{\frac{n}{2}} $ & $ \mathbf{H}^{\frac{n-4}{2}} \perp  \prec 1,-\Delta\pi^{-2e}, \pi,-\Delta\pi^{1-2e}\succ$ \\
        		\cline{2-4}
        		& $ \Delta $  & $ \mathbf{H}^{\frac{n-2}{2}}\perp   \prec 1,-\Delta\pi^{-2e}\succ $ & $ \mathbf{H}^{\frac{n-2}{2}}\perp \prec \pi,-\Delta\pi^{1-2e}\succ $  \\
        		\cline{2-4}
        		& $ \delta,\delta\in \mathcal{U}\backslash \{1,\Delta\} $  & $ \mathbf{H}^{\frac{n-2}{2}}\perp \prec 1,-\delta\pi^{1-d(\delta)}\succ$ & $ \mathbf{H}^{\frac{n-2}{2}}\perp \prec\delta^{\#},-\delta^{\#}\delta\pi^{1-d(\delta)}\succ$  \\
        		\cline{2-4}
        		& $ \delta\pi, \delta\in \mathcal{U} $  & $ \mathbf{H}^{\frac{n-2}{2}}\perp \prec  1,-\delta\pi\succ$ & $ \mathbf{H}^{\frac{n-2}{2}}\perp \prec\Delta,-\Delta\delta\pi\succ $  \\
        		\hline
        		\multirow{2}*{\text{Odd}}	 & $ \delta,\delta\in\mathcal{U}  $ & $ \mathbf{H}^{\frac{n-1}{2}}\perp \prec \delta\succ $  &   $ \mathbf{H}^{\frac{n-3}{2}}\perp \prec \delta\kappa^{\#}, -\delta\kappa^{\#}\kappa\pi^{2-2e}, \delta \kappa\succ 
        		$  \\
        		\cline{2-4}
        		& $ \delta\pi,\delta\in\mathcal{U}  $ & $ \mathbf{H}^{\frac{n-1}{2}}\perp \prec\delta\pi\succ $  &   $ \mathbf{H}^{\frac{n-3}{2}}\perp \prec 1,-\Delta\pi^{-2e},\Delta\delta\pi\succ $  \\
        		\bottomrule[1.2pt]
        	\end{tabular}
        \end{center}
			where $ \kappa $ is a fixed unit with $ d(\kappa)=2e-1 $ and $\mathbf{H}\cong \prec 1,-\pi^{-2e}\succ$ (cf. \cite[Lemma 3.9(i)]{HeHu2}).
			
			\item The set $ \mathcal{M}_{n} $ is a minimal testing set for $ n $-universality.			
		\end{enumerate}
 \end{lem}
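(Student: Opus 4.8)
The plan is to handle the two parts separately: to derive (i) from the space-level list already in hand together with the BONG dictionary of \cite{HeHu2}, and to obtain (ii) from the representation criteria assembled earlier in Section \ref{sec:n-ADCnon-dyadicfields}. For part (i) I would start from the explicit orthogonal splittings of the spaces $W_{\nu}^{n}(c)$ recorded in Proposition \ref{prop:space}(i), each of which has the shape $\mathbb{H}^{k}\perp W_{0}$ with $\dim W_{0}\le 4$. Since the $\mathcal{O}_{F}$-maximal lattice on a given space is unique up to isometry by \cite[91:2 Theorem]{omeara_quadratic_1963}, it suffices, for each entry of the table, to exhibit a good BONG and to check three things: that the displayed sequence forms a good BONG, that the lattice it defines is $\mathcal{O}_{F}$-maximal, and that its underlying space is exactly $W_{\nu}^{n}(c)$. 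For the hyperbolic part I would use $\mathbf{H}\cong\prec 1,-\pi^{-2e}\succ$ from \cite[Remark 3.9]{HeHu2}, and for the low-rank anisotropic tail I would read off the maximal lattice supplied by \cite[Theorem 1.2]{HeHu2}; the conversion rules \cite[Lemmas 3.10, 3.11]{HeHu2} then allow me to splice these blocks into a single BONG for the whole lattice. Goodness is verified through conditions \eqref{eq:GoodBONGs} and \eqref{eq:BONGs} of Lemma \ref{lem:goodBONGequivcon}, maximality through the norm/weight criterion in BONG form from \cite{HeHu2}, and the identification of the space reduces to comparing determinants and Hasse invariants against Proposition \ref{prop:space}(i). (One notes that the even-$c{=}1$ entry $N_{2}^{n}(1)$ only appears for $n\ge 4$, consistent with the undefined case $(2,1,2)$.)

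For part (ii), the testing-set property is the $n$-universality counterpart of Lemma \ref{lem:maximalequivADC}: given any $N\in\mathcal{L}_{n}$, by \cite[82:18]{omeara_quadratic_1963} there is an $\mathcal{O}_{F}$-maximal lattice $N'$ on $FN$ with $N\subseteq N'$, and $N'$ is isometric to some $N_{\nu}^{n}(c)$ by Definition \ref{defn:space-maximallattice} and Proposition \ref{prop:space}(ii); hence any lattice representing every member of $\mathcal{M}_{n}$ represents $N'$, and therefore $N$, so $M$ is $n$-universal. For minimality I would fix a member $N_{\nu}^{n}(c)\in\mathcal{M}_{n}$ and consider $M:=N_{3-\nu}^{n+2}(c)$, the $\mathcal{O}_{F}$-maximal lattice on $W_{3-\nu}^{n+2}(c)$, which is defined since $n+2\ge 4$ steers clear of the excluded case $(2,1,2)$. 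By Lemma \ref{lem:maximalrep} this $M$ is $n$-ADC, so Lemma \ref{lem:spacerep-criterion-ADC}(ii) shows it represents every element of $\mathcal{M}_{n}$ other than $N_{\nu}^{n}(c)$; meanwhile Proposition \ref{prop:space}(iii) shows $FM=W_{3-\nu}^{n+2}(c)$ does not represent $W_{\nu}^{n}(c)$, whence $M$ fails to represent $N_{\nu}^{n}(c)$ and is not $n$-universal. Thus $\mathcal{M}_{n}\setminus\{N_{\nu}^{n}(c)\}$ is not a testing set, which gives minimality.

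I expect the main obstacle to be the bookkeeping in part (i): verifying \eqref{eq:BONGs} at the junctions where the $\mathbf{H}$-blocks meet the anisotropic tail, since the relative quadratic-defect terms $d(-a_{i}a_{i+1})$ that control goodness depend on the delicate unit invariants $\delta^{\#}$, $\kappa$, $\kappa^{\#}$ and on the entry $\Delta\pi^{-2e}$. Confirming that each listed BONG carries the prescribed determinant and Hasse invariant, so that it lies on $W_{\nu}^{n}(c)$ rather than its companion space $W_{3-\nu}^{n}(c)$, is precisely the point where the defect computations and conversion lemmas of \cite{HeHu2} are indispensable; once those are in place, the rest is the routine matching of table entries.
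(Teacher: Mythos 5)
Your proposal is correct, and the first thing to note is that the paper offers no proof of this lemma at all: it is introduced with the single sentence that it rephrases \cite[Theorem 1.2]{HeHu2} in terms of BONGs by virtue of \cite[Remark 3.9, Lemmas 3.10, 3.11]{HeHu2}. Your part (i) is therefore essentially the paper's route made explicit: take the maximal lattices supplied by the companion paper and splice the blocks into good BONGs via the conversion lemmas, with the remaining verifications (goodness via Lemma \ref{lem:goodBONGequivcon}, maximality, and the determinant/Hasse-invariant comparison against Proposition \ref{prop:space}(i)) correctly identified but left as bookkeeping --- exactly the content the paper buries inside the citation, so no objection there, though strictly speaking the maximality check is redundant once one quotes the HeHu2 list, since that list already consists of maximal lattices. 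Where you genuinely diverge is part (ii): the paper imports the minimal-testing-set statement from \cite[Theorem 1.2]{HeHu2}, whereas you derive it internally from this paper's own machinery, and your derivation is sound. The testing property follows from \cite[82:18]{omeara_quadratic_1963} and \cite[91:2 Theorem]{omeara_quadratic_1963} exactly as in the proof of Lemma \ref{lem:maximalequivADC}; for minimality, the witness $M=N_{3-\nu}^{n+2}(c)$ is $n$-ADC by Lemma \ref{lem:maximalrep}, represents every member of $\mathcal{M}_{n}$ other than $N_{\nu}^{n}(c)$ by Lemma \ref{lem:spacerep-criterion-ADC}(ii), and cannot represent $N_{\nu}^{n}(c)$ because $FM=W_{3-\nu}^{n+2}(c)$ omits $W_{\nu}^{n}(c)$ by Proposition \ref{prop:space}(iii), so $\mathcal{M}_{n}\setminus\{N_{\nu}^{n}(c)\}$ is not a testing set. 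Two details you handled correctly and that deserve flagging: Lemmas \ref{lem:maximalrep} and \ref{lem:spacerep-criterion-ADC} and Proposition \ref{prop:space} are all stated before the standing non-dyadic assumption is imposed in Section \ref{sec:n-ADCnon-dyadicfields}, so invoking them for dyadic $F$ is legitimate, and passing to rank $n+2\ge 4$ does steer clear of the undefined case $(n,c,\nu)=(2,1,2)$. The trade-off between the two routes is clear: the paper's citation is shorter but opaque, while your argument for (ii) is self-contained within this paper's toolkit, at the cost of still leaving the tabular computations of (i) unexecuted.
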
	
\begin{re}\label{re:invariant}
	Each $ \mathcal{O}_{F} $-maximal lattice $ N_{\nu}^{n}(c) $ can be written as the form $ \mathbf{H}^{k}\perp L\cong \prec 1,-\pi^{-2e},\ldots, 1,-\pi^{-2e}, c_{1},\ldots,c_{\ell}\succ $ relative to a good BONG, where $ k,\ell $ are non-negative integers and $ L\cong \prec c_{1},\ldots,c_{\ell} \succ $ relative to a good BONG. Hence the above table gives the values of the invariants $ R_{i}(N_{\nu}^{n}(c)) $ (cf. \cite[Lemma 3.11]{HeHu2}).
\end{re}
	
The next two lemmas indicate that the invariants $ R_{i}(N) $ ($ 1\le i\le n $) and the space $ FN $ determine whether an $\mathcal{O}_{F}$-lattice $N$ of rank $n$ is $\mathcal{O}_{F}$-maximal or not. The proofs are the same as that of \cite[Proposition 3.7]{HeHu2}. (See also \cite[Lemma 3.11]{HeHu2}.)
\begin{lem}\label{lem:maximal-BONG-even}
		Let $ N $ be an $ \mathcal{O}_{F} $-lattice of even rank $ n\ge 2 $ and put $ S_{i}=R_{i}(N) $.
	\begin{enumerate}[itemindent=-0.5em,label=\rm (\roman*)]
		\item 	 If $ FN\cong W_{1}^{n}(c)$ with $ c\in \{1,\Delta\} $, then $ N\cong N_{1}^{n}(c) $ if and only if $ S_{i}=0 $ for $ i\in [1,n]^{O} $ and $ S_{i}=-2e $ for $ i\in [1,n]^{E} $.
		
		\item If $ FN\cong W_{2}^{n}(c)$ with $ c\in \{1,\Delta\} $, then $ N\cong N_{2}^{n}(c) $ if and only if  $ S_{i}=0 $  for $ i\in [1,n-2]^{O} $, $ S_{i}=-2e $ for $ i\in [1,n-2]^{E} $,
		$ S_{n-1}=1 $ and $ S_{n}=1-2e $.
		
		\item If $ FN\cong W_{\nu}^{n}(c)$, with $ \nu\in\{1,2\} $ and $ c\in \mathcal{V}\backslash \{1,\Delta\} $, then $ N\cong N_{\nu}^{n}(c) $ if and only if $ S_{i}=0 $ for $ i\in [1,n-1]^{O} $, $ S_{i}=-2e $ for $ i\in [1,n-1]^{E} $ and $ S_{n}=1-d(c)$.
		\end{enumerate}
\end{lem}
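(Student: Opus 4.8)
The plan is to prove each of the three equivalences by splitting into the (easy) necessity direction, read off from the explicit good BONGs, and the sufficiency direction, which I would reduce to a single volume comparison; the same argument serves all three parts. The one structural fact I would lean on is that on a fixed quadratic space there is, up to isometry, exactly one $\mathcal{O}_F$-maximal lattice \cite[91:2 Theorem]{omeara_quadratic_1963}, together with the observation that, by Definition \ref{defn:space-maximallattice}, $N_\nu^n(c)$ is by construction the maximal lattice on $W_\nu^n(c)$.

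For necessity, assume $N\cong N_\nu^n(c)$. By Remark \ref{re:invariant} I would write $N_\nu^n(c)=\mathbf{H}^k\perp L$ in the explicit good BONG displayed in Lemma \ref{lem:maximallattices-dyadic}(i) and simply read off the orders. Since $\mathbf{H}\cong\prec 1,-\pi^{-2e}\succ$, each of the $k$ hyperbolic blocks contributes the pair of orders $(0,-2e)$, so on all indices occupied by $\mathbf{H}^k$ one gets $S_i=0$ at odd positions and $S_i=-2e$ at even positions. The remaining orders come from the tail $L$: in (i) the tail is empty or $\prec 1,-\Delta\pi^{-2e}\succ$, again of orders $(0,-2e)$; in (ii) the tail ends in $\prec \pi,-\Delta\pi^{1-2e}\succ$, forcing $S_{n-1}=1$ and $S_n=1-2e$; and in (iii) the tail is one of the four rank-two lattices in the table, each having leading order $0$ and last order $1-d(c)$ (here one uses that $\delta^{\#}$ is a unit and that $d(\delta\pi)=0$ for the rows with $c=\delta\pi$). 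In every case this reproduces the prescribed values of the $S_i$.

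For sufficiency, suppose $FN\cong W_\nu^n(c)=FN_\nu^n(c)$ and that the $S_i$ take the stated values. Because $N_\nu^n(c)$ is $\mathcal{O}_F$-maximal and its space represents $FN$, Lemma \ref{lem:maximalrep} places an isometric copy of $N$ as a full-rank sublattice of $N_\nu^n(c)$. I would then compare volumes: recalling that a good BONG $N\cong\prec a_1,\ldots,a_n\succ$ satisfies $\mathfrak{v}(N)=a_{1,n}\mathcal{O}_F$, we have $\ord\mathfrak{v}(N)=\sum_{i=1}^n R_i(N)=\sum_{i=1}^n S_i$, and by the necessity computation this equals $\sum_i R_i(N_\nu^n(c))=\ord\mathfrak{v}(N_\nu^n(c))$. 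Since a full-rank sublattice whose volume equals that of the ambient lattice must coincide with it, we conclude $N\cong N_\nu^n(c)$.

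The bulk of the work, and essentially the only place where care is needed, is the necessity computation: keeping the even/odd index conventions straight across the three cases and confirming the volume identity $\ord\mathfrak{v}(N)=\sum_i R_i(N)$, which it suffices to check on $\mathbf{H}$ and on the rank-two tails, where it is immediate. I would also remark that the sufficiency direction uses only the total $\sum_i S_i$, the individual invariants being required solely to run the necessity direction and thereby to obtain a clean ``if and only if.''
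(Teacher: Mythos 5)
Your proof is correct, but the sufficiency argument follows a genuinely different route from the paper's. The paper disposes of this lemma (and Lemma \ref{lem:maximal-BONG-odd}) by declaring the proof to be ``the same as that of \cite[Proposition 3.7]{HeHu2}'', i.e.\ by the BONG-theoretic computation used there to pin down maximal lattices, which works directly with the invariants $R_i$, $\alpha_i$ and Beli's classification apparatus. Your necessity direction coincides with the paper's (reading the $S_i$ off the explicit good BONGs via Remark \ref{re:invariant}, using that the $R_i$ are independent of the chosen good BONG), but for sufficiency you instead embed $N$ isometrically as a full-rank sublattice of $N_{\nu}^{n}(c)$ via Lemma \ref{lem:maximalrep}, observe that $\ord\mathfrak{v}(N)=\sum_i S_i=\sum_i R_i(N_{\nu}^{n}(c))=\ord\mathfrak{v}(N_{\nu}^{n}(c))$, and conclude equality of the lattices from \cite[82:11]{omeara_quadratic_1963}. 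This is more elementary --- it bypasses the $\alpha_i$-invariants and isometry classification entirely --- and, as you note, it proves slightly more: any integral lattice on $W_{\nu}^{n}(c)$ whose volume attains the minimal order is automatically maximal. One caution: your closing remark that the identity $\ord\mathfrak{v}(N)=\sum_i R_i(N)$ ``suffices to check on $\mathbf{H}$ and on the rank-two tails'' is off the mark, because in the sufficiency step you need this identity for the \emph{arbitrary} lattice $N$, not merely for the displayed maximal lattices, and checking it on examples cannot establish the general statement. What you must invoke is the standard fact from Beli's theory that $\det L=a_{1,n}$ (equivalently $\mathfrak{v}(L)=a_{1,n}\mathcal{O}_F$) whenever $L\cong\prec a_1,\ldots,a_n\succ$; this does hold --- a norm generator $x_1$ satisfies $L\cap Fx_1=\mathcal{O}_Fx_1$, and projecting $L$ along $x_1$ multiplies the volume by $Q(x_1)^{-1}$, so induction along the recursive definition of BONGs gives the formula --- but it should enter your proof as a citation to Beli's work rather than as a spot check. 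With that reference supplied, your argument is complete and valid.
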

\begin{lem}\label{lem:maximal-BONG-odd}
		Let $ N $ be an $ \mathcal{O}_{F} $-lattice of odd rank $ n\ge 1 $ and put $ S_{i}=R_{i}(N) $.
		\begin{enumerate}[itemindent=-0.5em,label=\rm (\roman*)]
			\item 	 If $ FN\cong W_{1}^{n}(\delta)$ with $ \delta\in \mathcal{U}  $, then $ N\cong N_{1}^{n}(\delta) $ if and only if $ S_{i}=0 $ for $ i\in [1,n]^{O} $ and $ S_{i}=-2e $ for $ i\in [1,n]^{E} $.
			
			\item If $ FN\cong W_{2}^{n}(\delta)$ with $ \delta\in \mathcal{U}  $, then $ N\cong N_{2}^{n}(\delta) $ if and only if $ S_{i}=0 $  for $ i\in [1,n]^{O} $, $ S_{i}=-2e $ for $ i\in [1,n-2]^{E} $
			and $ S_{n-1}=2-2e $.
			
			\item  If $ FN\cong W_{\nu}^{n}(\delta\pi)$, with $ \nu\in \{1,2\} $ and $ \delta\in \mathcal{U}  $, then $ N\cong N_{\nu}^{n}(\delta\pi) $ if and only if  $ S_{i}=0 $ for $ i\in [1,n-1]^{O} $, $ S_{i}=-2e $ for $ i\in [1,n-1]^{E} $ and $ S_{n}=1$.
		\end{enumerate}
\end{lem}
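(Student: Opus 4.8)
The plan is to deduce both implications from the explicit good-BONG description of $N_{\nu}^{n}(c)$ in Lemma~\ref{lem:maximallattices-dyadic}(i) together with the fact that $N$ sits inside a maximal lattice; the argument is structurally identical to that of \cite[Proposition 3.7]{HeHu2}. For the necessity (``only if''), suppose $N\cong N_{\nu}^{n}(c)$. Since the $R_{i}$ are isometry invariants, it suffices to read off $S_{i}=R_{i}(N)$ from the BONG listed in Lemma~\ref{lem:maximallattices-dyadic}(i) by means of Remark~\ref{re:invariant}. In each case the $\mathbf{H}$-blocks $\prec 1,-\pi^{-2e}\succ$ contribute the alternating orders $0,-2e$, while the trailing block records the remaining values: in (i) the unary block $\prec\delta\succ$ gives $S_{n}=0$; in (ii) the ternary block $\prec\delta\kappa^{\#},-\delta\kappa^{\#}\kappa\pi^{2-2e},\delta\kappa\succ$ forces $S_{n-2}=0$, $S_{n-1}=2-2e$, $S_{n}=0$; and in (iii) the trailing entry of order $1$ gives $S_{n}=1$. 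This reproduces exactly the listed values.

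For the sufficiency I would argue via volumes. Since $N$ is integral, by \cite[82:18]{omeara_quadratic_1963} it is contained in an $\mathcal{O}_{F}$-maximal lattice $M'$ on $FN$; as $FN\cong W_{\nu}^{n}(c)$, the uniqueness of the maximal lattice on a fixed space (\cite[91:2 Theorem]{omeara_quadratic_1963}) yields $M'\cong N_{\nu}^{n}(c)$. Next I invoke the identity $\mathfrak{v}(L)=\mathfrak{p}^{\,R_{1}(L)+\cdots+R_{n}(L)}$, valid for any rank-$n$ lattice $L$ with a good BONG (the Gram determinant has order $\sum_{i}R_{i}$, as one checks on $\mathbf{H}\cong\prec 1,-\pi^{-2e}\succ$, of volume order $-2e$). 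By hypothesis $R_{i}(N)=S_{i}$ equals the value $R_{i}(N_{\nu}^{n}(c))$ computed in the necessity step, so $\sum_{i}R_{i}(N)=\sum_{i}R_{i}(M')$ and therefore $\mathfrak{v}(N)=\mathfrak{v}(M')$. Finally, for the rank-preserving inclusion $N\subseteq M'$ the module index obeys $\mathfrak{v}(N)=[M':N]^{2}\,\mathfrak{v}(M')$ (a standard property, cf. \cite[\S 82]{omeara_quadratic_1963}); equality of volumes forces $[M':N]=\mathcal{O}_{F}$, i.e. $N=M'\cong N_{\nu}^{n}(c)$.

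The three cases differ only in the trailing block, so once the correct value of $\sum_{i}R_{i}(N_{\nu}^{n}(c))$ is recorded the sufficiency step is uniform; the only case-dependent labor is the bookkeeping in the necessity step. The hard part will be the clean justification of the volume identity $\mathfrak{v}(L)=\mathfrak{p}^{\sum_{i}R_{i}(L)}$ inside the BONG formalism, where the norm generators are pairwise orthogonal but need not form an $\mathcal{O}_{F}$-basis, together with transcribing the ternary blocks of (ii) and (iii) with the correct orders. Everything else reduces to the containment in a maximal lattice and the index--volume relation.
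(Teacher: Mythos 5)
Your proof is correct, but it takes a genuinely different route from the paper, whose ``proof'' of this lemma (and of Lemma \ref{lem:maximal-BONG-even}) is just the remark that the argument is the same as for \cite[Proposition 3.7]{HeHu2}; the style of that argument can be seen in the paper's proof of Lemma \ref{lem:McongN-2-ADC-rank-4}, where an isometry of lattices is established by verifying Beli's classification conditions (i)--(iv) of \cite[Theorem 3.2]{beli_representations_2006} (matching $R_i$'s and $\alpha_i$'s, quadratic-defect inequalities, and representation conditions on the underlying spaces). Your argument bypasses that machinery: necessity is immediate from Remark \ref{re:invariant} and the isometry-invariance of the $R_i$, and your sufficiency is the classical maximality criterion --- $N$ integral gives $N\subseteq M'$ with $M'$ maximal on $FN$ by \cite[82:18]{omeara_quadratic_1963}, $M'\cong N_{\nu}^{n}(c)$ by \cite[91:2 Theorem]{omeara_quadratic_1963}, and $\mathfrak{v}(N)=[M':N]^{2}\mathfrak{v}(M')$ forces $N=M'$ once the volumes agree. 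The volume bookkeeping is right in all three cases: $\sum_i S_i$ equals $-e(n-1)$, $-e(n-1)+2$ and $-e(n-1)+1$ respectively, which matches the BONGs in Lemma \ref{lem:maximallattices-dyadic}(i) since $\kappa$ and $\kappa^{\#}$ are units (note that for $\nu=2$ in (iii) the trailing block is the ternary $\prec 1,-\Delta\pi^{-2e},\Delta\delta\pi\succ$, whose first two entries continue the $\mathbf{H}$ pattern). The identity $\mathfrak{v}(L)=\mathfrak{p}^{\,R_{1}(L)+\cdots+R_{n}(L)}$ that you flag as the hard point is in fact standard in BONG theory (it is in \cite[\S 4]{beli_integral_2003}) and has a two-line proof: if $x_{1}$ is a norm generator then $L\cap Fx_{1}=\mathcal{O}_{F}x_{1}$ (a denominator would produce a vector of order below $\ord\,\mathfrak{n}(L)$), so lifting an $\mathcal{O}_{F}$-basis of $\pr_{x_{1}^{\perp}}(L)$ through the split exact sequence gives a basis $x_{1},v_{2},\ldots,v_{n}$ of $L$ whose Gram determinant is $Q(x_{1})$ times that of the projected lattice, and one inducts along the BONG construction. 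It is worth noting that your sufficiency step uses the hypotheses only through the sum $\sum_i S_i$, so you actually prove the slightly stronger statement that any integral lattice on $W_{\nu}^{n}(c)$ of the correct volume is maximal; what this approach buys is independence from Beli's isometry-classification theorem, at the cost of not generalizing to non-maximal target lattices (where the paper's defect-and-$\alpha_i$ method is the one that works, as in Lemma \ref{lem:McongN-2-ADC-rank-4}).
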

\begin{prop}\label{prop:maximalproperty}
	Let $ N\cong \prec b_{1},\ldots,b_{n}\succ $ be an $ \mathcal{O}_{F} $-maximal lattice of odd rank $ n\ge 3 $. Put $ S_{i}=R_{i}(N) $ and $\beta_{i}=\alpha_{i}(N)$. Then
	\begin{enumerate}[itemindent=-0.5em,label=\rm (\roman*)]
		\item 	 $ S_{i}=0 $ for $ i\in [1,n-2]^{O} $, $ S_{i}=-2e $ for $ i\in [1,n-2]^{E} $ and $ S_{n-1}\in \{-2e,2-2e\}  $.
		
		\item If $ S_{n-1}=-2e $, then $ S_{n} \in \{0,1\} $, $ \beta_{n-2}=0 $ and $ \beta_{n-1}\ge d[-b_{n-2,n-1}]\ge 2e $.
		
		\item  If $ S_{n-1}=2-2e $, then $ S_{n}=0 $, $ \beta_{n-2}=1 $ and $  d[-b_{n-2,n-1}]=\beta_{n-1}=2e-1 $.
	\end{enumerate}	 
\end{prop}
\begin{proof}
		(i) It is clear from Lemma \ref{lem:maximal-BONG-odd}.
		
		(ii) If $ S_{n-1}=-2e $, then $S_{n}\in \{0,1\}$ by Lemma \ref{lem:maximal-BONG-odd}. Since $S_{n-1}-S_{n-2}=-2e$, by Proposition \ref{prop:alphaproperty}(ii) and (iv), we have  $ \beta_{n-2}=0 $ and $ \beta_{n-1}\ge d[-b_{n-2,n-1}]\ge 2e$.
		
		(iii) If $ S_{n-1}=2-2e $, then $S_{n}=0$ by Lemma \ref{lem:maximal-BONG-odd}. Since $S_{n-1}-S_{n-2}=2-2e$ and $ S_{n}-S_{n-1}=2e-2 $, by Proposition \ref{prop:Rproperty}(ii), we have $ \beta_{n-2}=1 $ and $ \beta_{n-1}=2e-1 $. Hence
		\begin{align*}
			2e-1=(2e-2)+1=S_{n-2}-S_{n-1}+\beta_{n-2}\le d[-b_{n-2,n-1}]\le \beta_{n-1}=2e-1
		\end{align*}
		by \eqref{eq:alpha-defn}, as desired.
\end{proof}

 \medskip
 We return to the case where $F$ is a local field. The following lemma shows that, over local fields, maximality implies $n$-ADC-ness.
\begin{lem}\label{lem:maximalrep}
   	Let $ M $ be an $ \mathcal{O}_{F} $-maximal lattice. If $ FM $ represents $ FN $, then $ M $ represents $ N $; thus $ M $ is $ n $-ADC for  $ 1\le n\le \rank\, M $. 
 \end{lem}
 \begin{proof}
   	If $ FM $ represents $ FN $, by \cite[Proposition 1]{omeara_integral_1958}, $ FM\cong FN \perp V $ for some quadratic space. Take an integral lattice $ L $ on $ V $. Then $ \mathfrak{n}(N\perp L)\subseteq \mathcal{O}_{F} $. By \cite[82:18]{omeara_quadratic_1963} and \cite[91:2 Theorem]{omeara_quadratic_1963}, there must be an $\mathcal{O}_{F}$-maximal lattice $ M^{\prime} $ of rank $n$ on $ FM $ such that $
   	N\subseteq N\perp L\subseteq M^{\prime}\cong M $. Thus $M$ represents $N$.
 \end{proof}
   
The following proposition characterizes $n$-ADC $\mathcal{O}_{F}$-lattices of rank $n$, thereby proving the simple case of Theorem \ref{thm:locallyn-ADCn+1}(i).
  
\begin{prop}\label{prop:n-ADC-n}
   	Let $ M $ be an $ \mathcal{O}_{F} $-lattice of rank $n\ge 2$. Then $ M $ is $ n $-ADC if and only if $M$ is $ \mathcal{O}_{F} $-maximal.
 \end{prop}
 \begin{proof}
   	Sufficiency is clear from Lemma \ref{lem:maximalrep}. From Remark \ref{re:spaces-maximal-lattices}, we may choose an $\mathcal{O}_{F}$-maximal lattice $N$ of rank $n$ such that $FN\cong FM$. Then $FN\rep FM$ by \cite[63:21 Theorem]{omeara_quadratic_1963}. Since $M$ is $n$-ADC, we have $N\rep M$, so $M\cong N$ by the maximality of $N$.
  \end{proof}
  Using Lemmas \ref{lem:maximallattices-non-dyadic}(i) and \ref{lem:maximallattices-dyadic}(i) with $n=4$, one can easily prove the following proposition for quaternary maximal lattices, which will be used in the proof of Theorem \ref{thm:Z2-ADCquaternary}.
 \begin{prop}\label{prop:maximal-quaternary}
 	Let $N$ be a quaternary $\mathcal{O}_{F}$-maximal lattice. Then $N$ represents $\mathbf{H}$ except when $N=N_{2}^{4}(1)$. In the exceptional case, $N\cong \mathbf{A}\perp \mathbf{A}^{(\pi)} $, 
 	where $\mathbf{A}^{(\pi)}$ denotes the lattice $\mathbf{A}$ scaled by $\pi$.
 \end{prop}
 
\section{$ n $-ADC lattices over non-dyadic local fields}\label{sec:n-ADCnon-dyadicfields}
	
Throughout this section, let $n$ be an integer with $n\ge 2$. We assume that $F$ is a non-dyadic local field and $M$ is an $\mathcal{O}_{F}$-lattice. In this case, we have $\mathcal{U}=\{1,\Delta\}$ and  $\mathcal{V}=\{1,\Delta,\pi,\Delta\pi\}$.
	
\begin{thm}\label{thm:nondyadic-n-ADCn+1n+2}
	 		 If $\rank\, M=n+1$ or $n+2$, then $ M $ is $ n $-ADC if and only if $M$ is $ \mathcal{O}_{F} $-maximal.
\end{thm}
\begin{lem}\label{cor:unimodular}  
	  Suppose that $M=J_{0,1}(M)$ and $\rank\,J_{1}(M)\le 1$. Then $M$ is $\mathcal{O}_{F}$-maximal; thus it is $n$-ADC.
	  
	  In particular, if $M$ is unimodular, then it is $\mathcal{O}_{F}$-maximal and $n$-ADC.
\end{lem}
\begin{proof}    	
	By the hypothesis, we have $\mathfrak{n}(M)=\mathfrak{s}(M)=\mathcal{O}_{F}$ and $\mathfrak{v}(M)\supseteq \mathfrak{p}$. It follows from \cite[82:19]{omeara_quadratic_1963} that $M$ is $\mathcal{O}_{F}$-maximal. Hence it is $n$-ADC from Lemma \ref{lem:maximalrep}.
	
    If $M$ is unimodular, then $M=J_{0}(M)$ and $\rank\,J_{1}(M)=0$, so the first assertion applies.
\end{proof}
\begin{lem}\label{lem:non-dyadic-rep-Nnu-eta}
	 \begin{enumerate}[itemindent=-0.5em,label=\rm (\roman*)]
			\item If for every $\delta\in \mathcal{U}$ there is some $\nu_{\delta}\in \{1,2\}$ such that  $M$ represents $N_{\nu_{\delta}}^{n}(\delta\pi)$, then $\rank\,J_{0}(M)\ge n-1$ and $\rank\,J_{0,1}(M)\ge n+1$.
			
			 \item If $M$ represents $N_{1}^{n}(\delta)$ for some $\delta\in \mathcal{U} $, then $\rank\,J_{0}(M)\ge n$.
			 
			 \item  If $M$ represents $N_{1}^{n}(\delta)$ for all $\delta\in \mathcal{U}$, then $\rank\,J_{0}(M)\ge n+1$.

		    \item  If $M$ represents both $N_{1}^{n}(c)$ and $N_{2}^{n}(c)$ for some $c\in \mathcal{V}$, then $\rank\,J_{0,1}(M)\ge n+2$.
			 	\end{enumerate}	
\end{lem}
\begin{proof}
		 (i) By Lemma \ref{lem:maximal-rep-nondyadic}, $FJ_{0}(M)$ represents $FJ_{0}(N_{\nu_{\delta}}^{n}(\delta \pi))$, which implies that
		 \begin{align*}
		 	\rank\,J_{0}(M)\ge \rank\, J_{0}(N_{\nu_{\delta}}^{n}(\delta \pi))=n-1.
		 \end{align*}
		 Also, $FJ_{0,1}(M)$ represents $FN_{\nu_{\delta}}^{n}(\delta\pi)=W_{\nu_{\delta}}^{n}(\delta\pi)$ for $\delta=1,\Delta$. Since $W_{\nu_{1}}^{n}(\pi)\not\cong W_{\nu_{\Delta}}^{n}(\Delta\pi)$ by Lemma \ref{lem:B-beli}(i), this implies that
		 \begin{align*}
		 	\rank\, J_{0,1}(M)=\dim FJ_{0,1}(M)\ge n+1.
		 \end{align*}		
		  
		 (ii) Observe from Lemma \ref{lem:maximallattices-non-dyadic}(i) that $N_{1}^{n}(\varepsilon)$ is unimodular for any $\varepsilon\in \mathcal{U}$, so $J_{0}(N_{1}^{n}(\varepsilon))=N_{1}^{n}(\varepsilon)$. By Lemma \ref{lem:maximal-rep-nondyadic}, $FJ_{0}(M)$ represents $FJ_{0}(N_{1}^{n}(\delta ))=FN_{1}^{n}(\delta)=W_{1}^{n}(\delta)$, which implies that
		 \begin{align*}
		 	\rank\,J_{0}(M)\ge \rank\,  N_{1}^{n}(\delta )=n.
		 \end{align*}
		 
		 (iii) By Lemma \ref{lem:maximal-rep-nondyadic}, $FJ_{0}(M)$ represents $FJ_{0}(N_{1}^{n}(\delta))=W_{1}^{n}(\delta)$  for $\delta=1,\Delta$. Since $W_{1}^{n}(1)\not\cong W_{1}^{n}(\Delta)$ by Lemma \ref{lem:B-beli}(i), this implies that
		  \begin{align*}
		 	\rank\, J_{0}(M)=\dim FJ_{0}(M)\ge n+1.
		 \end{align*}	 
		 
		 (iv) By Lemma \ref{lem:maximal-rep-nondyadic}, $FJ_{0,1}(M)$ represents $FN_{\nu}^{n}(c)=W_{\nu}^{n}(c)$ for $\nu=1,2$. This contradicts Lemma \ref{lem:spacerep-criterion}(i) if $\dim FJ_{0,1}(M)=n+1$. Hence
		 \begin{align*}
		 	\rank\, J_{0,1}(M)=\dim FJ_{0,1}(M)\ge n+2.
		 \end{align*} 
\end{proof}
\begin{lem}\label{lem:nondyadic-n-ADCn+1n+2}
		Suppose that $ M $ is $ n $-ADC of rank $n+1$ or $n+2$. Then $M=J_{0,1}(M)$ and $\rank\, J_{1}(M)\le 2$. 
\end{lem}
 \begin{proof} 
 	Let $\rank\, M=n+1$.  For each $\delta\in \{1,\Delta\}$, by Lemma \ref{lem:spacerep-criterion-ADC}(i), there is some $\nu_{\delta}\in \{1,2\}$ such that $M$ represents $N_{\nu_{\delta}}^{n}(\delta \pi)$. Hence, by Lemma \ref{lem:non-dyadic-rep-Nnu-eta}(i), we have
 	\begin{align*} 
 		\rank\, J_{0}(M)\ge n-1\quad\text{and}\quad\rank\, J_{0,1}(M)\ge n+1.
 	\end{align*}
 	 So $\rank\, J_{0,1}(M)=n+1$. Thus $J_{0,1}(M)=M$ and $\rank\, J_{1}(M)\le 2$.
 
 	Let $\rank\,M=n+2$ and $FM\cong W_{\nu}^{n+2}(c)$. Since $|\mathcal{U}|=2$ and $|\mathcal{V}|=4$, we have $\mathcal{U}\backslash \{c\}\not=\emptyset$ and $\mathcal{V}\backslash \{1,c\}\not=\emptyset$. Let $\delta\in \mathcal{U}\backslash\{c\}$ and $c^{\prime}\in \mathcal{V}\backslash \{1,c\}$. Since $\delta\not=c$, by Lemma \ref{lem:B-beli}(iii), $FM$ represents $W_{1}^{n}(\delta)$. Since $c^{\prime}\not=1$, both $W_{1}^{n}(c^{\prime})$ and $W_{2}^{n}(c^{\prime})$ are defined (including the case $n=2$) and, since $c^{\prime}\not=c$, by Lemma \ref{lem:B-beli}(iii), $FM$ represents both of them. Then, since $M$ is $n$-ADC, it represents $N_{1}^{n}(\delta)$, $N_{1}^{n}(c^{\prime})$ and $N_{2}^{n}(c^{\prime})$. By Lemma \ref{lem:non-dyadic-rep-Nnu-eta}(ii) and (iv), we get 
 	\begin{align*}
 		 \rank\, J_{0}(M)\ge n\quad\text{and}\quad\rank\, J_{0,1}(M)\ge n+2.
 	\end{align*}
 	So $\rank\, J_{0,1}(M)=n+2$. Thus $J_{0,1}(M)=M$ and $ \rank\, J_{1}(M)\le 2$.
 \end{proof}
\begin{proof}[Proof of Theorem \ref{thm:nondyadic-n-ADCn+1n+2}]
     	 Sufficiency is clear from Lemma \ref{lem:maximalrep}. Let $m=\rank\,M\in \{n+1,n+2\}$. By Lemma \ref{lem:nondyadic-n-ADCn+1n+2}, $M=J_{0,1}(M)$ and $\rank\,J_{1}(M)\le 2$. If $\rank\, J_{1}(M)\le 1$, then we are done by Lemma \ref{cor:unimodular}. 
     	 
     	 Assume $\rank\, J_{1}(M)=2$ and let $ M=J_{0}(M)\perp  M^{\prime (\pi)} $, where  $ M^{\prime}$ is unimodular of rank $2$. Since $J_{0}(M)$ and $M^{\prime}$ are unimodular, both are $\mathcal{O}_{F}$-maximal from Lemma \ref{cor:unimodular}. Hence, by Lemma \ref{lem:maximallattices-non-dyadic}(i),
     	 \begin{align*}
     	 	J_{0}(M)\cong N_{1}^{m-2}(1)\;\;\text{or}\;\;N_{1}^{m-2}(\Delta),\quad\text{and}\quad M^{\prime}\cong \mathbf{H}\;\;\text{or}\;\;\langle 1,-\Delta\rangle.
     	 \end{align*}
     	  If $M^{\prime}\cong \langle 1,-\Delta\rangle$, then $M\cong N_{2}^{m}(1)$ or $N_{2}^{m}(\Delta)$, as desired. Assume $M^{\prime}\cong \mathbf{H}$. Then $ FM\cong W_{1}^{m}(\eta^{\prime}) $ for some $ \eta^{\prime}\in \{1,\Delta\} $. Hence $FM$ represents $W_{1}^{n}(\eta)$ with $\eta\in \{1,\Delta\}$, by Lemma \ref{lem:B-beli}(ii), for $m=n+1$ (we have $(\eta,\eta^{\prime})_{\mathfrak{p}}=1$) and, by Lemma \ref{lem:B-beli}(iii), for $m=n+2$, so $M$ represents both of $N_{1}^{n}(1)$ and $N_{1}^{n}(\Delta)$ by $n$-ADC-ness of $M$. By Lemma \ref{lem:non-dyadic-rep-Nnu-eta}(iii), we have $\rank\, J_{0}(M)\ge n+1$, a contradiction.
\end{proof}
    
\section{ $ n $-ADC lattices over dyadic local fields I}\label{sec:even-n-ADC}\label{sec:n-ADCdyadicfields-even}
	
In this section, let $ n $ be an even integer with $ n\ge 2 $. We assume that $F$ is a dyadic local field and $  M\cong \prec a_{1},\ldots,a_{m}\succ $ is an $ \mathcal{O}_{F} $-lattice of rank $ m\ge n $, relative to some good BONG. Let $ R_{i}=R_{i}(M) $ for $ 1\le i\le m $ and $ \alpha_{i}=\alpha_{i}(M) $ for $ 1\le i\le m-1 $. We also suppose that $N\cong \prec b_1,\cdots, b_n\succ$ is an $\mathcal{O}_{F}$-lattice of rank $n$, relative to some good BONG, and denote its associated invariants by $S_i=R_i(N)$ and $\beta_i=\alpha_i(N)$ when an $\mathcal{O}_F$-lattice $N$ with rank $ n $ is discussed.
\begin{thm}\label{thm:dyadicACDeven-n+1}
		If $\rank\, M=n+1$, then $ M $ is $ n $-ADC if and only if $ M $ is $ \mathcal{O}_{F} $-maximal.		
\end{thm}
\begin{thm}\label{thm:dyadicACDeven}
	 	 If $\rank\, M=n+2$, then $ M $ is $ n $-ADC  if and only if either $ M $ is $ \mathcal{O}_{F} $-maximal, or $ n=2 $ and
	 	  \begin{align*}
	 	  	M\cong \mathbf{H}\perp \prec 1,-\Delta\pi^{2-2e}\succ\cong \mathbf{H}\perp 2^{-1}\pi A(2\pi^{-1},2\rho\pi),
	 	  \end{align*}
	 	 which is not $\mathcal{O}_{F}$-maximal. 	
\end{thm}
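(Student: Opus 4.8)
The plan is to prove both implications using the BONG description of $M$ and Beli's representation criterion (Theorem~\ref{thm:beligeneral}), together with the classification of even-rank maximal lattices in Lemma~\ref{lem:maximal-BONG-even}. Throughout I write $M\cong\prec a_1,\ldots,a_{n+2}\succ$ relative to a good BONG with invariants $R_i=R_i(M)$ and $\alpha_i=\alpha_i(M)$.

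\textbf{Sufficiency.} If $M$ is $\mathcal{O}_F$-maximal, then $n$-ADC-ness is immediate from Lemma~\ref{lem:maximalrep}, since $n<\rank M$. For the exceptional lattice $M\cong\mathbf{H}\perp\prec 1,-\Delta\pi^{2-2e}\succ$ (with $n=2$) I would verify $2$-ADC-ness directly. Its invariants are $R_1=0$, $R_2=-2e$, $R_3=0$, $R_4=2-2e$, and a short computation of determinant and Hasse invariant gives $FM\cong\mathbb{H}\perp[1,-\Delta]=W_1^4(\Delta)$. By Lemma~\ref{lem:maximalequivADC} it suffices to show that $M$ represents every $N\in\mathcal{M}_2$ with $FM$ representing $FN$; by Proposition~\ref{prop:space}(iii) these are exactly the $N\in\mathcal{M}_2$ other than $N_2^2(\Delta)$. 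For each such $N$, whose invariants $R_i(N)$ are read off from Lemma~\ref{lem:maximal-BONG-even}, I check that conditions (i)--(iv) of Theorem~\ref{thm:beligeneral} hold against the listed $R_i(M)$.

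\textbf{Necessity.} Suppose $M$ is $n$-ADC of rank $n+2$. The first step is to pin down the leading invariants, in the spirit of Lemma~\ref{lem:nondyadic-n-ADCn+1n+2}. Since $M$ is integral, Proposition~\ref{prop:Ralphaproperty3}(i) gives the lower bounds $R_i\ge 0$ for odd $i$ and $R_i\ge -2e$ for even $i$. Using Lemma~\ref{lem:spacerep-criterion-ADC}(ii), writing $FM\cong FN_\nu^{n+2}(c)$, the lattice $M$ must represent all $N\in\mathcal{M}_n$ with $N\not\cong N_{3-\nu}^n(c)$; feeding these representations into Beli's condition (i) of Theorem~\ref{thm:beligeneral} produces matching upper bounds, forcing $R_i=0$ for odd $i\le n+1$ and $R_i=-2e$ for even $i\le n$. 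Thus the only residual freedom is the last invariant $R_{n+2}$, together with $FM$ and its attached defect data.

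Then I would case-analyze on $R_{n+2}$ and $FM$. By Lemma~\ref{lem:maximal-BONG-even}, once the leading invariants are fixed, $M$ is $\mathcal{O}_F$-maximal precisely when $R_{n+2}=-2e$, or $(R_{n+1},R_{n+2})=(1,1-2e)$, or $R_{n+2}=1-d(c)$ with $FM\cong W_\nu^{n+2}(c)$ for the corresponding $c$. For a non-maximal $M$ the surviving value is $R_{n+2}=2-2e$ while $FM\cong W_1^{n+2}(\Delta)$ (so the numerical tail does not match the maximal lattice on its own space); applying the $n$-ADC requirement once more, namely that $M$ represent both relevant maximal lattices carried by $FM$, and testing this with Theorem~\ref{thm:beligeneral}, shows it is consistent only when $n=2$, forcing $M\cong\mathbf{H}\perp\prec 1,-\Delta\pi^{2-2e}\succ$. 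For $n\ge 4$ the extra hyperbolic summands generate further representation demands that $R_{n+2}=2-2e$ cannot meet, so no non-maximal example survives. The main obstacle is this necessity direction, and within it the delicate bookkeeping of the dyadic invariants: turning the abstract statements ``$M$ represents $N$'' into numerical constraints requires repeatedly evaluating $\alpha_i$, $d[\cdot]$ and $A_i$ across the possible spaces $FM\cong W_\nu^{n+2}(c)$, and the hard part will be verifying that these constraints are tight enough to exclude every non-maximal lattice except the single exception and simultaneously to force $n=2$ there, rather than merely narrowing the possibilities.
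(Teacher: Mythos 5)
Your sufficiency argument matches the paper's (Lemma \ref{lem:maximalrep} for the maximal case, and for the exceptional lattice a direct verification of Theorem \ref{thm:beligeneral}(i)--(iv) against all $N\in\mathcal{M}_2$ with $N\not\cong N_2^2(\Delta)$, exactly as in Lemma \ref{lem:2-ACDsufficiencyeven}(i)). The necessity direction, however, rests on a false premise. You claim that $n$-ADC-ness forces $R_i=0$ for all odd $i\le n+1$, so that ``the only residual freedom is the last invariant $R_{n+2}$.'' This is contradicted by $n$-ADC lattices that actually exist: the maximal lattices $N_2^{n+2}(1)$ and $N_2^{n+2}(\Delta)$ have $(R_{n+1},R_{n+2})=(1,1-2e)$ by Lemma \ref{lem:maximal-BONG-even}(ii) --- and indeed you list this very pair later among the maximality criteria, so your sketch is internally inconsistent. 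The pinning $R_{n+1}=0$ comes from Lemma \ref{lem:repN1N2}(ii), which requires $M$ to represent \emph{both} $N_1^n(1)$ and $N_1^n(\Delta)$; but when $FM\cong FN_2^{n+2}(c)$ with $c\in\{1,\Delta\}$, one of these two is precisely the excluded lattice $N_{3-\nu}^n(c)$ from Lemma \ref{lem:spacerep-criterion-ADC}(ii), so the hypothesis fails. In those cases the best available bound is $R_{n+1}\in\{0,1,2\}$, obtained from the $\kappa$-lattices $N_\nu^n(\kappa)$ with $d(\kappa)=2e-1$ (Lemma \ref{lem:repN1N2}(iv)), and the paper must then run a genuinely two-pronged analysis on $\alpha_{n+1}\in\{0,1\}$ (Lemma \ref{lem:repN4N3-2}, Corollary \ref{cor:R-R-odd}(ii), and the defect computation via the domination principle and \cite[Corollary 2.10]{beli_representations_2019}) to force $(R_{n+1},R_{n+2})=(1,1-2e)$; see the proof of Lemma \ref{lem:n-ACDnecessityeven-1}(iii),(iv). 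Your case analysis ``on $R_{n+2}$ and $FM$'' alone cannot reach these lattices, so as written the necessity proof does not go through.

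A secondary imprecision: you attribute the exclusion of the non-maximal candidate at $n\ge 4$ to ``extra hyperbolic summands,'' but the actual mechanism is the existence of $N_2^n(1)$, which is defined only for $n\ge 4$ (the space $W_2^2(1)$ does not exist). For $FM\cong FN_1^{n+2}(\Delta)$ with $n\ge 4$ the $n$-ADC hypothesis forces $M$ to represent $N_2^n(1)$, and Lemma \ref{lem:repN4N3-2}(ii) then yields a quadratic-defect contradiction with $d((-1)^{(n+2)/2}a_{1,n+2})=2e$, excluding $R_{n+2}\ge 2-2e$; at $n=2$ this test is unavailable, which is exactly why the exception survives there (via Lemma \ref{lem:repN5N6-n=2-2} one only gets $R_4\in\{-2e,2-2e\}$). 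Finally, note that Beli's Theorem \ref{thm:beligeneral} is a representation criterion, not a classification theorem: to conclude $M\cong\mathbf{H}\perp\prec 1,-\Delta\pi^{2-2e}\succ$ from the surviving invariants you still need an isometry argument (the paper invokes \cite[Theorem 3.2]{beli_representations_2006} in Lemma \ref{lem:McongN-2-ADC-rank-4}), a step your sketch leaves unaddressed.
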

\begin{re}\label{re:dyadicACDeven}
	 	When $e=1$, by \cite[Corollary 3.4(ii)]{beli_integral_2003} and \cite[Lemma 3.10]{HeHu2}, we also have $\mathbf{H}\perp \prec 1,-\Delta\pi^{2-2e}\succ\cong \mathbf{H}\perp \langle 1,-\Delta\rangle$. 
\end{re}
\begin{lem}\label{lem:repN1N2}			
		\begin{enumerate}[itemindent=-0.5em,label=\rm (\roman*)]		
		\item If $ M $ represents $ N_{1}^{n}(\Delta) $ (resp. $ N_{1}^{n}(1) $), then $ R_{i-1}=R_{i}+2e=0$ for $  i\in [1,n]^{E} $. If moreover $ R_{n+1}>0 $, then $ d((-1)^{n/2}a_{1,n})=2e $ (resp. $ d((-1)^{n/2}a_{1,n})=\infty $).
		
		\item If $ M $ represents $ N_{1}^{n}(1) $ and $ N_{1}^{n}(\Delta) $, then $ R_{i-1}=R_{i}+2e=0$ for $  i\in [1,n]^{E} $ and $ R_{n+1}=0$.
		
		\item   If $ M $ represents $ N_{2}^{n}(\Delta) $ (resp. $ N_{2}^{n}(1) $, with $ n\ge 4 $), then $ R_{i-1}=R_{i}+2e=0$ for $  i\in [1,n-2]^{E} $ and either $ R_{n-1}=0$ and $ R_{n}\in \{-2e,2-2e\} $  or $ R_{n-1}=R_{n}+2e=1 $. 
		
		\item If $ M $ represents one of $N_{1}^{n}(1) $, $ N_{1}^{n}(\Delta) $ and $ N_{2}^{n}(\Delta) $, and one of $ N_{1}^{n}(\kappa)$ and $ N_{2}^{n}(\kappa) $, then $ R_{n+1}\in \{0,1,2\}$. (Here $\kappa$ is the unit with $d(\kappa)=2e-1$ from Lemma \ref{lem:maximallattices-dyadic}(i).)
	 	\end{enumerate}	 
\end{lem}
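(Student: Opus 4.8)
The plan is to deduce every assertion from the necessity direction of Beli's representation theorem (Theorem~\ref{thm:beligeneral}), applied with each relevant maximal lattice $N$ playing the role of the represented lattice of rank $n$ and $M$ the representing lattice of rank $m$. For each $N$ I first read off $S_i=R_i(N)$ from Lemma~\ref{lem:maximallattices-dyadic}(i) and Remark~\ref{re:invariant}: for $N_1^n(1)$ and $N_1^n(\Delta)$ one has $S_i=0$ (odd $i$), $S_i=-2e$ (even $i$); for $N_2^n(c)$ with $c\in\{1,\Delta\}$ the same pattern for $i\le n-2$ together with $S_{n-1}=1$, $S_n=1-2e$; and for $N_1^n(\kappa),N_2^n(\kappa)$ the pattern for $i\le n-1$ together with $S_n=1-d(\kappa)=2-2e$. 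The two workhorses will be condition~(i) of Theorem~\ref{thm:beligeneral} together with the sign/parity constraints $R_i\ge 0$ (odd $i$), $R_i\ge -2e$ (even $i$) from Proposition~\ref{prop:Ralphaproperty3}(i) and Corollary~\ref{cor:R-R-odd}(i), which fix the $R_i$-patterns, and condition~(ii), which controls the quadratic-defect and tail statements.

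For the $R_i$-patterns in (i)--(iii) I would run condition~(i) index by index. At a fixed $i\le n$ the dichotomy ``$R_i\le S_i$, or $1<i<m$ and $R_i+R_{i+1}\le S_{i-1}+S_i$'' is combined with the sign constraints: e.g. for $N_1^n(c)$ and even $i\le n$ the first branch gives $R_i\le -2e$, hence $R_i=-2e$, while the second gives $R_i+R_{i+1}\le -2e$ with $R_{i+1}\ge 0$, again forcing $R_i=-2e$ (and $R_{i+1}=0$); the odd indices are symmetric, yielding $R_{i-1}=R_i+2e=0$ for $i\in[1,n]^{E}$. For part~(iii) the same analysis settles $i\le n-2$, and at $i=n-1,n$ the dichotomy, the bound $R_{n-1}\le 1$ (from condition~(i) at $i=n-1$), the constraint $R_n\ge -2e$, and Corollary~\ref{cor:R-R-odd}(i) (an odd gap $R_n-R_{n-1}$ must be positive) leave exactly $(R_{n-1},R_n)\in\{(0,-2e),(0,2-2e),(1,1-2e)\}$, the asserted trichotomy.

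For the ``moreover'' clause of~(i), Proposition~\ref{prop:Ralphaproperty3}(iv) and $R_n=-2e$ give $[a_1,\dots,a_n]\cong\mathbb{H}^{n/2}$ or $\mathbb{H}^{(n-2)/2}\perp[1,-\Delta]$, equivalently $d((-1)^{n/2}a_{1,n})\in\{\infty,2e\}$; I select the correct value with condition~(ii) at $i=n$, namely $d[a_{1,n}b_{1,n}]\ge A_n$. In the ``wrong'' case $a_{1,n}b_{1,n}=\Delta$ modulo squares, so the left side is $\le d(\Delta)=2e$, whereas $S_n=-2e$ makes the leading term of $A_n$ equal $R_{n+1}/2+2e>2e$. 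When $m=n+1$ the third term of $A_n$ is absent and the second term is also $>2e$, so $A_n>2e$ and we reach a contradiction, pinning the head space to $FN$ and hence the defect. Part~(ii) is then immediate: if $R_{n+1}>0$, representing both $N_1^n(1)$ and $N_1^n(\Delta)$ would force $d((-1)^{n/2}a_{1,n})$ to be both $\infty$ and $2e$, so $R_{n+1}=0$ (it is $\ge 0$ by Proposition~\ref{prop:Ralphaproperty3}(i)).

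For~(iv) the first representation fixes $R_n\in\{-2e,1-2e,2-2e\}$ by (i) or (iii), and I feed the $N_\nu^n(\kappa)$-representation (for which $S_n=2-2e$) into Theorem~\ref{thm:beligeneral}. The second branch of condition~(i) at $i=n$ reads $R_n+R_{n+1}\le 2-2e$, giving $R_{n+1}\le 2$ once $R_n\ge -2e$; the lower bound $R_{n+1}\ge 0$ is automatic. The step I expect to be the main obstacle is the first branch of condition~(i), which holds vacuously whenever $R_n\le 2-2e$ and yields nothing about $R_{n+1}$: there one must again pass to condition~(ii) at $i=n$, bounding the fixed quantity $d[a_{1,n}b_{1,n}]$ (determined by the already-pinned head $a_1,\dots,a_n$ and the known $b_1,\dots,b_n$) against the linearly growing $A_n$. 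The same third-term subtlety as in paragraph three---present only for $m\ge n+2$ and genuinely borderline when $e=1$---is the technical heart of the lemma; resolving it will require the full strength of $R_{n+1}>0$ together with the space hypothesis $FN\rep FM$ of Theorem~\ref{thm:beligeneral}, or condition~(ii)/(iii) at a neighbouring index.
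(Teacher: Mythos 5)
Your handling of the $R_i$-patterns in (i)--(iii) is sound and essentially matches the paper: where you run the dichotomy of Theorem~\ref{thm:beligeneral}(i) index by index against the sign constraints of Proposition~\ref{prop:Ralphaproperty3}(i) and Corollary~\ref{cor:R-R-odd}(i), the paper quotes the pairwise-sum inequalities $R_{i}+R_{i+1}\le S_{i}+S_{i+1}$ from \cite[Lemma 4.6(i)]{beli_representations_2006} at the tail indices and then propagates backwards with Proposition~\ref{prop:Ralphaproperty3}(iii); both routes give the same trichotomy in (iii). The genuine gap is exactly where you flagged it: every quadratic-defect statement --- the ``moreover'' clause of (i), hence the conclusion $R_{n+1}=0$ in (ii), and all of (iv) --- is established by your method only when $m=n+1$. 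For $m\ge n+2$ the third term of $A_n$, namely $R_{n+1}+R_{n+2}-S_{n-1}-S_{n}+d[a_{1,n+2}b_{1,n-2}]$, is bounded below only by roughly $2R_{n+1}$ (using $R_{n+2}\ge R_{n+1}-2e$ and $S_{n-1}+S_{n}=-2e$), so for instance with $R_{n+1}=1$ and $e\ge 2$ it can be $\le 2e$ while $d[a_{1,n}b_{1,n}]=2e$ in the ``wrong'' case; then Theorem~\ref{thm:beligeneral}(ii) at $i=n$ is simply satisfied and yields no contradiction. (So the failure is not only a borderline $e=1$ issue.) Likewise in (iv), as you yourself observe, condition (i) at $i=n$ is vacuous because the first disjunct $R_n\le S_n$ already holds, so your stated deduction $R_{n+1}\le 2$ from the second disjunct is not valid, and your fallback to condition (ii) inherits the same third-term obstruction. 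Your closing sentence defers this to ``condition (ii)/(iii) at a neighbouring index'' without carrying it out, so the proposal is incomplete precisely in the cases the paper uses: Lemma~\ref{lem:n-ACDnecessityeven-1} (the core of Theorem~\ref{thm:dyadicACDeven}) invokes parts (i), (ii) and (iv) with $m=n+2$.

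The paper closes this with a tool that is not contained in Theorem~\ref{thm:beligeneral} as stated and which you did not have: \cite[Corollary 2.10]{beli_representations_2019}, asserting that if $N\rep M$ and $R_{n+1}-S_{n}>2e$, then $a_{1,n}b_{1,n}\in F^{\times 2}$, with no hypothesis on $m$ and no case analysis on the third term of $A_n$. Applied with $N=N_{1}^{n}(1)$ or $N_{1}^{n}(\Delta)$ (where $S_n=-2e$), any $R_{n+1}>0$ forces $d((-1)^{n/2}a_{1,n})=\infty$ resp.\ $2e$, giving the ``moreover'' clause of (i) and then (ii); applied in (iv) with $S_n\in\{-2e,1-2e,2-2e\}$, the assumption $R_{n+1}>2$ pins $d((-1)^{n/2}a_{1,n})$ simultaneously to a value in $\{\infty,2e\}$ and to $d(\kappa)=2e-1$, a contradiction. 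To repair your proof you would need either to import this corollary or to reprove its content from conditions (ii)--(iv) of Theorem~\ref{thm:beligeneral} at several indices, neither of which your sketch does.
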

\begin{proof}
		(i) Let $N=N_{1}^{n}(\Delta)$ or $N_{1}^{n}(1)$. Then $ S_{n-1}=S_{n}+2e=0 $ by Lemma \ref{lem:maximal-BONG-even}(i). By Proposition \ref{prop:Ralphaproperty3}(i), we have $ R_{n-1} \ge 0 $ and $ R_{n}\ge  -2e $. If $M$ represents $N$, then $ -2e\le R_{n}\le R_{n-1}+R_{n}\le S_{n-1}+S_{n}=-2e $ by \cite[Lemma 4.6(i)]{beli_representations_2006}. So $R_{n}=-2e$ and hence $ R_{i-1}=R_{i}+2e=0 $ for $i\in [1,n]^{E}$, by Proposition \ref{prop:Ralphaproperty3}(iii).
		
		 If $ R_{n+1}>0 $, then $ R_{n+1}-S_{n}>2e$. By \cite[Corollary 2.10]{beli_representations_2019}, we have $ a_{1,n}b_{1,n}\in  F^{\times 2} $ and thus $a_{1,n}=b_{1,n}$ in $F^{\times}/F^{\times 2}$. So $(-1)^{n/2}a_{1,n}=(-1)^{n/2}b_{1,n}=\Delta$ or $1$, i.e., $d((-1)^{n/2}a_{1,n})=2e$ or $\infty$, according as $N=N_{1}^{n}(\Delta)$ or $N=N_{1}^{n}(1)$.
		 
		(ii) The first statement is clear from (i). By Proposition \ref{prop:Ralphaproperty3}(i), we have $R_{n+1}\ge 0$. Assume $ R_{n+1}>0 $. If $ M $ represents $ N_{1}^{n}(1) $ and $ N_{1}^{n}(\Delta) $, then $ d((-1)^{n/2}a_{1,n})=\infty $ and $ d((-1)^{n/2}a_{1,n})=2e $ by (i). This is impossible.	 
		
		(iii) Let $N=N_{2}^{n}(\Delta)$, or $N_{2}^{n}(1)$, with $n\ge 4$. Then $S_{n-3}=S_{n-2}+2e=0$ and $ S_{n-1}=S_{n}+2e=1 $ by Lemma \ref{lem:maximal-BONG-even}(ii). Similar to (i), we have $ R_{i-1}=R_{i}+2e=0 $ for $ i\in [1,n-2]^{E} $. Applying \cite[Lemma 4.6(i)]{beli_representations_2006}, we see that
		\begin{align}
			-2e+R_{n-1}=R_{n-2}+R_{n-1}&\le S_{n-2}+S_{n-1}=1-2e,\label{ineq1}\\
				R_{n-1}+R_{n}&\le S_{n-1}+S_{n}=2-2e \label{ineq2}.
		\end{align}
			Hence $ R_{n-1}\in \{0,1\} $ by \eqref{ineq1}. If $ R_{n-1}=0 $, then $-2e\le R_{n}\le 2-2e $ by \eqref{eq:BONGs} and \eqref{ineq2}, so $ R_{n}\in \{-2e,2-2e\} $ by Corollary \ref{cor:R-R-odd}(i). If $ R_{n-1}=1$, then $ R_{n}=1-2e $ similarly.
			
		(iv) Assume $R_{n+1}>2$. If $N=N_{\nu}^{n}(\varepsilon)$ is any of the five lattices under consideration, then, by Lemma \ref{lem:maximal-BONG-even}(i) and (iii), we have $S_{n}\le 2-2e$, so $R_{n+1}-S_{n}>2e$. Then, same as in the proof of (ii), we get $d((-1)^{n/2}a_{1,n})=d((-1)^{n/2}b_{1,n})=d(\varepsilon)$. 
				
		Since $M$ represents $N_{1}^{n}(1)$, $N_{1}^{n}(\Delta)$ or $N_{2}^{n}(\Delta)$, we have $d((-1)^{n/2}a_{1,n})=\infty$ or $2e$. Since $M$ also represents $N_{1}^{n}(\kappa)$ or $N_{2}^{n}(\kappa)$, we have $d((-1)^{n/2}a_{1,n})=d(\kappa)=2e-1$, a contradiction.
\end{proof}
\begin{lem}\label{lem:determineRn+1}
		Suppose $m=n+1 $ and $ R_{i-1}=R_{i}+2e=0 $ for all $ i\in [1,n-2]^{E} $. Let $ N=N_{\nu}^{n}(\varepsilon\pi) $, with $ \nu\in \{1,2\} $ and $ \varepsilon\in \mathcal{U}$.
	 	\begin{enumerate}[itemindent=-0.5em,label=\rm (\roman*)]	
	 	\item If $ R_{n-1}=0 $, $ R_{n}\in \{-2e,2-2e\} $ and $ R_{n+1}\ge 2 $, then Theorem \ref{thm:beligeneral}(ii) fails at $ i=n$.
	 	
	 	\item If $ R_{n-1}=R_{n}+2e=1 $, then Theorem \ref{thm:beligeneral}(ii) fails at $ i=n-1 $.
\end{enumerate}
\end{lem}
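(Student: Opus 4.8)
The plan is to apply the representation criterion of Theorem~\ref{thm:beligeneral} and show that its condition (ii), $d[a_{1,i}b_{1,i}]\ge A_i$, breaks down at the prescribed index (so that $N\not\rep M$). Throughout I set $N=N_\nu^n(\varepsilon\pi)$ and write $S_i=R_i(N)$, $\beta_i=\alpha_i(N)$. Since $\varepsilon\pi$ has odd order, $d(\varepsilon\pi)=0$, so by Lemma~\ref{lem:maximal-BONG-even}(iii) the only facts I need about $N$ are $S_i=0$ for odd $i\le n-1$, $S_i=-2e$ for even $i\le n-2$, and (as $n$ is even) $S_{n-1}=0$, $S_n=1$. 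I would prove both parts by the same two-step scheme: first $d[a_{1,i}b_{1,i}]=0$, then $A_i>0$.

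For the first step I would use a parity count of valuations. Combining the hypothesised $R$-pattern of $M$ with the $S_i$ above, one finds that the sum $\ord(a_{1,n}b_{1,n})$ (in (i)) and $\ord(a_{1,n-1}b_{1,n-1})$ (in (ii)) each contains exactly one odd summand, namely $S_n=1$ in (i) and $R_{n-1}=1$ in (ii), all other entries being even; hence both totals are odd. An element of odd order equals its own quadratic defect, so its relative quadratic defect is $0$; thus $d(a_{1,i}b_{1,i})=0$, and since the $\alpha$- and $\beta$-invariants are $\ge 0$ by Proposition~\ref{prop:alphaproperty}(i), this forces $d[a_{1,i}b_{1,i}]=0$ at $i=n$ (resp. $i=n-1$).

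The second step is where the two parts diverge. For part (i) it is immediate: with $S_n=1$ and $R_{n+1}\ge 2$ the first component $(R_{n+1}-S_n)/2+e\ge \tfrac12+e$ and the second component $R_{n+1}-S_n+d[-a_{1,n+1}b_{1,n-1}]\ge 1$ are both positive (the defect term is $\ge 0$), so $A_n\ge 1>0=d[a_{1,n}b_{1,n}]$ and (ii) fails at $i=n$. For part (ii) the first component of $A_{n-1}$ is exactly $(R_n-S_{n-1})/2+e=\tfrac12$, so it remains to show the other two components are positive. The third component (present only for $n\ge 4$) is $R_n+R_{n+1}-S_{n-2}-S_{n-1}+d[a_{1,n+1}b_{1,n-3}]=1+R_{n+1}+(\ge 0)\ge 2$. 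The decisive one is the second component $R_n-S_{n-1}+d[-a_{1,n}b_{1,n-2}]=(1-2e)+d[-a_{1,n}b_{1,n-2}]$, which is positive precisely when $d[-a_{1,n}b_{1,n-2}]\ge 2e$.

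The crux, and the step I expect to be the main obstacle, is thus to prove $d[-a_{1,n}b_{1,n-2}]=2e$. By definition this is $\min\{d(-a_{1,n}b_{1,n-2}),\alpha_n,\beta_{n-2}\}$, and the two invariant terms I would read off directly: $R_{n+1}-R_n\ge 2e$ gives $\alpha_n=(R_{n+1}-R_n)/2+e\ge 2e$ and $S_{n-1}-S_{n-2}=2e$ gives $\beta_{n-2}=2e$, both by Proposition~\ref{prop:Rproperty}(ii). It then suffices to show $d(-a_{1,n}b_{1,n-2})\ge 2e$, which I would obtain by proving $-a_{1,n}b_{1,n-2}\in F^{\times 2}\cup\Delta F^{\times 2}$, whose defect is $\infty$ or $2e$. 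For this I invoke Corollary~\ref{cor:R-R-odd}(ii): every step with $R_{j+1}-R_j=-2e$ forces $\prec a_j,a_{j+1}\succ\cong 2^{-1}\pi^{R_j}A(0,0)$ or $2^{-1}\pi^{R_j}A(2,2\rho)$, so each binary block $\prec a_{2k-1},a_{2k}\succ$ ($1\le k\le\tfrac{n-2}{2}$) and $\prec a_{n-1},a_n\succ$ contributes $-1$ or $-\Delta$ to the determinant modulo squares. Multiplying these, together with $b_{1,n-2}\equiv(-1)^{(n-2)/2}$ read off the explicit BONG of $N$, collapses $-a_{1,n}b_{1,n-2}$ to $1$ or $\Delta$ modulo squares, giving $d[-a_{1,n}b_{1,n-2}]=2e$. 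Hence the second component equals $1$ and $A_{n-1}=\tfrac12>0=d[a_{1,n-1}b_{1,n-1}]$, so (ii) fails at $i=n-1$. Finally I would note that the case $n=2$ runs identically, except that the $b$-index disappears ($b_{1,0}=1$) and the third component is absent, so only the first two components of $A_1$ enter.
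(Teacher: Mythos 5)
Your proposal is correct and follows essentially the same route as the paper's proof: in both parts you show $d[a_{1,i}b_{1,i}]=0$ from the parity of $\ord(a_{1,i}b_{1,i})$ (the single odd summand being $S_n=1$ in (i) and $R_{n-1}=1$ in (ii)) and then bound each component of $A_i$ strictly above $0$, with the same numerical estimates the paper uses. The only divergence is the key inequality $d[-a_{1,n}b_{1,n-2}]\ge 2e$ in part (ii), which the paper obtains in one line from Proposition \ref{prop:alphaproperty}(iv) (as $R_n-R_{n-1}=-2e$) combined with the domination principle and Proposition \ref{prop:Ralphaproperty3}(iii), whereas you derive it by an explicit square-class computation of $-a_{1,n}b_{1,n-2}$ via Corollary \ref{cor:R-R-odd}(ii) together with $\alpha_n\ge 2e$ and $\beta_{n-2}=2e$ from Proposition \ref{prop:Rproperty}(ii); your computation is valid (each binary block with jump $-2e$ has determinant $-1$ or $-\Delta$ modulo squares, and $b_{1,n-2}\equiv(-1)^{(n-2)/2}$ from the explicit BONGs), merely longer than the paper's.
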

\begin{proof}
		 By Lemma \ref{lem:maximal-BONG-even}(iii), we have $ S_{i}=S_{i+1}+2e=0 $ for $ i\in [1,n-2]^{O} $, $ S_{n-1}=0 $ and $ S_{n}=1 $.  
		
		(i) If $ R_{n-1}=0 $ and $ R_{n}\in \{-2e,2-2e\} $, then $ \ord (a_{1,n}b_{1,n}) $ is odd and thus $ d[a_{1,n}b_{1,n}]=0 $. Also, $ R_{n+1}-S_{n}\ge 2-1=1 $ and  $ d[-a_{1,n+1}b_{1,n-1}]\ge 0 $. Hence		
		 	\begin{align*}
		 	A_{n}=\min\{(R_{n+1}-S_{n})/2 +e,R_{n+1}-S_{n}+d[-a_{1,n+1}b_{1,n-1}]\}
		 	&\ge \min\{1/2+e,1\}\\
		 	&=1>0=d[a_{1,n}b_{1,n}].
		 \end{align*}
		 
		(ii) If $ R_{n-1}=1 $, then $ \ord (a_{1,n-1}b_{1,n-1}) $ is odd and so $ d[a_{1,n-1}b_{1,n-1}]=0 $. Since $ R_{n}-R_{n-1}=-2e $, by Proposition \ref{prop:alphaproperty}(iv), we have $ d[-a_{n-1,n}]\ge 2e $. Since $R_{n-2}=S_{n-2}=-2e$, by Proposition \ref{prop:Ralphaproperty3}(iii), we have $d[(-1)^{(n-2)/2}a_{1,n-2}]\ge 2e$ and $d[(-1)^{(n-2)/2}b_{1,n-2}]\ge 2e$. So, by the domination principle, we see that
		\begin{align*}
			 d[-a_{1,n}b_{1,n-2}]\ge 2e.
		\end{align*}
		 Also, $ R_{n}-S_{n-1}=(1-2e)-0=1-2e $, and 
		 \begin{align*}
		 	R_{n+1}-S_{n-2}+d[a_{1,n+1}b_{1,n-3}]\ge R_{n+1}-S_{n-2}\ge R_{n-1}-S_{n-2}=1-(-2e)=2e+1
		 \end{align*}
		 from \eqref{eq:GoodBONGs}. Hence
		 	\begin{align*}
		 		\begin{split}
		 				A_{n-1}&  =\min\{ (R_{n}-S_{n-1})/2 +e,R_{n}-S_{n-1}+d[-a_{1,n}b_{1,n-2}],\\ &R_{n}-S_{n-1}+R_{n+1}-S_{n-2}+d[a_{1,n+1}b_{1,n-3}]\}\\
		 				&\ge \min\{(1-2e)/2+e,(1-2e)+2e, (1-2e)+(2e+1)\}=1/2>0=d[a_{1,n-1}b_{1,n-1}].
		 		\end{split}
		\end{align*}
\end{proof}
\begin{proof}[Proof of Theorem \ref{thm:dyadicACDeven-n+1}]
		Sufficiency follows from Lemma \ref{lem:maximalrep}. We claim that $ R_{i-1}=R_{i}+2e=0 $ for $ i\in [1,n-2]^{E} $, $ R_{n-1}=0 $, $ R_{n}\in \{-2e,2-2e\} $ and $ R_{n+1}\in \{0,1\} $. By Lemma \ref{lem:spacerep-criterion-ADC}(i), $M$ represents either $N_{1}^{n}(\Delta)$ or $N_{2}^{n}(\Delta)$. In both cases, by Lemma \ref{lem:repN1N2}(i) and (iii), we have $R_{i-1}=R_{i}+2e=0$ for $i\in [1,n-2]^{E}$ and either $R_{n-1}=0$ and $R_{n}\in \{2-2e,-2e\}$ or $R_{n-1}=R_{n}+2e=1$.
		
		Next, take $\varepsilon\in \mathcal{U}$. By Lemma \ref{lem:spacerep-criterion-ADC}(i), $ M $ represents $N_{\nu}^{n}(\varepsilon\pi)$ for some $\nu\in \{1,2\}$. Hence $M$ and $N_{\nu}^{n}(\varepsilon\pi)$ satisfy the conditions (i)-(iv) of Theorem \ref{thm:beligeneral}. Then, by Lemma \ref{lem:determineRn+1}(i) and (ii), we cannot have $R_{n-1}=0$, $R_{n}\in \{2-2e,-2e\}$ and $R_{n+1}\ge 2$ or $R_{n-1}=R_{n}+2e=1$, because Theorem \ref{thm:beligeneral}(ii) would fail at $i=n$ or $n-1$. Hence we are left with the case when $R_{n-1}=0$, $R_{n}\in \{2-2e,-2e\}$ and $R_{n+1}\le 1$. The claim is proved.
		
		If $ R_{n}=-2e $, then $[a_{1},\ldots,a_{n}]\cong W_{1}^{n}(\eta)$ with $\eta\in \{1,\Delta\}$ by Proposition \ref{prop:Ralphaproperty3}(iv). For any $\varepsilon\in \mathcal{U}$, since $(\eta,\varepsilon)_{\mathfrak{p}}=1\not=-1$, by Lemma \ref{lem:B-beli}(ii), $[a_{1},\ldots,a_{n}]\cong W_{1}^{n}(\eta)\nrep W_{2}^{n+1}(\varepsilon) $. Hence $ FM\not\cong W_{2}^{n+1}(\varepsilon) $, so $ FM $ is isometric to one of the remaining three types: $W_{1}^{n+1}(\delta)$, $W_{1}^{n+1}(\delta\pi)$ and $W_{2}^{n+1}(\delta\pi)$, with $\delta\in \mathcal{U}$ (cf. Proposition \ref{prop:space}(i)). 
		
		Recall that $R_{n+1}\in \{0,1\}$. If $FM\cong W_{1}^{n+1}(\delta) $, then $\ord (a_{1,n+1})$ is even, so $R_{n+1}=0$ and hence $ M $ is $ \mathcal{O}_{F} $-maximal by Lemma \ref{lem:maximal-BONG-odd}(i); if $FM\cong W_{1}^{n+1}(\delta\pi)$ or $W_{2}^{n+1}(\delta\pi)$, then $\ord(a_{1,n+1})$ is odd, so $R_{n+1}=1$ and hence $M$ is $\mathcal{O}_{F}$-maximal by Lemma \ref{lem:maximal-BONG-odd}(iii). 
		
		If $R_{n}=2-2e$, let $FM\cong W_{\nu}^{n+1}(c)$. Assume that $W_{1}^{n}(\eta)\rep FM$ for some $\eta\in \{1,\Delta\}$. Then, by $n$-ADC-ness, $N_{1}^{n}(\eta)\rep M$, which, by Lemma  \ref{lem:repN1N2}(i), implies $R_{n}=-2e$. Contradiction. Hence for $\eta\in \{1,\Delta\}$, we have $W_{1}^{n}(\eta)\nrep FM\cong W_{\nu}^{n+1}(c)$, which, by Lemma \ref{lem:repN1N2}(ii), is equivalent to $(1,c)_{\mathfrak{p}}=(\Delta,c)_{\mathfrak{p}}=-(-1)^{1+\nu}$, i.e., $1=(\Delta,c)_{\mathfrak{p}}=(-1)^{\nu}$.  But this happens precisely when $\nu=2$ and $c=\delta$ for some $\delta\in \mathcal{U}$. Thus $FM\cong W_{2}^{n+1}(\delta)$. Recall that $R_{n+1}\in \{0,1\}$ from the claim. Hence $ R_{n+1}=0 $ by the parity of $ \ord(a_{1,n+1}) $ and so $ M $ is $ \mathcal{O}_{F} $-maximal by Lemma \ref{lem:maximal-BONG-odd}(ii).
\end{proof}
\begin{lem}\label{lem:repN4N3-1}
		Suppose $ m=n+2 $, $ R_{i-1}=R_{i}+2e=0 $ for all $ i\in [1,n]^{E} $, $  R_{n+2} \ge 2-2e $ and $d[-a_{n+1,n+2}]>1-R_{n+2}$. 
	 	\begin{enumerate}[itemindent=-0.5em,label=\rm (\roman*)]	
		\item   For $ n\ge 2 $, if $ R_{n+1} $ is even or $ d((-1)^{n/2}a_{1,n})=2e $, then Theorem \ref{thm:beligeneral}(iii) fails at $ i=n+1 $ for $M$ and $  N_{2}^{n}(\Delta) $.
		
		\item  For $ n\ge 4 $, if $ R_{n+1} $ is even or $ d((-1)^{n/2}a_{1,n})=\infty $, then Theorem \ref{thm:beligeneral}(iii) fails at $ i=n+1 $ for $M$ and $ N_{2}^{n}(1) $. 
    \end{enumerate}
\end{lem}
\begin{proof}
	Let $ N=N_{2}^{n}(\eta)$, with $ \eta\in \{1,\Delta\} $. Then $S_{n-1}=S_{n}+2e=1 $, by Lemma \ref{lem:maximal-BONG-even}(ii). Thus $ R_{n+2}\ge  2-2e>S_{n} $. 
		 
	Since $ S_{n}-S_{n-1}=-2e $, by Proposition \ref{prop:alphaproperty}(iv), we have $ d[-b_{n-1,n}]\ge 2e $. Since $ R_{n}=S_{n-2}=-2e$, by Proposition \ref{prop:Ralphaproperty3}(iii), we have $ d[(-1)^{n/2}a_{1,n}]\ge 2e $ and $ d[(-1)^{(n-2)/2}b_{1,n-2}]\ge 2e $. Hence $
			d[a_{1,n}b_{1,n}]\ge 2e>1-R_{n+2}$
		by the domination principle. Combining with the assumption $ d[-a_{n+1,n+2}]>1-R_{n+2} $, we deduce that
		\begin{align*} 
			d[-a_{1,n+2}b_{1,n}]>1-R_{n+2}
		\end{align*}
	by the domination principle again. This, combined with $ d[-a_{1,n+1}b_{1,n-1}]\ge 0 $, shows that
		\begin{align*}
		d[-a_{1,n+1}b_{1,n-1}]+d[-a_{1,n+2}b_{1,n}]>0+(1-R_{n+2})&=2e+(1-2e)-R_{n+2}\\
		&=2e+S_{n}-R_{n+2}.
	\end{align*}

Now it remains to show that $ [a_{1},\ldots,a_{n+1}] $ fails to represent $ [b_{1},\ldots,b_{n}]\cong FN $, which, under hypothesis (i) (resp. (ii)), is isometric to $W_{2}^{n}(\Delta)$ (resp. $W_{2}^{n}(1)$). Equivalently, by Lemma \ref{lem:spacerep-criterion}(i), we must show that $[a_{1},\ldots, a_{n+1}]$ represents $W_{1}^{n}(\Delta)$ (resp. $W_{1}^{n}(1)$).

If $R_{n+1}=\ord (a_{n+1})$ is even, then, by Proposition \ref{prop:Ralphaproperty3}(v), $[a_{1},\ldots,a_{n+1}]\cong \mathbb{H}^{n/2}\perp [\varepsilon]=W_{1}^{n+1}(\varepsilon)$ for some $\varepsilon\in \mathcal{O}_{F}^{\times}$. For $\eta\in \{1,\Delta\}$, since $(\eta,\varepsilon)_{\mathfrak{p}}=1$, by Lemma \ref{lem:B-beli}(ii), $W_{1}^{n}(\eta)\rep W_{1}^{n+1}(\varepsilon)$, as required.

If $d((-1)^{n/2}a_{1,n})=2e$, then, by Proposition \ref{prop:Ralphaproperty3}(iv), $W_{1}^{n}(\Delta)\cong [a_{1},\ldots,a_{n}]\rep [a_{1},\ldots,a_{n+1}]$, so (i) holds. And if $d((-1)^{n/2}a_{1,n})=\infty$, then, by Proposition \ref{prop:Ralphaproperty3}(iv) again, $W_{1}^{n}(1)\cong [a_{1},\ldots,a_{n}]\rep[a_{1},\ldots,a_{n+1}]$, so (ii) holds. 
\end{proof}
\begin{lem}\label{lem:repN4N3-2}
			Suppose  $ m=n+2 $, $ R_{i-1}=R_{i}+2e=0 $ for all $ i\in [1,n]^{E} $ and $  R_{n+2} \ge 2-2e $. 
		 \begin{enumerate}[itemindent=-0.5em,label=\rm (\roman*)]	
			\item   Suppose that either $ R_{n+1} $ is even or $ d((-1)^{n/2}a_{1,n})=2e $. If $ M $ represents $ N_{2}^{n}(\Delta) $, then either $ \alpha_{n+1}=0 $, or $ \alpha_{n+1}=1$ and $ d(-a_{n+1}a_{n+2})=d[-a_{n+1,n+2}]=1-R_{n+2} $.
			
		 \item  Suppose that $ n\ge 4 $ and either $ R_{n+1} $ is even, or $ d((-1)^{n/2}a_{1,n})=\infty$. If $ M $ represents $ N_{2}^{n}(1) $, then either $ \alpha_{n+1}=0 $, or $ \alpha_{n+1}=1$ and $ d(-a_{n+1}a_{n+2})=d[-a_{n+1,n+2}]=1-R_{n+2} $.		
		\end{enumerate}			
\end{lem}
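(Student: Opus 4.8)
The plan is to squeeze the invariant $\alpha_{n+1}$ between a bound coming from the representation $N\rep M$ and the intrinsic inequalities for $\alpha$-invariants. Write $N=N_2^n(\eta)$ with $\eta=\Delta$ in case (i) and $\eta=1$ in case (ii); by Lemma \ref{lem:maximal-BONG-even}(ii) its invariants satisfy $S_{n-1}=1$ and $S_n=1-2e$. The parity hypotheses imposed here are exactly those of Lemma \ref{lem:repN4N3-1}(i) (resp. (ii)); since $M$ represents $N$, Theorem \ref{thm:beligeneral}(iii) cannot fail at $i=n+1$, so the contrapositive of that lemma gives the upper bound
\begin{equation*}
d[-a_{n+1,n+2}]\le 1-R_{n+2}.
\end{equation*}

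First I would bound $\alpha_{n+1}$ from above. As $n+1$ is odd and $M$ is integral, Proposition \ref{prop:Ralphaproperty3}(i) gives $R_{n+1}\ge 0$. Feeding the displayed bound into the expression \eqref{eq:alpha-defn} for $\alpha_{n+1}$ yields
\begin{equation*}
\alpha_{n+1}\le (R_{n+2}-R_{n+1})+d[-a_{n+1,n+2}]\le 1-R_{n+1}\le 1.
\end{equation*}
Since $1<2e$, Proposition \ref{prop:alphaproperty}(i) forces $\alpha_{n+1}\in\{0,1\}$; the value $0$ is the first alternative of the conclusion, so assume $\alpha_{n+1}=1$. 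The chain above then forces $R_{n+1}=0$, and Proposition \ref{prop:alphaproperty}(v) supplies the matching lower bound $d[-a_{n+1,n+2}]\ge R_{n+1}-R_{n+2}+1=1-R_{n+2}$. Hence $d[-a_{n+1,n+2}]=1-R_{n+2}$, the asserted equality for the bracket.

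It remains to upgrade this to the bare defect $d(-a_{n+1}a_{n+2})=1-R_{n+2}$, and here I would return to the original (unbracketed) definition of $\alpha_{n+1}$. From $0\le d[-a_{n+1,n+2}]=1-R_{n+2}\le\alpha_{n+1}=1$ one reads off $R_{n+2}\in\{0,1\}$. Since $R_{n+1}-R_n=2e$, Proposition \ref{prop:Rproperty}(ii) gives $\alpha_n=2e$, and every term $R_{n+2}-R_j+d(-a_ja_{j+1})$ with $j\le n$ entering the minimum defining $\alpha_{n+1}$ is $\ge 2e$ by Proposition \ref{prop:Ralphaproperty3}(iii); thus the minimum collapses to $\alpha_{n+1}=\min\{R_{n+2}/2+e,\,R_{n+2}+d(-a_{n+1}a_{n+2})\}$. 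When $R_{n+2}=1$ this reads $1=\min\{1/2+e,\,1+d(-a_{n+1}a_{n+2})\}$, forcing $d(-a_{n+1}a_{n+2})=0=1-R_{n+2}$; when $R_{n+2}=0$ and $e\ge 2$ it reads $1=\min\{e,\,d(-a_{n+1}a_{n+2})\}$, again forcing $d(-a_{n+1}a_{n+2})=1=1-R_{n+2}$.

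The one delicate case, which I expect to be the main obstacle, is $R_{n+2}=0$ together with $e=1$: there $\alpha_{n+1}=\min\{1,d(-a_{n+1}a_{n+2})\}=1$ only yields $d(-a_{n+1}a_{n+2})\ge 1$, leaving open the values $d(-a_{n+1}a_{n+2})\in\{2e,\infty\}$. To exclude these I would argue that such values make the trailing binary block split off a unit-unimodular (and, for $d=\infty$, hyperbolic) piece, so that the structure of $[a_1,\ldots,a_n]$ from Proposition \ref{prop:Ralphaproperty3}(iv) together with this split block shows that $FM$ represents the companion space $W_1^n(\eta)$; the dichotomy of Lemma \ref{lem:spacerep-criterion} would then prevent $FM$ from representing $W_2^n(\eta)=FN$, contradicting $N\rep M$. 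Since $e=1$, Remark \ref{re:dyadicACDeven} lets one recast this exclusion in classical Jordan-splitting language, which is precisely where the determinant bookkeeping needed to apply Lemma \ref{lem:spacerep-criterion} must be verified carefully; the remainder of the argument is the two-sided squeeze above.
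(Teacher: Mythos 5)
Your first half reproduces the paper's argument exactly: the contrapositive of Lemma \ref{lem:repN4N3-1} gives $d[-a_{n+1}a_{n+2}]\le 1-R_{n+2}$, and the chain $\alpha_{n+1}\le R_{n+2}-R_{n+1}+d[-a_{n+1}a_{n+2}]\le R_{n+2}+d[-a_{n+1}a_{n+2}]\le 1$ (using $R_{n+1}\ge 0$ from Proposition \ref{prop:Ralphaproperty3}(i)) pins down $\alpha_{n+1}\in\{0,1\}$ and, when $\alpha_{n+1}=1$, forces $R_{n+1}=0$ and $d[-a_{n+1}a_{n+2}]=1-R_{n+2}$. The problem is the upgrade from the bracketed to the bare defect. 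Your step ``$0\le d[-a_{n+1,n+2}]=1-R_{n+2}\le \alpha_{n+1}=1$, hence $R_{n+2}\in\{0,1\}$'' rests on an inequality that is not valid: by the paper's definition, $d[-a_{n+1}a_{n+2}]=\min\{d(-a_{n+1}a_{n+2}),\alpha_{n},\alpha_{n+2}\}$ with $\alpha_{n+2}$ ignored (since $n+2=m$), so the bracket is capped by $\alpha_{n}$, not by $\alpha_{n+1}$; and indeed $d[-a_{n+1}a_{n+2}]>\alpha_{n+1}$ is exactly what happens when $R_{n+2}<R_{n+1}$, which is the situation you are trying to rule out. Proposition \ref{prop:alphaproperty}(iii) permits $R_{n+2}\in[4-2e,0]^{E}\cup\{2-2e,1\}$ when $\alpha_{n+1}=1$, so your case analysis silently omits all the negative even values of $R_{n+2}$ when $e\ge 2$, and on top of that the case $e=1$, $R_{n+2}=0$ is left as an admitted sketch rather than a proof.

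The fix is the paper's one-line observation, which also shows your detour through the full unbracketed definition of $\alpha_{n+1}$ is unnecessary: since $R_{n+1}-R_{n}\ge 2e$, Proposition \ref{prop:Rproperty}(i) gives $\alpha_{n}\ge 2e$, while $R_{n+2}\ge 2-2e$ gives $1-R_{n+2}\le 2e-1<2e\le\alpha_{n}$; hence in $d[-a_{n+1}a_{n+2}]=\min\{d(-a_{n+1}a_{n+2}),\alpha_{n}\}=1-R_{n+2}$ the minimum cannot be attained at $\alpha_{n}$, so $d(-a_{n+1}a_{n+2})=1-R_{n+2}$ outright, for every admissible $R_{n+2}$ and every $e$. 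Note in particular that your ``delicate case'' $e=1$, $R_{n+2}=0$ dissolves instantly from what you had already proved: there $\alpha_{n}=2e=2$, so $d(-a_{n+1}a_{n+2})\in\{2e,\infty\}$ would give $d[-a_{n+1}a_{n+2}]=2\ne 1=1-R_{n+2}$, contradicting your own bracket equality -- no space-representation or Jordan-splitting argument is needed.
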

\begin{proof}
				(i) Assume $ d[-a_{n+1,n+2}]>1-R_{n+2} $. Then Theorem \ref{thm:beligeneral}(iii) fails at $ i=n+1 $ for $ N=N_{2}^{n}(\Delta) $ by Lemma \ref{lem:repN4N3-1}(i). But this contradicts the fact that $ M $ represents $ N_{2}^{n}(\Delta) $. Thus $ d[-a_{n+1,n+2}]\le 1-R_{n+2} $. By Proposition \ref{prop:Ralphaproperty3}(i), we have $ R_{n+1}\ge 0 $. Hence, by \eqref{eq:alpha-defn}, we deduce that
				\begin{align*}
					\alpha_{n+1}\le R_{n+2}-R_{n+1}+d[-a_{n+1,n+2}]\le R_{n+2}+d[-a_{n+1,n+2}]\le 1,
				\end{align*}
				which implies that $ \alpha_{n+1}\in \{0,1\} $ by Proposition \ref{prop:alphaproperty}(i), and $ d[-a_{n+1,n+2}]=1-R_{n+2} $ if $\alpha_{n+1}=1$. 
				
				 Since $ R_{n+1}-R_{n}\ge 2e $, by Proposition \ref{prop:Rproperty}(i) and the hypothesis that $R_{n+2}\ge 2-2e$, we have $ \alpha_{n}\ge 2e>1-R_{n+2}= d[-a_{n+1,n+2}]=\min\{d(-a_{n+1,n+2}),\alpha_{n}\}$. (We have  $n+2=m$, so $\alpha_{n+2}$ is ignored.) It follows that $d(-a_{n+1,n+2})=d[-a_{n+1,n+2}]=1-R_{n+2} $.
				
				(ii) Similar to (i).
\end{proof}
\begin{lem}\label{lem:n-ACDnecessityeven-1}
		 	Suppose that $ m=n+2 $ and $ M $ is $ n $-ADC.
		 	 \begin{enumerate}[itemindent=-0.5em,label=\rm (\roman*)]	
		 		\item  If $ FM\cong W_{1}^{n+2}(1) $, then $ M\cong N_{1}^{n+2}(1)  $. 
		 		
		 		\item  If $ n\ge 4 $ and $ FM\cong W_{1}^{n+2}(\Delta) $, then $ M\cong N_{1}^{n+2}(\Delta)$. 
		 		
		 		\item If $ FM\cong W_{2}^{n+2}(1) $, then $ M\cong N_{2}^{n+2}(1)$.
		 		
		 		\item If $ FM\cong W_{2}^{n+2}(\Delta) $, then $ M\cong N_{2}^{n+2}(\Delta) $.
		 		
		 		\item If $ c\in \mathcal{V} \backslash \{1,\Delta\} $ and $ FM\cong W_{1}^{n+2}(c)  $, then $ M\cong N_{1}^{n+2}(c)  $. 
		 		
		 		\item If $c\in \mathcal{V} \backslash \{1,\Delta\} $ and $ FM\cong W_{2}^{n+2}(c)  $, then $ M\cong N_{2}^{n+2}(c)  $.  
		 	\end{enumerate}			
\end{lem}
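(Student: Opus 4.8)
The plan is to prove all six statements by a single template: assuming $M$ is $n$-ADC with $FM\cong FN_{\nu}^{n+2}(c)$, I would compute the entire string of invariants $R_i=R_i(M)$ relative to a good BONG and then recognize $M$ as the asserted maximal lattice through the characterization in Lemma \ref{lem:maximal-BONG-even} (which applies since $n+2$ is even). The entry point in every case is Lemma \ref{lem:spacerep-criterion-ADC}(ii): because $FM\cong FN_{\nu}^{n+2}(c)$, the lattice $M$ represents every member of $\mathcal{M}_n$ except $N_{3-\nu}^{n}(c)$. In particular $M$ represents at least one of $N_1^n(1)$, $N_1^n(\Delta)$ (both of them in parts (i), (ii), (v), (vi), where neither equals the excluded lattice), and it represents whichever of $N_2^n(\Delta)$, $N_2^n(1)$ is available. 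Feeding these into Lemma \ref{lem:repN1N2} immediately yields the lower part of the string, namely $R_{i-1}=R_i+2e=0$ for $i\in[1,n]^{E}$, together with $R_{n+1}=0$ in exactly the cases (i), (ii), (v), (vi) where both $N_1^n(1)$ and $N_1^n(\Delta)$ are representable.

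The crux is the determination of the top invariants. In (i), (ii), (v), (vi) only $R_{n+2}$ remains, whereas in (iii), (iv) both $R_{n+1}$ and $R_{n+2}$ must be pinned down (to $1$ and $1-2e$). I would argue by contradiction, supposing $R_{n+2}$ strictly exceeds its target so that the hypothesis $R_{n+2}\ge 2-2e$ of Lemma \ref{lem:repN4N3-2} is in force. Applying Lemma \ref{lem:repN4N3-2} to the representation of $N_2^n(\Delta)$ (resp. $N_2^n(1)$, which is what forces the restriction $n\ge 4$ in (ii)) leaves only $\alpha_{n+1}\in\{0,1\}$, and in the subcase $\alpha_{n+1}=1$ it pins the relative defect to $d(-a_{n+1}a_{n+2})=1-R_{n+2}$, an odd integer lying strictly between $0$ and $2e$.

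To close the contradiction I would invoke Proposition \ref{prop:Ralphaproperty3}(v): since $R_n=-2e$ and $R_{n+1}$ is even, $[a_1,\ldots,a_{n+1}]\cong\mathbb{H}^{n/2}\perp[\varepsilon]$ with $\varepsilon\in a_{n+1}F^{\times 2}\cup\Delta a_{n+1}F^{\times 2}$, whence $FM\cong\mathbb{H}^{n/2}\perp[\varepsilon,a_{n+2}]$. Comparing with the prescribed isometry type $FN_{\nu}^{n+2}(c)$ and cancelling the hyperbolic part by Witt's theorem forces the binary tail $[\varepsilon,a_{n+2}]$ to be a specific space; the requirement that it be hyperbolic (in (i)) or of the prescribed determinant (in the other parts) translates, via the two choices of $\varepsilon$, into $d(-a_{n+1}a_{n+2})\in\{\infty,2e\}$, which is incompatible with the odd value $1-R_{n+2}$ found above. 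Hence $\alpha_{n+1}=0$, i.e. $R_{n+2}=-2e$, giving the target in (i), (ii). Parts (v), (vi) run identically but land on $R_{n+2}=1-d(c)$, read off from the single available $N_2^n$ representation, while (iii), (iv) additionally use the odd-position constraints and Lemma \ref{lem:repN4N3-1} to force $R_{n+1}=1$ and $R_{n+2}=1-2e$; in every situation Lemma \ref{lem:maximal-BONG-even} then identifies $M$ uniquely.

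I expect the main obstacle to be precisely this last step: matching the fine defect data coming out of Lemma \ref{lem:repN4N3-2} against the rigid isometry type of $FM$. The delicate point is that the near-miss values of $R_{n+2}$ (such as $2-2e$ in (i)) are \emph{not} excluded by order parity or by the BONG inequalities alone, since $\pi^{2-2e}$ is itself a square; one genuinely needs the invariant $d(-a_{n+1}a_{n+2})=1-R_{n+2}$ together with the observation that hyperbolicity (or the prescribed determinant) of the binary complement $[\varepsilon,a_{n+2}]$ demands this defect be $\infty$ or $2e$. Keeping straight which of the $N_1^n$, $N_2^n$ are representable in each of the six cases, and hence which of Lemmas \ref{lem:repN1N2}, \ref{lem:repN4N3-1}, \ref{lem:repN4N3-2} applies (with the $n\ge 4$ caveat), is the principal bookkeeping hazard.
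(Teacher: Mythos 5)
For parts (i), (ii), (v) and (vi) your plan coincides with the paper's proof: the same entry via Lemma \ref{lem:spacerep-criterion-ADC}(ii), the same determination of $R_{1},\dots,R_{n+1}$ via Lemma \ref{lem:repN1N2}(ii), the same dichotomy from Lemma \ref{lem:repN4N3-2} once $R_{n+2}\ge 2-2e$ is assumed (with the $n\ge 4$ caveat for (ii) correctly placed, since $N_{2}^{n}(1)$ must stand in for the excluded $N_{2}^{n}(\Delta)$), and the same identification via Lemma \ref{lem:maximal-BONG-even}. Your only deviation is to extract the contradiction from Proposition \ref{prop:Ralphaproperty3}(v) and Witt cancellation of $\mathbb{H}^{n/2}$ rather than, as the paper does, from Proposition \ref{prop:Ralphaproperty3}(iii) and the domination principle; these are equivalent, because $\varepsilon\in a_{n+1}F^{\times 2}\cup\Delta a_{n+1}F^{\times 2}$ translates the determinant of the binary tail $[\varepsilon,a_{n+2}]$ exactly into $d(-a_{n+1}a_{n+2})\in\{\infty,2e\}$ (resp.\ $=d(c)$ in (v), (vi)). A small bonus of your template is that it treats $n=2$ in (i) uniformly, where the paper instead argues separately through $2$-universality and \cite[Remark 6.4]{HeHu2}.

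Parts (iii) and (iv), however, contain a genuine gap, not just compressed bookkeeping. There the target is $R_{n+1}=1$, $R_{n+2}=1-2e$, so $R_{n+1}$ is odd and your central mechanism --- Proposition \ref{prop:Ralphaproperty3}(v) plus Witt cancellation, which requires $R_{n+1}$ even --- is unavailable; moreover Lemma \ref{lem:repN1N2}(ii) does not apply (only one of $N_{1}^{n}(1)$, $N_{1}^{n}(\Delta)$ is representable), so your template produces no upper bound on $R_{n+1}$ at all. The paper closes this by feeding the representations of $N_{1}^{n}(\kappa)$ and $N_{2}^{n}(\kappa)$ (with $d(\kappa)=2e-1$) into Lemma \ref{lem:repN1N2}(iv) to obtain $R_{n+1}\in\{0,1,2\}$, and --- crucially --- by disposing of the branch $\alpha_{n+1}=0$, which is a \emph{permitted} outcome of Lemma \ref{lem:repN4N3-2} and is compatible with $R_{n+2}\ge 2-2e$ when $R_{n+1}=2$: that branch dies only because Corollary \ref{cor:R-R-odd}(ii) then forces $FM\cong W_{1}^{n+2}(1)$ or $W_{1}^{n+2}(\Delta)$, contradicting $FM\cong FN_{2}^{n+2}(c)$. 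Your phrase ``odd-position constraints and Lemma \ref{lem:repN4N3-1}'' supplies neither the bound $R_{n+1}\le 2$ nor the elimination of the configuration $(R_{n+1},R_{n+2})=(2,2-2e)$, so as written (iii) and (iv) do not close; everything else in the proposal is sound and essentially the paper's argument.
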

		 \begin{proof}
		 	(i) If $ n=2 $, then $ FM $ is $ 2 $-universal by \cite[Theorem 2.3]{hhx_indefinite_2021} and so $ M $ is $ 2 $-universal by $ 2 $-ADC-ness. So $ M\cong \mathbf{H}^{2}=N_{1}^{4}(1)$ by \cite[Remark 6.4]{HeHu2}. Suppose $ n\ge 4 $. Then,  by Lemma \ref{lem:spacerep-criterion-ADC}(ii), $ M $ represents every $ N $ in $ \mathcal{M}_{n}$ with $ N\not\cong N_{2}^{n}(1)$. Since $ M $ represents $ N_{1}^{n}(1) $ and $ N_{1}^{n}(\Delta) $, 	by Lemma \ref{lem:repN1N2}(ii), we have
		 	\begin{align}\label{inv:R1-Rn+1}
		 		R_{i}=0\quad\text{for $ i\in [1,n+1]^{O} $}\quad\text{and}\quad R_{i}=-2e\quad\text{for $ i\in [1,n]^{E} $}.
		 	\end{align}
		  	If $ R_{n+2}=R_{n+2}-R_{n+1}\ge 2-2e $, then $ \alpha_{n+1}\not=0 $, by Proposition \ref{prop:alphaproperty}(ii). Since $R_{n+1}=0$ is even and $ M $ represents $ N_{2}^{n}(\Delta) $, we have $ \alpha_{n+1}=1 $ and $ d[-a_{n+1,n+2}]=1-R_{n+2}$ by Lemma \ref{lem:repN4N3-2}(i). Since $R_{n}=-2e$, we also have $ d[(-1)^{n/2}a_{1,n}]\ge 2e $ by Proposition \ref{prop:Ralphaproperty3}(iii). So
		 	\begin{align*}
		 		d((-1)^{(n+2)/2}a_{1,n+2})=d[(-1)^{(n+2)/2}a_{1,n+2}]=1-R_{n+2}<2e
		 	\end{align*}
		 	by the domination principle. However, $ FM\cong W_{1}^{n+2}(1)  $ and so $ d((-1)^{(n+2)/2}a_{1,n+2})=\infty $, a contradiction. Hence $ R_{n+2}-R_{n+1}<2-2e $.
		 	
		 	Note that $ R_{n+2}-R_{n+1}\not=1-2e $ by Corollary \ref{cor:R-R-odd}(i). Hence $ R_{n+2}=R_{n+2}-R_{n+1}=-2e $ by \eqref{eq:BONGs}. Combining with \eqref{inv:R1-Rn+1}, we conclude that $ N\cong N_{1}^{n+2}(1) $ by Lemma \ref{lem:maximal-BONG-even}(i).
		 	
		 	(ii) If $ n\ge 4 $, then $ N_{2}^{n}(1) $ is defined. By Lemma \ref{lem:spacerep-criterion-ADC}(ii), $ M $ represents every $ N$ in $ \mathcal{M}_{n} $ with $ N\not\cong N_{2}^{n}(\Delta) $. In particular, it represents $ N_{1}^{n}(1) $, $ N_{1}^{n}(\Delta) $ and $ N_{2}^{n}(1) $. We repeat the reasoning from (i), but in this case we use Lemma \ref{lem:repN4N3-2}(ii) instead of Lemma \ref{lem:repN4N3-2}(i). Again we see that $ M $ satisfies \eqref{inv:R1-Rn+1} and $ R_{n+2}=-2e $. Since $ FM\cong W_{1}^{n+2}(\Delta) $, we deduce that $ N\cong N_{1}^{n+2}(\Delta) $ by Lemma \ref{lem:maximal-BONG-even}(i).
		 	
		 	(iii)(iv)  First, $ M $ represents every $ N $ in $ \mathcal{M}_{n} $ with $ N\not\cong N_{1}^{n}(1)$ (resp. $ N\not\cong N_{1}^{n}(\Delta)$) by Lemma \ref{lem:spacerep-criterion-ADC}(ii). Since $ M $ represents $ N_{1}^{n}(\Delta) $ (resp. $N_{1}^{n}(1)$) and $ N_{1}^{n}(\kappa) $, we see that
		 	\begin{align}\label{inv:R1-Rn}
		 		R_{i}=0\quad\text{for $ i\in [1,n]^{O} $}\quad\text{and}\quad  R_{i}=-2e\quad\text{for $ i\in [1,n]^{E} $}
		 	\end{align}
		 	by Lemma \ref{lem:repN1N2}(i) and  $ R_{n+1}\in \{0,1,2\} $ by Lemma \ref{lem:repN1N2}(iv). 
		 	
		 	By Proposition \ref{prop:Ralphaproperty3}(i), we have $ R_{n+2}\ge  -2e $. We assert $ R_{n+2}=1-2e $. 
		 	
		 	If $ R_{n+2}=-2e $, then $ FM\cong W_{1}^{n+2}(1) $ or $ W_{1}^{n+2}(\Delta) $ by Proposition \ref{prop:Ralphaproperty3}(iv). This contradicts $ FM\cong W_{2}^{n+2}(1) $ (resp. $FM\cong W_{2}^{n+2}(\Delta)$). 
		 	
		 	If $ R_{n+2}\ge 2-2e $, by Lemma \ref{lem:repN1N2}(i), we have either $ R_{n+1}\in \{0,2\} $, or 
		 	\begin{align*}
		 		 \quad R_{n+1}=1\quad \text{and}\quad d((-1)^{n/2}a_{1,n})=2e 
		 	\end{align*}
		 	 (resp. $R_{n+1}=1$ and $d((-1)^{n/2}a_{1,n})=\infty $). Hence the hypothesis of Lemma \ref{lem:repN4N3-2}(i) (resp. Lemma \ref{lem:repN4N3-2}(ii)) is satisfied. Since $ M $ represents $ N_{2}^{n}(\Delta) $ (resp. $N_{2}^{n}(1)$ with $n\ge 4$), we see that either $ \alpha_{n+1}=0 $, or $ \alpha_{n+1}=1 $ and $ d(-a_{n+1}a_{n+2})=1-R_{n+2} $ by Lemma \ref{lem:repN4N3-2}(i) (resp. Lemma \ref{lem:repN4N3-2}(ii)).
		 	
		 	\textbf{Case I: $ \alpha_{n+1}=0 $}
		 	
		 	By Proposition \ref{prop:alphaproperty}(ii), $R_{n+2}-R_{n+1}=-2e$. Since $R_{n+1}\le 2$ and, by our assumption, $R_{n+2}\ge 2-2e$, we must have $ R_{n+1}=2$ and $ R_{n+2}=2-2e $. This combined with \eqref{inv:R1-Rn} shows that for every $i\in [1,n+1]^{O}$, we have $R_{i+1}-R_{i}=-2e$ and $R_{i}$ is even. So, by Corollary \ref{cor:R-R-odd}(ii), $[a_{i},a_{i+1}]\cong \mathbb{H}$ or $[1,-\Delta]$. It follows that $ FM\cong  \mathbb{H}^{n/2}\perp [1,-\eta]=W_{1}^{n+2}(\eta) $ for $\eta=1$ or $\Delta$. This contradicts $ FM\cong W_{2}^{n+2}(1) $ (resp. $FM\cong W_{2}^{n+2}(\Delta)$).

		 	\textbf{Case II: $ \alpha_{n+1}=1 $}
		 	
		 	Since $ R_{n}=-2e $, we have $ d((-1)^{n/2}a_{1,n})\ge 2e $ by Proposition \ref{prop:Ralphaproperty3}(iii). Hence
		 	\begin{align*}
		 		d((-1)^{(n+2)/2}a_{1,n+2})=d(-a_{n+1}a_{n+2})=1-R_{n+2}<2e
		 	\end{align*}
		 	by the domination principle. This contradicts $ FM\cong W_{2}^{n+2}(1) $ (resp. $FM\cong W_{2}^{n+2}(\Delta)$) again. 
		 	
		 	With above discussion, the assertion is proved and thus $ R_{n+2}=1-2e $. 
		 	
		 	Recall that $ R_{n+1}\in \{0,1,2\} $ and so $ R_{n+1}=1 $ by Corollary \ref{cor:R-R-odd}(i). Combining with \eqref{inv:R1-Rn}, we deduce that $ M\cong N_{2}^{n+2}(1) $ (resp. $N_{2}^{n+2}(\Delta)$) by Lemma \ref{lem:maximal-BONG-even}(ii).
		 	
			(v) Let $ c\in \mathcal{V} \backslash \{1,\Delta\} $. By Lemma \ref{lem:spacerep-criterion-ADC}(ii), $ M $ represents every $ N $ in $ \mathcal{M}_{n} $ with $ N\not\cong N_{2}^{n}(c) $. In particular, $ M $ represents $ N_{1}^{n}(1) $ and $ N_{1}^{n}(\Delta) $, so it satisfies \eqref{inv:R1-Rn+1} by Lemma \ref{lem:repN1N2}(ii).

		 	 By Proposition \ref{prop:Ralphaproperty3}(i), we have $ R_{n+2}\ge  -2e $. If $ R_{n+2}=-2e $, then $ FM\cong W_{1}^{n+2}(1) $ or $ W_{1}^{n+2}(\Delta) $ by Proposition \ref{prop:Ralphaproperty3}(iv), which contradicts $ FM\cong W_{1}^{n+2}(c) $. Thus $ R_{n+2}>-2e $. Since $ R_{n+1}=0 $, Corollary \ref{cor:R-R-odd}(i) implies $ R_{n+2}\not=1-2e $. Hence $ R_{n+2}\ge 2-2e $ and so $ \alpha_{n+1}\not=0 $ by Proposition \ref{prop:alphaproperty}(ii).
		 	 
		 	 Now, we see that $R_{n+1}=0$ is even, $ M $ represents $ N_{2}^{n}(\Delta) $ and $\alpha_{n+1}\not=0$, so	$1-R_{n+2}=d(-a_{n+1}a_{n+2})$ by Lemma \ref{lem:repN4N3-2}(i). Since $R_{n}=-2e$, we also have $d((-1)^{n/2}a_{1,n})\ge d[(-1)^{n/2}a_{1,n}]\ge 2e$ by Proposition \ref{prop:Ralphaproperty3}(iii). On the other hand, $FM\cong W_{1}^{n+1}(c)$, so in $F^{\times}/F^{\times 2}$ we have $a_{1,n+2}=\det FM=(-1)^{(n+2)/2}c$. It follows that
		 	 \begin{align*}
		 	 	d((-1)^{(n+2)/2}a_{1,n+2})=d(c)<2e=d((-1)^{n/2}a_{1,n}).
		 	 \end{align*}
		 	   By the domination principle, this implies  $ 1-R_{n+2}=d(-a_{n+1}a_{n+2})=d(c) $. Combining with \eqref{inv:R1-Rn+1}, we conclude $ M\cong N_{1}^{n+2}(c) $ by Lemma \ref{lem:maximal-BONG-even}(iii).
		 	
		 	(vi) Similar to (v).
		 \end{proof}
\begin{lem}\label{lem:repN5N6-n=2-2}
	 Suppose $ m=4 $ and $ R_{1}=R_{3}=R_{2}+2e=0  $. If $ FM\cong W_{1}^{4}(\Delta) $ and $ M $ represents both $ N_{1}^{2}(\kappa) $ and $ N_{2}^{2}(\kappa) $, then $ R_{4}\in \{-2e,2-2e\} $.
\end{lem}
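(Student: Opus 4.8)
The plan is to pin down $R_4$ by combining a parity constraint coming from the isometry class of $FM$ with a representation obstruction coming from Beli's criterion. First I would record two elementary bounds. Since $FM\cong FN_1^4(\Delta)=W_1^4(\Delta)$ has determinant $\Delta\in\mathcal{O}_F^{\times}$ up to squares, the order of $\det FM$ is even; as $\det FM\equiv a_{1,4}\pmod{F^{\times 2}}$ with $\ord(a_{1,4})=R_1+R_2+R_3+R_4=R_4-2e$, this forces $R_4$ to be even. On the other hand, integrality of $M$ gives $R_4\ge R_2=-2e$ by Proposition \ref{prop:Ralphaproperty3}(i). Hence, since $R_4$ is even and $\ge -2e$, it suffices to exclude the even values $R_4\ge 4-2e$ in order to conclude $R_4\in\{-2e,2-2e\}$ (the only integer strictly between $-2e$ and $2-2e$ is the odd number $1-2e$).

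For the exclusion I would apply Theorem \ref{thm:beligeneral}(iii) at $i=3$ to both $N=N_1^2(\kappa)$ and $N=N_2^2(\kappa)$ simultaneously. Since $d(\kappa)=2e-1$ and $\kappa\in\mathcal{U}_e\setminus\{1,\Delta\}$, Lemma \ref{lem:maximal-BONG-even}(iii) gives $S_1=0$ and $S_2=1-d(\kappa)=2-2e$ for both lattices. The key computation is $\nu$-independent: writing $N_\nu^2(\kappa)\cong\prec b_1,b_2\succ$, in both cases $b_1b_2\in-\kappa F^{\times 2}$ (directly from the BONGs $\prec 1,-\kappa\pi^{2-2e}\succ$ and $\prec\kappa^{\#},-\kappa^{\#}\kappa\pi^{2-2e}\succ$). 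Combined with $a_{1,4}\in\Delta F^{\times 2}$ this yields $-a_{1,4}b_{1,2}\in\Delta\kappa F^{\times 2}$, and since $d(\Delta)=2e\ne 2e-1=d(\kappa)$ we get $d[-a_{1,4}b_{1,2}]=d(\Delta\kappa)=2e-1$ for both $\nu$.

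Now suppose $R_4\ge 4-2e$. Then $R_4>S_2=2-2e$, and since $d[-a_{1,3}b_{1,1}]\ge 0$,
\[
d[-a_{1,3}b_{1,1}]+d[-a_{1,4}b_{1,2}]\ge 2e-1>2e-2\ge 2-R_4=2e+S_2-R_4,
\]
so the hypothesis of Theorem \ref{thm:beligeneral}(iii) holds at $i=3$ for both $N_1^2(\kappa)$ and $N_2^2(\kappa)$. Since $M$ represents each of them, the conclusion of that part forces $[b_1,b_2]\rep[a_1,a_2,a_3]$ in both cases, i.e. the non-degenerate rank-$3$ space $[a_1,a_2,a_3]$ represents both $W_1^2(\kappa)$ and $W_2^2(\kappa)$. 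This contradicts Lemma \ref{lem:spacerep-criterion}, which asserts that a space of dimension $n+1=3$ represents exactly one of $W_1^2(\kappa)$ and $W_2^2(\kappa)$. Hence $R_4\le 2-2e$, and together with $R_4$ even and $R_4\ge -2e$ we conclude $R_4\in\{-2e,2-2e\}$.

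The main obstacle is establishing the clean, $\nu$-independent value $d[-a_{1,4}b_{1,2}]=2e-1$: it is precisely what allows a single instance of Theorem \ref{thm:beligeneral}(iii) to fire for both maximal lattices at once and thereby trip the ``exactly one'' dichotomy of Lemma \ref{lem:spacerep-criterion}. The supporting subtlety is the quadratic-defect identity $d(\Delta\kappa)=\min\{d(\Delta),d(\kappa)\}=2e-1$, valid exactly because $d(\Delta)\ne d(\kappa)$. Once this is in place, the determinant-parity argument disposes of every odd candidate for $R_4$ for free, so no separate treatment of small $e$ or of boundary odd values is required.
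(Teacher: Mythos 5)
Your proof is correct and takes essentially the same route as the paper's: both arguments apply Theorem \ref{thm:beligeneral}(iii) at $i=3$, using $S_1=0$, $S_2=2-2e$ and the $\nu$-independent value $d[-a_{1,4}b_{1,2}]=d(\Delta\kappa)=2e-1$ from the domination principle, and then trip the ``exactly one'' dichotomy of Lemma \ref{lem:spacerep-criterion} against the hypothesis that $M$ represents both $N_1^2(\kappa)$ and $N_2^2(\kappa)$. The only difference is bookkeeping: you eliminate odd values of $R_4$ up front via determinant parity (so the crude bound $d[-a_{1,3}b_1]\ge 0$ suffices against $R_4\ge 4-2e$), whereas the paper runs the contradiction for all $R_4>2-2e$ using the sharper value $d[-a_{1,3}b_1]=1$ (via $\beta_1=1$) and removes $R_4=1-2e$ at the end by Corollary \ref{cor:R-R-odd}(i).
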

\begin{proof}
	Let $ N=N_{\nu}^{2}(\kappa) $, $ \nu\in \{1,2\} $. Then $ S_{1}=0 $ and $ S_{2}=2-2e $ by Lemma \ref{lem:maximal-BONG-even}(iii). Suppose $ R_{4}>2-2e $. Then $ R_{4}>S_{2}  $. Since $ R_{4}-R_{3}=R_{4}>2-2e>-2e $ and $ S_{2}-S_{1}=2-2e $, we have $ \alpha_{3}\ge 1=\beta_{1} $ 
	by Proposition \ref{prop:alphaproperty}(ii) and (iii). Since $\ord (a_{1,3}b_{1})$ is even, we also have $ d(-a_{1,3}b_{1})\ge 1 $. Combining these, we see that
	\begin{align*}
		d[-a_{1,3}b_{1}]=\min\{d(-a_{1,3}b_{1}),\alpha_{3},\beta_{1}\}=1.
	\end{align*}
	Also, $ d[-a_{1,4}b_{1,2}]=d(-a_{1,4}b_{1,2})=d(\Delta\kappa)=d(\kappa)=2e-1$ by the domination principle. So
	\begin{align*}
		d[-a_{1,3}b_{1}]+d[-a_{1,4}b_{1,2}]=1+(2e-1)>2e+(2-2e)-R_{4}=2e+S_{2}-R_{4}.
	\end{align*}
	 By definition, $ [b_{1},b_{2}]=FN\cong W_{1}^{2}(\kappa)  $ or $ W_{2}^{2}(\kappa) $. But, by Lemma \ref{lem:spacerep-criterion}(i), $ [a_{1},a_{2},a_{3}] $ represents exactly one of $ W_{1}^{2}(\kappa) $ and $ W_{2}^{2}(\kappa) $. Hence Theorem \ref{thm:beligeneral}(iii) fails at $ i=3 $ for either $ N=N_{1}^{2}(\kappa) $ or $N_{2}^{2}(\kappa)$. This contradicts the hypothesis that $ M $ represents both $ N_{1}^{2}(\kappa) $ and $ N_{2}^{2}(\kappa) $. Hence $ R_{4}\le 2-2e $.
	
	By Proposition \ref{prop:Ralphaproperty3}(i), we have $ R_{4}\ge -2e $. Recall that $R_{3}=0$ and so $R_{4}\not=1-2e$ by Corollary \ref{cor:R-R-odd}(i). Hence $ R_{4}\in \{-2e,2-2e\}$.
\end{proof} 
\begin{lem}\label{lem:McongN-2-ADC-rank-4}
	If $FM\cong W_{1}^{4}(\Delta)$ and $R_{1}=R_{3}=R_{2}+2e=R_{4}+2e-2=0$, then $M\cong \mathbf{H}\perp \prec 1,-\Delta\pi^{2-2e}\succ$.
\end{lem}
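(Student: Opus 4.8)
The plan is to show that the integral lattice $M$ is forced to carry exactly the Beli data of the candidate
\[
M_{0}:=\mathbf{H}\perp \prec 1,-\Delta\pi^{2-2e}\succ\cong \prec 1,-\pi^{-2e},1,-\Delta\pi^{2-2e}\succ,
\]
and then to upgrade the resulting representation into an isometry by a volume count. Observe first that $M_{0}$ realizes the prescribed invariants $R_{i}(M_{0})=(0,-2e,0,2-2e)=R_{i}(M)$ and $FM_{0}\cong W_{1}^{4}(\Delta)\cong FM$. Since $\ord\det=\sum_{i}R_{i}=2-4e$ for both lattices and the discriminant class ($\det FM\in\Delta F^{\times 2}$) is that of the common space, $M$ and $M_{0}$ have the same volume ideal $\mathfrak{v}=\mathfrak{p}^{2-4e}$. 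Hence it suffices to prove $M_{0}\rep M$: then $M_{0}$ is isometric to a sublattice $M'\subseteq M$ with $\mathfrak{v}(M')=\mathfrak{v}(M_{0})=\mathfrak{v}(M)$, which forces $M'=M$ and so $M\cong M_{0}$.

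To establish $M_{0}\rep M$ I would apply Theorem \ref{thm:beligeneral} with $N=M_{0}$, writing $\prec a_{1},\ldots,a_{4}\succ$ for a good BONG of $M$ and $\prec b_{1},\ldots,b_{4}\succ=\prec 1,-\pi^{-2e},1,-\Delta\pi^{2-2e}\succ$ for $M_{0}$. Several invariants are pinned down by the $R_{i}$ alone: $\alpha_{1}=0$ and $\alpha_{2}=2e$ by Propositions \ref{prop:alphaproperty}(ii) and \ref{prop:Rproperty}(ii), while $\alpha_{3}\ge 1$ since $R_{4}-R_{3}=2-2e\neq -2e$; likewise $\beta_{3}=\alpha_{3}(M_{0})=1$. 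Condition (i) holds with equality as $R_{i}=S_{i}$. For condition (ii) the smallness of these $\alpha$'s caps the defect brackets: $d[a_{1}b_{1}]=0$ (capped by $\alpha_{1}$), and using $\det FM\in\Delta F^{\times 2}$ together with $d(\Delta)=2e$ one finds $d[a_{1,2}b_{1,2}]=2e$, meeting the bounds $A_{1}\le 0$ and $A_{2}\le 2e$. Conditions (iii) and (iv) either have vacuous hypotheses in this range or reduce, via Lemma \ref{lem:spacerep-criterion} and $FM\cong\mathbb{H}\perp[1,-\Delta]$, to the fact that the ternary space $[a_{1},a_{2},a_{3}]$ is isotropic and hence represents $[b_{1},b_{2}]\cong\mathbb{H}$.

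The decisive and only genuinely delicate point is condition (ii) at $i=3$, which, since $b_{1,3}\in -F^{\times 2}$ and (using \eqref{eq:alpha-defn}) $A_{3}=1$, amounts to the single inequality $d(-a_{1,3})\ge 1$. Here the standing hypothesis that $M$ is \emph{integral} is indispensable: among lattices with the given $R_{i}$ and space there are a priori several isometry classes, separated precisely by the quadratic-defect class of the partial product $a_{1,3}$ (equivalently by the weight of the $\mathfrak{p}^{1-e}$-modular binary block), and only the integral class satisfies $d(-a_{1,3})\ge 1$. I would extract this as follows: by Corollary \ref{cor:R-R-odd}(ii) the block $\prec a_{1},a_{2}\succ$ is isometric to $2^{-1}A(0,0)$ or $2^{-1}A(2,2\rho)$, and by Proposition \ref{prop:Ralphaproperty3}(v), applicable since $R_{2}=-2e$ and $R_{3}=0$ is even, one has $[a_{1},a_{2},a_{3}]\cong\mathbb{H}\perp[\varepsilon]$ with $a_{1,3}\in -\varepsilon F^{\times 2}$; comparing with $FM\cong\mathbb{H}\perp[1,-\Delta]$ and invoking integrality to control the weight of the modular block forces $\varepsilon$ into a square class with $d(\varepsilon)\ge 1$, whence $d(-a_{1,3})=d(\varepsilon)\ge 1$ and $\alpha_{3}=1$. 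The main obstacle is exactly this last step: ruling out the defect-zero unit classes of $\varepsilon$ by the integrality of $M$, which is the sort of dyadic quadratic-defect bookkeeping (the domination principle combined with Proposition \ref{prop:Ralphaproperty3}) that the BONG machinery is built to handle. With $d(-a_{1,3})\ge 1$ secured, condition (ii) at $i=3$ holds, Theorem \ref{thm:beligeneral} yields $M_{0}\rep M$, and the volume count of the first paragraph completes the proof.
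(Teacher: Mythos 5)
Your proposal is correct in substance but takes a genuinely different route from the paper. The paper never passes through representation-plus-volume: it applies Beli's isometry criterion directly (conditions (i)--(iv) of \cite[Theorem 3.2]{beli_representations_2006}), reading off $(\alpha_{1},\alpha_{2},\alpha_{3})=(\beta_{1},\beta_{2},\beta_{3})=(0,2e,1)$ from the common jumps $R_{2}-R_{1}=-2e$, $R_{3}-R_{2}=2e$, $R_{4}-R_{3}=2-2e$ via Propositions \ref{prop:alphaproperty}(ii) and \ref{prop:Rproperty}(ii), checking the defect conditions by Corollary \ref{cor:R-R-odd}(ii) and the domination principle, and settling the one representation condition $[b_{1},b_{2}]\rep[a_{1},a_{2},a_{3}]$ by Proposition \ref{prop:Ralphaproperty3}(v) and \cite[Lemma 3.14(i)]{HeHu2}. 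You instead prove $M_{0}\rep M$ via Theorem \ref{thm:beligeneral} and upgrade to isometry by the volume count $\ord\mathfrak{v}(M)=\sum_{i}R_{i}=2-4e=\ord\mathfrak{v}(M_{0})$, so that the embedded copy $M'\subseteq M$ has $\mathfrak{v}(M')=\mathfrak{v}(M)$ and hence $M'=M$. That argument is sound, and your verifications of conditions (i)--(iv) (including $A_{3}=\min\{1,\,2-2e+2e\}=1$) agree with the invariant bookkeeping; what the paper's route buys is brevity, since the isometry theorem absorbs the index/volume step, while yours buys independence from that theorem at the cost of one extra (standard) lattice-theoretic argument.

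However, your self-identified ``decisive and only genuinely delicate point'' is misdiagnosed, and the justification you sketch for it is both unnecessary and partly wrong. First, $\alpha_{3}$ is not merely $\ge 1$: since $R_{4}-R_{3}=2-2e\in\{-2e,2-2e,2e-2\}$, Proposition \ref{prop:Rproperty}(ii) gives $\alpha_{3}=(R_{4}-R_{3})/2+e=1$ outright --- the very computation you already used to get $\beta_{3}=1$ for $M_{0}$, which has the same $R$-jumps. Second, $d(-a_{1,3})\ge 1$ needs no integrality or weight control: $\ord(a_{1,3})=-2e$ is even (equivalently, by your own appeal to Proposition \ref{prop:Ralphaproperty3}(v), $-a_{1,3}\in\varepsilon F^{\times 2}$ for a unit $\varepsilon$), and over a dyadic local field every even-order square class has quadratic defect contained in $\mathfrak{p}$, because the residue field is finite of characteristic $2$ and squaring is surjective on it; this is exactly the one-line observation the paper uses (``since $\ord(a_{1,i}b_{1,i})$ is even, $d(a_{1,i}b_{1,i})\ge 1$''). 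In particular, your claim that there are a priori several isometry classes with these $R_{i}$ and this space, separated by the defect class of $a_{1,3}$, with only the integral one satisfying $d(-a_{1,3})\ge 1$, is false: a defect-zero even-order class does not exist in the dyadic setting, so the promised ``integrality bookkeeping'' has nothing to rule out. Replacing that hand-waved step by this trivial observation makes your proof complete.
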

\begin{proof}
	Let $N=\mathbf{H}\perp \prec 1,-\Delta\pi^{2-2e}\succ$. By \cite[Lemma 3.10]{HeHu2}, $N\cong   \prec 1,-\pi^{-2e}, 1,-\Delta\pi^{2-2e}\succ$, $S_{1}=S_{3}=0$, $S_{2}=-2e$ and $S_{4}=2-2e$.
	
	To show $M\cong N$, we only need to verify that conditions (i)-(iv) in \cite[Theorem 3.2]{beli_representations_2006} are satisfied. We have $R_{2}-R_{1}=-2e$, $R_{3}-R_{2}=2e$ and $R_{4}-R_{3}=2-2e$. Hence $(\alpha_{1},\alpha_{2},\alpha_{3})=(0,2e,1)$ by Proposition \ref{prop:Rproperty}(ii). Since $R_{i}=S_{i}$ for $1\le i\le 4$, we have $(\beta_{1},\beta_{2},\beta_{3})=(0,2e,1)$ similarly. Hence conditions (i) and (ii) hold. For $i=1,3$, since $\ord(a_{1,i}b_{1,i}) $ is even, we have $d(a_{1,i}b_{1,i})\ge 1\ge \alpha_{i}$. Since $R_{2}-R_{1}=-2e$, we have $d(-a_{1}a_{2})\ge 2e$ by Corollary \ref{cor:R-R-odd}(ii). Similarly, $d(-b_{1}b_{2})\ge 2e$. Hence $d(a_{1,2}b_{1,2})\ge 2e\ge  \alpha_{2}$ by the domination principle. Thus condition (iii) is checked. Since $\alpha_{1}+\alpha_{2}=2e$ and $\alpha_{2}+\alpha_{3}=2e+1$, we only need to show that $[b_{1},b_{2}]\rep [a_{1},a_{2},a_{3}]$ for condition (iv). By definition, $[b_{1},b_{2}]\cong W_{1}^{2}(1) $. By Proposition \ref{prop:Ralphaproperty3}(v), $[a_{1},a_{2},a_{3}]\cong W_{1}^{3}(\varepsilon)$ for some $\varepsilon\in \mathcal{U}$. Hence $[b_{1},b_{2}]\rep [a_{1},a_{2},a_{3}]$ by Lemma \ref{lem:B-beli}(ii).
\end{proof}
\begin{lem}\label{lem:n-ACDnecessityeven-2}
	 Suppose that $ M $ is $ 2 $-ADC of rank $ 4 $. If $ FM\cong W_{1}^{4}(\Delta) $, then $ M\cong N_{1}^{4}(\Delta)$  or $  \mathbf{H}\perp \prec 1,-\Delta\pi^{2-2e}\succ  $. 
\end{lem}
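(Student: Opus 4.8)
The plan is to pin down the $R_i$-invariants of a good BONG for $M$ and then read off the isometry class from the structural lemmas already established. Write $M\cong \prec a_1,a_2,a_3,a_4\succ$ relative to a good BONG and set $R_i=R_i(M)$. Since $M$ is $2$-ADC with $FM\cong FN_1^4(\Delta)=FN_1^{n+2}(c)$ for $n=2$, $\nu=1$ and $c=\Delta$, Lemma \ref{lem:spacerep-criterion-ADC}(ii) shows that $M$ represents every lattice in $\mathcal{M}_2$ except possibly $N_2^2(\Delta)$; in particular $M$ represents $N_1^2(1)$, $N_1^2(\Delta)$, $N_1^2(\kappa)$ and $N_2^2(\kappa)$. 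These four representation facts are the only consequences of the $2$-ADC hypothesis that I intend to use.

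First I would extract the first three invariants. Applying Lemma \ref{lem:repN1N2}(ii) to the fact that $M$ represents both $N_1^2(1)$ and $N_1^2(\Delta)$ (here $n=2$, so $[1,n]^E=\{2\}$) yields $R_1=R_2+2e=0$ and $R_3=R_{n+1}=0$. Hence $R_1=R_3=R_2+2e=0$, which is precisely the hypothesis required by the next step.

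Next I would determine $R_4$. With the relations just obtained, together with $FM\cong FN_1^4(\Delta)$ and the fact that $M$ represents both $N_1^2(\kappa)$ and $N_2^2(\kappa)$, Lemma \ref{lem:repN5N6-n=2-2} forces $R_4\in\{-2e,2-2e\}$. This is the crux of the argument: the lemma rules out any larger value of $R_4$ by exhibiting a failure of Theorem \ref{thm:beligeneral}(iii) for one of $N_1^2(\kappa)$, $N_2^2(\kappa)$, which would contradict the $2$-ADC-ness of $M$. The parity constraint of Corollary \ref{cor:R-R-odd}(i) (together with $R_3=0$) is what excludes the intermediate value $1-2e$, leaving exactly the two admissible values.

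Finally I would split on the value of $R_4$, which by the previous step is exhaustive. If $R_4=-2e$, then $R_1=R_3=0$ and $R_2=R_4=-2e$, so Lemma \ref{lem:maximal-BONG-even}(i) (with $FM\cong W_1^4(\Delta)$ and $c=\Delta\in\{1,\Delta\}$) gives $M\cong N_1^4(\Delta)$. If $R_4=2-2e$, then $R_1=R_3=R_2+2e=R_4+2e-2=0$, and Lemma \ref{lem:McongN-2-ADC-rank-4} gives $M\cong \mathbf{H}\perp\prec 1,-\Delta\pi^{2-2e}\succ$. Since the preparatory lemmas carry essentially all of the technical weight, the only genuine task is to verify that each lemma's hypotheses are met and that the two admissible values of $R_4$ correspond exactly to the two claimed isometry classes; I expect no serious obstacle beyond this bookkeeping, the delicate point being merely the correct invocation of Lemma \ref{lem:repN5N6-n=2-2}, whose proof is where the real work resides.
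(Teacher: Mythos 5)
Your proposal is correct and follows essentially the same route as the paper's proof: derive $R_{1}=R_{3}=R_{2}+2e=0$ from representing $N_{1}^{2}(1)$ and $N_{1}^{2}(\Delta)$ via Lemma \ref{lem:repN1N2}(ii), pin down $R_{4}\in\{-2e,2-2e\}$ from representing $N_{1}^{2}(\kappa)$ and $N_{2}^{2}(\kappa)$ via Lemma \ref{lem:repN5N6-n=2-2}, and conclude with Lemmas \ref{lem:maximal-BONG-even}(i) and \ref{lem:McongN-2-ADC-rank-4} in the two cases. The hypothesis checks you flag (including the role of Corollary \ref{cor:R-R-odd}(i) inside Lemma \ref{lem:repN5N6-n=2-2}) are exactly right, so there is nothing to add.
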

\begin{proof}
    By Lemma \ref{lem:spacerep-criterion-ADC}(ii), $ M $ represents every $ N $  in $ \mathcal{M}_{2} $ with $ N\not\cong N_{2}^{2}(\Delta) $. Since $ M $ represents $ N_{1}^{2}(1) $ and $ N_{1}^{2}(\Delta) $, we have $ R_{1}=R_{3}=R_{2}+2e=0 $ by Lemma \ref{lem:repN1N2}(ii). Since $ M $ represents $ N_{1}^{2}(\kappa) $ and $ N_{2}^{2}(\kappa) $, we also have $R_{4}\in \{-2e,2-2e\}$ by Lemma \ref{lem:repN5N6-n=2-2}. If $ R_{4}=-2e $, then $ M\cong N_{1}^{4}(\Delta) $ by Lemma \ref{lem:maximal-BONG-even}(i). If $ R_{4}=2-2e $, then  $ M\cong \mathbf{H} \perp \prec 1,-\Delta\pi^{2-2e}\succ $ by Lemma \ref{lem:McongN-2-ADC-rank-4}.
\end{proof}
\begin{lem}\label{lem:2-ACDsufficiencyeven}
	 Let $ M\cong \mathbf{H}\perp \prec 1,-\Delta\pi^{2-2e}\succ $. Then
	 
	  \begin{enumerate}[itemindent=-0.5em,label=\rm (\roman*)]	
	 \item  $ M $ is $ 2 $-ADC, but not $\mathcal{O}_{F}$-maximal. 
	 
	 \item  $ M $ is not $ 3 $-ADC.
	\end{enumerate}
	\end{lem}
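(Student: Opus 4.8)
The plan is to handle the three assertions separately, working throughout with the good BONG $M\cong\prec 1,-\pi^{-2e},1,-\Delta\pi^{2-2e}\succ$ supplied by \cite[Lemma 3.10]{HeHu2}, whose invariants $R(M)=(R_1,R_2,R_3,R_4)=(0,-2e,0,2-2e)$ and $(\alpha_1,\alpha_2,\alpha_3)=(0,2e,1)$ were already recorded in the proof of Lemma \ref{lem:McongN-2-ADC-rank-4}. Non-maximality is immediate: the space $FM\cong W_1^4(\Delta)$ carries a unique $\mathcal{O}_F$-maximal lattice, namely $N_1^4(\Delta)$, which by Lemma \ref{lem:maximal-BONG-even}(i) has $R_4=-2e$; since $M$ has $R_4=2-2e\neq-2e$ and the $R_i$ are isometry invariants, $M\not\cong N_1^4(\Delta)$, so $M$ is not $\mathcal{O}_F$-maximal.

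For $2$-ADC-ness I would first apply Lemma \ref{lem:maximalequivADC} to reduce the problem to showing that $M$ represents every $N\in\mathcal{M}_2$ with $FM$ representing $FN$. Because $FM\cong W_1^4(\Delta)=W_1^{2+2}(\Delta)$, Proposition \ref{prop:space}(iii) shows $FM$ represents all binary spaces except $W_2^2(\Delta)$, so it suffices to prove $M\rep N$ for all $N\in\mathcal{M}_2\setminus\{N_2^2(\Delta)\}$. Reading $S=(S_1,S_2)$ off Lemma \ref{lem:maximallattices-dyadic}(i), each such $N$ has $S_1=0$ and $S_2\in\{-2e\}\cup\{1-d(\delta):\delta\in\mathcal{U}_e\setminus\{1,\Delta\}\}\cup\{1\}$, and I would verify Theorem \ref{thm:beligeneral} band by band in $S_2$. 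Conditions (i) and (iv) hold uniformly---the former since $R_1=S_1=0$ and $R_2=-2e\le S_2$, the latter since its hypothesis would force $2-2e>2e$, impossible for $e\ge1$---while condition (iii) is vacuous except when $S_2=-2e$, in which case its conclusion $[b_1,b_2]\rep[a_1,a_2,a_3]\cong\mathbb{H}\perp[1]$ follows from Lemma \ref{lem:spacerep-criterion}. The real content is condition (ii) at $i=2$, where one compares $A_2=\min\{-S_2/2+e,\,-S_2+d[-a_{1,3}b_{1,1}],\,2-S_2\}$ with $d[a_{1,2}b_{1,2}]=\min\{d(-\det FN),2e\}$; I expect $A_2=d[a_{1,2}b_{1,2}]$ in each band (equal to $2e$, $d(\delta)$, and $0$ respectively), so (ii) holds with no slack.

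For part (ii) the plan is to exhibit one ternary witness. By Lemma \ref{lem:spacerep-criterion} with $\dim FM=4=3+1$, the space $FM$ represents exactly one of $W_1^3(\pi)$ and $W_2^3(\pi)$; let $N=N_\nu^3(\pi)$ be the corresponding $\mathcal{O}_F$-maximal lattice, so $FM$ represents $FN$. Both candidates have $S=(0,-2e,1)$ by Lemma \ref{lem:maximallattices-dyadic}(i) and satisfy $b_{1,3}\equiv-\pi$ modulo squares; since $a_{1,3}\equiv-1$ we get $a_{1,3}b_{1,3}\equiv\pi$, whence $d[a_{1,3}b_{1,3}]=\min\{d(\pi),\alpha_3\}=0$, while $A_3=(R_4-S_3)/2+e=\tfrac12$. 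Thus condition (ii) of Theorem \ref{thm:beligeneral} fails at $i=3$, giving $M\not\rep N$ although $FM$ represents $FN$; hence $M$ is not $3$-ADC.

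The main obstacle is the bookkeeping inside condition (ii) for part (i). For each band one must pin down the relevant $\alpha_i$- and $\beta_1$-invariants of the maximal binary lattices and evaluate the relative-defect terms $d[\,\cdot\,]$ by the domination principle, using Propositions \ref{prop:Rproperty} and \ref{prop:alphaproperty} together with the explicit defects $d(\delta)=\ord(\delta-1)$, $d(\delta^{\#})=2e-d(\delta)$ and $d(\Delta)=2e$. The delicate feature is that $A_2$ must be shown to equal $d[a_{1,2}b_{1,2}]$ exactly rather than merely bounded, so these defect computations cannot be done crudely; this, rather than any single conceptual point, is where the work lies.
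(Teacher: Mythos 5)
Your proposal follows essentially the same route as the paper: the same reduction via Lemma \ref{lem:maximalequivADC} and Proposition \ref{prop:space}(iii) to the lattices $N\in\mathcal{M}_2$ with $N\not\cong N_2^2(\Delta)$, the same verification of Theorem \ref{thm:beligeneral} using the invariants $(R_1,\dots,R_4)=(0,-2e,0,2-2e)$, $(\alpha_1,\alpha_2,\alpha_3)=(0,2e,1)$, the same non-maximality argument via $R_4\neq -2e$, and for (ii) the same failure of Theorem \ref{thm:beligeneral}(ii) at $i=3$ for the ternary maximal lattices of odd-order determinant class (the paper runs this as a contradiction for both $N_\nu^3(\delta\pi)$; your direct-witness formulation with $N_\nu^3(\pi)$ is equivalent, and your observation that condition (iii) is vacuous for the bands $S_2>-2e$ because $R_4\le S_2$ already fails is in fact slightly cleaner than the paper's defect-sum estimate). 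Two points need repair. First, in part (ii) you write $A_3=(R_4-S_3)/2+e=\tfrac12$, but $A_3$ is the minimum of \emph{two} terms, and the conclusion $A_3>0$ requires in addition $R_4-S_3+d[-a_{1,4}b_{1,2}]\ge \tfrac12$; this is exactly the content of the paper's computation, which gets $d[-a_{1,4}b_{1,2}]\ge 2e$ from $d(a_{1,4})=d(\Delta)=2e$, $d(-b_1b_2)\ge 2e$ (as $S_2-S_1=-2e$) and $\beta_2=2e+\tfrac12$, whence the second term equals $1$. Without this bound your claimed value of $A_3$ is unjustified (if $d[-a_{1,4}b_{1,2}]$ were small, condition (ii) would hold and no contradiction would arise). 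Second, in the band $S_2=-2e$ you cite Lemma \ref{lem:spacerep-criterion} for $[b_1,b_2]\rep[a_1,a_2,a_3]$, but that lemma only says exactly one of $W_1^2(\eta)$, $W_2^2(\eta)$ is represented, not which one; you must identify the represented space, e.g.\ by noting $[a_1,a_2,a_3]\cong\mathbb{H}\perp[1]$ and $[1,-\eta]\perp[\eta]\cong\mathbb{H}\perp[1]$, or by invoking Proposition \ref{prop:Ralphaproperty3}(v) together with \cite[Lemma 3.14(i)]{HeHu2} as the paper does. Both fixes are routine, so the plan is sound; you should also record explicitly the trivial case $i=1$ of condition (ii), where $A_1\le(R_2-S_1)/2+e=0\le d[a_1b_1]$.
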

    \begin{proof}
    	 (i) We have $ FM\cong W_{1}^{4}(\Delta) $ and $R_{4}(M)=2-2e$. Hence $M$ is not $\mathcal{O}_{F}$-maximal from Lemma \ref{lem:maximal-BONG-even}(i).
    	 
    	 By Proposition \ref{prop:space}(iii), $FM$ represents $ FN $ for every $ N $ in $ \mathcal{M}_{2} $ with $ N\not\cong N_{2}^{2}(\Delta) $. So, by Lemma \ref{lem:maximalequivADC}, it suffices to show that $ M $ represents all $ N $ in $ \mathcal{M}_{2} $ except for $ N\cong N_{2}^{2}(\Delta) $. To do so, we will verify conditions (i)-(iv) in Theorem \ref{thm:beligeneral} for those $N$. Note that their invariants $S_{i}$ are clear from Lemma \ref{lem:maximal-BONG-even}.
    	
    	Let $\nu\in \{1,2\}$, $\eta\in \{1,\Delta\}$ and $c\in \mathcal{V} \backslash \{1,\Delta\}$. Then $d(c)<2e$. For condition (i), we have $ R_{1}=0\le S_{1} $ and $ R_{2}=-2e\le S_{2} $ for every $ N $ in $ \mathcal{M}_{2} $. Since $ S_{1}+2e\ge 2e>2-2e=R_{4} $, condition (iv) is verified.
    	
    	To verify condition (ii), for every $ N $ in $ \mathcal{M}_{2} $, we have
    	\begin{align*}
    		A_{1}\le \dfrac{R_{2}-S_{1}}{2}+e=\dfrac{-2e-S_{1}}{2}+e=\dfrac{-S_{1}}{2}\le 0\le d[a_{1}b_{1}]. 
    	\end{align*}
    Thus condition (ii) holds at $i=1$ for these $N$. 
    
        For $  N=N_{1}^{2}(\eta)$, since $ R_{2}=S_{2}=-2e $, by Proposition \ref{prop:Ralphaproperty3}(iii), we have $ d[-a_{1,2}]\ge 2e $ and $ d[-b_{1,2}]\ge 2e $. So $ d[a_{1,2}b_{1,2}]\ge 2e $ by the domination principle. Hence 
       \begin{align*}
       	A_{2}\le \dfrac{R_{3}-S_{2}}{2}+e=\dfrac{0-(-2e)}{2}+e=2e\le  d[a_{1,2}b_{1,2}].
       \end{align*}
   For $ N=N_{\nu}^{2}(c) $, by the domination principle, we have $ d[a_{1,2}b_{1,2}]=d[-b_{1,2}]=d(-b_{1}b_{2})=d(c)<2e $. Since $ S_{1}=0 $ and $ S_{2}=1-d(c) $, \eqref{eq:alpha-defn} gives $ \beta_{1}=1$. Hence
   \begin{align*}
   	A_{2}\le R_{3}-S_{2}+d[-a_{1,3}b_{1}]\le R_{3}-S_{2}+\beta_{1}=0-(1-d(c))+1=d(c)=d[a_{1,2}b_{1,2}].
   \end{align*}
 Hence condition (ii) also holds at $i=2$ for every $N$ in $\mathcal{M}_{2}$. Thus condition (ii) is verified.
   
To verify condition (iii), we have $ R_{3}=0\le S_{1} $ for every $ N $ in $ \mathcal{M}_{2} $. Thus condition (iii) holds at $i=2$ for these $N$. 
   
For $  N=N_{1}^{2}(\eta)$, we have $ [b_{1},b_{2}]\cong W_{1}^{2}(1) $ or $ W_{1}^{2}(\Delta) $. Also, $ [a_{1},a_{2},a_{3}]\cong W_{1}^{3}(\varepsilon) $ for some $ \varepsilon\in \mathcal{U} $ by Proposition \ref{prop:Ralphaproperty3}(v). Hence $[b_{1},b_{2}] \rep [a_{1},a_{2},a_{3}] $ by Lemma \ref{lem:B-beli}(ii). For $ N=N_{\nu}^{2}(c) $, in $F^{\times}/F^{\times 2}$ we have $a_{1,4}=\det FM=\Delta$ and $b_{1,2}=\det FN=-c$. Since $d(\Delta)=2e>d(c)$, by the domination principle, we have $  d(-a_{1,4}b_{1,2})=d(\Delta c)=d(c)=1-S_{2} $. Hence
\begin{align*}
   	d[-a_{1,3}b_{1}]+d[-a_{1,4}b_{1,2}]\le \beta_{1}+d(-a_{1,4}b_{1,2})= 1+(1-S_{2})\le 2e+S_{2}-R_{4},
 \end{align*}
   where the last inequality holds from $   2-2e\le S_{2} $ and $ R_{4}=2-2e $. Hence condition (iii) also holds at $i=3$ for every $N$ in $\mathcal{M}_{2}$. Thus condition (iii) is verified.
   
  (ii) Suppose that $ M $ is $ 3 $-ADC. Let $\varepsilon\in \mathcal{U}$. By Lemma \ref{lem:spacerep-criterion-ADC}(i), $ M $ represents $ N_{\nu}^{3}(\varepsilon\pi) $ for some $\nu\in\{1,2\}$. Then $ S_{1}=S_{2}+2e=0 $ and $ S_{3}=1$ by Lemma \ref{lem:maximal-BONG-odd}(iii). Since $ \ord(a_{1,3}b_{1,3}) $ is odd, $ d[a_{1,3}b_{1,3}]=0 $. By definition, we have $d(a_{1,4})=d(\Delta)=2e$. Since $S_{2}-S_{1}=-2e$, we also have $d(-b_{1}b_{2})\ge 2e$ by Corollary \ref{cor:R-R-odd}. Hence $ d(-a_{1,4}b_{1,2})\ge 2e $ by the domination principle. Since $S_{3}-S_{2}=2e+1$, Proposition \ref{prop:Rproperty}(ii) implies $ \beta_{2}=2e+1/2 $. So $ d[-a_{1,4}b_{1,2}]=\min\{d(-a_{1,4}b_{1,2}),\beta_{2}\}\ge 2e $. Also, $ R_{4}-S_{3}=(2-2e)-1=1-2e $. Hence
   \begin{align*}
   	A_{3}&=\min\{(R_{4}-S_{3})/2+e,R_{4}-S_{3}+d[-a_{1,4}b_{1,2}]\}\\
   	&\ge \min\{(1-2e)/2+e,(1-2e)+2e\}=1/2>0=d[a_{1,3}b_{1,3}].
   \end{align*}
   Thus Theorem \ref{thm:beligeneral}(ii) fails at $ i=3 $, which contradicts the fact that $M$ represents $N$.
    \end{proof}
 \begin{proof}[Proof of Theorem \ref{thm:dyadicACDeven}]
 	 Sufficiency follows by Lemmas \ref{lem:maximalrep} and \ref{lem:2-ACDsufficiencyeven}(i). Suppose that $ M $ is $ n $-ADC. Then, by Proposition \ref{prop:space}(ii), $ FM\cong W_{\nu}^{n}(c) $ for some $ \nu\in \{1,2\} $ and $ c\in \mathcal{V} $. So, by Lemmas \ref{lem:n-ACDnecessityeven-1} and \ref{lem:n-ACDnecessityeven-2}, $ M\cong N_{\nu}^{n}(c) $ or $ \mathbf{H}\perp \prec 1,-\Delta\pi^{2-2e}\succ $. Also, $ \prec 1,-\Delta\pi^{2-2e}\succ\cong 2^{-1}\pi A(2\pi^{-1},2\rho\pi) $ by \cite[Corollary 3.4(iii)]{beli_integral_2003} and \cite[93:17 Example]{omeara_quadratic_1963}.
 \end{proof}
	\section{$ n $-ADC lattices over dyadic local fields II}\label{sec:n-ADCdyadicfields-odd} 
	
	In this section, we keep the setting as the previous section, but let $ n $ be an odd integer with $ n\ge 3 $.	 
		 \begin{thm}\label{thm:dyadicACDodd-n+1}
		If $\rank\, M=n+1$, then $ M $ is $ n $-ADC if and only if $ M $ is $ \mathcal{O}_{F} $-maximal.		
	\end{thm}
 \begin{proof}
 	Sufficiency is clear from Lemma \ref{lem:maximalrep}. Suppose that $ M $ is $ n $-ADC. Then it is $ (n-1) $-ADC. Since $ n-1 $ is even, $ M $ is $ \mathcal{O}_{F} $-maximal except for $ n-1=2 $ and $ M\cong \mathbf{H}\perp \prec 1,-\Delta\pi^{2-2e} \succ$ by Theorem \ref{thm:dyadicACDeven}. However, $  \mathbf{H}\perp \prec 1,-\Delta\pi^{2-2e} \succ $ is not $ 3 $-ADC by Lemma \ref{lem:2-ACDsufficiencyeven}(ii). So the exceptional case cannot happen. 
 \end{proof}
  \begin{thm}\label{thm:dyadicACDodd-2}
	If $\rank\, M=n+2$, then $ M $ is $ n $-ADC if and only if either $ M $ is $ \mathcal{O}_{F} $-maximal, or
	\begin{align*}
		M\cong   N_{\nu}^{n+1}(\delta)\perp \langle \varepsilon \pi^{k}\rangle, 
	\end{align*}
	 with $\nu\in\{1,2\}$, $\delta\in \mathcal{U}\backslash \{1,\Delta\}$, $ \varepsilon\in \mathcal{U} $ and $ k\in \{0,1\} $.
	 
	 Also, if $M$ is simultaneously $\mathcal{O}_{F}$-maximal and isometric to the described orthogonal splitting, then $
	 	M\cong N_{2}^{n+2}(\varepsilon)$
	 with $\varepsilon\in \mathcal{U}$.
\end{thm}
 \begin{re}\label{re:dyadicACDodd-2}
 	For the lattice $N_{\nu}^{n+1}(\delta)$ given in Theorem \ref{thm:dyadicACDodd-2}, we see from Lemma \ref{lem:maximallattices-dyadic} and \cite[Remark 3.8, Lemma 3.9]{HeHu2} that 
 	\begin{align*}
 		N_{1}^{n+1}(\delta)&=\mathbf{H}^{(n-1)/2}\perp   N_{1}^{2}(\delta)
 		 \cong \; \mathbf{H}^{(n-1)/2}\perp \pi^{-l}A(\pi^{l},-(\delta-1)\pi^{-l}) \quad\text{and} \\
   N_{2}^{n+1}(\delta)&=\mathbf{H}^{(n-1)/2}\perp   N_{2}^{2}(\delta)
   \cong \; \mathbf{H}^{(n-1)/2}\perp \delta^{\#}\pi^{-l}A(\pi^{l},-(\delta-1)\pi^{-l}),
 	\end{align*}
  with $\delta\in \mathcal{U}\backslash \{1,\Delta\}$ and $2l=d(\delta)-1\le 2e-2$, where $\delta^{\#}=1+4\rho(\delta-1)^{-1}$. Similarly, we also see that
  \begin{align*}
  	N_{2}^{n+2}(\varepsilon)=\mathbf{H}^{(n-1)/2}\perp N_{2}^{3}(\varepsilon)\cong \mathbf{H}^{(n-1)/2}\perp 2^{-1}\pi A(2,2\rho)\perp \langle \Delta \varepsilon\rangle,
  \end{align*}
  with $\varepsilon\in \mathcal{U}$.
 \end{re}

 Before showing Theorem \ref{thm:dyadicACDodd-2}, we first prove the following theorem, which characterizes the $n$-ADC lattices with odd $n$. In the remainder of this section, we assume $ \rank\,M=n+2 $. 
\begin{thm}\label{thm:dyadicACDodd-R-alpha-invariant}
    	$ M $ is $ n $-ADC if and only if $R_{i}=0$ for $ i\in [1,n]^{O} $, $ R_{i}=-2e $ for $ i\in [1,n]^{E} $, $R_{n+1}\in [-2e,0]^{E}$ and $R_{n+2},\alpha_{n}\in \{0,1\}$.  
    \end{thm}
     \begin{proof}
    	We will show that the theorem is equivalent to Lemma \ref{lem:dyadicACDodd} below.

    	For necessity, by Proposition \ref{prop:alphaproperty}(ii), $\alpha_{n}=0$ if and only if $R_{n+1}=-2e<0$. Hence the conditions follows from Lemma \ref{lem:dyadicACDodd}(i)(ii)(iv).
    	
    	For sufficiency, from the hypothesis, we have $R_{n+1} \ge -2e$ and $R_{n+2}\le 1 $. It follows that $R_{n+2}-R_{n+1}\le 2e+1$, and the equality holds if and only if $R_{n+1}=-2e$ and $R_{n+2}=1$. This shows Lemma \ref{lem:dyadicACDodd}(iii). If $\alpha_{n}=1$, then $R_{n+1}=R_{n+1}-R_{n}\in [2-2e,0]^{E}\cup \{1\}$ by Proposition \ref{prop:alphaproperty}(iii), but $R_{n+1}\le 0$ and so $R_{n+1}\in [2-2e,0]^{E}$. Hence Lemma \ref{lem:dyadicACDodd}(i), (ii) and (iv) follow from the hypothesis except for the condition $R_{n+1}+d[-a_{n,n+1}]=1$.
    	
    	Since $\alpha_{n}=1$, by Proposition \ref{prop:alphaproperty}(v), we see that $d[-a_{n,n+1}]\ge 1-R_{n+1}$ and the equality holds when $R_{n+1}\not=2-2e$. Assume $R_{n+1}=2-2e$. Since $R_{n+2}-R_{n+1}\le 1-(2-2e)=2e-1$, Proposition \ref{prop:Rproperty}(i) implies that $d[-a_{n,n+1}]\le \alpha_{n+1}\le 2e-1=1-R_{n+1}$. Hence $d[-a_{n,n+1}]=1-R_{n+1}$, as desired.
    \end{proof}
      	\begin{lem}\label{lem:dyadicACDodd}
  	$ M $ is $ n $-ADC if and only if the following conditions hold:
  	 \begin{enumerate}[itemindent=-0.5em,label=\rm (\roman*)]	
  		\item  $R_{i}=0$ for $ i\in [1,n]^{O} $ and $ R_{i}=-2e $ for $ i\in [1,n]^{E} $. 
  		
  		\item Either $ \alpha_{n}=0 $ or $ \alpha_{n}=R_{n+1}+d[-a_{n,n+1}]=1 $.
  		
  		\item If $ R_{n+2}-R_{n+1}>2e $, then $ R_{n+1}=-2e $ and $ R_{n+2}=1 $.
  		
  		\item  If $ \alpha_{n}=1 $, then $ R_{n+1}\in [2-2e,0]^{E} $ and $ R_{n+2}\in \{0,1\} $.
  	\end{enumerate} 
  \end{lem}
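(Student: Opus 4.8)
The statement is an ``if and only if'', so the plan is to prove necessity and sufficiency separately, in both directions translating representation facts into the language of the invariants $R_i,\alpha_i$ by means of Theorem \ref{thm:beligeneral} and the properties collected in Propositions \ref{prop:Rproperty}, \ref{prop:alphaproperty} and \ref{prop:Ralphaproperty3}. Throughout I use that $\rank M=n+2$ and that $n$ is odd, so that the ``stable'' block $R_1,\ldots,R_n$ ends at the odd index $n$ (hence at value $0$), and the whole analysis concentrates on the tail $\alpha_n,R_{n+1},R_{n+2}$.

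For necessity, assume $M$ is $n$-ADC. Writing $FM\cong FN_{\nu_0}^{n+2}(c_0)$ for the unique pair $(\nu_0,c_0)$ furnished by Proposition \ref{prop:space}, Lemma \ref{lem:spacerep-criterion-ADC}(ii) shows that $M$ represents every rank-$n$ maximal lattice except possibly $N_{3-\nu_0}^n(c_0)$; since $|\mathcal{U}_e|\ge 2$, in particular $M$ represents $N_1^n(\delta)$ for some unit $\delta$. The first step is condition (i): feeding this representation into Beli's condition (Theorem \ref{thm:beligeneral}(i), i.e.\ \cite[Lemma 4.6]{beli_representations_2006}) gives the paired upper bounds on $R_i$, and combining with the integrality lower bounds $R_i\ge 0$ ($i$ odd), $R_i\ge -2e$ ($i$ even) from Proposition \ref{prop:Ralphaproperty3}(i) forces $R_i=0$ for odd $i\le n$ and $R_i=-2e$ for even $i\le n$. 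This is the odd-$n$ analogue of Lemma \ref{lem:repN1N2}(i).

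The second step derives (ii)--(iv), the constraints on $\alpha_n,R_{n+1},R_{n+2}$. Here I would test $M$ against the second-type unit lattice $N_2^n(\delta)$, whose extra invariant $S_{n-1}=2-2e$ forces a jump, and against the prime-scaled lattices $N_\nu^n(\delta\pi)$ with $S_n=1$. By arguments paralleling Lemmas \ref{lem:repN4N3-1} and \ref{lem:repN4N3-2} in the even case (Beli's condition (iii) would otherwise fail), each such representation, read through the formula \eqref{eq:alpha-defn} for $\alpha_n$, bounds $\alpha_n$ by $1$ and pins the defect $d[-a_na_{n+1}]$, giving (ii); the representation of the $\delta\pi$-lattices then forces $R_{n+2}\in\{0,1\}$ and, via Corollary \ref{cor:R-R-odd} and Proposition \ref{prop:Rproperty}(i), excludes the forbidden tail shapes, yielding (iii) and the range $R_{n+1}\in[2-2e,0]^{E}$ in (iv). The delicate point is to match which maximal lattice fails to be represented with exactly the right $(R_{n+1},R_{n+2})$ configuration, so as to rule out precisely the non-ADC tails while retaining the admissible ones.

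For sufficiency, assume (i)--(iv). By Lemma \ref{lem:maximalequivADC} it suffices to show that $M$ represents every rank-$n$ maximal lattice $N$ with $FM$ representing $FN$, and by Proposition \ref{prop:space}(iii) these are exactly all of $\mathcal{M}_n$ except $N_{3-\nu_0}^n(c_0)$. For each such $N$ I would verify the four conditions of Theorem \ref{thm:beligeneral}. Condition (i) is immediate because the $R_i$ ($i\le n$) sit at their minimal admissible values while the $S_i$ dominate them; condition (ii) reduces, after computing the relevant $A_i$ and quadratic defects by the domination principle, to the defect identities guaranteed by (ii)--(iv). The main obstacle, as in the even case (cf.\ Lemmas \ref{lem:repN4N3-1} and \ref{lem:2-ACDsufficiencyeven}), is Beli's conditions (iii) and (iv): these require the sub-space representations $[b_1,\ldots,b_{i-1}]\rep[a_1,\ldots,a_i]$, which I would settle by identifying the truncated spaces $[a_1,\ldots,a_j]$ through Proposition \ref{prop:Ralphaproperty3}(iv)(v) and then invoking Lemma \ref{lem:spacerep-criterion} together with \cite[Lemma 3.14]{HeHu2}. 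The hypothesis $\alpha_n=R_{n+1}+d[-a_na_{n+1}]=1$ is exactly what makes these local space-representation conditions come out in favour of representation, and pinning down that interplay is where the bulk of the case-checking will lie.
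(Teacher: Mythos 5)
Your sketch is broadly sensible in outline, but the necessity direction has a genuine gap at precisely the point you flag as ``delicate'' and then leave unresolved: the proof of (iv) (and the exact defect equality in (ii)). Testing $M$ against generic lattices $N_{2}^{n}(\delta)$ or $N_{\nu}^{n}(\delta\pi)$ ``in parallel'' with Lemmas \ref{lem:repN4N3-1} and \ref{lem:repN4N3-2} does not work in the odd case, for two reasons. First, the hypothesis \eqref{asmp:beli3if} of Theorem \ref{thm:beligeneral}(iii) at $i=n+1$ requires $d[-a_{1,n+2}b_{1,n}]$ to be large, and for a generic choice of $b_{1,n}$ this defect is small, so no failure of Beli's condition is triggered; second, the single maximal lattice $N_{3-\nu}^{n}(c)$ that $M$ is \emph{not} guaranteed to represent may be exactly the test lattice you need. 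The paper's resolution (Lemma \ref{lem:repN1ON2ON3ON4O}) is to choose the test pair keyed to $M$ itself: with $c=(-1)^{(n+1)/2}a_{1,n+2}$ and $\tilde{c}=(-1)^{(n+1)/2}a_{1,n+1}$, both $N_{\nu}^{n}(c)$ and $N_{\nu}^{n}(c\tilde{c}^{\#})$ are representable (neither is $N_{3-\nu}^{n}(c)$), both make $d(-a_{1,n+2}b_{1,n})$ large enough via Lemma \ref{lem:dsharp} to force the hypothesis of Theorem \ref{thm:beligeneral}(iii), and the assumption that $[a_{1},\ldots,a_{n+1}]$ represents both leads, after Witt cancellation, to the Hilbert-symbol identity $(\tilde{c}^{\#},\tilde{c})_{\mathfrak{p}}=1$, contradicting the defining property of $\#$. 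Nothing in your proposal supplies this idea or a substitute for it; likewise the exceptional case $n=3$, $d(a_{1,4})=\infty$ in (iii) needs its own argument (Lemma \ref{lem:repN3ONfour}$\!$\,-type, i.e.\ Lemma \ref{lem:repN3ON4On=3}), which your sketch does not anticipate. Note also that the paper does not re-derive (i)--(ii) by Beli-testing at all: it first proves that $n$-ADC implies $(n-1)$-universality (Lemma \ref{lem:n-ADC-implies-n-1-universal}) and then imports the classification of $(n-1)$-universal lattices from \cite[Theorem 4.1]{HeHu2} (Lemma \ref{lem:even-nuniversaldyadic}); your direct derivation of (i) from one representation $N_{1}^{n}(\delta)\rep M$ is fine, but the exact equality $\alpha_{n}=R_{n+1}+d[-a_{n}a_{n+1}]=1$ in (ii) is again left without a mechanism.

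In the sufficiency direction there is a smaller but real omission: conditions (i)--(iv) permit $R_{n+2}-R_{n+1}>2e$ (namely $R_{n+1}=-2e$, $R_{n+2}=1$), and in that regime your plan of verifying Theorem \ref{thm:beligeneral}(i)--(iv) head-on runs into condition (iv) at $i=n$ for $N=N_{\nu}^{n}(\delta\pi)$, which Lemma \ref{lem:sufficiency-B1B2B4}(iii) dismisses only under the hypothesis $R_{n+2}-R_{n+1}\le 2e$. The paper sidesteps this by observing that in this regime $M$ is already $\mathcal{O}_{F}$-maximal by Lemma \ref{lem:maximal-BONG-odd}, hence $n$-ADC by Lemma \ref{lem:maximalrep}, and only runs the Beli verification (Lemmas \ref{lem:sufficiency-B1B2B4} and \ref{lem:sufficiency-B3}) when $R_{n+2}-R_{n+1}\le 2e$. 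Your verification could probably be pushed through directly in the exceptional regime, but as written the sufficiency argument is incomplete without either this case split or the extra subspace-representation check.
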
  
To establish Lemma \ref{lem:dyadicACDodd}, we need a series of lemmas. First, we review the invariants $S_{i}=R_{i}(N)$ from Proposition \ref{prop:maximalproperty} for $N$ in $\mathcal{M}_{n}$. Precisely, we have
\begin{equation}\label{maximal-con-Si}
	\begin{split}
		&S_{i}=0\quad\text{for $i\in [1,n-2]^{O}$},\quad S_{i}=-2e\quad\text{for $i\in [1,n-2]^{O}$},\\
		&S_{n-1}\in \{-2e,2-2e\}\quad\text{and}\quad S_{n}\in \{0,1\},
	\end{split}
\end{equation}
which will be repeatedly used for the argument in Lemmas \ref{lem:invariantD}, \ref{lem:sufficiency-B1B2B4} and \ref{lem:sufficiency-B3}.
\begin{lem}\label{lem:invariantD}
	Suppose that $ R_{i}=0$ for $ i\in [1,n]^{O} $ and $R_{i}=-2e $ for $ i\in [1,n]^{E} $. For any $ N $ in $ \mathcal{M}_{n} $, the following statements hold:
	 \begin{enumerate}[itemindent=-0.5em,label=\rm (\roman*)]	
		\item  $ d[a_{1,i}b_{1,i}]\ge 2e $ for $ i\in [1,n-2]^{E} $.	
		
		\item If $ S_{n-1}=-2e $, then $ d[a_{1,n-1}b_{1,n-1}]\ge 2e $; if $ S_{n-1}=2-2e$, then $ d[a_{1,n-1}b_{1,n-1}]=2e-1 $.
		
		\item If $ \alpha_{n}=1 $, then $ d[a_{1,n}b_{1,n}]=1-S_{n} $.
		
		\item   If $ S_{n-1}=-2e $, then $ d[-a_{1,n}b_{1,n-2}]=0 $; if $ S_{n-1}=2-2e $, then $ d[-a_{1,n}b_{1,n-2}]\le 1 $.
		
		\item If $ \alpha_{n}=R_{n+1}+d[-a_{n,n+1}]=1 $, then $ d[-a_{1,n+1}b_{1,n-1}]=1-R_{n+1}$.
	\end{enumerate} 
 
\end{lem}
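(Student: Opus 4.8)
The plan is to treat all five parts as quadratic-defect computations resting on three ingredients: the definition of the bracket $d[\,\cdot\,]$ as the minimum of a bare defect against the relevant $\alpha$- and $\beta$-invariants, the domination principle $d(xy)\ge\min\{d(x),d(y)\}$ (with equality when the two defects differ), and the elementary fact that $d(c)=0$ exactly when $\ord(c)$ is odd, while $d(c)\ge 1$ for $c$ of even order. Before starting I would record the structural data. From $R_i=0$ on odd $i\le n$ and $R_i=-2e$ on even $i\le n$, Proposition \ref{prop:Ralphaproperty3}(iii) gives $d[(-1)^{k/2}a_{1,k}]\ge 2e$ for every even $k\le n-1$, and Proposition \ref{prop:Rproperty}(i) gives $\alpha_{n-1}=2e$ (since $R_n-R_{n-1}=2e$). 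Proposition \ref{prop:maximalproperty} supplies the matching data for $N$: $\beta_{n-2}=0$ and $\beta_{n-1}\ge 2e$ when $S_{n-1}=-2e$, and $\beta_{n-2}=1$, $\beta_{n-1}=d[-b_{n-2}b_{n-1}]=2e-1$ when $S_{n-1}=2-2e$; in the latter case Proposition \ref{prop:Ralphaproperty3}(iii) applied to $N$ up to index $n-3$ still gives $d[(-1)^{(n-3)/2}b_{1,n-3}]\ge 2e$.

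Parts (i) and (iv) are then immediate. For (i), all bare defects and all $\alpha,\beta$ are $\ge 0$, so the odd-index bound is automatic; for even $i\le n-2$ I factor $a_{1,i}b_{1,i}=\big((-1)^{i/2}a_{1,i}\big)\big((-1)^{i/2}b_{1,i}\big)$, both factors having defect $\ge 2e$, so domination gives $d(a_{1,i}b_{1,i})\ge 2e$, while $\alpha_i=\beta_i=2e$. For (iv), $d[-a_{1,n}b_{1,n-2}]\le\beta_{n-2}$, and $\beta_{n-2}=0$ (resp. $1$) when $S_{n-1}=-2e$ (resp. $2-2e$) yields the two bounds at once, nonnegativity of the bracket closing the first case.

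For (ii) I again factor $a_{1,n-1}b_{1,n-1}$ into pieces of controlled defect. When $S_{n-1}=-2e$ the factorization into $(-1)^{(n-1)/2}a_{1,n-1}$ and $(-1)^{(n-1)/2}b_{1,n-1}$ (each of defect $\ge 2e$) together with $\alpha_{n-1}=2e$, $\beta_{n-1}\ge 2e$ gives $d[a_{1,n-1}b_{1,n-1}]\ge 2e$. When $S_{n-1}=2-2e$ I write $a_{1,n-1}b_{1,n-1}=\big((-1)^{(n-1)/2}a_{1,n-1}\big)\big((-1)^{(n-3)/2}b_{1,n-3}\big)\big(-b_{n-2}b_{n-1}\big)$ (the signs cancel since $n-1$ is even); the first two factors have defect $\ge 2e$ and $d(-b_{n-2}b_{n-1})\ge 2e-1$, so domination forces $d(a_{1,n-1}b_{1,n-1})\ge 2e-1$, and since $\beta_{n-1}=2e-1\le\alpha_{n-1}=2e$ the bracket is exactly $2e-1$. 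Part (iii) is a parity argument: $\ord(a_{1,n})$ and $\sum_{i<n}S_i$ are both even, so $\ord(a_{1,n}b_{1,n})\equiv S_n\pmod 2$; hence $d(a_{1,n}b_{1,n})=0=1-S_n$ when $S_n=1$, while for $S_n=0$ the product has even order, so $d(a_{1,n}b_{1,n})\ge 1$ and, capped by $\alpha_n=1$ (the $\beta_n$ term being dropped), the bracket is $1=1-S_n$.

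The real work is (v), which I expect to be the main obstacle. Writing $D:=d[-a_na_{n+1}]=1-R_{n+1}$ (so $R_{n+1}\ge 2-2e$ and $D\le 2e-1$), I would factor $-a_{1,n+1}b_{1,n-1}=(a_{1,n-1}b_{1,n-1})(-a_na_{n+1})$ and bound the three entries of $d[-a_{1,n+1}b_{1,n-1}]$ below by $D$: the product defect is $\ge\min\{d(a_{1,n-1}b_{1,n-1}),d(-a_na_{n+1})\}\ge\min\{2e-1,D\}=D$ by part (ii) and domination, while $\alpha_{n+1}\ge D$ (it appears in the minimum defining $D$, with $\alpha_{n-1}=2e>D$ discarded) and $\beta_{n-1}\ge D$. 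For the matching upper bound I split on where $D$ is attained: if $D=\alpha_{n+1}$ the bracket is $\le\alpha_{n+1}=D$; if instead $D=d(-a_na_{n+1})<\alpha_{n+1}$, then whenever $d(a_{1,n-1}b_{1,n-1})>D$ domination upgrades the product defect to an exact $D$. The delicate point is the coincidence $d(a_{1,n-1}b_{1,n-1})=D$, where domination only gives $\ge D$; by part (ii) this forces $D=2e-1$, hence $R_{n+1}=2-2e$, and the bare-defect bound $\ge 2e$ of the $S_{n-1}=-2e$ case then forces $S_{n-1}=2-2e$ as well, so that $\beta_{n-1}=2e-1=D$ supplies exactly the missing upper bound. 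In every case $d[-a_{1,n+1}b_{1,n-1}]=D=1-R_{n+1}$; the whole design is to ensure that in each boundary configuration one of $\alpha_{n+1}$, $\beta_{n-1}$ equals the target value exactly, compensating for the fact that domination degenerates to an inequality when two defects agree.
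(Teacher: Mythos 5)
Your proposal is correct and follows essentially the same route as the paper's proof: the same appeals to Proposition \ref{prop:Ralphaproperty3}(iii) and Proposition \ref{prop:maximalproperty} for the defect data of $M$ and $N$, the same factorizations combined with the domination principle for (i), (ii), the same parity argument for (iii), the same $\beta_{n-2}$ cap for (iv), and in (v) the same boundary analysis, where the only delicate configuration $S_{n-1}=R_{n+1}=2-2e$ is resolved exactly as in the paper by $\beta_{n-1}=2e-1=1-R_{n+1}$. The differences (working with bare defects plus explicit $\alpha_{n+1},\beta_{n-1}$ caps rather than bracket-level domination, and organizing the case split in (v) around where the minimum defining $d[-a_na_{n+1}]$ is attained rather than around $S_{n-1}$ versus $R_{n+1}$) are purely organizational.
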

\begin{proof}
(i) For $ i\in [1,n-2]^{E} $, since $ R_{i}=S_{i}=-2e $, Proposition \ref{prop:Ralphaproperty3}(iii) implies that $ d[(-1)^{i/2}a_{1,i}]\ge 2e  $ and $ d[(-1)^{i/2}b_{1,i}]\ge 2e $. Hence $ d[a_{1,i}b_{1,i}]\ge 2e $ by the domination principle.
	
(ii) Since $ R_{n-1}=S_{n-3}=-2e $, by Proposition \ref{prop:Ralphaproperty3}(iii), we have $ d[(-1)^{(n-1)/2}a_{1,n-1}]\ge 2e $ and $d[(-1)^{(n-3)/2}b_{1,n-3}] \ge 2e $. If $ S_{n-1}=-2e $, then $ d[-b_{n-2,n-1}]\ge 2e $ by Proposition \ref{prop:maximalproperty}(ii). If $ S_{n-1}=2-2e $, then $d[-b_{n-2,n-1}]=2e-1 $ by Proposition \ref{prop:maximalproperty}(iii). Hence 
	\begin{align*} 
		d[a_{1,n-1}b_{1,n-1}] 
		\begin{cases}
		 \ge 2e   &\text{if $ S_{n-1}=-2e $},\\
		 =2e-1  &\text{if $ S_{n-1}=2-2e $},
		\end{cases}
	\end{align*}
	by the domination principle.
	
(iii) First, $ \ord(a_{1,n}) $ is even from hypothesis and $ \ord (b_{1,n-1}) $ is also even from \eqref{maximal-con-Si}. If $ S_{n}=1 $, then $ d[a_{1,n}b_{1,n}]=d(a_{1,n}b_{1,n})=0 $; if $ S_{n}=0 $, then $ d(a_{1,n}b_{1,n})\ge 1=\alpha_{n} $, so $d[a_{1,n}b_{1,n}]=\min\{d(a_{1,n}b_{1,n}),\alpha_{n}\}=1$. In both cases, $ d[a_{1,n}b_{1,n}]=1-S_{n} $. 
	
(iv) If $S_{n-1}=-2e$, then $\beta_{n-2}=0$, by Proposition \ref{prop:maximalproperty}(ii); if $S_{n-1}=2-2e$, then $\beta_{n-2}=1$, by Proposition \ref{prop:maximalproperty}(iii). Note that $ 	0\le d[-a_{1,n}b_{1,n-2}]\le \beta_{n-2} $ and we are done.
	
(v) If $ \alpha_{n}=R_{n+1}+d[-a_{n,n+1}]=1 $, then $ R_{n+1}=R_{n+1}-R_{n}\ge 2-2e $, by Proposition \ref{prop:alphaproperty}(iii).  
	
By (ii), we have $
		  d[a_{1,n-1}b_{1,n-1}]\ge 2e-1\ge 1-R_{n+1}$. Moreover, the first inequality is strict unless $S_{n-1}=2-2e$ and the second is strict unless $R_{n+1}=2-2e$. Therefore, 
	\begin{align*}
		 d[a_{1,n-1}b_{1,n-1}]>1-R_{n+1}=d[-a_{n,n+1}],
	\end{align*} 
	 unless $S_{n-1}=R_{n+1}=2-2e$. Hence, by the domination principle, $d[-a_{1,n+1}b_{1,n-1}]=1-R_{n+1}$ holds except for  $S_{n-1}=R_{n+1}=2-2e$.
	
	In the exceptional case $S_{n-1}=R_{n+1}=2-2e$, we have
	\begin{align*}
		d[a_{1,n-1}b_{1,n-1}]\ge 2e-1=1-R_{n+1}=d[-a_{n,n+1}],
	\end{align*}
	so $d[-a_{1,n+1}b_{1,n-1}]\ge 2e-1$, by the domination principle.
	But, by Proposition \ref{prop:maximalproperty}(iii), we have $d[-a_{1,n+1}b_{1,n-1}]\le \beta_{n-1}=2e-1$. Hence $d[-a_{1,n+1}b_{1,n-1}]=2e-1=1-R_{n+1}$.
\end{proof}
\begin{lem}\label{lem:sufficiency-B1B2B4}
	Suppose that $R_{i}=0$ for $ i\in [1,n]^{O} $ and $ R_{i}=-2e $ for $ i\in [1,n]^{E} $. Then
	 \begin{enumerate}[itemindent=-0.5em,label=\rm (\roman*)]	
	\item   Theorem \ref{thm:beligeneral}(i) holds for every $N$ in $\mathcal{M}_{n}$.
	
	\item   If $\alpha_{n}=0$ or $ \alpha_{n}=R_{n+1}+d[-a_{n,n+1}]=1 $, then Theorem \ref{thm:beligeneral}(ii) holds for every $N$ in $\mathcal{M}_{n}$.
	
	\item  If $\alpha_{n}\in \{0,1\}$ and $R_{n+2}-R_{n+1}\le 2e$, then Theorem \ref{thm:beligeneral}(iv) holds for every $N$ in $\mathcal{M}_{n}$.
\end{enumerate}
\end{lem}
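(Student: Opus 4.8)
The plan is to verify conditions (i), (ii) and (iv) of Theorem~\ref{thm:beligeneral} for the pair $(M,N)$ directly, reading the invariants $S_i=R_i(N)$ and $\beta_i=\alpha_i(N)$ off Proposition~\ref{prop:maximalproperty} and the defect data off Lemma~\ref{lem:invariantD}; recall that $m=n+2$, so the ranges are $1\le i\le n$ in (ii) and $1<i\le n$ in (iv). For part~(i) I would compare the two $R$-sequences termwise. By Proposition~\ref{prop:maximalproperty}(i) we have $S_i=0$ for odd $i\le n-2$, $S_i=-2e$ for even $i\le n-3$, $S_{n-1}\in\{-2e,2-2e\}$, and $S_n\in\{0,1\}$, whereas the hypothesis gives $R_i=0$ for odd $i\le n$ and $R_i=-2e$ for even $i\le n$. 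In each case $R_i\le S_i$ (for $i=n-1$ because $S_{n-1}\ge -2e=R_{n-1}$, and for $i=n$ because $S_n\ge 0=R_n$), so the first alternative of Theorem~\ref{thm:beligeneral}(i) holds throughout.

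For part~(ii) with $1\le i\le n-1$ the argument is routine: I would bound $A_i$ above by its leading term $(R_{i+1}-S_i)/2+e$ and evaluate it, obtaining $0$ for odd $i$, $2e$ for even $i\le n-3$, and $2e$ or $2e-1$ for $i=n-1$ according as $S_{n-1}=-2e$ or $2-2e$. These are precisely the lower bounds for $d[a_{1,i}b_{1,i}]$ supplied by Lemma~\ref{lem:invariantD}(i),(ii), so $d[a_{1,i}b_{1,i}]\ge A_i$.

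The case $i=n$ of part~(ii) is the main obstacle, since here $R_{n+1}$ and the defect $d[-a_na_{n+1}]$ are not fixed by the hypothesis and $d[a_{1,n}b_{1,n}]=\min\{d(a_{1,n}b_{1,n}),\alpha_n\}$, so one cannot simply read $A_n$ off its leading term. I would split on $\alpha_n$. If $\alpha_n=1$, Lemma~\ref{lem:invariantD}(iii) gives $d[a_{1,n}b_{1,n}]=1-S_n$ exactly, and via \eqref{eq:alpha-defn} together with Lemma~\ref{lem:invariantD}(v) one checks that the second term $R_{n+1}-S_n+d[-a_{1,n+1}b_{1,n-1}]$ of $A_n$ is at most $1-S_n$, whence $A_n\le d[a_{1,n}b_{1,n}]$. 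For the remaining values of $\alpha_n$ I would compare $A_n$ with $\min\{d(a_{1,n}b_{1,n}),\alpha_n\}$ through the domination principle; the decisive inputs are $\alpha_{n-1}=2e$ (since $R_n-R_{n-1}=2e$, Proposition~\ref{prop:Rproperty}(ii)) and $d(a_{1,n-1}b_{1,n-1})\ge 2e-1$ (from the bracket bound of Lemma~\ref{lem:invariantD}(ii)). In the regime where $\alpha_n=R_{n+1}+d[-a_na_{n+1}]$ with small defect, domination forces $d[-a_{1,n+1}b_{1,n-1}]=d[-a_na_{n+1}]$, so the second term of $A_n$ equals $\alpha_n$ and again $A_n\le d[a_{1,n}b_{1,n}]$.

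For part~(iii) I would show that the hypothesis of Theorem~\ref{thm:beligeneral}(iv) is never met, so that (iv) holds vacuously. Its chain $S_i\ge R_{i+2}>S_{i-1}+2e\ge R_{i+1}+2e$ forces $R_{i+2}-R_{i+1}>2e$. For $i\le n-2$ both $R_{i+1},R_{i+2}\in\{0,-2e\}$, giving $R_{i+2}-R_{i+1}\le 2e$, a contradiction; for $i=n-1$ the chain forces $R_{n+1}>S_{n-2}+2e=2e$ while demanding $S_{n-1}\ge R_{n+1}$, impossible since $S_{n-1}\le 2-2e<2e$; and for $i=n$ it forces $R_{n+2}-R_{n+1}>2e$, contradicting the standing hypothesis $R_{n+2}-R_{n+1}\le 2e$. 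Hence no index $i$ triggers the condition, and Theorem~\ref{thm:beligeneral}(iv) is satisfied automatically.
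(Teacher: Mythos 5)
Your parts (i) and (iii) are correct and essentially coincide with the paper's proof, and your part (ii) for $1\le i\le n-1$ is also fine: at $i=n-1$ you bound $A_{n-1}$ by its first term $(R_{n}-S_{n-1})/2+e$, which equals $2e$ resp.\ $2e-1$ according as $S_{n-1}=-2e$ or $2-2e$, whereas the paper uses the second term of $A_{n-1}$ together with Lemma~\ref{lem:invariantD}(iv); the two routes give the same bound.

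The genuine gap is at $i=n$ in part (ii). In your branch $\alpha_{n}=1$ you cite Lemma~\ref{lem:invariantD}(v), but its hypothesis is the stronger condition $\alpha_{n}=R_{n+1}+d[-a_{n}a_{n+1}]=1$; from \eqref{eq:alpha-defn} and $\alpha_{n}=1$ one only gets $R_{n+1}+d[-a_{n}a_{n+1}]\ge 1$, and when $R_{n+1}=2-2e$ (the minimum attained by the term $(R_{n+1}-R_{n})/2+e$ alone) one can have $d[-a_{n}a_{n+1}]=2e>1-R_{n+1}$, since the only a priori caps on $d[-a_{n}a_{n+1}]$ are $\alpha_{n-1}=2e$ and $\alpha_{n+1}$. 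Your fallback branch is also flawed: with $\alpha_{n}=R_{n+1}+d[-a_{n}a_{n+1}]$ and $d[-a_{1,n+1}b_{1,n-1}]=d[-a_{n}a_{n+1}]$, the second term of $A_{n}$ is $R_{n+1}-S_{n}+d[-a_{1,n+1}b_{1,n-1}]=\alpha_{n}-S_{n}$, not $\alpha_{n}$, and for $S_{n}=1$ (the types $N_{\nu}^{n}(\delta\pi)$, where $\ord(a_{1,n}b_{1,n})$ is odd and $d[a_{1,n}b_{1,n}]=0$) one needs $A_{n}\le 0$, which forces $\alpha_{n}\le 1$. In fact the unrestricted statement you are trying to prove is false: for $n=3$ and $e\ge 2$, the lattice $M\cong\;\prec 1,-\pi^{-2e},1,-\Delta,-\Delta\pi^{2e}\succ$ satisfies the $R$-pattern with $R_{4}=0$, $R_{5}=2e$ and $\alpha_{3}=e\ge 2$, yet for $N=N_{1}^{3}(\delta\pi)$ one computes $d[a_{1,3}b_{1,3}]=0$ while $A_{3}=\min\{e-\tfrac12,\;2e-1,\;4e-1+d[a_{1,5}b_{1,1}]\}=e-\tfrac12>0$, so Theorem~\ref{thm:beligeneral}(ii) fails at $i=3$. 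What rescues the paper is that this lemma is only ever applied under condition (ii) of Lemma~\ref{lem:dyadicACDodd}, i.e.\ under the dichotomy ``$\alpha_{n}=0$ or $\alpha_{n}=R_{n+1}+d[-a_{n}a_{n+1}]=1$'', and the paper's proof of (ii) at $i=n$ splits on exactly these two cases: if $\alpha_{n}=0$ then $R_{n+1}=-2e$ and $A_{n}\le (R_{n+1}-S_{n})/2+e=-S_{n}/2\le 0$, while if $\alpha_{n}=R_{n+1}+d[-a_{n}a_{n+1}]=1$ then Lemma~\ref{lem:invariantD}(iii) and (v) are legitimately available and give $A_{n}\le R_{n+1}-S_{n}+(1-R_{n+1})=1-S_{n}=d[a_{1,n}b_{1,n}]$. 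Your proof, assuming only the $R$-pattern, cannot be completed as written; you need to import this dichotomy (as an explicit hypothesis, or from the ambient application in the sufficiency proof of Lemma~\ref{lem:dyadicACDodd}) and treat the case $\alpha_{n}=0$ separately.
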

\begin{proof}
		(i) By Proposition \ref{prop:Ralphaproperty3}(i), if $i$ is odd, then $R_{i}=0\le S_{i}$, and if $i$ is even, then $R_{i}=-2e\le S_{i}$. Hence Theorem \ref{thm:beligeneral}(i) holds for $1\le i\le n$.
 
 (ii) For $ i\in [1,n-2]^{O} $, note that $ S_{i}=R_{i+1}+2e=0 $ and so
 \begin{align*}
 	A_{i}\le \dfrac{R_{i+1}-S_{i}}{2}+e=\dfrac{-2e-0}{2}+e=0\le d[a_{1,i}b_{1,i}].
 \end{align*}
 For $ i\in [1,n-2]^{E} $, since $ R_{i+1}=S_{i}+2e=0 $, we have
 \begin{align*}
 	A_{i}\le \dfrac{R_{i+1}-S_{i}}{2}+e=\dfrac{0-(-2e)}{2}+e=2e\le d[a_{1,i}b_{1,i}]
 \end{align*}
 by Lemma \ref{lem:invariantD}(i).
 For $ i=n-1 $, by Lemma \ref{lem:invariantD}(ii), we have
 \begin{align*} 
 	d[a_{1,n-1}b_{1,n-1}] 
 	\begin{cases}
 		\ge 2e   &\text{if $ S_{n-1}=-2e $},\\
 		=2e-1  &\text{if $ S_{n-1}=2-2e $}.
 	\end{cases}
 \end{align*}
 By Lemma \ref{lem:invariantD}(iv), we also have
 \begin{align*}
 	d[-a_{1,n}b_{1,n-2}]   
 	\begin{cases}
 		=0   &\text{if $ S_{n-1}=-2e $},\\
 		\le 1  &\text{if $ S_{n-1}=2-2e $}.
 	\end{cases}
 \end{align*}
 So
 \begin{align*}
 	A_{n-1}&\le R_{n}-S_{n-1}+d[-a_{1,n}b_{1,n-2}]\\&
 	\begin{cases}
 		=0-(-2e)+0=2e\le d[a_{1,n-1}b_{1,n-1}]   &\text{if $ S_{n-1}=-2e $},\\
 		\le 0-(2-2e)+1=2e-1=d[a_{1,n-1}b_{1,n-1}]  &\text{if $ S_{n-1}=2-2e $}.
 	\end{cases}
 \end{align*}
  For $ i=n $, if $ \alpha_{n}=0 $, then $ R_{n+1}=-2e $ by Proposition \ref{prop:alphaproperty}(ii) and so 
  \begin{align*}
  	A_{n}\le \dfrac{R_{n+1}-S_{n}}{2}+e=\dfrac{-S_{n}}{2}\le 0\le d[a_{1,n}b_{1,n}].
  \end{align*}
   If $ \alpha_{n}=R_{n+1}+d[-a_{n,n+1}]=1 $, then $
 d[-a_{1,n+1}b_{1,n-1}]=1-R_{n+1}$
 by Lemma \ref{lem:invariantD}(v). Also, $ d[a_{1,n}b_{1,n}]=1-S_{n} $ by Lemma \ref{lem:invariantD}(iii). So 
 \begin{align*}
 	A_{n}\le R_{n+1}-S_{n}+d[-a_{1,n+1}b_{1,n-1}]= R_{n+1}-S_{n}+(1-R_{n+1})=1-S_{n}= d[a_{1,n}b_{1,n}].
 \end{align*}
 Hence Theorem \ref{thm:beligeneral}(ii) holds for $ 1\le i\le n $.
 
 (iii) Since $\alpha_{n}\le 1$, Proposition \ref{prop:Rproperty}(i) implies $R_{n+1}-R_{n}<2e$. Combining with the hypothesis, for every $2\le i\le n$, we have $
 R_{i+2}-R_{i+1}\le 2e$, so Theorem \ref{thm:beligeneral}(iv) holds.
\end{proof}
\begin{lem}\label{lem:sufficiency-B3}
	Suppose that $R_{i}=0$ for $ i\in [1,n]^{O} $, $ R_{i}=-2e $ for $ i\in [1,n]^{E} $ and $R_{n+2}-R_{n+1}\le 2e$. If either $\alpha_{n}=0$, or $\alpha_{n}=R_{n+1}+d[-a_{n,n+1}]=1$, $R_{n+1}\in [2-2e,0]^{E} $ and $R_{n+2}\in \{0,1\}$, then Theorem \ref{thm:beligeneral}(iii) holds for every $N$ in $\mathcal{M}_{n}$.
\end{lem}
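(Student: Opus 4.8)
The plan is to verify Theorem \ref{thm:beligeneral}(iii) for every $N$ in $\mathcal{M}_{n}$ and every admissible index. Since $\rank\,M=m=n+2$ and $\rank\,N=n$, condition (iii) must be checked only for $1<i\le \min\{m-1,n+1\}=n+1$, i.e. $i\in\{2,\ldots,n+1\}$. For each such $i$ I will either show that the hypothesis \eqref{asmp:beli3if} fails, so that (iii) holds vacuously, or I will verify its conclusion $[b_{1},\ldots,b_{i-1}]\rep[a_{1},\ldots,a_{i}]$ directly. Throughout I would use the invariants of $N$ recorded in Proposition \ref{prop:maximalproperty} and the defect identities of Lemma \ref{lem:invariantD}.

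First, for $2\le i\le n-1$ both $i+1\le n$ and $i-1\le n-2$ hold and have the same parity, so the standing invariant conditions on $M$ together with Proposition \ref{prop:maximalproperty}(i) give $R_{i+1}=S_{i-1}$ (both $0$ or both $-2e$). Hence the first clause $R_{i+1}>S_{i-1}$ of \eqref{asmp:beli3if} fails and (iii) is vacuous. For $i=n$ I would instead verify the conclusion outright: the invariants $R_{n-1}=-2e$ and $R_{n}=0$ force $[a_{1},\ldots,a_{n}]\cong \mathbb{H}^{(n-1)/2}\perp[\varepsilon]=W_{1}^{n}(\varepsilon)$ by Proposition \ref{prop:Ralphaproperty3}(v), and such a space represents every $(n-1)$-dimensional space, in particular $[b_{1},\ldots,b_{n-1}]$, by \cite[Lemma 3.14]{HeHu2}(i). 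Thus (iii) holds at $i=n$ regardless of whether its hypothesis does.

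The crux, and what I expect to be the main obstacle, is $i=n+1$. Here $S_{n}\in\{0,1\}$; when $\alpha_{n}=0$ one has $R_{n+1}=-2e$, hence $R_{n+2}\le 0\le S_{n}$ and the first clause $R_{n+2}>S_{n}$ fails. When $\alpha_{n}=1$ with $R_{n+2}\in\{0,1\}$, the first clause can hold only if $R_{n+2}=1$ and $S_{n}=0$, and the $N$ satisfying this are exactly $N_{1}^{n}(\delta)$ and $N_{2}^{n}(\delta)$ with $\delta$ a unit. The delicate point is that for a fixed $M$ the space $[a_{1},\ldots,a_{n+1}]$ represents only one of $W_{1}^{n}(\delta),W_{2}^{n}(\delta)$, so the conclusion of (iii) cannot hold for both lattices $N_{\nu}^{n}(\delta)$; the statement must be rescued by showing the second clause of \eqref{asmp:beli3if} fails. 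To do so I would compute $\ord(\det FM)=-(n-1)e+R_{n+1}+1$, which is odd, whereas $\det FN$ has even order; consequently $-a_{1,n+2}b_{1,n}$ has odd order and $d[-a_{1,n+2}b_{1,n}]=d(-a_{1,n+2}b_{1,n})=0$ (the $\alpha$- and $\beta$-terms in the bracket being ignored).

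Combining this with $d[-a_{1,n+1}b_{1,n-1}]=1-R_{n+1}$ from Lemma \ref{lem:invariantD}(v), the left side of \eqref{asmp:beli3if} equals $1-R_{n+1}$, while the right side is $2e+S_{n}-R_{n+2}=2e-1$; since $R_{n+1}\ge 2-2e$ we get $1-R_{n+1}\le 2e-1$, so the strict inequality fails and (iii) is vacuous at $i=n+1$ as well. The hard part is precisely this last case: one must recognize that the conclusion genuinely fails for one of the two maximal lattices and that the parity computation giving $d(-a_{1,n+2}b_{1,n})=0$ is exactly what forces the defect hypothesis to fail, so that (iii) never needs to be verified for the "wrong" lattice.
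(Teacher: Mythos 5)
Your handling of $2\le i\le n-1$ and of $i=n+1$ is correct and essentially the paper's own argument. At $i=n+1$ the paper likewise reduces to $\alpha_{n}=1$, $R_{n+2}=1$, $S_{n}=0$, observes that $\ord(a_{1,n+2}b_{1,n})$ is odd so that $d[-a_{1,n+2}b_{1,n}]=0$, and defeats the second clause of \eqref{asmp:beli3if} using $d[-a_{1,n+1}b_{1,n-1}]=1-R_{n+1}$ from Lemma \ref{lem:invariantD}(v); the paper phrases the final bound via the parity of $R_{n+2}-R_{n+1}\le 2e$ (hence $\le 2e-1$), which is equivalent to your use of $R_{n+1}\ge 2-2e$. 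Your identification of $i=n+1$ as the place where the representation conclusion genuinely fails for one of $N_{1}^{n}(\delta)$, $N_{2}^{n}(\delta)$ is also exactly right.

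The genuine gap is at $i=n$. Your claim that $[a_{1},\ldots,a_{n}]\cong W_{1}^{n}(\varepsilon)$ ``represents every $(n-1)$-dimensional space'' is false: by Lemma \ref{lem:spacerep-criterion} (applied with $n-1$ in place of $n$), an $n$-dimensional space represents exactly one of $W_{1}^{n-1}(c)$ and $W_{2}^{n-1}(c)$ for each class $c$, so $W_{1}^{n}(\varepsilon)$ necessarily misses one space in every discriminant class; and \cite[Lemma 3.14(i)]{HeHu2} only yields representation of $W_{1}^{n-1}(1)$ and $W_{1}^{n-1}(\Delta)$, which are precisely the spaces occurring as $[b_{1},\ldots,b_{n-1}]$ when $S_{n-1}=-2e$ (Proposition \ref{prop:Ralphaproperty3}(iv)). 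For $N\cong N_{2}^{n}(\delta)$ with $\delta\in\mathcal{U}_{e}$ one has $S_{n-1}=2-2e$ and, by Lemma \ref{lem:maximallattices-dyadic}(i), $[b_{1},\ldots,b_{n-1}]\cong \mathbb{H}^{(n-3)/2}\perp[\delta\kappa^{\#},-\delta\kappa^{\#}\kappa\pi^{2-2e}]$, an anisotropic-tailed space whose discriminant has relative defect $2e-1$; for suitable $\delta$ this is exactly the space $W_{1}^{n}(\varepsilon)$ fails to represent, so the conclusion of Theorem \ref{thm:beligeneral}(iii) at $i=n$ can genuinely fail and cannot be verified ``regardless of whether its hypothesis does.'' The paper instead splits the case $i=n$: when $\alpha_{n}=0$, the first clause of \eqref{asmp:beli3if} fails because $R_{n+1}=-2e\le S_{n-1}$; when $\alpha_{n}=1$ and $S_{n-1}=2-2e$, the defect hypothesis fails because $d[-a_{1,n}b_{1,n-2}]+d[-a_{1,n+1}b_{1,n-1}]\le 1+(1-R_{n+1})=2e+S_{n-1}-R_{n+1}$ by Lemma \ref{lem:invariantD}(iv) and (v); only when $S_{n-1}=-2e$ is the conclusion verified directly, via Proposition \ref{prop:Ralphaproperty3}(iv),(v) and \cite[Lemma 3.14(i)]{HeHu2}. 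Your proposal proves nothing for the lattices with $S_{n-1}=2-2e$, so the case $i=n$ is not closed.
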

\begin{proof}
By \eqref{maximal-con-Si}, we have $ R_{i+1}=0=S_{i-1} $ for $ i\in [2,n-1]^{E} $ and $ R_{i+1}=-2e=S_{i-1} $ for $ i\in [2,n-1]^{O} $, Theorem \ref{thm:beligeneral}(iii) holds trivially for $ 2\le i\le n-1 $. 
	
For $ i=n $, if $ \alpha_{n}=0 $, then $ R_{n+1}=-2e\le S_{n-1} $. If $ \alpha_{n}=R_{n+1}+d[-a_{n,n+1}]=1 $, when $  S_{n-1}=2-2e $, we have
	\begin{align*}
		d[-a_{1,n}b_{1,n-2}]+d[-a_{1,n+1}b_{1,n-1}]\le 1+(1-R_{n+1})= 2e+S_{n-1}-R_{n+1}
	\end{align*}
	by Lemma \ref{lem:invariantD}(iv) and (v); when $ S_{n-1}=-2e$, note that $ [b_{1},\ldots ,b_{n-1}]\cong W_{1}^{n-1}(1) $ or $ W_{1}^{n-1}(\Delta) $ from Proposition \ref{prop:Ralphaproperty3}(iv), and $ [a_{1},\ldots,a_{n}]\cong W_{1}^{n}(\varepsilon) $ with $ \varepsilon\in \mathcal{O}_{F}^{\times} $ from Proposition \ref{prop:Ralphaproperty3}(v). Hence $ [b_{1},\ldots, b_{n-1}]\rep [a_{1},\ldots,a_{n}] $ by Lemma \ref{lem:B-beli}(ii).

	For $ i=n+1 $, we may assume $R_{n+2}>S_{n}\ge 0$. If $R_{n+1}=-2e$, then, by hypothesis, $R_{n+2}\le R_{n+1}+2e=0 $, a contradiction. Hence $R_{n+1}\not=-2e$, i.e., $\alpha_{n}\not=0$. So $\alpha_{n}=R_{n+1}+d[-a_{n,n+1}]=1$. Hence $ R_{n+1}\in [2-2e,0]^{E} $ and $ R_{n+2}=1 $. Now, we have $1=R_{n+2}>S_{n}\ge 0$, so $S_{n}=0$. It follows that $\ord(a_{1,n+2}b_{1,n}) $ is odd and so $ d[-a_{1,n+2}b_{1,n}]=0 $. Since $ R_{n+2} \le 2-2e $, we have
	\begin{align*}
		d[-a_{1,n+1}b_{1,n-1}]+d[-a_{1,n+2}b_{1,n}]= (1-R_{n+1})+0 \le 2e-1=2e+S_{n}-R_{n+2},
	\end{align*}
	by Lemma \ref{lem:invariantD}(v). Hence Theorem \ref{thm:beligeneral}(iii) holds for $ 2\le i\le n+1 $.
\end{proof}
Now, we are ready to show the sufficiency of Lemma \ref{lem:dyadicACDodd}.
\begin{proof}[Proof of Sufficiency of Lemma \ref{lem:dyadicACDodd}]
	If $ R_{n+2}-R_{n+1}>2e $, then $ R_{n+1}=-2e $ and $ R_{n+2}=1 $. Then $\det FM$ has an odd order and so $FM\cong W_{\nu}^{n+2}(\delta\pi)$ for some $\delta\in \mathcal{U}$ and $\nu\in \{1,2\}$. Then, by Lemma \ref{lem:maximal-BONG-odd}(iii), we have $ M\cong N_{\nu}^{n+2}(\delta\pi) $. So $ M $ is $ \mathcal{O}_{F} $-maximal and thus is $ n $-ADC by Lemma \ref{lem:maximalrep}. 
	
	Assume $ R_{n+2}-R_{n+1}\le  2e $. By Lemma \ref{lem:maximalequivADC}, it is sufficient to show that for every $ N $ in $ \mathcal{M}_{n} $, if $ FM $ represents $ FN $, then $ M $ represents $ N $. To do so, we need to verify that Theorem \ref{thm:beligeneral}(i)-(iv) hold for $ M$ and $N$. But this follows from Lemmas \ref{lem:sufficiency-B1B2B4} and \ref{lem:sufficiency-B3}.
\end{proof}
\begin{lem}\label{lem:n-ADC-implies-n-1-universal}
     If $M$ is $n$-ADC, then it is $(n-1)$-universal. 
\end{lem}
\begin{proof}
	Let $N$ be an $\mathcal{O}_{F}$-lattice of rank $n-1$. We take a non-zero element $c\in \mathcal{O}_{F}$ such that $c\not=-\det FM\det FN$, i.e., $c\det FN\not=-\det FM$. Define $N^{\prime}:=N\perp \langle c\rangle$. Then $N^{\prime}$ is integral and $\det FN^{\prime}=c\det FN\not=-\det FM$. Since $\dim FM-\dim FN^{\prime}=2$, it follows from \cite[63:21 Theorem]{omeara_quadratic_1963} that $FN^{\prime}\rep FM$. Since $M$ is $n$-ADC, we have $N^{\prime}\rep M$. Since also $N\rep N^{\prime}$, we have $N\rep M$. Thus $M$ is $(n-1)$-universal by the arbitrariness of $N$.
\end{proof}
 
In view of Lemma \ref{lem:n-ADC-implies-n-1-universal} and the classification for $ (n-1) $-universality in \cite[Theorem 4.1]{HeHu2}, we futher have the lemma.
\begin{lem}\label{lem:even-nuniversaldyadic}
	Suppose that $ M $ is $ n$-ACD. Then
  \begin{enumerate}[itemindent=-0.5em,label=\rm (\roman*)]	
	\item  $R_{i}=0$ for $ i\in [1,n]^{O} $ and $ R_{i}=-2e $ for $ i\in [1,n]^{E} $.
	
	\item   Either $ \alpha_{n}=0 $ or $ \alpha_{n}=R_{n+1}+d[-a_{n,n+1}]=1 $.
	
	\item   If $ R_{n+2}-R_{n+1}>2e $, then $ R_{n+1}=-2e $; and if moreover $ n\ge 5 $, or $ n=3 $ and $ d(a_{1,4})=2e $, then $ R_{n+2}=1 $.
\end{enumerate}
\end{lem}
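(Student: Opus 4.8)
The plan is to deduce everything from the known classification of even-dimensional universal lattices. Since $n$ is odd, $n-1$ is even with $n-1\ge 2$, and Lemma~\ref{lem:n-ADC-implies-n-1-universal}(iii) already tells us that an $n$-ADC lattice $M$ is $(n-1)$-universal. As $\rank M=n+2=(n-1)+3$, I would feed $M$ into the classification of $(n-1)$-universal $\mathcal{O}_F$-lattices in \cite[Theorem 4.1]{HeHu2}, which is phrased directly in terms of the BONG-invariants $R_i(M)$ and $\alpha_i(M)$ in use here. Matching conventions, item (i) — the alternating pattern $R_i=0$ for odd $i$ and $R_i=-2e$ for even $i$ out to $i=n$ — is exactly what that classification forces when the rank exceeds the universality dimension by three; at the level of the universal core it is what representing $N_1^{n-1}(1)=\mathbf{H}^{(n-1)/2}$ and $N_1^{n-1}(\Delta)$ demands on the initial segment (with $R_n=0$ coming from Proposition~\ref{prop:Ralphaproperty3}(i) together with the classification's upper bound). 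The constraint on $\alpha_n$ in item (ii) — namely $\alpha_n\in\{0,1\}$, with the sharp identity $\alpha_n=R_{n+1}+d[-a_na_{n+1}]=1$ when $\alpha_n=1$ — is the standard condition governing the junction between the universal core and the tail, which I would read off via \eqref{eq:alpha-defn} and Proposition~\ref{prop:alphaproperty}. Likewise the first half of (iii), that $R_{n+1}=-2e$ as soon as $R_{n+2}-R_{n+1}>2e$, is the condition forbidding $a_{n+1}$ from detaching from the core.

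The one item needing more than a direct translation is the second half of (iii), that $R_{n+2}=1$. Here I would argue by contradiction, assuming $R_{n+2}\ge 2$. With $R_{n+1}=-2e$ and item (i) in force, Proposition~\ref{prop:Ralphaproperty3}(iv) pins down $[a_1,\dots,a_{n+1}]\cong\mathbb{H}^{(n+1)/2}$ or $\mathbb{H}^{(n-1)/2}\perp[1,-\Delta]$, according to the value of $d((-1)^{(n+1)/2}a_{1,n+1})$. Using Lemma~\ref{lem:spacerep-criterion} I would then select a rank-$n$ maximal lattice $N$ whose space $FN$ is represented by $FM$ but \emph{not} by $[a_1,\dots,a_{n+1}]$, locating the correct pair $W_1^n(c),W_2^n(c)$ by matching determinants through $\det FM=\det[a_1,\dots,a_{n+1}]\cdot a_{n+2}$. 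Since $R_{n+2}$ is large, the right-hand side $2e+S_{n-1}-R_{n+2}$ is small and the hypotheses of Theorem~\ref{thm:beligeneral}(iii) hold at $i=n+1$, so $n$-ADC-ness would force $FN\rep[a_1,\dots,a_{n+1}]$, contradicting the choice of $N$. Hence $R_{n+2}\le 1$, and since $R_{n+2}>0$ this gives $R_{n+2}=1$.

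The main obstacle is the boundary case $n=3$. For $n\ge 5$ there are spare hyperbolic planes, and representation of enough maximal lattices (for instance those built on $W_1^{n}(1)$, $W_1^n(\Delta)$ and $W_1^n(\kappa)$) determines $d((-1)^{(n+1)/2}a_{1,n+1})$ and always produces a non-represented test space $FN$ with $FM\rep FN$, so the contradiction closes unconditionally. When $n=3$ only a single hyperbolic plane remains, the ambiguity of $[a_1,a_2,a_3,a_4]$ between $\mathbb{H}^2$ and $\mathbb{H}\perp[1,-\Delta]$ cannot be removed for free, and in the split case $[a_1,a_2,a_3,a_4]\cong\mathbb{H}^2$ (which is already $2$-universal at the space level, the exceptional $(n,c,\nu)=(2,1,2)$ phenomenon of Proposition~\ref{prop:space}(iii)) the argument does not run. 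This is precisely why the extra hypothesis $d(a_{1,4})=2e$ must be imposed: it forces $[a_1,a_2,a_3,a_4]\cong\mathbb{H}\perp[1,-\Delta]$, the space for which a suitable non-represented $FN$ exists and the inequality in Theorem~\ref{thm:beligeneral}(iii) becomes strict. I expect the delicate bookkeeping of the $d[\cdot]$-invariants and the determinant matching in this last step, rather than the reduction itself, to be where the real work lies.
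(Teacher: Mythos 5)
Your treatment of (i), (ii) and the first half of (iii) coincides with the paper's proof, which consists entirely of the two-step citation you open with: Lemma \ref{lem:n-ADC-implies-n-1-universal}(iii) gives $(n-1)$-universality, and the classification \cite[Theorem 4.1]{HeHu2} for rank $(n-1)+3$ is then quoted verbatim — including the second half of (iii), whose hypothesis ``$n\ge 5$, or $n=3$ and $d(a_{1,4})=2e$'' is simply inherited from the statement of that classification. So your separate from-scratch argument for $R_{n+2}=1$ is work the paper does not do at this point; it is nonetheless legitimate in shape, and in fact parallels machinery the paper deploys elsewhere (Lemmas \ref{lem:repN1ON2ON3ON4O} and \ref{lem:repN3ON4On=3}: use $n$-ADC-ness to force representation of a maximal rank-$n$ test lattice, then derive a contradiction from Theorem \ref{thm:beligeneral}(iii) at $i=n+1$), with the $d[\cdot]$-estimates you defer being of exactly the type carried out in Lemmas \ref{lem:invariantD} and \ref{lem:repN3ON4On=3}.

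However, your explanation of the $n=3$ boundary is backwards, and this is a genuine error in your reasoning even though it concerns the excluded case. The exceptional ``$(n,c,\nu)=(2,1,2)$'' phenomenon of Proposition \ref{prop:space}(iii) concerns \emph{binary} spaces, whereas your test lattices have rank $n=3$; by Lemma \ref{lem:spacerep-criterion}, which has no exception for $n\ge 3$, the split space $\mathbb{H}^{2}$ represents exactly one space from each pair $\{W_{1}^{3}(c),W_{2}^{3}(c)\}$ and hence misses \emph{every} $W_{2}^{3}(c)$. So non-represented ternary test spaces exist in abundance in the split case, and the paper's Lemma \ref{lem:repN3ON4On=3} runs precisely the argument you claim ``does not run'': with $d(a_{1,4})=\infty$ and $R_{5}>1$ it makes Theorem \ref{thm:beligeneral}(iii) fail at $i=4$ for $N=N_{2}^{3}(\varepsilon)$ and $N_{2}^{3}(\varepsilon\pi)$, which in the necessity proof of Lemma \ref{lem:dyadicACDodd} forces $R_{5}=1$ in the split case as well. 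The true reason the hypothesis $d(a_{1,4})=2e$ appears in (iii) is the $(n-1)$-universality provenance: for $n=3$ the universality tests are binary, and $\mathbb{H}^{2}$ \emph{does} represent all binary spaces (this is where the $(2,1,2)$ exception genuinely bites), so $2$-universality alone cannot constrain $R_{5}$ when the core is split. Since your misdiagnosis is quarantined to the case excluded from the statement, your proof of the lemma as stated survives modulo the deferred bookkeeping, but the conflation of the rank of the test objects should be corrected.
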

 \begin{lem}\label{lem:repN3ON4On=3}
     Suppose $ n=3 $, $ d(a_{1,4})=\infty $, $ R_{1}=R_{3}=R_{2}+2e=R_{4}+2e=0$ and $ R_{5}>1 $. Then Theorem \ref{thm:beligeneral}(iii) fails at $ i=4 $ for all $N=N_{2}^{3}(c)$ with $ c\in \mathcal{V}$.
\end{lem}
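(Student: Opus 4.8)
The plan is to show that, for each of the two maximal lattices $N=N_2^3(\varepsilon)$ and $N=N_2^3(\varepsilon\pi)$, the hypothesis of Theorem \ref{thm:beligeneral}(iii) is satisfied at $i=4$ while its conclusion $[b_1,b_2,b_3]\rep[a_1,a_2,a_3,a_4]$ is false; since (iii) is an implication, this is precisely the assertion that it ``fails at $i=4$''. Note that $\min\{m-1,n+1\}=\min\{4,4\}=4$ with $m=n+2=5$, so $i=4$ is admissible.

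First I would record the relevant invariants. For $M$ the hypotheses give $R_1=R_3=0$ and $R_2=R_4=-2e$, whence $\alpha_1=\alpha_3=0$ by Proposition \ref{prop:alphaproperty}(ii), $\alpha_2=2e$ by Proposition \ref{prop:Rproperty}(i), and $\alpha_4=(R_5+2e)/2+e>2e$ by Proposition \ref{prop:Rproperty}(ii), since $R_5>1$. The crucial structural fact is that $R_4=-2e$ together with $d((-1)^2a_{1,4})=d(a_{1,4})=\infty$ lets Proposition \ref{prop:Ralphaproperty3}(iv) identify $[a_1,a_2,a_3,a_4]\cong\mathbb{H}^2=W_1^4(1)$. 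For $N$, Proposition \ref{prop:maximalproperty} supplies $S_1=0$ and $S_3\in\{0,1\}$, together with $S_2=2-2e$ and $\beta_2=d[-b_1b_2]=2e-1$ in the case $N_2^3(\varepsilon)$ (part (iii)), and $S_2=-2e$, $d[-b_1b_2]\ge 2e$, $\beta_2\ge 2e$ in the case $N_2^3(\varepsilon\pi)$ (part (ii)).

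Next I would verify the hypothesis of Theorem \ref{thm:beligeneral}(iii) at $i=4$. The condition $R_5>S_3$ is immediate from $R_5>1\ge S_3$. For the defect inequality the key simplification is $a_{1,4}\in F^{\times 2}$, which gives $d(-a_{1,4}b_{1,2})=d(-b_1b_2)$; combining this with $d(-b_1b_2)\ge d[-b_1b_2]$ and the values of $\alpha_4,\beta_2$ above yields $d[-a_{1,4}b_{1,2}]=2e-1$ for $N_2^3(\varepsilon)$ and $d[-a_{1,4}b_{1,2}]\ge 2e$ for $N_2^3(\varepsilon\pi)$. Since $d[-a_{1,5}b_{1,3}]\ge 0$ and the right-hand side $2e+S_3-R_5$ is strictly below $2e-1$ (when $S_3=0$) respectively $2e$ (when $S_3=1$), because $R_5>1$, the required inequality $d[-a_{1,4}b_{1,2}]+d[-a_{1,5}b_{1,3}]>2e+S_3-R_5$ holds in both cases.

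Finally I would refute the conclusion. Writing $c=\varepsilon$ or $\varepsilon\pi$, we have $[b_1,b_2,b_3]=FN=W_2^3(c)$, while $[a_1,a_2,a_3,a_4]\cong\mathbb{H}^2$ has dimension $n+1=4$. By Lemma \ref{lem:spacerep-criterion}, $\mathbb{H}^2$ represents exactly one of $W_1^3(c)$ and $W_2^3(c)$; and it does represent $W_1^3(c)=\mathbb{H}\perp[c]$, since $\mathbb{H}\cong[c,-c]$ gives $\mathbb{H}^2\cong W_1^3(c)\perp[-c]$. Hence $\mathbb{H}^2$ does not represent $W_2^3(c)$, i.e.\ $[b_1,b_2,b_3]\not\rep[a_1,a_2,a_3,a_4]$, so Theorem \ref{thm:beligeneral}(iii) indeed fails at $i=4$. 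The only delicate point in the argument is the bookkeeping of the bracket-defect $d[-a_{1,4}b_{1,2}]$: one must invoke $d(a_{1,4})=\infty$ to reduce to the internal invariants of $N$ and then correctly take the minimum against $\alpha_4$ and $\beta_2$; once the space $[a_1,a_2,a_3,a_4]\cong\mathbb{H}^2$ is identified, the rest is a direct application of Lemma \ref{lem:spacerep-criterion}.
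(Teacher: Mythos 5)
Your proposal is correct and follows the paper's own argument in all essentials: you verify the premise \eqref{asmp:beli3if} of Theorem \ref{thm:beligeneral}(iii) at $i=4$ (including $R_5>S_3$ from $R_5>1\ge S_3$) using the invariants of $N_2^3(\varepsilon)$ and $N_2^3(\varepsilon\pi)$, and you refute the conclusion exactly as the paper does, by identifying $[a_1,a_2,a_3,a_4]\cong \mathbb{H}^2=W_1^4(1)$ via Proposition \ref{prop:Ralphaproperty3}(iv) and invoking Lemma \ref{lem:spacerep-criterion}. The only (harmless) deviation is in the defect bookkeeping: the paper bounds $d[-a_{1,4}b_{1,2}]$ by the domination principle from $d[-a_3a_4]\ge 2e$ and $d[a_{1,2}b_{1,2}]$ (Lemma \ref{lem:invariantD}(ii)), whereas you use $a_{1,4}\in F^{\times 2}$ to reduce $d(-a_{1,4}b_{1,2})$ to $d(-b_1b_2)$ and then take the minimum against $\alpha_4$ and $\beta_2$ --- the same estimate by a slightly more direct route.
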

\begin{proof}
	  Since $R_{4}-R_{3}=-2e$, we have $d[-a_{3,4}]\ge 2e$ by Proposition \ref{prop:alphaproperty}(iv).
	
	We have $c=\varepsilon$ or $\varepsilon\pi$ for some $\varepsilon\in \mathcal{U}$. For  $ N=N_{2}^{3}(\varepsilon) $, we have $S_{1}=0$, $S_{2}=2-2e$ and $S_{3}=0$ by Lemma \ref{lem:maximal-BONG-odd}(ii), so $d[a_{1,2}b_{1,2}]=2e-1$ by Lemma \ref{lem:invariantD}(ii). For $ N=N_{2}^{3}(\varepsilon\pi) $, we have $S_{1}=0$, $S_{2}=-2e$ and $S_{3}=1$ by Lemma \ref{lem:maximal-BONG-odd}(iii), so $d[a_{1,2}b_{1,2}]\ge 2e$ by Lemma \ref{lem:invariantD}(ii). Since also $d[-a_{3,4}]\ge 2e$, by the domination principle, we have $d[-a_{1,4}b_{1,2}]=2e-1$ or $\ge 2e$, according as $S_{3}=0$ or $1$. So, in both cases, $d[-a_{1,4}b_{1,2}]\ge 2e-1+S_{3}$. 
  	 So we conclude that $ R_{5}>1\ge S_{3} $ and
  	\begin{align}\label{S3R5}
  		d[-a_{1,4}b_{1,2}]+d[-a_{1,5}b_{1,3}]\ge  (2e-1+S_{3})+0>2e+S_{3}-R_{5}.
  	\end{align}
	
	It remains to show that $ [a_{1},a_{2},a_{3},a_{4}] $ fails to represent   $ [b_{1},b_{2},b_{3}] $. Since $ a_{1,4}\in F^{\times 2} $, $ [a_{1},a_{2},a_{3},a_{4}]\cong W_{1}^{4}(1)=\mathbb{H}^{2} $ by Proposition \ref{prop:Ralphaproperty3}(iv). Also, $ [b_{1},b_{2},b_{3}]\cong W_{2}^{3}(c) $. Hence $[b_{1},b_{2},b_{3}]\nrep [a_{1},a_{2},a_{3},a_{4}]$ by Lemma \ref{lem:B-beli}(ii).
\end{proof}

If $ M $ is $ n $-ADC, then it is $ (n-1) $-universal by Lemma \ref{lem:n-ADC-implies-n-1-universal}(iii) and thus $ M $ satisfies the hypothesis of \cite[Lemma 5.8]{HeHu2} from Lemma \ref{lem:even-nuniversaldyadic}. Hence we have the following lemma.
\begin{lem}\label{lem:dsharp}
	Suppose that $ M $ is $ n $-ADC. If $ \alpha_{n}=1 $ and either $ R_{n+1}=1 $ or $ R_{n+2}>1 $, then 
	\begin{align*}
		 d((-1)^{(n+1)/2}a_{1,n+1})=1-R_{n+1}<2e,
	\end{align*}
	 $ ((-1)^{(n+1)/2}a_{1,n+1})^{\#} $ is a unit and $ d(((-1)^{(n+1)/2}a_{1,n+1})^{\#})=2e+R_{n+1}-1 $.
\end{lem}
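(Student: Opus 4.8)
The quickest route is to read the statement off \cite[Lemma 5.8]{HeHu2}. Indeed, since $M$ is $n$-ADC it is $(n-1)$-universal by Lemma \ref{lem:n-ADC-implies-n-1-universal}(iii), and Lemma \ref{lem:even-nuniversaldyadic} records that its invariants satisfy $R_{i}=0$ for $i\in[1,n]^{O}$, $R_{i}=-2e$ for $i\in[1,n]^{E}$, together with (as $\alpha_{n}=1\ne 0$) the relation $\alpha_{n}=R_{n+1}+d[-a_{n}a_{n+1}]=1$; these are exactly the hypotheses of the cited lemma, whose conclusion under the extra assumption $R_{n+1}=1$ or $R_{n+2}>1$ matches our claim. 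For a self-contained argument, set $c:=(-1)^{(n+1)/2}a_{1,n+1}=\big((-1)^{(n-1)/2}a_{1,n-1}\big)\cdot(-a_{n}a_{n+1})$. From the relation above, $d[-a_{n}a_{n+1}]=1-R_{n+1}\ge 0$, so $R_{n+1}\le 1$; and from $R_{n-1}=-2e$ we get $d((-1)^{(n-1)/2}a_{1,n-1})\ge 2e$ by Proposition \ref{prop:Ralphaproperty3}(iii) and $\alpha_{n-1}=2e$ by Proposition \ref{prop:Rproperty}(ii).

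The plan is to split on the parity of $\ord(c)=R_{n+1}-(n-1)e$, which equals the parity of $R_{n+1}$. If $R_{n+1}$ is odd, then $R_{n+1}\ge 1$ by Corollary \ref{cor:R-R-odd}(i), hence $R_{n+1}=1$; here $\ord(c)$ is odd, so $d(c)=0=1-R_{n+1}$, and by definition $c^{\#}=\Delta$, a unit with $d(c^{\#})=d(\Delta)=2e=2e+R_{n+1}-1$. This settles the first alternative of the hypothesis. Otherwise $R_{n+1}$ is even, so $R_{n+1}\ne 1$ and the hypothesis forces $R_{n+2}>1$; moreover $R_{n+1}\ge -2e$ by \eqref{eq:BONGs}, while $R_{n+1}\ne -2e$ by Proposition \ref{prop:alphaproperty}(ii) (as $\alpha_{n}\ne 0$) and $R_{n+1}\le 0$, so $R_{n+1}\in[2-2e,0]^{E}$ and $1-R_{n+1}\le 2e-1$.

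The heart of the even case is to upgrade the truncated defect $d[-a_{n}a_{n+1}]=1-R_{n+1}$ to the genuine defect $d(-a_{n}a_{n+1})$. Since $R_{n+1}\ne -2e$, Lemma \ref{lem:even-nuniversaldyadic}(iii) rules out $R_{n+2}-R_{n+1}>2e$, so $R_{n+2}-R_{n+1}\le 2e$ and Proposition \ref{prop:Rproperty}(iii) gives $\alpha_{n+1}\ge R_{n+2}-R_{n+1}\ge 2-R_{n+1}>1-R_{n+1}$; combined with $\alpha_{n-1}=2e>1-R_{n+1}$ this forces $d(-a_{n}a_{n+1})=d[-a_{n}a_{n+1}]=1-R_{n+1}<2e$. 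As $d((-1)^{(n-1)/2}a_{1,n-1})\ge 2e>d(-a_{n}a_{n+1})$, the domination principle yields $d(c)=1-R_{n+1}<2e$. Finally $\ord(c)$ is even, so by the definition of $\#$ one has $c^{\#}=1+4\rho r^{-1}\pi^{R_{n+1}-1}$; its error term has order $2e+R_{n+1}-1\in[1,2e-1]$, whence $c^{\#}$ is a unit, and since this order is odd and below $2e$ a standard quadratic-defect computation gives $d(c^{\#})=2e+R_{n+1}-1=2e-d(c)$. The main obstacle is exactly this single passage $d[-a_{n}a_{n+1}]\to d(-a_{n}a_{n+1})$: controlling $\alpha_{n+1}$ strictly above $1-R_{n+1}$, which is where the hypothesis $R_{n+2}>1$ (through Lemma \ref{lem:even-nuniversaldyadic}(iii)) is indispensable.
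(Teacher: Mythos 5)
Your first paragraph is exactly the paper's proof: the paper derives $(n-1)$-universality from Lemma \ref{lem:n-ADC-implies-n-1-universal}(iii), notes via Lemma \ref{lem:even-nuniversaldyadic} that the hypotheses of \cite[Lemma 5.8]{HeHu2} hold, and reads off the conclusion — so on that route you and the paper coincide. Your additional self-contained argument is correct and goes beyond what the paper records. The parity split on $\ord(c)\equiv R_{n+1}\pmod 2$ is the right organizing device; the odd case is forced to $R_{n+1}=1$ by Corollary \ref{cor:R-R-odd}(i) together with $R_{n+1}\le 1$, and then $d(c)=0$ and $c^{\#}=\Delta$ settle everything. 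You also correctly isolate the crux of the even case: upgrading the truncated defect $d[-a_{n}a_{n+1}]=1-R_{n+1}$ to the genuine one, which works because $\alpha_{n-1}=2e$ and because $R_{n+2}>1$ plus the first clause of Lemma \ref{lem:even-nuniversaldyadic}(iii) (ruling out $R_{n+2}-R_{n+1}>2e$ since $\alpha_{n}\neq 0$ forces $R_{n+1}\neq -2e$ by Proposition \ref{prop:alphaproperty}(ii)) lets Proposition \ref{prop:Rproperty}(iii) give $\alpha_{n+1}\ge R_{n+2}-R_{n+1}\ge 2-R_{n+1}>1-R_{n+1}$; the final computation ($c^{\#}=1+4\rho r^{-1}\pi^{R_{n+1}-1}$ with error term of odd order $2e+R_{n+1}-1\in[1,2e-1]$, hence a unit with that relative defect) is standard and right. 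Two remarks on what each route buys: the citation is shorter but opaque; your direct argument exposes the mechanism, though note it is self-contained only modulo Lemma \ref{lem:even-nuniversaldyadic} — in particular the equality $d[-a_{n}a_{n+1}]=1-R_{n+1}$ does \emph{not} follow from $\alpha_{n}=1$ and \eqref{eq:alpha-defn} alone when $R_{n+1}=2-2e$, so you are importing it from that lemma (which itself rests on \cite[Theorem 4.1]{HeHu2}); you have thus traded the dependence on \cite[Lemma 5.8]{HeHu2} for a dependence on \cite[Theorem 4.1]{HeHu2}, which is legitimate but worth stating.
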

\begin{lem}\label{lem:repN1ON2ON3ON4O}
 Suppose that $ M $ is $ n $-ADC and $ FM\cong W_{\nu}^{n+2}(c) $. Thus $ c=(-1)^{(n+1)/2}a_{1,n+2} $. Let $ \tilde{c}=(-1)^{(n+1)/2}a_{1,n+1} $ and let $ N=N_{\nu}^{n}(c)$ or $ N_{\nu}^{n}(c\tilde{c}^{\#}) $.
 
 If $ \alpha_{n}=1 $ and either $ R_{n+1}=1 $ or $ R_{n+2}>1 $, then 
	\begin{enumerate}[itemindent=-0.5em,label=\rm (\roman*)]	
	\item  $ R_{n+2}>S_{n} $ and 	$d[-a_{1,n+1}b_{1,n-1}]+d[-a_{1,n+2}b_{1,n}]>2e+S_{n}-R_{n+2} $.
	
	\item  $ [a_{1},\ldots,a_{n+1}] $ fails to represent $ FN=[b_{1},\ldots,b_{n}] $.
\end{enumerate}
	Thus, Theorem \ref{thm:beligeneral}(iii) fails at $ i=n+1 $.
\end{lem}
  \begin{proof}
  	(i) First, $ \ord (a_{1,n}) $ is even from Lemma \ref{lem:even-nuniversaldyadic}(i) and $ \tilde{c}^{\#} $ is a unit from Lemma \ref{lem:dsharp}. Hence $ \ord(c)=\ord (c\tilde{c}^{\#})\equiv R_{n+2}-R_{n+1}\pmod{2} $. Therefore, by Lemma \ref{lem:maximal-BONG-odd}, both when $ N=N_{\nu}^{n}(c) $ or $ N_{\nu}^{n}(c\tilde{c}^{\#}) $ we have
  	\begin{align*}
  		S_{n}=
  		\begin{cases}
  			1      &\text{if $ R_{n+2}-R_{n+1} $ is odd},\\
  			0  &\text{if $ R_{n+2}-R_{n+1} $ is even}.
  		\end{cases}
  	\end{align*}
  Note that $ R_{n+2}\ge 0 $ by Proposition \ref{prop:Ralphaproperty3}(i). If $R_{n+2}=0$, then $R_{n+1}=1$ by the hypothesis. This contradicts Corollary \ref{cor:R-R-odd}(i). Thus $ R_{n+2}\ge 1$. If $R_{n+2}=1$, then $R_{n+1}=1$ by the hypothesis. Then $R_{n+2}-R_{n+1}$ is even, so $S_{n}=0$ and thus $R_{n+2}=1>0=S_{n}$. If $R_{n+2}>1$, then $R_{n+2}>1\ge S_{n}$. So, in all cases, $R_{n+2}>S_{n}$.
  	
  	Secondly, from the hypothesis, we have $a_{1,n+2}=(-1)^{(n+1)/2}c$ and $b_{1,n}=(-1)^{(n-1)/2}c$ or  $(-1)^{(n-1)/2}c\tilde{c}^{\#}$. By Lemma \ref{lem:dsharp}, we also have $ d(\tilde{c}^{\#})=2e+R_{n+1}-1  $. Hence
  	\begin{align*}
  		d[-a_{1,n+2}b_{1,n}]=d(-a_{1,n+2}b_{1,n})=
  		\begin{cases}
  		d(c^{2})=\infty        &\text{if $ N=N_{\nu}^{n}(c)$},   \\
  		d(c^{2}\tilde{c}^{\#})=2e+R_{n+1}-1	&\text{if $ N=N_{\nu}^{n}(c\tilde{c}^{\#})$}. 
  		\end{cases}
  	\end{align*}
  	Also, by Lemma \ref{lem:invariantD}(v), $d[-a_{1,n+1}b_{1,n-1}]=1-R_{n+1}$. Thus
  	 \begin{align*}
  		d[-a_{1,n+1}b_{1,n-1}]+d[-a_{1,n+2}b_{1,n}] \ge (1-R_{n+1})+(2e+R_{n+1}-1)=2e>2e+S_{n}-R_{n+2}.
  	\end{align*}
  	(Recall that we have shown $R_{n+2}>S_{n}$.)
  	 
  	 (ii) Let $V=[a_{1},\ldots,a_{n+1}]$. Then $\det V=a_{1,n+1}=(-1)^{(n+1)/2}\tilde{c}$, so $V\cong W_{\nu^{\prime}}^{n+1}(\tilde{c})$, with $\nu^{\prime}\in \{1,2\}$. Assume that $V$ represents $[b_{1},\ldots, b_{n}]\cong FN$ for $N=N_{\nu}^{n}(c)$ and $N=N_{\nu}^{n}(c\tilde{c}^{\#})$, i.e., $W_{\nu^{\prime}}^{n+1}(\tilde{c})$ represents both $W_{\nu}^{n}(c)$ and $W_{\nu}^{n}(c\tilde{c}^{\#})$. Then, by Lemma \ref{lem:B-beli}(ii), we have 
  	 \begin{align*}
  	 	(c,\tilde{c})_{\mathfrak{p}}=(-1)^{\nu+\nu^{\prime}}=  (c\tilde{c}^{\#},\tilde{c})_{\mathfrak{p}}=(c,\tilde{c})_{\mathfrak{p}}(\tilde{c}^{\#},\tilde{c})_{\mathfrak{p}},
  	 \end{align*}
  	  which implies $(\tilde{c}^{\#},\tilde{c})_{\mathfrak{p}}=1$. This contradicts \eqref{csharp-2}.
 \end{proof}
 \begin{proof}[Proof of necessity of Lemma \ref{lem:dyadicACDodd}]
	Let $ FM\cong W_{\nu}^{n+2}(c) $, where $ \nu\in \{1,2\} $ and $ c\in \mathcal{V}$. Suppose that $M$ is $n$-ADC. Then (i) and (ii) coincide with Lemma \ref{lem:even-nuniversaldyadic}(i) and (ii). 

For (iii), assume that $R_{n+2}-R_{n+1}>2e$. Then, by Lemma \ref{lem:even-nuniversaldyadic}(iii), $R_{n+1}=-2e$. If, moreover, either $n\ge 5$ or $n=3$ and $d(a_{1,4})=2e$, then also $R_{n+2}=1$. So (iii) holds.

Suppose now that in the remaining case, $n=3$ and $d(a_{1,4})\not=2e$, and (iii) fails, i.e., $R_{5}\not=1$. Again, by Lemma \ref{lem:even-nuniversaldyadic}(i) and (iii), $R_{1}=R_{3}=R_{2}+2e=R_{4}+2e=0$. Since $d(a_{1,4})\not=2e$, Proposition \ref{prop:Ralphaproperty3}(iv) implies $d(a_{1,4})=\infty$. Also from $R_{5}-R_{4}>2e$ we see that $R_{5}>R_{4}+2e=0$, so $R_{5}\not=1$ implies $R_{5}>1$.

Let $c^{\prime}\in \mathcal{V}\backslash \{c\}$ and let $N=N_{2}^{3}(c^{\prime})$. Since $c^{\prime}=c$, we have $N\not\cong N_{3-\nu}^{3}(c)$. So, by Lemma \ref{lem:spacerep-criterion-ADC}(ii), $M$ represents $N$. But, by Lemma \ref{lem:repN3ON4On=3}, Theorem \ref{thm:beligeneral}(iii) fails for $M$ and $N$, so $M$ cannot represent $N$. Contradiction. Hence $ R_{5}=1 $ and (iii) is proved.
	
	For (iv), suppose $ \alpha_{n}=1 $ and either $ R_{n+1}=1$ or $ R_{n+2}>1 $. By Lemma \ref{lem:B-beli}(i), $ N_{\nu}^{n}(c)\not\cong N_{3-\nu}^{n}(c) $ and $ N_{\nu}^{n}(c\tilde{c}^{\#}) \not\cong N_{3-\nu}^{n}(c) $, so, by Lemma \ref{lem:spacerep-criterion-ADC}(ii), $ M $ represents $ N_{\nu}^{n}(c) $ and $ N_{\nu}^{n}(c\tilde{c}^{\#}) $. But, by Lemma \ref{lem:repN1ON2ON3ON4O}, Theorem \ref{thm:beligeneral}(iii) fails for either $ N=N_{\nu}^{n}(c) $ or $ N=N_{\nu}^{n}(c\tilde{c}^{\#}) $. Hence $R_{n+1}=R_{n+1}-R_{n}\not=1$. Since $\alpha_{n}=1$, Proposition \ref{prop:alphaproperty}(iii) implies $R_{n+1}\in [2-2e,0]^{E}$. Also, $R_{n+2}\le 1$, i.e., $R_{n+2}\in \{0,1\}$. Thus (iv) is proved.
\end{proof}

Unlike in the even case, there are many $n$-ADC lattices that are not $\mathcal{O}_{F}$-maximal when $n$ is odd. Thus, to complete the proof of Theorem \ref{thm:dyadicACDodd-2}, we need to determine the structures of $n$-ADC lattices explicitly, as presented in Lemma \ref{lem:n-ADC-explicitform}. 

First, recall from Theorem \ref{thm:dyadicACDodd-R-alpha-invariant} that $M$ is $n$-ADC of rank $n+2$ if and only if
\begin{equation}\label{equiv-con-odd-n-ADC}
	\begin{split}
&(a)\;R_{i}=0\;\text{for}\;i\in [1,n]^{O}\quad\text{and}\quad R_{i}=-2e\; \text{for}\; i\in [1,n]^{E}; \\
&(b)\;  R_{n+1}\in [-2e,0]^{E};\quad (c)\;\alpha_{n}\in \{0,1\};\quad (d)\; R_{n+2}\in \{0,1\}. 
	\end{split}
\end{equation}
\begin{lem}\label{lem:Rn+2}
	Let $\nu\in \{1,2\}$ and $\varepsilon\in \mathcal{U}$. Suppose that $M$ is $n$-ADC. 
	\begin{enumerate}[itemindent=-0.5em,label=\rm (\roman*)]	
	\item  If $FM\cong W_{\nu}^{n+2}(\varepsilon)$, then $R_{n+2}=0$.
	
	\item  If $FM\cong W_{\nu}^{n+2}(\varepsilon\pi)$, then $R_{n+2}=1$.
	\end{enumerate}
\end{lem}
\begin{proof}
	   Clearly, $ \ord(a_{1,n+2}) $ is even or odd, according as $FM\cong W_{\nu}^{n+2}(\varepsilon)$ or $W_{\nu}^{n+2}(\varepsilon\pi)$. By \eqref{equiv-con-odd-n-ADC}(a) and (b), we have $\ord(a_{1,n+2})\equiv \sum_{i=1}^{n+2} R_{i}\equiv  R_{n+2} \pmod{2}$. By (d), we further have
	 \begin{align*} 
	 	R_{n+2}=\begin{cases}
	 		0  &\text{if $\ord(a_{1,n+2})$ is even},\\
	 		1 &\text{if $\ord(a_{1,n+2})$ is odd},
	 	\end{cases}
	 \end{align*}
	   as desired.
\end{proof}
\begin{lem}\label{lem:ADC-isometry-odd-Rn+1}
	Let $M,M^{\prime}$ be two $n$-ADC $\mathcal{O}_{F}$-lattices of rank $n+2$. Then  $M\cong M^{\prime}$ if and only if $FM\cong FM^{\prime} $ and $R_{n+1}(M)=R_{n+1}(M^{\prime})$.
\end{lem}
\begin{proof}
	 We only need to show the sufficiency. Let $M\cong \prec a_{1},\ldots,a_{n+2}\succ$,  $R_{i}=R_{i}(M)$ and $\alpha_{i}=\alpha_{i}(M)$. Since $M$ is $n$-ADC, the conditions (a)-(d) in \eqref{equiv-con-odd-n-ADC} hold. Let $M^{\prime}\cong \prec b_{1},\ldots,b_{n+2}\succ$, $S_{i}=R_{i}(M^{\prime})$ and $\beta_{i}=\alpha_{i}(M^{\prime})$. The same conditions (a')-(d') hold for the corresponding invariants $S_{i}$ and $\beta_{i}$ of $M^{\prime}$. 
	  
	  By \eqref{equiv-con-odd-n-ADC}(a) and (a'), we have $R_{i}=S_{i}$ for $1\le i\le n$. By hypothesis, $R_{n+1}=S_{n+1}$. And, by Lemma \ref{lem:Rn+2}, $R_{n+2}=S_{n+2}=0$ or $1$, according as $FM\cong FM^{\prime}\cong W_{\nu}^{n+2}(\varepsilon)$ or $W_{\nu}^{n+2}(\varepsilon\pi)$ for some $\varepsilon\in \mathcal{U}$. Thus 
	   \begin{align}\label{Ri=Si}
	   	R_{i}=S_{i}
	   \end{align}
	    for $1\le i\le n+2$, i.e., the condition (i) of \cite[Theorem 3.1]{beli_Anew_2010} is fulfilled.
	   
	    Suppose $R_{n+1}=-2e$. If $FM\cong FM^{\prime}\cong W_{1}^{n+2}(\varepsilon)$ with $\varepsilon\in \mathcal{U}$, by Lemma \ref{lem:Rn+2}(i), we have $R_{n+2}=0$. Hence, by Lemma \ref{lem:maximal-BONG-odd}(i), $M\cong M^{\prime}\cong N_{1}^{n+2}(\varepsilon)$. If $FM\cong FM^{\prime}\cong W_{\nu}^{n+2}(\varepsilon\pi)$, with $\nu\in\{1,2\}$ and $\varepsilon\in \mathcal{U}$, by Lemma \ref{lem:Rn+2}(ii), we have $R_{n+2}=1$. Hence, by Lemma \ref{lem:maximal-BONG-odd}(iii), $M\cong M^{\prime}\cong N_{\nu}^{n+2}(\varepsilon\pi)$.  
	      
	     Now, assume that $R_{n+1}\not=-2e$, i.e., $\alpha_{n}\not=0$. By \eqref{Ri=Si}, $M$ and $M^{\prime}$ satisfy the condition (i) of \cite[Theorem 3.1]{beli_Anew_2010}, so we are left to verify that the conditions (ii)-(iv) are fulfilled. 	  
	   
	 By (c), we have $\alpha_{n}=1$. By (a) and Proposition \ref{prop:Rproperty}(i)(ii), we have
	  \begin{align}\label{alpha-i}
	  	\alpha_{i}=\begin{cases}
	  		0    &\text{if $i\in [1,n-1]^{O}$},  \\
	  		2e   &\text{if $i\in [1,n-1]^{E}$}.
	  	\end{cases}
	  \end{align}
	   If $R_{n+2}=1$, then $R_{n+2}-R_{n+1}$ is odd, so Proposition \ref{prop:Rproperty}(iii) implies $\alpha_{n+1}=R_{n+2}-R_{n+1}=1-R_{n+1}$; if $R_{n+2}=0$, then $R_{n}=R_{n+2}$, so $R_{n+1}+\alpha_{n+1}=R_{n}+\alpha_{n}=1$ by \cite[Corollary 2.3(i)]{beli_Anew_2010}, i.e., $\alpha_{n+1}=1-R_{n+1}$. Hence, in both cases, we have
	   \begin{align*}
	   	\alpha_{n+1}=1-R_{n+1}.
	   \end{align*}
	   The same argument combined with \eqref{Ri=Si} gives the values of $\beta_{i}$'s. Thus $\alpha_{i}=\beta_{i}$ for $1\le i\le n+1$ and so \cite[Theorem 3.1(ii)]{beli_Anew_2010} holds for $M$ and $M^{\prime}$.
	  
	 For $i\in [1,n]^{O}$, we have $\alpha_{i}\le 1$ ($\alpha_{i}=0$ for $i\in [1,n-1]^{O}$ and $\alpha_{n}=1$). Since $R_{i}=S_{i}$, $\ord (a_{1,i}b_{1,i})=\sum_{k=1}^{i} (R_{k}+S_{k})$ is even, so $d(a_{1,i}b_{1,i})\ge 1\ge \alpha_{i}$. For $i\in [1,n]^{E}$, since $R_{i}=-2e$, by Proposition \ref{prop:Ralphaproperty3}(iii), we have $d((-1)^{i/2}a_{1,i})\ge d[(-1)^{i/2}a_{1,i}]\ge 2e$. Similarly, $d((-1)^{i/2}b_{1,i}) \ge 2e$. Hence, by the domination principle, $d(a_{1,i}b_{1,i})\ge 2e=\alpha_{i}$. For $i=n+1$, by Proposition \ref{prop:alphaproperty}(v), we have $d(-a_{n}a_{n+1})\ge  d[-a_{n,n+1}]=1-R_{n+1}$. Since $d((-1)^{(n-1)/2}a_{1,n-1})\ge 2e$, by the domination principle, we see that 
	 \begin{align*}
	 	d((-1)^{(n+1)/2}a_{1,n+1})&\ge \min\{d((-1)^{(n-1)/2}a_{1,n-1}),d(-a_{n}a_{n+1})\}\\
	 	&\ge \min\{2e,1-R_{n+1}\}=1-R_{n+1}.
	 \end{align*}
	  Similarly, $d((-1)^{(n+1)/2}b_{1,n+1})\ge 1-R_{n+1}$. So, by the domination principle again, we conclude that $d(a_{1,n+1}b_{1,n+1})\ge 1-R_{n+1}=\alpha_{n+1}$. Thus \cite[Theorem 3.1(iii)]{beli_Anew_2010} holds for $M$ and $M^{\prime}$.
	 
	 By \eqref{alpha-i}, we have $\alpha_{i}+\alpha_{i+1}=2e$ for $1\le i\le n-2$. Recall that $\alpha_{n}=1$, $\alpha_{n+1}=1-R_{n+1}$ and $R_{n+1}\in [2-2e,0]^{E}$. We also have $\alpha_{n}+\alpha_{n+1}=1+(1-R_{n+1})=2-R_{n+1}\le 2e$. For $i=n-1$, since $\alpha_{n-1}+\alpha_{n}=2e+1>2e$, we need to prove that $[b_{1},\ldots,b_{n-1}]\rep [a_{1},\ldots,a_{n}]$. By Proposition \ref{prop:Ralphaproperty3}(iv) and (v), $[b_{1},\ldots,b_{n-1}]\cong W_{1}^{n-1}(\eta)$, with $\eta\in \{1,\Delta\}$, and $[a_{1},\ldots,a_{n}]\cong W_{1}^{n}(\delta)$ for some $\delta\in \mathcal{O}_{F}^{\times}$. Then $W_{1}^{n-1}(\eta)\rep W_{1}^{n}(\delta)$ follows from Lemma \ref{lem:B-beli}(ii). (Both when $\eta=1$ or $\Delta$, we have $(\eta,\delta)_{\mathfrak{p}}=1$.) Thus \cite[Theorem 3.1(iv)]{beli_Anew_2010} holds for $M$ and $M^{\prime}$.
\end{proof}
 \begin{defn}\label{defn:Wvr}
 	Let $\nu\in \{1,2\}$, $r\in \{0,\ldots,e\}$ and $c\in \mathcal{V}$. We denote by $M_{\nu,r}^{n+2}(c)$ the only $n$-ADC lattice $M$ with $FM\cong W_{\nu}^{n+2}(c)$ and $R_{n+1}(M)=-2r$, provided that such lattice exists.
 \end{defn}
 \begin{re}\label{re:Wvr}
 	By Lemma \ref{lem:ADC-isometry-odd-Rn+1}, such lattice is unique up to isometry, if it exists.
 	
 	If $M$ is $n$-ADC of rank $n+2$, from \eqref{equiv-con-odd-n-ADC}(b) we have $R_{n+1}(M)\in [-2e,0]^{E}$, i.e., $R_{n+1}(M)=-2r$ for some $0\le r\le e$. Hence $M\cong M_{\nu,r}^{n+2}(c)$, where $FM\cong W_{\nu}^{n+2}(c)$. Thus every $n$-ADC lattice of rank $n+2$ is isometric to $M_{\nu,r}^{n+2}(c)$ for some, $\nu\in \{1,2\}$, $r\in \{0,\ldots,e\}$ and $c\in \mathcal{V}$.
 \end{re}
 \begin{lem}\label{lem:undefined}
 	Suppose that $M$ is $n$-ADC. If $FM\cong W_{2}^{n+2}(\varepsilon)$ for some $\varepsilon\in \mathcal{U}$, then $R_{n+1}\not=-2e$. Equivalently, $M_{2,e}^{n+2}(\varepsilon)$ is not defined.
 \end{lem}
 \begin{proof}
 	Assume that $R_{n+1}=-2e$. Since $FM\cong W_{2}^{n+2}(\varepsilon)$, by Lemma \ref{lem:Rn+2}(i), $R_{n+2}=0$. Proposition \ref{prop:Ralphaproperty3}(v), with $j=n+1$, implies that $FM\cong \mathbb{H}^{(n+1)/2}\perp [\varepsilon^{\prime}]= W_{1}^{n+2}(\varepsilon^{\prime})$ for some $\varepsilon^{\prime}\in \mathcal{U}$, a contradiction.
 \end{proof}
  \begin{lem}\label{lem:C}
 	Let $\nu\in \{1,2\}$, $c\in\mathcal{V}$ and $\delta\in \mathcal{U}$, with $d(\delta)<2e$. Then $M=N_{\nu}^{n+1}(\delta)\perp \langle c\rangle$ is $n$-ADC and $R_{n+1}(M)=1-d(\delta)\in [2-2e,0]^{E}$.
 \end{lem}
 \begin{proof}
 	By Lemma \ref{lem:maximallattices-dyadic} and Remark \ref{re:invariant}, we have $N_{\nu}^{n+1}(\delta)\cong \prec a_{1},\ldots,a_{n+1}\succ$ relative to a good BONG, with $(a_{1},\ldots,a_{n-1})=(1,-\pi^{-2e},\ldots,1,-\pi^{-2e})$ and $(a_{n},a_{n+1})=(1,-\delta\pi^{1-d(\delta)})$ or $(\delta^{\#},-\delta^{\#}\delta\pi^{1-d(\delta)})$, according as $\nu=1$ or $2$. Put $R_{i}=R_{i}(N_{\nu}^{n+1}(\delta))$. Then, by Lemma \ref{lem:maximal-BONG-even}(iii), $R_{i}=0$ for $i\in [1,n]^{O}$, $R_{i}=-2e$ for $i\in [1,n]^{E}$ and $R_{n+1}=1-d(\delta)$. Since $c\in \mathcal{V}$, we have $\ord(c)\in \{0,1\}$. Hence if $a_{n+2}:=c$ and $R_{n+2}:=\ord(a_{n+2})$, then $R_{n+2}\in \{0,1\}$.
 	
 	Since $R_{n+2}\ge 0=R_{n}$ and $R_{n+2}\ge 0\ge 1-d(\delta)=R_{n+1}$, by \cite[Corollary 4.4(v)]{beli_integral_2003}, we have 
 	\begin{align*}
 		M\cong \prec a_{1},\ldots,a_{n+1}\succ\perp \prec a_{n+2}\succ\cong \prec a_{1},\ldots,a_{n+1},a_{n+2}\succ
 	\end{align*}
 	relative to a good BONG and $R_{i}(M)=R_{i}$. In particular, since $\delta\in \mathcal{U}\backslash\{1,\Delta\}$, we have $d(\delta)\in [1,2e-1]^{O}$, so $R_{n+1}(M)=1-d(\delta)\in [2-2e,0]^{E}$.
 	
 	Write $\alpha_{n}=\alpha_{n}(M)$. Since $R_{n+1}-R_{n}=R_{n+1}>-2e$, Proposition \ref{prop:alphaproperty}(ii) implies that $\alpha_{n}\ge 1$. On the other hand, for $\nu\in \{1,2\}$, in $F^{\times}/F^{\times 2}$ we have $-a_{n}a_{n+1}=\delta$, so 
 	\begin{align*}
 		\alpha_{n}\le R_{n+1}-R_{n}+d(-a_{n}a_{n+1})=(1-d(\delta))-0+d(\delta)=1.
 	\end{align*}
 	Hence $\alpha_{n}=1$.
 	
 	With above discussion, we have shown the conditions (a)-(d) in \eqref{equiv-con-odd-n-ADC}. By Theorem \ref{thm:dyadicACDodd-R-alpha-invariant}, $M$ is $n$-ADC.
 \end{proof}
Let $c\in F^{\times}$. For convenience, we also write $c=\mathcal{U}$ (resp. $c\not=\mathcal{U}$) for $c\in \mathcal{\mathcal{U}}$ (resp. $c\not\in \mathcal{U}$) temporarily.
 \begin{lem}\label{lem:n-ADC-explicitform}
 	Let $\nu\in \{1,2\}$, $r\in \{0,\ldots,e\}$ and $c\in \mathcal{V}$. Then $M_{\nu,r}^{n+2}(c)$ is defined except for $(\nu,r,c)=(2,e,\mathcal{U})$.
 	\begin{enumerate}[itemindent=-0.5em,label=\rm (\roman*)]	
 		\item 	If $r=e$ and $(\nu,c)\not=(2,\mathcal{U})$, then  $M_{\nu,e}^{n+2}(c)\cong N_{\nu}^{n+2}(c)$.
 		
 		 \item If $r=e-1$ and $(\nu,c)=(2,\mathcal{U})$, then $M_{2,e-1}^{n+2}(c)\cong N_{2}^{n+2}(c)$.
 		
 		\item \footnote{I am thankful to the referee for suggesting this improved version for the case $\alpha_{n}=1$, which refines the original version of  Theorem \ref{thm:dyadicACDodd-2}.}	If $0\le r\le e-1$, then 
 		$M_{\nu,r}^{n+2}(c)\cong N_{\nu^{\prime}}^{n+1}(\omega_{r})\perp \langle \omega_{r}c\rangle$, where $\omega_{r}\in \mathcal{U}$ is arbitrary such that $d(\omega_{r})=2r+1$ and $ \nu^{\prime}\in \{1,2\}$ satisfies $(-1)^{\nu^{\prime}}=(-1)^{\nu}(\omega_{r},c)_{\mathfrak{p}}$. 
 	\end{enumerate}
  \end{lem}
 \begin{proof}
 	First, by Lemma \ref{lem:undefined}, $M_{2,e}^{n+2}(c)$ is undefined for every $c\in \mathcal{U}$. Next, we will show the assertions (i)-(iii), thereby confirming that the lattice $M_{\nu,r}^{n+2}(c)$ is defined for $(\nu,r,c)\not=(2,e,\mathcal{U})$.
 	
 	For (i) and (ii), by Lemma \ref{lem:maximalrep}, $N_{\nu}^{n+2}(c)$ is $\mathcal{O}_{F}$-maximal and thus is $n$-ADC. We also have $FN_{\nu}^{n+2}(c)\cong W_{\nu}^{n+2}(c)$. If $(\nu,c)\not=(2,\mathcal{U})$, then, by Lemma \ref{lem:maximal-BONG-odd}(i) and (iii), $R_{n+1}(N_{\nu}^{n+2}(c))=-2e$. So, by Definition \ref{defn:Wvr}, $N_{\nu}^{n+2}(c)\cong M_{\nu,e}^{n+2}(c)$. If $(\nu,c)=(2,\mathcal{U})$, then, by Lemma \ref{lem:maximal-BONG-odd}(ii), $R_{n+1}(N_{2}^{n+2}(c))=2-2e$. So, by Definition \ref{defn:Wvr}, $N_{2}^{n+2}(c)\cong M_{2,e-1}^{n+2}(c)$. 
 	
 	For (iii), let $M=N_{\nu^{\prime}}^{n+1}(\omega_{r})\perp \langle \omega_{r}c\rangle$ and $0\le r\le e-1$. Since $(\omega_{r},c)_{\mathfrak{p}}=(-1)^{\nu+\nu^{\prime}}$, by Lemma \ref{lem:B-beli}(ii), we have $W_{\nu^{\prime}}^{n+1}(\omega_{r})\rep W_{\nu}^{n+2}(c)$. Since $\det W_{\nu^{\prime}}^{n+1}(\omega_{r})\det W_{\nu}^{n+2}(c)=\omega_{r}c$, we get $FM\cong W_{\nu^{\prime}}^{n+1}(\omega_{r})\perp [\omega_{r}c]\cong W_{\nu}^{n+2}(c)$. Also, by Lemma \ref{lem:C}, $M$ is $n$-ADC and $R_{n+1}(M)=1-d(\omega_{r})=-2r$. Then, by Definition \ref{defn:Wvr}, $M\cong M_{\nu,r}^{n+2}(c)$.
 \end{proof}
 \begin{cor}\label{cor:counting-odd}
 	 Up to isometry, there are $(8e+6)(N\mathfrak{p})^{e}$ $n$-ADC lattices of rank $n+2$ with odd $n\ge 3$, of which  $(8e-2)(N\mathfrak{p})^{e}$ are not $\mathcal{O}_{F}$-maximal.
 \end{cor}
 \begin{proof}
 	If $M$ is $n$-ADC of the form (i) in Lemma \ref{lem:n-ADC-explicitform}, then $M\cong N_{\nu}^{n+2}(c)$ with $(\nu,c)\not=(2,\mathcal{U})$, and the number of these $\mathcal{O}_{F}$-maximal lattices is given by
 	\begin{align*}
 	 3|\mathcal{U}|=3[\mathcal{O}_{F}^{\times}:\mathcal{O}_{F}^{\times 2}]=6(N\mathfrak{p})^{e}
 	\end{align*} 
 	from \eqref{maximal-num} and \cite[63:9]{omeara_quadratic_1963}.
 	If $M$ is $n$-ADC of the form (iii) in Lemma \ref{lem:n-ADC-explicitform}, then the number of such lattices is given by 
 	\begin{align*}
 		4e|\mathcal{U}|=4e[\mathcal{O}_{F}^{\times}:\mathcal{O}_{F}^{\times 2}]=8e(N\mathfrak{p})^{e}. 
 	\end{align*}
 	 Then excluding out the $\mathcal{O}_{F}$-maximal lattices of the form (ii) in Lemma \ref{lem:n-ADC-explicitform}, i.e., $N_{2}^{n+2}(\varepsilon)$ with $\varepsilon\in \mathcal{U}$, gives the number for the non $\mathcal{O}_{F}$-maximal lattices: $
 	4e|\mathcal{U}|-|\mathcal{U}|=8e(N\mathfrak{p})^{e}-2(N\mathfrak{p})^{e}$,
 	as desired.
 \end{proof}
\begin{proof}[Proof of Theorem \ref{thm:dyadicACDodd-2}]
    This follows from Definition \ref{defn:Wvr}, Remark \ref{re:Wvr} and Lemma \ref{lem:n-ADC-explicitform}.
\end{proof} 

 \section{Proof of Theorems \ref{thm:locallyn-ADCn+1}, \ref{thm:globallyn-ADC-n+1}, \ref{thm:locallyn-ADC}, \ref{thm:count-sol} and \ref{thm:Z2-ADCquaternary}} 

We first prove Theorems \ref{thm:locallyn-ADCn+1}, \ref{thm:locallyn-ADC} and \ref{thm:count-sol}.
\begin{proof}[Proof of Theorem \ref{thm:locallyn-ADCn+1}]
	(i) Combine Proposition \ref{prop:n-ADC-n} and Theorems \ref{thm:nondyadic-n-ADCn+1n+2}, \ref{thm:dyadicACDeven-n+1} and \ref{thm:dyadicACDodd-n+1}. (ii) This follows from (i) and \cite[\S 82K]{omeara_quadratic_1963}.
\end{proof}
\begin{proof}[Proof of Theorem \ref{thm:locallyn-ADC}]
	Combine Theorems \ref{thm:nondyadic-n-ADCn+1n+2}, \ref{thm:dyadicACDeven} and \ref{thm:dyadicACDodd-2} and Remark \ref{re:dyadicACDodd-2}.
\end{proof}
\begin{proof}[Proof of Theorem \ref{thm:count-sol}]
Let $M$ be an integral $\mathcal{O}_{F}$-lattice of rank $m$ over a local field $F$.
	
	If $m\in \{n,n+1\}$, or  $m=n+2$ and $F$ is non-dyadic, then, by Theorems \ref{thm:locallyn-ADCn+1}(i) and \ref{thm:locallyn-ADC}(i), $M$ is $n$-ADC if and only if it is $\mathcal{O}_{F}$-maximal. Hence, by \eqref{maximal-num}, $B(m,n)=|\mathcal{M}_{m}|=8(N\mathfrak{p})^{e}$ or $8(N\mathfrak{p})^{e}-1$, according as $m\ge 3$ or $m=2$, as required. 
	
	Assume that $m=n+2$ and $F$ is dyadic. If $n$ is odd, then we are done by Corollary \ref{cor:counting-odd}. If $n$ is even, then, by Theorem \ref{thm:locallyn-ADC}(ii), $M$ is $n$-ADC if and only if $M$ is $\mathcal{O}_{F}$-maximal or $n=2$ and it is not $\mathcal{O}_{F}$-maximal. Consequently,  $B(m,n)=8(N\mathfrak{p})^{e}$ or $8(N\mathfrak{p})^{e}+1$. 
\end{proof}
 
\medskip
 
 In the rest of the paper, we always assume that $F$ is an algebraic number field and $M$ is an $\mathcal{O}_{F}$-lattice. To show Theorem \ref{thm:globallyn-ADC-n+1}, we need some results on the class number of $M$.
\begin{lem}\label{lem:n-ADCclassnumberone}
 Suppose that $M$ has class number one.
	 \begin{enumerate}[itemindent=-0.5em,label=\rm (\roman*)]	
	\item If $M$ is locally $n$-ADC, then it is globally $n$-ADC.
	
	\item If $M$ is $\mathcal{O}_{F}$-maximal, then it is globally $n$-ADC.
	\end{enumerate}
\end{lem}
\begin{proof}
	Since the class number of $ M $ is one, $ M $ is $ n $-regular.
	
	(i) If $M$ is locally $n$-ADC, then it is globally $ n $-ADC by Theorem \ref{thm:globallADC}.
	
	(ii) If $M$ is $\mathcal{O}_{F}$-maximal, then for each $ \mathfrak{p}\in \Omega_{F}\backslash \infty_{F} $, $ M_{\mathfrak{p}} $ is $ \mathcal{O}_{F_{\mathfrak{p}}} $-maximal by \cite[\S 82K]{omeara_quadratic_1963} and so it is $ n $-ADC by Lemma \ref{lem:maximalrep}. Hence $M$ is locally $n$-ADC, so it is globally $n$-ADC by (i).
\end{proof}

Based on Xu's work \cite[\S 1]{xu_springer_1999}, we extend \cite[Theorem 5.2 and Corollary 5.3]{meyer_determination_2014} to the indefinite case. (Also see \cite[\S 4]{hsia_indefinite_1998}.) 
 \begin{thm}\label{thm:meyer-hsx}
 	Suppose $\rank\, M=n+1\ge 3$. Then there exists an $\mathcal{O}_{F}$-lattice $N$ of rank $n$ such that
 	\begin{enumerate}[itemindent=-0.5em,label=\rm (\roman*)]	
 		\item $N\rep M$;
 		
 		\item if $N\rep M^{\prime}$ for some lattice $M^{\prime}$ in  $\gen\, (M)$, then $M^{\prime}\cong M$.
 	\end{enumerate}
 \end{thm}
 
 \begin{proof}
 	  This is clear from \cite[Theorem 5.2]{meyer_determination_2014} when $M$ is definite. Assume that $M$ is indefinite. Let $V=FM$ and take $H=O_{A}(M)O(V)O_{A}^{\prime}(V)$ in \cite[Theorem 1.5']{xu_springer_1999}. Then there exists an $\mathcal{O}_{F}$-lattice $N\subseteq M$ with rank $n$ such that
 	\begin{align*}
 		X_{M/N}O(V)O_{A}^{\prime}(V)=O_{A}(M)O(V)O_{A}^{\prime}(V).
 	\end{align*}
 	From the one-to-one correspondence in \cite[p.\hskip 0.1cm 181]{xu_springer_1999}, there is only one spinor genus in $\gen(M)$ representing $N$. Since $M$ is indefinite, by \cite[104:5 Theorem]{omeara_quadratic_1963}, there is exactly one class in $\gen(M)$ representing $N$.
 \end{proof}
 \begin{cor}\label{cor:n-regular-clsnumberone}
 	Suppose $\rank\, M=n+1\ge 3$. If $M$ is $n$-regular, then $M$ has class number one.
 \end{cor}
 \begin{proof}
 	Let $M^{\prime}$ be a lattice in $\gen(M)$. Then there exists some lattice $N$ of rank $n$ such that $N\rep M^{\prime}$ and if $N\rep M$ for some lattice $M$ in $\gen(M^{\prime})$, then $M\cong M^{\prime}$.
 	
 	 Since $N\rep M^{\prime}$, we see that $N_{\mathfrak{p}}\rep M_{\mathfrak{p}}^{\prime}\cong M_{\mathfrak{p}}$ for all $\mathfrak{p}\in \Omega_{F}$. Since $M$ is $n$-regular, it follows that $N\rep M$. So $M\cong M^{\prime}$ and thus the class number of $M$ is one.
 \end{proof}
\begin{proof}[Proof of Theorem \ref{thm:globallyn-ADC-n+1}]
	Sufficiency is clear from Lemma \ref{lem:n-ADCclassnumberone}(ii). To show necessity, suppose that $ M $ is $ n $-ADC of rank $ n+1 $. Then, by Theorem \ref{thm:globallADC}, it is locally $ n $-ADC. So, by Theorem \ref{thm:locallyn-ADCn+1}(ii), $M$ is $ \mathcal{O}_{F} $-maximal. Again by Theorem \ref{thm:globallADC}, $ M $ is $ n $-regular. Hence, by Corollary \ref{cor:n-regular-clsnumberone}, the class number of $ M $ is one.  
\end{proof}

\medskip

Now, we consider the case $F=\mathbb{Q}$ and $n=2$. Let $p$ be a prime number. For $\gamma\in \mathbb{Q}$, denote by $N^{(\gamma)}$ the $\mathbb{Z}$-lattice (resp. $\mathbb{Z}_{p}$-lattice) $N$ scaled by $\gamma$ (cf. \cite[\S 82J]{omeara_quadratic_1963}).  Assume that $M$ is a positive definite quaternary $\mathbb{Z}$-lattice. Following \cite{oh_even2regular_2008}, a lattice is called \textit{primitive} if $\mathfrak{s}(M)=\mathbb{Z}$. Also note that if $p=2$, then $\mathbb{H}$ and $\mathbb{A}$ from \cite{oh_even2regular_2008}  are $A(0,0)$ and $A(2,2)$, so they coincide with our $\mathbf{H}^{(2)}$ and $\mathbf{A}^{(2)} $. (We have $\mathbf{H}=2^{-1}A(0,0)$ and $\mathbf{A}=2^{-1}A(2,2)$.) If $p>2$, they are the same as our $\mathbf{H}$ and $\mathbf{A}$. But $2$ is a unit in $\mathbb{Q}_{p}$, so $\langle 1,-1\rangle\cong \langle2,-2\rangle$ and $\langle 1,-\Delta\rangle\cong \langle 2,-2\Delta\rangle $, i.e., $\mathbf{H}\cong \mathbf{H}^{(2)}$ and $\mathbf{A}\cong \mathbf{A}^{(2)}$. So again $\mathbb{H}$ and $\mathbb{A}$ from \cite{oh_even2regular_2008} coincide with $\mathbf{H}^{(2)}$ and $\mathbf{A}^{(2)}$. Then as defined in \cite{oh_even2regular_2008}, we call $ \mathcal{L} $ \textit{stable at $p$} if $\mathfrak{n}(\mathcal{L}_{p})=2\mathbb{Z}_{p}$ and $ \mathbf{H}^{(2)}\rep \mathcal{L}_{p}$ or $ \mathcal{L}_{p}\cong \mathbf{A}^{(2)}\perp \mathbf{A}^{(2p)} $. Moreover, we call $\mathcal{L}$ \textit{stable} if it is stable at every prime $ p  $.
\begin{lem}\label{lem:locally2-ADCimpliesstable}
	If $ M $ is $ 2 $-ADC, then $M^{(2)}$ is $2$-regular and stable.
\end{lem}
\begin{proof}
	If $M$ is $2$-ADC, then $\mathfrak{n}(M_{p})=\mathbb{Z}_{p}$, so $\mathfrak{n}(M_{p}^{(2)})=2\mathbb{Z}_{p}$. By Theorem \ref{thm:globallADC}, we see that $M$ is $2$-regular and locally $2$-ADC. Clearly, $M^{(2)}$ is $2$-regular because $2$-regularity is invariant under scaling. For any prime $p$, since $M_{p}$ is $2$-ADC, by Theorem \ref{thm:dyadicACDeven} and Proposition \ref{prop:maximal-quaternary}, $\mathbf{H}\rep M_{p}$ or $M_{p}\cong \mathbf{A}\perp \mathbf{A}^{(p)}$. This implies that   $\mathbf{H}^{(2)}\rep M_{p}^{(2)}$ or $M_{p}^{(2)}\cong \mathbf{A}^{(2)}\perp \mathbf{A}^{(2p)}$, so $M_{p}^{(2)}$ is $p$-stable. Thus $M^{(2)}$ is stable. 
	 \end{proof}
	 
By Theorem \ref{thm:globallADC}	and Lemma \ref{lem:locally2-ADCimpliesstable}, we have the following corollary.
\begin{cor}\label{cor:2-ADC-Z}
	 $M$ is $2$-ADC if and only if it is locally $2$-ADC and isometric to $\mathcal{L}^{(1/2)}$ for some stable $2$-regular lattice $\mathcal{L}$.
\end{cor}
 
As in \cite{oh_even2regular_2008}, we put $ \mathcal{L}\cong [a,b,c,d,f_{1},f_{2},f_{3},f_{4},f_{5},f_{6}] $ if
\begin{align*}
	\mathcal{L}\cong \begin{pmatrix}
		a  &  f_{1}  &f_{2}  &f_{4}  \\
		f_{1}  &b    &f_{3}   &f_{5} \\
		f_{2}  &f_{3}  &c     &f_{6} \\
		f_{4}  &f_{5}  &f_{6}  &d
	\end{pmatrix}.
\end{align*}
Table \ref{table:stable2regularquaternary} adopted from \cite[\S 4]{oh_even2regular_2008} enumerates all primitive stable $2$-regular quaternary $\mathbb{Z}$-lattices, where we list all the primes for which $(\mathcal{L}_{i}^{(1/2)})_{p}$ is not $2$-ADC in the last column. 
Then, we relabel these lattices $ \mathcal{L}_{j}^{(1/2)} $, as shown in the first two columns of Table \ref{table:2ADCquaternary}. The third and fourth columns provide the local structures of each $L_{i}$ for the primes $p$, where $(L_{i})_{p}$ is not unimodular.
\begin{proof}[Proof of Theorem \ref{thm:Z2-ADCquaternary}]
	  As mentioned before, Table \ref{table:stable2regularquaternary} lists all stable $ 2 $-regular quaternary $ \mathbb{Z} $-lattices. Hence, by Corollary \ref{cor:2-ADC-Z}, the lattices $ \mathcal{L}_{i}^{(1/2)}  $ ($ i=1,\ldots,48 $) in Table \ref{table:stable2regularquaternary} are all possible candidates, and it suffices to determine which of them are locally $2$-ADC. For each prime $p>2$ with $p\nmid d\mathcal{L}$, $L_{p}$ is unimodular and so $\mathbb{Z}_{p}$-maximal. Thus it is $2$-ADC. Therefore, one only needs to check if $L_{p}$ is $2$-ADC for $p=2$ and the primes $p>2$ with $p\mid d\mathcal{L}$, and we complete the verification by hand.
\end{proof}
 
\begin{table}[h]\tiny
	\begin{center}
		\caption{\small Quaternary positive definite stable $ 2 $-regular integral $ \mathbb{Z} $-lattices $ \mathcal{L}_{i}$}	\label{table:stable2regularquaternary}	
					\renewcommand\arraystretch{1.2}
		\begin{tabular}{c|c|c|c}
			\toprule[1pt]
		$\mathcal{L}$ &$[a,b,c,d,f_{1},f_{2},f_{3},f_{4},f_{5},f_{6}]$&$ d\mathcal{L} $  &The primes $p$ where $\mathcal{L}_{p}^{(1/2)}$ is not $2$-ADC    \\
	
			\hline
		 $\mathcal{L}_{1}$ &$ [2,2,2,2,0,0,0,1,1,1] $  &$  2^{2} $  &None  \\
			\hline
		 $ \mathcal{L}_{2} $ &$  [2,2,2,2,1,0,0,1,0,1] $  &$ 5 $         &None    \\
			\hline
		 $ \mathcal{L}_{3} $ &$  [2,2,2,2,0,0,0,1,1,0] $  &$  2^{3} $         &  None  \\
			\hline
		 $ \mathcal{L}_{4} $ &$  [2,2,2,2,1,0,0,0,0,1] $  &$ 3^{2} $         & None     \\
			\hline 
		 $ \mathcal{L}_{5} $ &$  [2,2,2,4,1,1,0,1,0,0] $  &$ 2^{2} \cdot 3 $         & None   \\
			\hline
		 $ \mathcal{L}_{6} $ &$  [2,2,2,2,1,0,0,0,0,0] $  &$ 2^{2}\cdot 3 $         & None   \\
			\hline
		 $ \mathcal{L}_{7} $ &$  [2,2,2,4,1,1,0,0,1,0] $  &$13 $         & None     \\
			\hline
		 $ \mathcal{L}_{8} $ &$  [2,2,2,4,1,0,0,1,0,1] $  &$17 $         & None    \\
			\hline
		 $ \mathcal{L}_{9} $ &$  [2,2,2,4,0,0,0,1,1,1] $  &$ 2^{2}\cdot 5 $         & None   \\
			\hline
		 $ \mathcal{L}_{10} $ &$  [2,2,2,4,1,0,0,1,0,0] $  &$ 2^{2}\cdot 5 $         & None    \\
			\hline
		 $ \mathcal{L}_{11} $ &$  [2,2,2,4,1,0,0,0,0,1] $  &$ 3\cdot 7 $         &None  \\
			\hline
		 $ \mathcal{L}_{12} $ &$  [2,2,2,6,1,1,0,0,1,0] $  &$ 3\cdot 7 $         &None     \\
			\hline 
		 $ \mathcal{L}_{13} $ &$  [2,2,2,4,0,0,0,1,1,0] $  &$ 2^{3} \cdot 3$         &None   \\
			\hline
		 $ \mathcal{L}_{14} $ &$  [2,2,4,4,1,1,0,1,1,2] $  &$ 5^{2} $         & None    \\
			\hline
		 $ \mathcal{L}_{15} $ &$  [2,2,4,4,1,1,0,0,1,1] $  &$ 2^{2}\cdot 7 $         & None    \\
			\hline
		  $ \mathcal{L}_{16} $ &$ [2,2,2,6,1,0,0,1,0,0] $  &$  2^{5} $         & $2$       \\
			\hline
	 	$ \mathcal{L}_{17} $ &$[2,2,4,4,0,0,0,1,1,2]  $  &$ 2^{5}  $         & $  2$        \\
			\hline
		 $ \mathcal{L}_{18} $ &$[2,2,4,4,1,1,0,1,0,0]  $  &$  2^{5} $         & $2$       \\
			\hline
		 $ \mathcal{L}_{19} $ &$  [2,2,4,4,0,1,1,1,0,2] $  &$ 3\cdot 11 $         & None     \\
			\hline
		$ \mathcal{L}_{20} $ &$[2,2,2,10,1,1,0,1,0,0]  $  &$  2^{2}\cdot3^{2} $         & 3     \\
			\hline
		$ \mathcal{L}_{21} $ &$[2,2,2,6,0,0,0,1,1,1]  $  &$  2^{2}\cdot3^{2} $         & $3$     \\
			\hline
		$ \mathcal{L}_{22} $ &$[2,2,2,6,1,0,0,0,0,0]  $  &$  2^{2}\cdot3^{2} $         & 2     \\
			\hline
		$ \mathcal{L}_{23} $ &$ [2,2,4,4,1,0,0,0,0,2] $  &$  2^{2}\cdot3^{2} $         & $3$      \\
			\hline
		$ \mathcal{L}_{24} $ &$[2,2,4,4,0,1,1,1,1,1]  $  &$  2^{2}\cdot3^{2} $         & $2$     \\	
		   	\hline
		 $ \mathcal{L}_{25} $ &$  [2,2,4,4,1,0,0,0,0,1] $  &$ 3^{2}\cdot 5 $         & None       \\
		 	\hline
		$ \mathcal{L}_{26} $ &$ [2,2,4,4,0,1,0,0,1,1] $  &$  3^{2}\cdot 5 $         & $3$       \\	
			\hline
	    $ \mathcal{L}_{27} $ &$ [2,2,4,6,1,1,0,0,1,1] $  &$ 2^{4}\cdot 3 $         & $2$   \\	
	    \hline
	     $ \mathcal{L}_{28} $ &$[2,2,4,4,0,1,1,0,0,0]  $  &$ 2^{4}\cdot 3  $         & $2$      \\	
	     	\hline
	     $ \mathcal{L}_{29} $ &$[2,4,4,4,0,0,0,1,2,2]  $  &$ 2^{4}\cdot 3  $         & $2$     \\	
			\hline
		 $ \mathcal{L}_{30} $ &$  [2,2,4,4,0,1,0,0,1,0] $  &$ 7^{2} $         &  None      \\
			\hline
		 $ \mathcal{L}_{31} $ &$  [2,4,4,4,1,0,2,0,1,2] $  &$ 2^{2}\cdot 3\cdot 5 $         & None   \\
			\hline
		 $ \mathcal{L}_{32} $ &$  [2,2,4,6,0,1,0,1,1,0] $  &$ 3\cdot 23 $         & None  \\
		 	\hline
		 $ \mathcal{L}_{33} $ &$[2,4,4,4,1,1,0,1,0,0 ]  $  &$  2^{4}\cdot 5 $         & $2$    \\	
		 	\hline
		$ \mathcal{L}_{34} $ &$[2,2,4,8,0,1,0,0,0,2]  $  &$  2^{5}\cdot 3 $         & $ 2$    \\	
			\hline
		 $ \mathcal{L}_{35} $ &$[2,4,4,4,0,0,0,1,1,1]  $  &$   2^{5}\cdot 3 $         & $ 2$     \\	
		 	\hline
		 $ \mathcal{L}_{36} $ &$[2,2,6,6,0,1,1,1,1,1]  $  &$  2^{2}\cdot 5^{2}  $         & $5$   \\					 
			\hline
		$ \mathcal{L}_{37} $ &$[2,4,4,6,1,0,2,0,1,2 ]  $  &$ 2^{2}\cdot 5^{2}  $         & $2$    \\	
			\hline
		$ \mathcal{L}_{38} $ &$[2,4,4,6,0,0,2,-1,1,-1 ]  $  &$ 2^{2}\cdot 3^{3}  $         & $3$    \\	
			\hline
		$ \mathcal{L}_{39} $ &$[2,4,4,6,1,0,2,0,1,0]  $  &$ 2^{4}\cdot 7 $         & $ 2$    \\	
			\hline
		$ \mathcal{L}_{40} $ &$[2,2,6,8,0,1,1,1,0,3]  $  &$ 5^{3}  $         & $5$     \\	
			\hline
		$ \mathcal{L}_{41} $ &$[2,4,4,8,1,0,2,0,2,0]  $  &$ 2^{7}  $         & $2 $   \\	
			\hline
		$ \mathcal{L}_{42} $ &$[2,4,4,6,1,1,0,0,1,1]  $  &$ 2^{7}  $         & $2 $    \\	
	 		\hline
    	$ \mathcal{L}_{43} $ &$[2,4,4,8,1,1,0,1,2,2]  $  &$ 2^{4}\cdot 3^{2} $         & $2$    \\	
			\hline
		$ \mathcal{L}_{44} $ &$[2,4,4,8,1,0,1,1,1,2] $  &$ 13^{2} $         &  None      \\
			\hline
		$ \mathcal{L}_{45} $ &$[2,4,6,6,0,1,1,1,2,1]  $  &$ 3^{3}\cdot 7  $         & $ 3$     \\	
	    	\hline 
		$ \mathcal{L}_{46} $ &$[2,4,6,6,1,0,1,0,-1,2]  $  &$  2^{6}\cdot 3 $         & $2$       \\	
		\hline  
		$ \mathcal{L}_{47} $ &$[2,4,6,10,0,1,2,0,2,1]  $  &$  2^{2}\cdot 3^{4} $         & $ 3$      \\	
		\hline 
		$ \mathcal{L}_{48} $ &$[2,4,6,12,0,1,0,0,2,0]  $  &$  2^{2}\cdot 11^{2} $         & $ 11$     \\	
			\bottomrule[1pt]
		\end{tabular}	 
	\end{center}
\end{table}

\begin{table}[!ht]
	\begin{center}
		\caption{Quaternary positive definite $ 2 $-ADC integral $ \mathbb{Z} $-lattices $ L_{i}  $}	\label{table:2ADCquaternary}	
		\renewcommand\arraystretch{1.3}
		\begin{tabular}{c|c|c|c|c}
			\toprule[1.2pt]
			\multirow{2}{*}{\text{$L $}}&	\multirow{2}{*}{\text{$\mathcal{L}^{(1/2)} $}}  & \multicolumn{2}{c|}{$L_{p}$ is not unimodular}  &\multirow{2}{*}{\text{$ \mathbb{Z} $-maximal}}  \\
			\cline{3-4}
			&  & $p=2$ & $p>2$   &     \\
			\hline
			$ L_{1}$	&$\mathcal{L}_{1}^{(1/2)} $  &$N_{2}^{4}(1) $ &  & \text{True}   \\
			\hline
			$ L_{2} $	&$ \mathcal{L}_{2}^{(1/2)}  $  &$ N_{1}^{4}(5)$ &$ N_{2}^{4}(5)$, $p=5$        & \text{True}   \\
			\hline
			$ L_{3} $&$ \mathcal{L}_{3}^{(1/2)}  $  &$ N_{2}^{4}(2) $   &      & \text{True}   \\
			\hline
			$ L_{4} $&$ \mathcal{L}_{4}^{(1/2)}  $  &$N_{1}^{4}(1)$ &$ N_{2}^{4}(1) $, $p=3$        & \text{True}   \\
			\hline 
			$ L_{5} $&$ \mathcal{L}_{5}^{(1/2)}  $  &$N_{2}^{4}(3)$ &$ N_{1}^{4}(3) $, $p=3$     & \text{True}   \\
			\hline
			$ L_{6} $&$ \mathcal{L}_{6}^{(1/2)}  $  &$ N_{1}^{4}(3)$ &$ N_{2}^{4}(3)$, $p=3$         & \text{True}   \\
			\hline
			$ L_{7} $&$ \mathcal{L}_{7}^{(1/2)}  $  &$N_{1}^{4}(5)$ &$ N_{2}^{4}(13)$, $p=13$       & \text{True}   \\
			\hline
			$ L_{8} $&$ \mathcal{L}_{8}^{(1/2)}  $ &$N_{1}^{4}(1)$ &$ N_{2}^{4}(17) $, $p=17$     & \text{True}   \\
			\hline
			$ L_{9} $&$ \mathcal{L}_{9}^{(1/2)}  $   &$N_{2}^{4}(5)$ &$N_{1}^{4}(5) $, $p=5$      & \text{True}   \\
			\hline
			$ L_{10} $&$ \mathcal{L}_{10}^{(1/2)}  $  &$\mathbf{H}\perp \langle 1,-5\rangle$&$N_{2}^{4}(5)$, $p=5$        & \text{False}   \\
			\hline
			\multirow{2}{*}{$ L_{11} $}&\multirow{2}{*}{$ \mathcal{L}_{11}^{(1/2)}  $} &\multirow{2}{*}{$N_{1}^{4}(5)$} &$N_{2}^{4}(3)  $, $p=3$         & \multirow{2}{*}{\text{True}}   \\
			\cline{4-4}
		            &  & & $N_{1}^{4}(7\Delta_{7}) $, $p=7$ &     \\
			\hline
			
			\multirow{2}{*}{$ L_{12} $}&\multirow{2}{*}{$ \mathcal{L}_{12}^{(1/2)}  $} &\multirow{2}{*}{$N_{1}^{4}(5)$} &$N_{1}^{4}(3) $, $p=3$         & \multirow{2}{*}{\text{True}}   \\
			\cline{4-4}
			&  & & $N_{2}^{4}(7\Delta_{7}) $, $p=7$ &     \\
			\hline 
			$ L_{13} $&$ \mathcal{L}_{13}^{(1/2)}  $  &$N_{1}^{4}(6)$ &$N_{2}^{4}(3\Delta_{3}) $, $p=3$       & \text{True}   \\
			\hline
			$ L_{14} $&$ \mathcal{L}_{14}^{(1/2)}  $   &$N_{1}^{4}(1)$ &$ N_{2}^{4}(1) $, $p=5$         & \text{True}   \\
			\hline
			$ L_{15} $&$ \mathcal{L}_{15}^{(1/2)}  $    &$N_{2}^{4}(7)$ &$N_{1}^{4}(7)$, $p=7$        & \text{True}   \\
			\hline
			\multirow{2}{*}{$ L_{16} $}&\multirow{2}{*}{$ \mathcal{L}_{19}^{(1/2)}  $} &\multirow{2}{*}{$N_{1}^{4}(1)$} &$N_{2}^{4}(3\Delta_{3}) $, $p=3$         & \multirow{2}{*}{\text{True}}   \\
			\cline{4-4}
			&  & & $N_{1}^{4}(11)  $, $p=11$ &     \\
			\hline
			\multirow{2}{*}{$ L_{17} $}&\multirow{2}{*}{$ \mathcal{L}_{25}^{(1/2)} $} &\multirow{2}{*}{$N_{1}^{4}(5)$} &$N_{2}^{4}(5) $, $p=3$         & \multirow{2}{*}{\text{True}}   \\
			\cline{4-4}
			&  & & $N_{1}^{4}(5)  $, $p=5$ &     \\
			\hline
			$ L_{18} $&$ \mathcal{L}_{30}^{(1/2)}  $   &$N_{1}^{4}(1)$  &$N_{2}^{4}(1) $, $p=7$      & \text{True}   \\
				\hline
			\multirow{2}{*}{$ L_{19} $}&\multirow{2}{*}{$ \mathcal{L}_{31}^{(1/2)}  $} &\multirow{2}{*}{$N_{2}^{4}(7)$} &$N_{2}^{4}(3\Delta_{3})  $, $p=3$         & \multirow{2}{*}{\text{True}}   \\
			\cline{4-4}
			&  & & $N_{2}^{4}(5\Delta_{5})$, $p=5$ &     \\			 
			\hline
	     	\multirow{2}{*}{$ L_{20} $}&\multirow{2}{*}{$ \mathcal{L}_{32}^{(1/2)}  $} &\multirow{2}{*}{$N_{1}^{4}(5)$} &$N_{2}^{4}(3\Delta_{3}) $, $p=3$         & \multirow{2}{*}{\text{True}}   \\
	     	\cline{4-4}
	     	&  & & $N_{1}^{4}(23)$, $p=23$ &     \\
		    \hline	
			$ L_{21} $&$ \mathcal{L}_{44}^{(1/2)}  $  &$N_{1}^{4}(1) $ &$N_{2}^{4}(1) $, $p=13$         & \text{True}   \\ 
			\bottomrule[1.2pt]
		\end{tabular}	 
	\end{center}
\end{table}

\section*{Acknowledgments} 
  I am grateful to the referee for providing many detailed corrections and suggestions, which significantly improved the exposition of this paper. I would also like to thank Prof. Yong Hu and Prof. Fei Xu for helpful discussions, to thank Prof. Pete L. Clark for enlightening comments, and to thank Prof. Andrew G. Earnest for detailed and valuable suggestions. This work was supported by a grant from the National Natural Science Foundation of China (Project No. 12301013).

			\end{document}